%------------------------------------------------------------------------------
% Beginning of journal.tex
%------------------------------------------------------------------------------
%
% AMS-LaTeX 1.2 sample file for journals, based on amsart.cls.
%
% Replace amsart by the documentclass for the target journal, e.g. tran-l.
%
\documentclass{amsart}
\usepackage{color}
\usepackage{amsmath,amscd}
\usepackage{amssymb}

\ifx\pdfpageheight\undefined
   \usepackage[dvips,colorlinks=true,linkcolor=blue,citecolor=red,%
      urlcolor=green]{hyperref}
   \usepackage[dvips]{graphicx}
   \makeatletter
   \edef\Gin@extensions{\Gin@extensions,.mps}
   \DeclareGraphicsRule{.mps}{eps}{*}{}
   \makeatother
\else
 \usepackage[pdftex]{graphicx}
   \usepackage[bookmarksopen=false,pdftex=true,breaklinks=true,%
      backref=page,pagebackref=true,plainpages=false,%
      hyperindex=true,pdfstartview=FitH,colorlinks=true,%
      pdfpagelabels=true,colorlinks=true,linkcolor=blue,%
      citecolor=red,urlcolor=green,hypertexnames=false%
      ]%
   {hyperref}
\fi
\usepackage{bm}

\usepackage{enumerate}
\newtheorem{theorem}{Theorem}[section]
\newtheorem{lemma}[theorem]{Lemma}
\newtheorem{corollary}[theorem]{Corollary}
\newtheorem{conjecture}[theorem]{Conjecture}
\newtheorem{proposition}[theorem]{Proposition}
\theoremstyle{definition}
\newtheorem{definition}[theorem]{Definition}
\newtheorem{example}[theorem]{Example}

\theoremstyle{remark}
\newtheorem{remark}[theorem]{Remark}
\newtheorem{notation}[theorem]{Notation}
\newtheorem{convention}[theorem]{Convention}
\numberwithin{equation}{section}
%    Absolute value notation

%    Blank box placeholder for figures (to avoid requiring any
%    particular graphics capabilities for printing this document).

% ----------------------------------------------------------------

\vfuzz2pt % Don't report over-full v-boxes if over-edge is small

\hfuzz2pt % Don't report over-full h-boxes if over-edge is small

% THEOREMS -------------------------------------------------------

\newcommand{\Real}{{\mathbb R}}

\newcommand{\eps}{\varepsilon}
\newcommand{\f}{\mathbf{f}}
\newcommand{\x}{\mathbf{x}}
\newcommand{\y}{\mathbf{y}}
\newcommand{\z}{\mathbf{z}}

\newcommand {\hide}[1]{}

\begin{document}
\title[Triangulations of 2D families]
{Triangulations of monotone families I:
Two-dimensional families
}
%{
\thanks{2010 Mathematics Subject Classification 14P10, 14P15, 14P25}
%}
%    Information for first author
\author{Saugata Basu}
\address{Department of Mathematics,
Purdue University, West Lafayette, IN 47907, USA}
\email{sbasu@math.purdue.edu}
\thanks{S. Basu was partially supported by NSF grants CCF-0915954 and CCF-1319080.}
\author{Andrei Gabrielov}
%    Address of record for the research reported here
\address{Department of Mathematics,
Purdue University, West Lafayette, IN 47907, USA}
%    Current address
\email{agabriel@math.purdue.edu}
%    \thanks will become a 1st page footnote.
\thanks{A. Gabrielov and S. Basu were partially supported by  NSF grant DMS-1161629.}
%    Information for second author
\author{Nicolai Vorobjov}
\address{
Department of Computer Science, University of Bath, Bath
BA2 7AY, England, UK}
\email{nnv@cs.bath.ac.uk}
%\thanks{The first author was supported in part by NSF grant DMS-0801050.}
%\keywords{Homotopy type, Semi-algebraic sets}
% ----------------------------------------------------------------

\begin{abstract}
Let $K \subset \Real^n$ be a compact definable set in an o-minimal structure over $\Real$, e.g.,
a semi-algebraic or a subanalytic set.
A definable family $\{ S_\delta|\> 0< \delta \in \Real \}$ of compact subsets of $K$, is called a {\em monotone family} if
$S_\delta \subset S_\eta$ for all sufficiently small $\delta > \eta >0$.
The main result of the paper is that when $\dim K \le 2$ there exists a definable triangulation of $K$ such that for each
(open) simplex $\Lambda$ of the triangulation
and each small enough $\delta>0$, the intersection $S_\delta \cap \Lambda$ is {\em equivalent}
to one of the five {\em standard} families in the standard simplex (the equivalence relation and a standard family will be
formally defined).
The set of standard families is in a natural bijective correspondence with the set of all
five lex-monotone Boolean functions in two variables.
As a consequence, we prove the two-dimensional case of the topological conjecture in
\cite{GV07} on approximation of definable sets by compact families.
We introduce most technical tools and prove statements for compact sets $K$ of {\em arbitrary} dimensions,
with the view towards extending the main result and proving the topological conjecture in the general case.
\end{abstract}
\maketitle

% ----------------------------------------------------------------
\section{Introduction}\label{sec:intro}

Let $K \subset \Real^n$ be a compact definable set in an o-minimal structure over $\Real$, for example, it may be
a semi-algebraic or a subanalytic set.
Consider a one-parametric definable family $\{ S_\delta \}_{\delta >0}$ of compact subsets of $K$,
defined for all sufficiently small positive $\delta \in \Real$.

\begin{definition}\label{def:monot_family}
The family $\{ S_\delta \}_{\delta >0}$ is called {\em monotone family} if
the sets $S_\delta$ are monotone increasing as $\delta \searrow 0$, i.e., $S_\delta \subset S_\eta$ for
all sufficiently small $\delta > \eta >0$.
\end{definition}

It is well known that there exists a definable triangulation of $K$ (see \cite{Michel2, Dries}).
In this paper we suggest a more general notion of a definable triangulation of $K$ {\em compatible
with the given monotone family $\{ S_\delta \}_{\delta >0}$}.
The intersection of each set $S_\delta$ with each open simplex of such a triangulation
is a topologically regular cell and is topologically equivalent, in a precise sense, to one of the families
in the finite list of {\em model families}.
Model families are in a natural bijective correspondence with all {\em lex-monotone} Boolean functions in $\dim K$
Boolean variables (see Figure~\ref{fig:1D-2D} for the lists of model families and corresponding lex-monotone
functions in dimensions $1$ and $2$).
We conjecture that such a triangulation always exist, and we prove the conjecture in the case
when $\dim K \leq 2$ (Theorem~\ref{th:main}).

In the course of achieving this goal, we study a problem that is important on its own, of the existence
of a definable cylindrical decomposition of $\Real^n$ compatible with $K$ such that each cylindrical cell of the
decomposition is topologically regular.
Cylindrical decomposition is a fundamental construction in o-minimal geometry \cite{Michel2, Dries},
as well as in semi-algebraic geometry \cite{BCR}.
The elements of a decomposition are called cylindrical cells and are definably homeomorphic to open balls of the
corresponding dimensions.
By definition, a cylindrical decomposition depends on a chosen linear order of coordinates in $\Real^n$.
It is implicitly proved in \cite{Michel2, Dries} that for a given finite collection of definable sets in
$\Real^n$ there is a linear change of coordinates in $\Real^n$ and a cylindrical
decomposition compatible with these sets, such that each cylindrical cell is a topologically regular cell.
Without a suitable change of coordinates, the cylindrical cells defined in various  proofs of existence of cylindrical
decomposition (e.g., in \cite{Michel2, Dries}) can fail to be topologically regular
(see Example 4.3 in \cite{BGV_JEMS11}).

It remains an open problem, even in the category of semi-algebraic sets, whether there always exists a cylindrical
decomposition, with respect to a given order of coordinates, compatible with a given definable bounded set $K$,
such that the cells in the decomposition, contained in $K$, are topologically regular.
We conjecture that such regular cylindrical decompositions always exist, and prove this conjecture in the case
when $\dim K \le 2$ (in this case a weaker result was obtained in \cite{La10}).

Topological regularity is a difficult property to verify in general.
An important tool that we use to prove it for cylindrical cells is the concept of a {\em monotone cell}
introduced in \cite{BGV2} (see Definition~\ref{def:mon_cell} below).
It is proved in \cite{BGV2} that every non-empty monotone cell is a topologically regular cell.
In fact, everywhere in this paper when we prove that a certain cylindrical cell is topologically regular,
we actually prove the stronger property that it is a monotone cell.

\subsection*{History and Motivation}

An important recurring problem in semi-algebraic geometry is to find tight uniform bounds on the topological complexity
of various classes of semi-algebraic sets.
Naturally, in o-minimal geometry, definable sets that are locally closed are easier to handle than arbitrary ones.
A typical example of this phenomenon can be seen in the well-studied problem of obtaining tight upper bounds on
Betti numbers of semi-algebraic or sub-Pfaffian sets in terms of the complexity of formulae defining them.
Certain standard techniques from algebraic topology (for example, inequalities
stemming from the Mayer-Vietoris exact sequence) are directly applicable only in the case of locally
closed definable sets.
Definable sets which are not locally closed are comparatively more difficult to analyze.
In order to overcome this difficulty, Gabrielov and Vorobjov proved in \cite{GV07} the following result.

Suppose that for a bounded definable set $S \subset K \subset \Real^n$ in an o-minimal structure over $\Real$ there is a
definable monotone family $\{S_\delta\}_{\delta >0}$ of compact subsets of $S$  such that
$S= \bigcup_\delta S_\delta$.
Suppose also that for each sufficiently small $\delta >0$ there is a definable family $\{S_{\delta, \eps}\}_{\eps>0}$
of compact subsets of $K$ such that for all $\eps, \eps' \in (0,1)$, if $\eps'>\eps$ then
$S_{\delta, \eps} \subset S_{\delta, \eps'}$, and $S_\delta=\bigcap_\eps S_{\delta, \eps}$.
Finally, assume that for all $\delta'>0$ sufficiently smaller than $\delta$, and all $\eps'>0$ there exists an open
in $K$ set $U \subset K$ such that $S_\delta \subset U \subset S_{\delta', \eps'}$.
The main theorem in \cite{GV07} states that under a certain technical condition on the family $\{S_\delta\}_{\delta>0}$
(called ``separability'' which will be made precise later), for all
$\eps_0 \ll \delta_0 \ll \eps_1 \ll \delta_1 \ll \cdots \ll  \eps_n \ll \delta_n$
(where ``$\ll$'' stands for ``sufficiently smaller than'')
the compact definable set $S_{\delta_0,\eps_0} \cup \cdots \cup S_{\delta_n,\eps_n}$ is homotopy equivalent to  $S$.

The separability condition is automatically satisfied in many cases of interest, such as when $S$ is described by
equalities and inequalities involving  continuous definable functions, and the family $S_\delta$ is defined by replacing
each inequality of the kind $P > 0$ or $P<0$ in the definition of $S$, by $P \geq \delta$ or $P \leq -\delta$
respectively.
However, the property of separability is not  preserved under taking images of definable maps (in particular,
under blow-down maps) which restricts the applicability of this construction.

The following conjecture was made in \cite{GV07}.

\begin{conjecture}\label{conj:equivalence}
The property that the approximating set $S_{\delta_0,\eps_0} \cup \cdots \cup S_{\delta_n,\eps_n}$ is homotopy
equivalent to $S$ remains true even without the separability hypothesis.
\end{conjecture}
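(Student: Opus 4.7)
The plan is to derive Conjecture~\ref{conj:equivalence} in the case $\dim K \leq 2$ from the triangulation result of Theorem~\ref{th:main}, reducing the global homotopy statement to a finite list of explicit local verifications on the standard simplex. In \cite{GV07} the separability hypothesis served to ensure compatibility across coordinate charts; here, a definable triangulation of $K$ compatible with the monotone family $\{S_\delta\}$ (and its $\eps$-thickenings) is expected to play the same role.

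First I would apply Theorem~\ref{th:main} to produce a definable triangulation $\mathcal{T}$ of $K$ such that for every open simplex $\Lambda \in \mathcal{T}$ and every sufficiently small $\delta$, the trace $S_\delta \cap \Lambda$ is equivalent to one of the five model families. Because $\{S_{\delta,\eps}\}_{\eps}$ is itself a monotone family (decreasing as $\eps \searrow 0$) for each fixed small $\delta$, I would apply Theorem~\ref{th:main} once more to the finitely many relevant families that arise after choosing $\delta_0,\dots,\delta_n$ small, refining $\mathcal{T}$ so that the new triangulation is simultaneously compatible, on each closed simplex, with every $S_{\delta_i}$ and every $S_{\delta_i,\eps_i}$ that will appear in the approximation.

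Second, on each closed simplex $\bar\Lambda$ and for each of the finitely many combinations of model families that can occur, I would verify by explicit deformation retraction inside the standard simplex that
\[ (S_{\delta_0,\eps_0} \cup \cdots \cup S_{\delta_n,\eps_n}) \cap \bar\Lambda \; \simeq \; S \cap \bar\Lambda \]
in the prescribed asymptotic regime $\eps_0 \ll \delta_0 \ll \cdots \ll \eps_n \ll \delta_n$. The five model families are semialgebraically simple enough (essentially half-planes, quadrants, and strips in the standard simplex) that this verification is routine and uniform in the parameters. Then I would globalize using a Mayer--Vietoris / nerve argument, inducting on the simplicial skeleton of $\mathcal{T}$: add closed simplices one at a time in order of increasing dimension; at each stage the intersection of the new simplex with the previously assembled set is a closed union of lower-dimensional faces on which the local equivalence has already been established, and the five-lemma promotes the equivalences to the union.

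The main obstacle will be the gluing step: the set $S_{\delta_i,\eps_i}$ typically extends slightly outside $S_{\delta_i}$ across the boundary of a simplex into neighbouring open simplices, and one must ensure that the local deformation retractions on adjacent closed simplices agree on their common boundary. Separability made this automatic in \cite{GV07}; in the present approach the role of separability is replaced by the combined monotonicity of $\{S_\delta\}$ in $\delta$ and of $\{S_{\delta,\eps}\}$ in $\eps$, together with the fact that the triangulation is compatible with every member of both families. Controlling the interplay of these monotonicities at simplicial boundaries, and constructing the retractions simplex-by-simplex in a compatible way, is the technical heart of the argument and is precisely what Theorem~\ref{th:main}, rather than any weaker triangulation statement, is designed to supply.
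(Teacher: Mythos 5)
Your plan and the paper's proof diverge at the crucial point, and the divergence leaves a genuine gap. The paper does \emph{not} attempt any new local-to-global homotopy argument. Instead (Sections~\ref{sec:triang_sep}--\ref{sec:approximation}) it constructs a \emph{surrogate} family: on each simplex $\Lambda$ where $S_\delta\cap\Lambda$ is non-separable, it replaces $S_\delta\cap\Lambda$ by a separable model family $V_\delta\cap\Lambda$ (either the full simplex or a type~(2) family, depending on whether $\Lambda_0\subset S_{\delta,\eps}$), builds corresponding $V_{\delta,\eps}$ satisfying conditions (1)--(3), and then proves the key combinatorial fact (Lemma~\ref{le:tele}) that the approximating telescopes are literally \emph{equal}: $T_m(S_{\delta,\eps})=T_m(V_{\delta,\eps})$ for $m\ge1$, because on each non-separable simplex already $\Lambda\cap T_1(S_{\delta,\eps})=\Lambda$. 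After that, Proposition~\ref{pr:epi}(ii) — the already-proved separable case from \cite{GV07} — applies verbatim to $V_{\delta,\eps}$, and no gluing of retractions is ever needed.

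Your route, by contrast, proposes to verify the homotopy equivalence simplex-by-simplex by explicit deformation retractions and then globalize by Mayer--Vietoris and the nerve. The step you yourself flag as ``the technical heart'' — arranging that the retractions on adjacent closed simplices agree on shared faces, given that $S_{\delta_i,\eps_i}$ leaks across simplex boundaries — is precisely the difficulty that the separability hypothesis in \cite{GV07} was introduced to control, and you do not supply an argument for it. Asserting that ``the combined monotonicity in $\delta$ and $\eps$'' replaces separability is not a proof: the non-separable model family (type~(5), the blow-up) is exactly the case where the Hausdorff limit of $\partial_\Lambda S_\delta$ degenerates to a vertex and naive compatibility of retractions across faces fails. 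Also, a minor point: the five-lemma/Mayer--Vietoris scheme gives isomorphisms on homology only if you first produce maps inducing the local equivalences compatibly with inclusions, which is again the unresolved step. The missing idea, concretely, is the observation that you need not prove any new homotopy equivalence at all for the telescope — you only need a separable family with the \emph{same} telescope, which the paper manufactures explicitly.
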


Conjecture~\ref{conj:equivalence} would be resolved if one could replace
$S_{\delta_0,\eps_0} \cup \cdots \cup S_{\delta_n,\eps_n}$ by a homotopy equivalent union
$V_{\delta_0,\eps_0} \cup \cdots \cup V_{\delta_n,\eps_n}$ for another, separable, family
$\{ V_{\delta, \eps}\}_{\delta, \eps >0}$, satisfying the same properties as the family ${S_{\delta,\eps}}$ with
respect to $S$.

This motivates the problem of trying to find a finite list of model families inside the standard simplex $\Delta$
such that for each simplex $\Lambda$ of the triangulation of $K$, the family $\{S_\delta \cap \Lambda\}_{\delta>0}$
is topologically equivalent to one of the (separable or non-separable) model families.
Such families $\{S_\delta \cap \Lambda\}_{\delta>0}$ are called {\em standard}.
The main result of this paper is a proof of the existence of a {\em triangulation yielding standard families}
in the two-dimensional case.
As a consequence we obtain a proof of Conjecture~\ref{conj:equivalence} in the case when $\dim K \leq 2$.

This triangulation presents an independent interest.
We will show in Section~\ref{sec:upper-semi-continuous} that there is a bijective correspondence between monotone
families $\{ S_\delta \}_{\delta >0}$ and non-negative upper semi-continuous definable functions
$f:\> K \to \Real$, with $S_\delta=\{\x \in K|\> f(\x) \ge \delta\}$.
Then, for a given $f$, a triangulation into simplices $\Lambda$ yielding standard families
$\{\Lambda \cap S_\delta\}_{\delta>0}$ can be interpreted as a {\em topological resolution of singularities} of
the continuous map $\textrm{graph}(f) \rightarrow K$ induced by $f$,
in the sense that we obtain a partition of the domain into a finite number of simplices  on each of
which the function $f$ behaves in a canonical way up to a certain topological equivalence relation.
A somewhat loose analogy in the analytic setting is provided by
the ``Local Flattening Theorem'' \cite[Theorem 4.4.]{Hironaka_book}.

When $f:\> K \to \Real$ is the distance function to a singular point $\x \in K$, the set $S_\delta$ for small
$\delta >0$ becomes the complement to a neighbourhood of $\x$ in $K$, and the boundary
$\partial S_\delta$ becomes the link of $\x$.
Then the triangulation of $K$, compatible with $\{ S_\delta \}$, can provide a new technique for the
 study of bi-Lipschitz
classification of germs of two-dimensional definable sets \cite{Birbrair}.

\subsubsection*{Relation to triangulations of functions and maps}

It is well known \cite{Michel2, Dries} that continuous definable functions $f:\> K \to \Real$, where $K$ is a compact
definable subset of $\Real^n$, can be triangulated.
A simple example (that of the blow-down map corresponding to the plane $\Real^2$
blown up at a point) shows that definable maps which are not functions (i.e., maps
of the form $f:\> K \to \Real^m,\> m \ge 2$) need not be triangulable, and this leads
to various difficulties in studying topological properties of definable maps.
For example, the question whether a definable map admitting a continuous section,
also admits a definable one would have an immediate positive answer if the map was
definably triangulable.
However, at present this remains a difficult open problem in o-minimal geometry.

The version of the topological resolution of singularities described above can be viewed as
an alternative to the traditional notion of triangulations compatible with a map.
Towards this end, we have identified a special class of definable sets and maps, which
we call semi-monotone sets and monotone maps respectively (see below for
definitions), such that general definable maps could be obtained from these simple
ones via appropriate gluing.

\subsubsection*{Relation to preparation theorems}
An important line of research in o-minimal geometry has been concentrated around preparation theorems.
Given a definable function $f: \Real^{n+1} \rightarrow \Real$, the goal of a preparation theorem
(along the lines of classical preparation theorems in algebra and analysis, due to Weierstrass, Malgrange, etc.)
is to separate the dependence on the last variable, as a power function
with real exponent, from the dependence on the remaining variables.
For example, van den Dries and Speissegger \cite{VDD-Speissegger}, following earlier work by Macintyre, Marker and
Van den Dries \cite{MMV}, Lion and Rolin \cite{Lion-Rolin}, proved that in a  polynomially bounded o-minimal structure
there exists a definable decomposition of $\Real^n$ into definable cells such that over each
cell $C$ the function $f$ can be written as
\[
f(\x,y) = |(y - \theta_C(\x))|^{\lambda_C} g_C(\x) u_C(\x,y).
\]
where $\lambda_C \in \Real$, while $\theta_C, g_C, u_C$ are definable functions with $u_C$ being a unit.
From this viewpoint, the triangulation yielding standard families, could be seen as a topological analogue of a
preparation theorem such as the one mentioned above.
Allowing the unit  $u_C$ in the preparation theorem
gives additional flexibility which is not available in the situation considered in this paper.

\subsection*{Organization of the paper}

Although the main results of the paper are proved in the case when $\dim K \le 2$, most of the definitions and
many technical statements are formulated and proved in the general case.
We consider this paper as the first in the series, and will be using these general definitions and statements in
future work.

The rest of the paper is organized as follows.
In Section~\ref{sec:monotone-cells}, we recall the definition of monotone cells and some of their key properties
needed in this paper.
In Section~\ref{sec:cylindrical}, we recall the definition of definable cylindrical decomposition compatible with
a finite family of definable subsets of $\Real^n$.
The notions of ``top'', ``bottom'' and ``side wall'' of a cylindrical cell that are going to play an
important role later are also defined in this section.
We prove the existence of a cylindrical cell decomposition with monotone cylindrical cells in the case when
$\dim K \leq 2$ (Theorem~\ref{th:cyl_decomp}).

In Section~\ref{sec:upper-semi-continuous}, we establish a connection between monotone definable families of compact
sets, and super-level sets of definable upper semi-continuous functions.
This allows us to include monotone families in the context of cylindrical decompositions.

In Section~\ref{sec:separability}, we recall the notion of ``separability'' introduced in \cite{GV07}, and
discuss certain topological properties of monotone families inside regular cells which will serve as a preparation for
later results on triangulation.

In Section~\ref{sec:standard_and_model} we define the combinatorially standard families and model families.
A combinatorially standard family is a combinatorial equivalence class of monotone families inside the standard simplex
$\Delta^n$.
There is a bijective correspondence between the set of all combinatorially standard families and all
\emph{lex-monotone} Boolean functions on $\{0,1\}^n$ (Definition \ref{def:standard}).
The model families are particular piece-wise linear representatives of the combinatorially standard families
(Definition \ref{def:model}).
After applying a barycentric subdivision to any model family, the monotone family inside each of the
sub-simplices of the barycentric sub-division is guaranteed to be separable (Lemma \ref{le:model_bary}).

In Section~\ref{sec:top-equivalence}, we define the notion of topological equivalence and
prove the existence of certain ``interlacing'' homeomorphisms in the two dimensional case.
This allows us to prove that in two dimensional case
combinatorial equivalence is the same as topological equivalence.

Section~\ref{sec:monotone-curves} is devoted to a technical problem of proving the existence of
monotone curves (and, more generally, families of monotone curves) connecting any two points inside a monotone cell.
Construction of such curves is an essential tool in obtaining a stellar sub-division of a monotone cell into
simplices with an additional requirement that the simplices  are monotone cells.

In Section~\ref{sec:combinatorial-equivalence}, we prove the existence of
a triangulation of two-dimensional compact $K$
such that the restriction of the monotone family to each simplex is standard.

In Section~\ref{sec:triang_sep}, we prove for any given monotone family $\{ S_\delta \}_{\delta>0}$ in
two-dimensional compact $K$ the existence of a
homotopy equivalent monotone family $\{ R_\delta \}_{\delta>0}$ in $K$ and a definable triangulation of $K$ such that
the restriction $\Lambda \cap R_\delta$ to each its simplex $\Lambda$ is separable.

In Section~\ref{sec:approximation}, we prove the motivating conjecture of this paper,
Conjecture~\ref{conj:equivalence}, in the case when the dimension of the set $S$ is at most two
(Theorem~\ref{thm:homotopy}).

\subsection*{Acknowledgements}

The authors thank the anonymous referee for many helpful remarks.

\section{Monotone cells}
\label{sec:monotone-cells}

In \cite{BGV_JEMS11, BGV2} the authors introduced the concepts of a semi-monotone set and a monotone map.
Graphs of monotone maps are generalizations of semi-monotone sets, and will be called {\em monotone cells}
in this paper (see Definition~\ref{def:mon_cell} below).

\begin{definition}
\label{def:semi-monotone}
Let $L_{j, c}:= \{ \x=(x_1, \ldots ,x_n) \in \Real^n|\> x_j = c \}$
for $j=1, \ldots ,n$, and $c \in \Real$.
Each intersection of the kind
$$S:=L_{j_1, c_1} \cap \cdots \cap L_{j_m, c_m} \subset \Real^n,$$
where $m=0, \ldots ,n$, $1 \le j_1 < \cdots < j_m \le n$,
and $c_1, \ldots ,c_m \in \Real$, is called an {\em affine coordinate subspace} in $\Real^n$.

In particular, the space $\Real^n$ itself is an affine coordinate subspace in $\Real^n$.
\end{definition}

We now define \emph{monotone maps}.
The definition below is not the one given in \cite{BGV2}, but equivalent to it as shown in
\cite[Theorem~9]{BGV2}.

We first need a preliminary definition.
For a coordinate subspace $L$ of $\Real^n$ we denote by $\rho_L:\> \Real^n \to L$ the projection map.

\begin{definition}\label{def:quasi-affine}
Let a bounded continuous map $\f=(f_1, \ldots ,f_k)$ defined on an open bounded non-empty set
$X \subset \Real^n$ have the graph
$$Y:=\{(\x, f_1(\x), \ldots, f_k(\x)) \in \Real^{n+k}|\> \x=(x_1, \ldots ,x_n) \in X\}.$$
We say that $\f$ is {\em quasi-affine} if for any coordinate subspace $L$ of $\Real^{n+k}$, the restriction
$\rho_L|_Y$ of the projection is injective if and only if the image $\rho_L(Y)$ is $n$-dimensional.
\end{definition}

\begin{definition}
\label{def:def_monotone_map}
Let a bounded continuous quasi-affine map $\f=(f_1, \ldots ,f_k)$ defined on an open bounded
non-empty set $X \subset \Real^n$ have the graph $Y \subset \Real^{n+k}$.
We say that the map $\f$ is {\em monotone} if for each affine coordinate subspace $S$ in $\Real^{n+k}$ the intersection
$Y \cap S$ is connected.
\end{definition}

\begin{notation}
Let the space $\Real^n$ have coordinate functions $x_1, \ldots, x_n$.
Given a subset $I =\{x_{j_1}, \ldots, x_{j_m} \} \subset \{x_1, \ldots , x_n\}$, let $W$ be the linear subspace
of $\Real^n$ where all coordinates in $I$ are equal to zero.
By a slight abuse of notation we will denote by ${\rm span} \{x_{j_1}, \ldots , x_{j_m}\}$ the
quotient space $\Real^n / W$.
Similarly, for any affine coordinate subspace $S \subset \Real^n$ on which all the functions $x_j \in I$ are
constant, we will identify $S$ with its image under the canonical surjection to $\Real^n /W$.
Again, by a slight abuse of notation, ${\rm span} \{x_1, \ldots x_i \}$, where $i \le n$, will be denoted
by $\Real^i$.
\end{notation}

\begin{definition}[\cite{BGV_JEMS11, BGV2}]\label{def:mon_cell}
A set $Y \subset \Real^n={\rm span}\> \{x_1, \ldots ,x_n \}$ is called a {\em monotone cell}
if it is the graph of a monotone map $\f:\> X \to {\rm span}\> H$, where
$H \subset \{x_1, \ldots ,x_n \}$ and $X \subset {\rm span}(\{x_1, \ldots ,x_n \} \setminus H)$ .
In a particular case, when $H= \emptyset$ (i.e., ${\rm span}\> H$ coincides with the origin)
such a graph is called a {\em semi-monotone set}.
\end{definition}

We refer the reader to \cite{BGV_JEMS11}, Figure 1, for some examples of
monotone cells in $\Real^2$ (actually, semi-monotone sets), as well as some counter-examples.
In particular, it is clear from the examples that the intersection of two monotone cells
in the plane is not necessarily connected and hence not a monotone cell.

Notice that any bounded convex open subset $X$ of $\Real^n$ is a semi-monotone set, while the graph of any
linear function on $X$ is a monotone cell in $\Real^{n+1}$.

The following statements were proved in \cite{BGV2}.

\begin{proposition}[\cite{BGV2}, Theorem~1]\label{Theorem13}
Every monotone cell is a topologically regular cell.
\end{proposition}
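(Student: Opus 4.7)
The plan is to argue by induction on $d = \dim X = \dim Y$, with the trivial base case $d = 0$ (a single point). For the inductive step, I would first reduce the general monotone-cell case to the semi-monotone one, and then prove the semi-monotone statement by slicing.

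For the reduction, the coordinate projection $\rho \colon \Real^n \to {\rm span}(\{x_1,\ldots,x_n\}\setminus H)$ restricts to a continuous bijection $Y \to X$: injectivity together with $\dim \rho(Y) = d$ is exactly the quasi-affine hypothesis of Definition~\ref{def:quasi-affine}. To promote this to a homeomorphism of closures, I would first establish the preliminary lemma that a monotone map $\f \colon X \to {\rm span}(H)$ extends continuously to $\overline{X}$; this is where the connectedness axiom of Definition~\ref{def:def_monotone_map} is used crucially, to rule out oscillatory boundary behavior (the graph cannot limit to two distinct points over a single boundary point of $X$, since otherwise a suitably chosen affine coordinate subspace $S$ would force disconnectedness of $Y \cap S$). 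Once this extension is in hand, $\rho \colon \overline{Y} \to \overline{X}$ is a continuous bijection between compact Hausdorff spaces, hence a homeomorphism, and it suffices to treat the semi-monotone set $X$.

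For a semi-monotone $X \subset \Real^d$, I would slice by one coordinate: $J := \pi_{x_1}(X)$ is an open interval $(a,b)$ by connectedness, and for each $t \in J$ the slice $X_t := X \cap \{x_1 = t\}$ is itself a semi-monotone set in $\{x_1=t\} \cong \Real^{d-1}$, because every affine coordinate subspace of the slice is cut out of $X_t$ by an affine coordinate subspace of $\Real^d$. By the inductive hypothesis, every $\overline{X_t}$ is a closed $(d-1)$-ball. To assemble these into a global homeomorphism $\overline{X} \to \overline{B^d}$, I would apply the o-minimal definable trivialization theorem to $\pi_{x_1} \colon \overline{X} \to [a,b]$, which gives a trivialization over a finite definable partition of $J$, and glue across the finitely many ``jump'' points using Alexander's trick on the model ball $\overline{B^{d-1}}$.

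The main obstacle I expect is the behavior at the endpoints $t=a$ and $t=b$, where the slices $\overline{X_t}$ may degenerate to a lower-dimensional limit: the lateral boundary $\bigcup_{t\in[a,b]} \partial X_t$ is a topological cylinder $S^{d-2}\times[a,b]$ by induction plus trivialization, but showing that its union with the two endpoint slices caps off into an honest $(d-1)$-sphere requires a careful semi-continuity argument using o-minimal curve selection and the connectedness axiom applied to affine coordinate subspaces through the endpoints. The secondary obstacle is the preliminary extension lemma of step one, which I expect to consume a significant fraction of the total work, since quasi-affinity alone does not suffice and one must cleverly exploit the full force of connectedness of $Y \cap S$ for all affine coordinate $S$.
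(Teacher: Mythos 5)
First, a point of reference: this paper does not prove Proposition~\ref{Theorem13} at all --- it is imported verbatim from \cite{BGV2} --- so there is no internal proof to compare against, and your attempt has to stand on its own.

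It does not. The fatal gap is the ``preliminary extension lemma'' of your first step: a monotone map does \emph{not}, in general, extend continuously to $\overline{X}$. Take $X=\{(x,y)\,:\,0<x<y<1\}$ (open, bounded, convex, hence semi-monotone) and $f(x,y)=x/y$. One checks directly that $f$ is quasi-affine and that every intersection of its graph $Y$ with an affine coordinate subspace is connected (sections $\{x=c\}$, $\{y=c\}$, $\{z=c\}$ are single curve intervals, codimension-two sections are points), so $Y$ is a monotone cell; yet along $x=ty$ one has $f\to t$, so $\overline{Y}$ contains the entire segment $\{(0,0)\}\times[0,1]$ over the boundary point $(0,0)$. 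Your connectedness argument only shows that the fiber of $\overline{Y}$ over a boundary point of $X$ cannot be \emph{disconnected}; it cannot prevent that fiber from being a nondegenerate connected segment, which is exactly what happens. The present paper is in fact organized around this phenomenon: Definition~\ref{def:blowup} introduces blow-up points of the (monotone) restrictions $f|_Y$, and Lemma~\ref{le:hausdorff}(iii) and Corollary~\ref{cor:triang_down}(vii) exist precisely to handle them. Consequently $\rho\colon\overline{Y}\to\overline{X}$ is not injective, the reduction of the monotone case to the semi-monotone case collapses, and since topological regularity is a statement about the pair $(\overline{Y},Y)$, the part you have discarded --- the structure of $\overline{Y}\setminus Y$ --- is the essential one.

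The slicing step is also under-powered even in the genuinely semi-monotone case. Hardt trivialization of $\pi_{x_1}\colon\overline{X}\to[a,b]$ gives product structures only over the \emph{open} strata of a finite partition of $(a,b)$; at the finitely many exceptional parameters $t_0$ (which occur in the interior of $(a,b)$, not only at the endpoints) the Hausdorff limit of $\overline{X_t}$ as $t\to t_0$ need not coincide with $\overline{X_{t_0}}$, and Alexander's trick only glues homeomorphisms of a fixed model ball that already agree on its boundary sphere --- it presupposes a homeomorphism of $\overline{X}\cap\{t_0-\eps\le x_1\le t_0\}$ with $\overline{B^{d-1}}\times[t_0-\eps,t_0]$, which is essentially the statement being proved. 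Both obstacles you flag as ``expected'' are, in other words, the entire content of \cite[Theorem~1]{BGV2}, whose argument works with $\overline{Y}$ directly rather than through either of your two reductions.
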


\begin{proposition}[\cite{BGV2}, Corollary~7, Theorem~11]\label{prop:def_monotone_map}
Let $X \subset \Real^n$ be a monotone cell.
Then
\begin{enumerate}[(i)]
\item
for every coordinate $x_i$ in $\Real^n$ and every $c \in \Real$,
each of the intersections $X \cap \{ x_i= c \}$, $X \cap \{ x_i< c \}$, $X \cap \{ x_i> c \}$
is either empty or a monotone cell;
\item
Let $Y \subset X$ be a monotone cell such that $\dim Y=\dim X-1$ and $\partial Y \subset \partial X$.
Then $X \setminus Y$ is a disjoint union of two monotone cells.
\end{enumerate}
\end{proposition}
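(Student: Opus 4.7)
My plan is to prove (i) and (ii) by simultaneous induction on $n$, with (i) in dimension $n$ feeding into (ii) in dimension $n$, and (ii) in smaller dimension feeding back into the open half-space case of (i). The base cases (small $n$, or $X$ an open interval) are trivial, so I concentrate on the inductive step.

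For (i), I would write $X$ as the graph of a monotone quasi-affine map $\f = (f_1,\ldots,f_k) : X' \to {\rm span}\, H$, set $d = \dim X$, and identify $\{x_i = c\}$ with $\Real^{n-1}$ by dropping $x_i$. To see that the nonempty fiber $X \cap \{x_i = c\}$ is a monotone cell, I choose a coordinate subspace $L \subset \{x_i = c\}$ of dimension $d-1$ onto which the fiber projects with image of full dimension; such $L$ exists because $\dim(X \cap \{x_i = c\}) = d-1$. Augmenting $L$ by the $x_i$-axis to a $d$-dimensional coordinate subspace $L^*$ of $\Real^n$ and applying the quasi-affinity of $\f$ to $L^*$, I conclude that $\rho_{L^*}|_X$ is injective, hence $\rho_L$ is injective on the fiber. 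This realizes $X \cap \{x_i = c\}$ as a graph; the new defining map inherits quasi-affinity by the same augmentation trick, and it inherits monotonicity because every affine coordinate subspace of $\{x_i = c\}$ is also an affine coordinate subspace of $\Real^n$ whose intersection with $X$ is already connected. The same choice of $L$ works for $X \cap \{x_i < c\}$ and $X \cap \{x_i > c\}$: quasi-affinity transfers directly, and connectedness of an arbitrary affine coordinate section $X \cap \{x_i < c\} \cap S$ reduces to an open half-space cut of the lower-dimensional monotone cell $X \cap S$, handled by induction.

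For (ii), I would use the quasi-affinity of the map defining $Y$ to pick a coordinate $x_i$ in which $Y$ appears as a graph $\{x_i = g(\hat\x) : \hat\x \in \hat Y\}$ with $g$ continuous; because $|H_Y| = n - d + 1 > n - d = |H_X|$, there is always some $x_i \in H_Y \setminus H_X$, i.e., dependent for $Y$ but independent for $X$. Set $X_\pm := \{\x \in X : \pm(x_i - g(\hat\x)) > 0\}$, extended by continuity to $\x \in X$ with $\hat\x \notin \hat Y$ using the hypothesis $\partial Y \subset \partial X$, which prevents $Y$ from terminating inside $X$; any continuous path in $X$ from $X_+$ to $X_-$ must therefore cross $Y$, so $X \setminus Y = X_+ \sqcup X_-$ with both sides nonempty. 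Quasi-affinity of each $X_\pm$ is inherited from that of $X$, and monotonicity reduces to showing connectedness of each affine coordinate section $X_\pm \cap S$, which is a one-sided slice of $X \cap S$ (monotone by (i)) by the codimension-one subcell $Y \cap S$, and is handled by an inductive application of (ii).

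The main obstacle is coordinating the inductive bookkeeping: reapplying (ii) inside $X \cap S$ requires that $Y \cap S$ be itself a codimension-one monotone subcell of $X \cap S$ with $\partial(Y \cap S) \subset \partial(X \cap S)$. The monotone-cell and dimension conditions follow from (i), but the boundary containment needs a direct argument relating the topological frontier of a monotone cell to its restrictions to affine coordinate subspaces. Proposition~\ref{Theorem13}, which tells us all monotone cells in sight are topologically regular, is the key tool, but verifying the precise inheritance of boundary containment under slicing---and confirming that the ``independent for $X$, dependent for $Y$'' choice of $x_i$ can be propagated consistently through the recursion---is the subtlest technical point of the whole argument.
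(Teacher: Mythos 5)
This proposition is not proved in the paper at all: it is imported verbatim from \cite{BGV2} (Corollary~7 and Theorem~11 there), so there is no in-paper argument to compare yours against, and your attempt has to stand on its own. As it stands it is a plausible plan rather than a proof, and the gaps sit exactly at the points you yourself flag as ``subtle'' --- which are not bookkeeping issues but the actual content of the result. The most serious one is in part (ii): the sets $X_\pm$ are only defined over $\hat Y=\rho(Y)$, and ``extending by continuity'' to points $\x\in X$ with $\hat\x\notin\hat Y$ is not a construction; moreover the assertion that every path in $X$ from $X_+$ to $X_-$ must cross $Y$ \emph{is} the separation statement to be proved. One genuinely needs a Jordan--Brouwer/Sch\"onflies type argument exploiting the topological regularity of $X$ and $Y$ (Proposition~\ref{Theorem13}) to see that $X\setminus Y$ has exactly two components, and then a separate argument that each component is a monotone cell.

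The inductive scheme also does not close. To get connectedness of $X_\pm\cap S$ by applying (ii) to the pair $(X\cap S,\,Y\cap S)$ you need $Y\cap S$ to be a single nonempty monotone cell of codimension one in $X\cap S$ with $\partial(Y\cap S)\subset\partial(X\cap S)$. None of this is automatic: $Y\cap S$ can be empty (and then you must still argue $X\cap S$ lies entirely on one side), it can have the same dimension as $X\cap S$ (when the coordinate cut out by $S$ is constant on $Y$), and the frontier containment --- which you concede is unresolved --- requires comparing $\partial X\cap S$ with $\partial(X\cap S)$, something that fails for general definable sets and again needs the regularity theory. In part (i) there are smaller but real gaps of the same nature: injectivity of $\rho_L$ on the fiber does not by itself make $X\cap\{x_i=c\}$ a monotone cell, since the definition requires the projected domain to be \emph{open} and the graph map continuous and bounded; the case where $x_i$ is a dependent coordinate with constant graph function (so the fiber is all of $X$, not $(d-1)$-dimensional) must be split off; and connectedness of $X\cap\{x_i<c\}$ itself (the case $S=\Real^n$, which is an admissible affine coordinate subspace) is not reached by your dimension-dropping induction. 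Each of these can in principle be repaired, but the repairs amount to redoing a substantial part of \cite{BGV2}, so the proposal cannot be accepted as a proof.
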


\begin{proposition}[\cite{BGV2}, Theorem~10]\label{prop:proj}
Let $X \subset \Real^n$ be a monotone cell.
Then for any coordinate subspace $L$ in $\Real^n$ the image $\rho_L (X)$
is a monotone cell.
\end{proposition}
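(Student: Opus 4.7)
The plan is to reduce the problem to a single-coordinate projection and then verify, in each step, that the image retains both the connectedness-of-slices property of Definition~\ref{def:def_monotone_map} and a suitable graph structure witnessing quasi-affinity. Writing $\rho_L = \pi_m \circ \cdots \circ \pi_1$ with each $\pi_s$ forgetting one coordinate, it suffices to prove the proposition for a single projection $\pi : \Real^n \to \Real^{n-1}$ that eliminates a chosen coordinate $x_i$, and then iterate. I would proceed by induction on $n$.

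The connectedness-of-slices property is the easy half. For any affine coordinate subspace $S' \subset \Real^{n-1}$, the preimage $\pi^{-1}(S') \subset \Real^n$ is again an affine coordinate subspace, since forgetting $x_i$ imposes no additional constraint in that direction. Hence
$$
\pi(X) \cap S' \;=\; \pi\bigl(X \cap \pi^{-1}(S')\bigr),
$$
which is the continuous image of a connected set by Definition~\ref{def:def_monotone_map}, and is therefore connected. The more delicate ingredient is to realize $\pi(X)$ as the graph of a monotone quasi-affine map. Writing $X$ as the graph of $\f = (f_1, \ldots, f_k)$ over $X' \subset \mathrm{span}(\{x_1, \ldots, x_n\} \setminus H)$, I separate two cases. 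If $x_i \in H$, say $x_i = f_r$, then $\pi(X)$ is already the graph of the truncated map $(f_1, \ldots, \widehat{f_r}, \ldots, f_k)$ on the same domain $X'$, and quasi-affinity is inherited directly, since coordinate subspaces of $\Real^{n-1}$ pull back under $\pi$ to coordinate subspaces of $\Real^n$. If $x_i \notin H$, then applying the quasi-affinity condition of Definition~\ref{def:quasi-affine} to $\pi$ itself forces a dichotomy: either $\pi|_X$ is injective with image of full dimension $\dim X$, in which case $\pi(X)$ is recognized as the graph of a new quasi-affine map over the base $\pi(X') \subset \mathrm{span}(\{x_1, \ldots, x_n\} \setminus (H \cup \{x_i\}))$; or $\pi|_X$ fails to be injective, whence $\dim \pi(X) < \dim X$ and each $f_j$ is forced to be constant along the $x_i$-fibers of $X'$, so $\f$ descends to $\pi(X')$ and $\pi(X)$ is again its graph. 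In both situations, the inductive hypothesis applied to $X' \subset \Real^{n-k}$ ensures that the new base $\pi(X')$ is itself a semi-monotone set.

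The main obstacle is the base-variable case $x_i \notin H$, where the combinatorial bookkeeping of which coordinates remain base versus fiber after projection requires a careful use of Definition~\ref{def:quasi-affine} to extract the new quasi-affine map, and where the semi-monotonicity of $\pi(X')$ must be obtained from the induction. Once the graph representation of $\pi(X)$ is in hand, monotonicity of the new map is immediate from the connectedness-of-slices conclusion above, and Propositions~\ref{Theorem13} and~\ref{prop:def_monotone_map} confirm that $X'$ and its projections are the correct objects on which to run the induction.
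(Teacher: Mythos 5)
This proposition is not proved in the paper at all: it is imported verbatim from \cite{BGV2} (Theorem~10), so there is no in-paper argument to compare yours against. Judged on its own terms, your reduction to eliminating one coordinate at a time and your verification of the connectedness-of-slices half (via $\pi(X)\cap S'=\pi(X\cap\pi^{-1}(S'))$) are fine, as is the case where the eliminated coordinate $x_i$ lies in $H$.

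The genuine gap is in the case $x_i\notin H$ with $\pi|_X$ injective. There you claim $\pi(X)$ is the graph of a quasi-affine map over the base $\pi(X')\subset{\rm span}(\{x_1,\ldots,x_n\}\setminus(H\cup\{x_i\}))$. This is dimensionally impossible: $X'$ is open of dimension $\dim X$, so $\pi(X')$ has dimension $\dim X-1$, while in this sub-case $\dim\pi(X)=\dim X$; a $\dim X$-dimensional set cannot be a graph over a $(\dim X-1)$-dimensional base. (Already for $X=\{(t,t):t\in(0,1)\}$ with $H=\{x_2\}$ and $x_i=x_1$, your recipe would make $\pi(X)$ a graph over a single point, whereas it is an interval.) What actually happens is that some fiber coordinate $y_j$ must be promoted to a base coordinate; one must then show that the projection of $\pi(X)$ onto the new $\dim X$-dimensional coordinate subspace is injective with \emph{open} image and that the resulting map is \emph{continuous} and quasi-affine, so that Definition~\ref{def:def_monotone_map} genuinely applies. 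That openness/continuity step is the substantive content of Theorem~7 and Corollary~5 of \cite{BGV2} (quoted in the Remark following Definition~\ref{def:monotone_on_monotone}) and is entirely missing from your sketch. A secondary, repairable omission: in the non-injective sub-case you assert that each $f_j$ is constant on every $x_i$-fiber of $X'$; this is true, but it requires an argument combining quasi-affinity (applied to the subspace obtained from the base by swapping $x_i$ for $y_j$) with the connectedness of the fibers $X\cap S$, and as written it is only asserted.
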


Let $\Real^n_{>0}:=\{ \x=(x_1, \ldots ,x_n) \in \Real^n|\> x_i>0\> \text{for all}\> i=1, \ldots ,n \}$,
and $X \subset \Real^n_{>0}$.

\begin{lemma}\label{le:cond_for_semi}
Consider the following two properties, which are obviously equivalent.
\begin{enumerate}[(i)]
\item
For each $\x \in X$, the box
$$B_{\x}:=\{(y_1, \ldots ,y_n) \in \Real^n|\> 0<y_1<x_1, \dots ,0<y_n<x_n\}$$
is a subset of $X$.
\item
For each $\x\in X$ and each $j=1, \ldots ,n$, the interval
$$I_{\x,j}:= \{(y_1, \ldots ,y_n) \in \Real^n|\> 0<y_j<x_j,\> y_i=x_i\> \text{for}\> i\ne j\}$$
is a subset of $X$.
\end{enumerate}
If $X$ is open and bounded, then either of the properties (i) or (ii) implies that $X$ is semi-monotone.
If an open and bounded subset $Y \subset \Real^n_{>0}$ also satisfies the conditions (i) or (ii), then
both $X \cup Y$ and $X \cap Y$ satisfy these conditions, and hence are semi-monotone.
\end{lemma}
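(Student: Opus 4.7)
The plan is to handle the three claims in sequence: equivalence of (i) and (ii); semi-monotonicity of a set satisfying these conditions; and preservation under union and intersection.

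First I would verify the equivalence of (i) and (ii), which the authors label obvious. The direction (i) $\Rightarrow$ (ii) follows from openness of $X$: given $\x \in X$ there is $\epsilon > 0$ with $\x^+ := (x_1+\epsilon, \ldots, x_n+\epsilon) \in X$, and then $I_{\x,j} \subset B_{\x^+} \subset X$ by (i). The converse (ii) $\Rightarrow$ (i) is proved by iterated one-coordinate descent: starting from $\x$, apply (ii) in direction $1$ to put the segment where only $y_1$ varies inside $X$; from each point of this segment apply (ii) in direction $2$; continue through all coordinates to sweep out $B_\x$.

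Next, to show that (i) (equivalently (ii)) plus openness and boundedness implies $X$ is semi-monotone, I need to check that $X \cap S$ is connected for every affine coordinate subspace $S = L_{j_1,c_1} \cap \cdots \cap L_{j_m,c_m}$ (the quasi-affine condition is vacuous in the case $H = \emptyset$). Let $\x, \x' \in X \cap S$; let $i_1 < \cdots < i_{n-m}$ be the free coordinates, and choose $\eta > 0$ strictly smaller than $x_{i_k}$ and $x'_{i_k}$ for all $k$. The main construction is an axis-aligned descent: starting from $\x^{(0)} := \x$, for $k = 1, \ldots, n-m$ use property (ii) applied at $\x^{(k-1)}$ in direction $i_k$ to obtain the segment $I_{\x^{(k-1)}, i_k} \subset X$, and connect $\x^{(k-1)}$ to the point $\x^{(k)}$ that agrees with $\x^{(k-1)}$ except that its $i_k$-coordinate is $\eta$. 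Each such segment lies in $X$ (using openness of $X$ at $\x^{(k-1)}$ to close up the half-open interval) and in $S$ (since the $j_s$-coordinates never change). The terminal point $\x^\ast$ has $x^\ast_{j_s} = c_s$ and $x^\ast_{i_k} = \eta$ for every $k$, and depends only on $S$ and $\eta$. Running the same procedure from $\x'$ lands at the same $\x^\ast$, so $\x$ and $\x'$ lie in one path-component of $X \cap S$.

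For the last claim, property (i) is trivially preserved: if $\x \in X \cap Y$ then $B_\x \subset X$ and $B_\x \subset Y$, hence $B_\x \subset X \cap Y$; if $\x \in X \cup Y$, say $\x \in X$, then $B_\x \subset X \subset X \cup Y$. Openness and boundedness are preserved by finite unions and intersections, so the first part of the lemma applies (in the non-empty case) to give the semi-monotonicity of both $X \cap Y$ and $X \cup Y$.

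The construction contains no real obstacle; the only subtlety to watch is that the strict inequalities in the definitions of $B_\x$ and $I_{\x,j}$ mean that these sets do not contain $\x$, so each descent segment must be written as a closed interval whose upper endpoint $\x^{(k-1)}$ is added back using that $\x^{(k-1)} \in X$ and $X$ is open. Once this minor notational point is noted, everything else is a direct, explicit set-theoretic verification.
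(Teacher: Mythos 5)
Your proof is correct, but it takes a genuinely different route from the paper's. You verify Definition~\ref{def:def_monotone_map} head-on: after observing that the quasi-affine requirement is automatic for a non-empty open set (with $H=\emptyset$ the graph is $X$ itself, and $\rho_L|_X$ is injective exactly when $L=\Real^n$), you show that $X\cap S$ is connected for \emph{every} affine coordinate subspace $S$ by an explicit axis-aligned descent of the free coordinates to a common basepoint $\x^\ast$ with all free coordinates equal to $\eta$. The paper instead argues by induction on $n$, invoking Corollary~1 of \cite{BGV2}, which reduces semi-monotonicity to two things: connectedness of $X$ (proved by noting that $B_\x$ and $B_\z$ are connected, overlap, and have $\x,\z$ in their closures) and semi-monotonicity of every hyperplane section $X\cap\{x_k=c\}$ (which inherits property (ii), closing the induction). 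Your argument is more self-contained --- it never leaves the definitions stated in this paper and does not rely on the external criterion --- at the cost of carrying out the path construction for an arbitrary coordinate section rather than only for $X$ itself; the paper's version is shorter but outsources the reduction to \cite{BGV2}. Your handling of the half-open intervals (re-attaching the upper endpoint using openness of $X$), of the equivalence (i)$\Leftrightarrow$(ii), and of the union/intersection claim all check out and match what the paper leaves implicit.
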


\begin{proof}
The proof of semi-monotonicity of $X$ is by induction on $n$, the base for $n=0$ being obvious.
According to Corollary~1 in \cite{BGV2}, it is sufficient to prove that $X$ is connected, and that
for every $k,\> 1 \le k \le n$ and every $c \in \Real$ the intersection $X \cap \{ x_k=c \}$
is semi-monotone.
The set $X$ is connected because for every two points $\x, \z \in X$ the boxes $B_{\x}$ and $B_{\z}$
are connected and $B_{\x} \cap B_{\z} \neq \emptyset$.
Since the property (ii) is true for the intersection $X \cap \{ x_k=c \}$, by the inductive hypothesis
this intersection is semi-monotone, and we proved semi-monotonicity of $X$.

If an open and bounded $Y \subset \Real^n_{>0}$ satisfies the conditions (i) or (ii), then both sets
$X \cup Y$ and $X \cap Y$ obviously also satisfy these conditions, hence are semi-monotone.
\end{proof}

\begin{definition}\label{def:monotone_on_monotone}
Let $Y \subset {\rm span}\> \{x_1, \ldots ,x_n \}$ be a monotone cell and
$\f:\> Y \to {\rm span}\> \{y_1, \ldots ,y_k \}$ a continuous map.
The map $\f$ is called {\em monotone on} $Y$ if its graph
$Z \subset {\rm span}\> \{x_1, \ldots ,x_n,y_1, \ldots ,y_k \}$ is a monotone cell.
In the case $k=1$, the map $\f$ is called a {\em monotone function on} $Y$.
 \end{definition}

\begin{remark}
Let $Y$ be a monotone cell and $L$ a coordinate subspace such that $\rho_L|_{Y}$ is injective.
Then, according to Theorem~7 and Corollary~5 in \cite{BGV2}, $Y$
is the graph of a monotone map defined on $\rho_L(Y)$.
\end{remark}

\section{Cylindrical decomposition}\label{sec:cylindrical}

We now define, closely following \cite{Dries}, a {\em cylindrical cell} and a
{\em cylindrical cell decomposition}.

\begin{definition}\label{def:cyl_cell}
When $n=0$, there is a unique cylindrical cell, $\bf 0$, in $\Real^n$.
Let $n \ge 1$ and $(i_1, \ldots ,i_n) \in \{0,1 \}^n$.
A cylindrical $(i_1, \ldots ,i_n)$-cell is a definable set in $\Real^n$ obtained by induction on $n$
as follows.

A $(0)$-cell is a single point $x \in \Real$, a $(1)$-cell is one of the intervals $(x,y)$ or
$(-\infty, y)$ or $(x, \infty)$ or $(-\infty, \infty)$ in $\Real$.

Suppose that $(i_1, \ldots ,i_{n-1})$-cells, where $n > 1$, are defined.
An $(i_1, \ldots ,i_{n-1},0)$-cell (or a {\em section} cell) is the graph in $\Real^n$ of a
continuous definable function $f:\> C \to \Real$, where $C$ is a $(i_1, \ldots ,i_{n-1})$-cell.
Further, an $(i_1, \ldots ,i_{n-1},1)$-cell (or a {\em sector} cell) is either a set $C \times \Real$,
or a set $\{ (\x,t) \in C \times \Real|\> f(\x) < t <g(\x) \}$, or a set
$\{ (\x,t) \in C \times \Real|\> f(\x) < t \}$, or a set
$\{ (\x,t) \in C \times \Real|\> t <g(\x) \}$, where $C$ is a $(i_1, \ldots ,i_{n-1})$-cell and
$f,g:\> C \to \Real$ are continuous definable functions such that $f(\x)<g(\x)$ for all $\x \in C$.
In the case of a sector cell $C$, the graph of $f$ is called the {\em bottom} of $C$, and
the graph of $g$ is called the {\em top} of $C$.
In the case of a section $(i_1, \ldots ,i_{n-1},0)$-cell $C$, let $k$ be the largest number in
$\{ 1, \ldots ,n-1 \}$ with $i_k=1$.
Then $C$ is the graph of a map $C' \to \Real^{n-k}$, where $C'$ is a sector $(i_1, \ldots ,i_k)$-cell.
The pre-image of the bottom of $C'$ by $\rho_{\Real^k}|_{\overline {C}}$ is called the {\em bottom} of
$C$, and the pre-image of the top of $C'$ by $\rho_{\Real^k}|_{\overline {C}}$ is called the
{\em top} of $C$.
Let $C_T$ be the top and $C_B$ be the bottom of a cell $C$.
The difference $\overline C \setminus (C \cup C_T \cup C_B)$ is called the {\em side wall} of $C$.
\end{definition}

In some literature (e.g., in \cite{Michel2}) section cells are called {\em graphs},
while sector cells -- {\em bands}.

Note that in the case of a sector cell, the top and the bottom are cylindrical section cells.
On the other hand, the top or the bottom of a section cell $C$
is not necessarily a graph of a continuous function
since it may contain blow-ups of the function $\varphi$ of which $C$ is the graph.
Consider the following example.

\begin{example}
\label{eg:blow-up-1}
Let $n=3$, $C' = \{ (x,y)|\> x\in (-1,1), |x| < y <1\}$, and $\varphi(x,y)= |x/y|$.
In this example, the bottom of the cell $C$, defined as the graph of $\varphi|_{C'}$,
is not the graph of a continuous function.
\end{example}

Lemma~\ref{le:top_bottom} below provides a condition under which the top and the bottom of a cylindrical
section cell are cylindrical section cells.

When it does not lead to a confusion, we will sometimes drop the multi-index $(i_1, \ldots ,i_n)$
when referring to a cylindrical cell.

\begin{lemma}\label{le:proj_cyl}
Let $C \subset \Real^n$ be a cylindrical $(i_1, \ldots ,i_{k-1},0,i_{k+1}, \ldots ,i_n)$-cell.
Then
$$C':=\rho_{{\rm span} \{ x_1, \ldots ,x_{k-1},x_{k+1}, \ldots , x_n \}} (C)$$
is a cylindrical $(i_1, \ldots ,i_{k-1},i_{k+1}, \ldots ,i_n)$-cell, and
$C$ is the graph of a continuous definable function on $C'$.
\end{lemma}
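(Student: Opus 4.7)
My plan is to argue by induction on $n - k$, that is, to fix the position $k$ of the ``section'' index and induct on how many coordinates lie beyond $k$. The base case $n = k$ is immediate from Definition~\ref{def:cyl_cell}: a $(i_1,\ldots,i_{n-1},0)$-cell $C$ is by definition the graph of a continuous definable function on a $(i_1,\ldots,i_{n-1})$-cell $C''$ in $\Real^{n-1}$, and this $C''$ is precisely $\rho_{\Real^{n-1}}(C) = C'$.

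For the inductive step, write $D := \rho_{\Real^{n-1}}(C)$, which by the inductive construction of $C$ is a cylindrical $(i_1,\ldots,i_{n-1})$-cell whose $k$-th entry is still $0$. Applying the inductive hypothesis to $D$, the projection $D' := \rho_{{\rm span}\{x_1,\ldots,\widehat{x_k},\ldots,x_{n-1}\}}(D)$ is a cylindrical $(i_1,\ldots,i_{k-1},i_{k+1},\ldots,i_{n-1})$-cell, and $D$ is the graph of a continuous definable function $g : D' \to \Real$ giving the $x_k$-coordinate. I will denote by $\iota_g : D' \to D \subset \Real^{n-1}$ the continuous definable map that inserts $g(\mathbf w)$ in position $k$.

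Now I would split on $i_n$. If $i_n = 0$, then $C = \{(\mathbf z, h(\mathbf z)) : \mathbf z \in D\}$ for a continuous definable $h : D \to \Real$; the composition $\widetilde h := h \circ \iota_g$ is continuous and definable on $D'$, and a direct check shows $C'$ is the graph of $\widetilde h$ over $D'$, hence a cylindrical $(i_1,\ldots,i_{k-1},i_{k+1},\ldots,i_{n-1},0)$-cell. If $i_n = 1$, then $C$ is one of the four types of sector cells over $D$, given by continuous definable bounds $f, h$ (when present) on $D$; replacing $f, h$ by $f \circ \iota_g$ and $h \circ \iota_g$ (noting strict inequality is preserved pointwise) exhibits $C'$ as the corresponding sector cell over $D'$, hence a cylindrical $(i_1,\ldots,i_{k-1},i_{k+1},\ldots,i_{n-1},1)$-cell. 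In either case, the map $C' \to \Real$ sending a point to the value of $x_k$ at its preimage in $C$ is exactly $(\mathbf w, x_n) \mapsto g(\mathbf w)$, which is continuous and definable; and the projection $\rho_{\text{span}\{x_1,\ldots,\widehat{x_k},\ldots,x_n\}}\big|_C$ is injective precisely because the $k$-th coordinate on $C$ is determined by the remaining coordinates via $g$. Therefore $C$ is the graph of this function on $C'$.

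There is no essential obstacle here; the only thing requiring care is bookkeeping, namely verifying that the composition with $\iota_g$ preserves the defining data of each cell type (graph versus band, bounded versus unbounded endpoints, strict inequalities), and tracking that the multi-index really becomes $(i_1,\ldots,i_{k-1},i_{k+1},\ldots,i_n)$ after the projection. All the substantive content is already packaged in the inductive definition of cylindrical cells and in the ability of the inductive hypothesis to ``unstack'' the $k$-th coordinate one dimension lower.
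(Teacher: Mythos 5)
Your proof is correct and follows essentially the same route as the paper's: both arguments induct by passing to $\rho_{\Real^{n-1}}(C)$ (your $D$), apply the inductive hypothesis to realize $D$ as a graph over $D'$, and then split on whether the last coordinate is a section or a sector, composing the defining functions with the insertion map to exhibit $C'$ as a cell of the right type and $C$ as a graph over $C'$. The only cosmetic difference is that you index the induction by $n-k$ with $k$ fixed rather than by $n$, which changes nothing in the inductive step.
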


\begin{proof}
Proof is by induction on $n$ with the base case $n=1$ being trivial.

By the definition of a cylindrical $(i_1, \ldots ,i_{k-1},0,i_{k+1}, \ldots ,i_n)$-cell,
the image $\rho_{\Real^k } (C)$ is the graph of a continuous function
$\varphi:\> \rho_{\Real^{k-1}} (C) \to {\rm span} \{ x_k \}$.

If $C$ is a section cell, then it is the graph of a continuous function
$f:\> \rho_{\Real^{n -1}} (C) \to {\rm span} \{ x_{n} \}$.
Thus $C'$ is the graph of the continuous function
$$f \circ (x_1, \ldots ,x_{k-1}, \varphi(x_1, \ldots ,x_{k-1}), x_{k+1}, \ldots ,x_{n-1})$$
on $\rho_{{\rm span} \{ x_1, \ldots ,x_{k-1},x_{k+1}, \ldots , x_{n-1} \}}(C)$.
The latter is a cylindrical cell by the inductive hypothesis, since $\rho_{\Real^{n-1}}(C)$ is a
cylindrical $(i_1, \ldots ,i_{k-1},0,i_{k+1}, \ldots ,i_{n-1})$-cell.
Hence $C'$ is a cylindrical cell, being the graph of a continuous function on a cylindrical cell.
By the inductive hypothesis, $\rho_{\Real^{n-1}}(C)$ is the graph of a continuous function $h$ on
$\rho_{{\rm span} \{ x_1, \ldots ,x_{k-1},x_{k+1}, \ldots , x_{n-1} \}}(C)$.
The cell $C$ is the graph of the continuous function $f \circ h \circ \rho_{\Real^{n-1}}|_{C'}$ on $C'$.

If $C$ is a sector cell, then let
$f,\> g: \> \rho_{\Real^{n -1}} (C) \to {\rm span} \{ x_{n} \}$ be its
bottom and its top functions.
Thus, $C'$ is a sector between graphs of functions
$$f \circ (x_1, \ldots ,x_{k-1}, \varphi, x_{k+1}, \ldots ,x_{n-1})\> \text{and}\>
g \circ (x_1, \ldots ,x_{k-1}, \varphi, x_{k+1}, \ldots ,x_{n-1})$$
on the cylindrical cell $\rho_{{\rm span} \{ x_1, \ldots ,x_{k-1},x_{k+1}, \ldots , x_{n-1} \}}(C)$.
Hence $C'$ is a cylindrical cell.
Let $C'_B$ be its bottom and $C'_T$ its top.
The cell $C$ is the graph of the continuous function on $C'$ since the bottom of $C$ is the graph of the continuous
function $f \circ h \circ \rho_{\Real^{n-1}}|_{C'_B}$ on $C'_B$, while the top of $C$ is the graph of the continuous
function $g \circ\ h \circ \rho_{\Real^{n-1}}|_{C'_B}$ on $C'_T$.
Hence each intersection of $C$ with the straight line parallel to $x_n$-axis projects bijectively by
$\rho_{{\rm span} \{ x_1, \ldots ,x_{k-1},x_{k+1}, \ldots , x_n \}}$
onto an intersection of $C'$ with the straight line parallel to $x_n$-axis.
\end{proof}

\begin{lemma}\label{le:side_wall}
Let $C$ be a two-dimensional cylindrical cell in $\Real^n$ such that $C$ is the graph of a quasi-affine map
(see Definition~\ref{def:quasi-affine}).
Then the side wall $W$ of $C$ has exactly two connected components each of which is either a single
point or a closed curve interval.
\end{lemma}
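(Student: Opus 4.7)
The plan is to reduce everything to the planar sector $C^* := \rho_L(C) \subset \Real^2$, where $L = {\rm span}\{x_k, x_K\}$ is the plane spanned by the two sector coordinates of $C$ (the positions $k < K$ with $i_k = i_K = 1$), and then transfer the side-wall analysis back to $C$ via the projection $\pi := \rho_L|_{\overline C}$.

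By repeated application of Lemma~\ref{le:proj_cyl}, projecting out one section coordinate at a time, $C^*$ is a two-dimensional cylindrical cell in $L$ and $C$ is the graph over $C^*$ of a continuous definable map $\phi : C^* \to \Real^{n-2}$. Since only the sector indices survive, $C^*$ is a planar sector
\[
C^* = \{(u,v) : u \in (a,b),\ f^*(u) < v < g^*(u)\}
\]
for continuous bounded definable functions $f^* < g^*$ on a bounded open interval (boundedness comes from the quasi-affine assumption, whose domain is required to be bounded). The o-minimal monotonicity theorem yields continuous extensions $\bar f^*, \bar g^*$ to $[a,b]$, so the side wall of $C^*$ splits as $W^* = W^*_L \sqcup W^*_R$ with $W^*_L = \{a\} \times [\bar f^*(a), \bar g^*(a)]$ and $W^*_R = \{b\} \times [\bar f^*(b), \bar g^*(b)]$, each either a single point or a closed interval. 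Unwinding Definition~\ref{def:cyl_cell} through the same sequence of projections gives $C_T = \pi^{-1}(C^*_T)$, $C_B = \pi^{-1}(C^*_B)$ and $C = \pi^{-1}(C^*)$; the last equality holds because continuity of $\phi$ on the open cell $C^*$ makes each fiber of $\pi$ above $C^*$ a singleton. Hence $W = \pi^{-1}(W^*) = W_L \sqcup W_R$, with $W_L := \pi^{-1}(W^*_L)$ and $W_R := \pi^{-1}(W^*_R)$ disjoint (they lie in the different hyperplanes $\{x_k = a\}$ and $\{x_k = b\}$) and both non-empty by compactness. Thus $W$ has at least two components.

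It remains to show that each of $W_L$ and $W_R$ is connected and is either a point or a closed curve interval; I treat $W_L$. For small $\eps > 0$ the slice $S_\eps := \overline C \cap \{x_k = a + \eps\}$ contains $C \cap \{x_k = a + \eps\}$ as an open dense subset, and the latter is the graph of $\phi$ over the connected segment $\{a+\eps\} \times (f^*(a+\eps), g^*(a+\eps))$, hence connected; so $S_\eps$ is connected. As $\eps \to 0^+$, the definable family $S_\eps$ Hausdorff-converges to $W_L$, and a Hausdorff limit of compact connected sets is connected, so $W_L$ is connected. Since $W_L$ is compact, connected, definable and at most one-dimensional (it lies in a hyperplane transverse to the two-dimensional $\overline C$), topologically it is a point, a closed arc, a circle, or a graph with branch points. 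The circle is excluded because the extreme points $(a,\bar f^*(a))$ and $(a,\bar g^*(a))$ of $W^*_L$ lie in $\overline{C_B}$ and $\overline{C_T}$ respectively and serve as two distinguished endpoints of $W_L$. Branch points are excluded by the quasi-affine hypothesis: a branch point would produce three distinct cluster values of $\phi$ at a single boundary point of $C^*$, and a suitably chosen two-dimensional coordinate subspace would then violate the injectivity/dimension dichotomy of Definition~\ref{def:quasi-affine}.

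The main technical obstacle is this last step---using quasi-affinity to rule out branch points---since the slicing/Hausdorff-limit argument by itself only yields connectedness. The remainder of the proof is essentially definition-chasing via Lemma~\ref{le:proj_cyl} and the o-minimal monotonicity theorem.
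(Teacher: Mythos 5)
Your reduction to the planar sector $C^*$, the identification of $W$ with the preimages of the two vertical segments $W^*_L$, $W^*_R$, and the connectedness argument via Hausdorff limits of connected slices all track the paper's own proof, which slices by the first sector coordinate and realizes each component of $W$ as the Hausdorff limit of $C\cap\{x_j=c\}$ as $c$ tends to an endpoint. The problem is that the one step you yourself flag as the "main technical obstacle" --- excluding branch points via quasi-affinity --- is not actually carried out, and the criterion you propose for it is wrong. Having "three distinct cluster values of $\phi$ at a single boundary point of $C^*$" just means the fiber of $\overline C$ over that point is a nondegenerate interval, i.e.\ a blow-up; blow-ups are perfectly compatible with quasi-affinity (restrict Example~\ref{eg:blow-up-1} to $x>0$), so this cannot be what contradicts Definition~\ref{def:quasi-affine}. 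A branch point requires something finer: that the limit of $\phi$ along the edge (through the one-point fibers of $W^*_L$, or along $C^*_B$, $C^*_T$) lands in the \emph{interior} of a fiber interval. For instance, $\phi(u,v)=u(u-v)/(u^2+v^2)$ on $(0,1)^2$ has fiber $[m,1]$ with $m<0$ over the corner $(0,0)$ while $\phi(0^+,v_0)\to 0$ for $v_0>0$, so $W_L$ is a T-shape; of course this $\phi$ is not quasi-affine, but detecting that requires exhibiting a coordinate $x_i$ that is non-monotone along the slices near the endpoint, giving a coordinate plane with two-dimensional image and non-injective projection. Your sentence "a suitably chosen two-dimensional coordinate subspace would then violate the dichotomy" asserts this rather than proving it.

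A second, related gap: your circle exclusion relies on the two endpoints $(a,\bar f^*(a))$ and $(a,\bar g^*(a))$ being distinct, but in the degenerate case $\bar f^*(a)=\bar g^*(a)$ the segment $W^*_L$ is a single point and $W_L$ is the entire cluster set of $\phi$ at a corner of $C^*$; there the argument gives nothing and only quasi-affinity controls the shape. The paper's proof handles both issues simultaneously and more directly: since $C$ is the graph of a quasi-affine map, each slice $C\cap\{x_j=c\}$ is itself a curve interval that is the graph of a quasi-affine map (every coordinate is monotone along it), and the Hausdorff limit of such a definable family of coordinate-monotone arcs is either a single point or a closed curve interval. That last statement is the precise fact your proof needs and never establishes; to repair your write-up you should either prove it or prove the branch-point exclusion honestly by deriving non-monotonicity of some coordinate along nearby slices and hence a violation of Definition~\ref{def:quasi-affine}.
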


\begin{proof}
Let $C$ be a cylindrical $(i_1, \ldots ,i_n)$-cell and $i_j$ the first 1 in the list $i_1, \ldots ,i_n$.
The image of the projection, $\rho_{{\rm span} \{ x_{j} \}}(C)$ is an interval $(a,b)$.
Consider the disjoint sets
$A:=(\rho_{{\rm span} \{ x_{j} \}}|_{\overline C})^{-1}(a)$ and
$B:=(\rho_{{\rm span} \{ x_{j} \}}|_{\overline C})^{-1}(b)$.
Then $W=A \cup B$, and $A$ (respectively, $B$) is the Hausdorff limit of the intersections
$C \cap \{x_j=c \}$ as $c \searrow a$ (respectively, $c \nearrow b$).
Since $C$ is the graph of a quasi-affine map, for every $c \in (a,b)$ the intersection
$C \cap \{x_j=c \}$ is a curve interval which is also the graph of a quasi-affine map.
Hence each of the Hausdorff limits $A, B$ is either a single point or a closed curve interval.
\end{proof}

\begin{definition}\label{def:cd}
A {\em cylindrical cell decomposition} of $\Real^n$ is a finite partition of $\Real^n$ into cylindrical
cells defined by induction on $n$ as follows.

When $n=0$ the cylindrical cell decomposition of $\Real^n$ consists of a single point.

Let $n>0$.
For a partition $\mathcal D$ of $\Real^n$ into cylindrical cells, let ${\mathcal D}'$
be the set of all cells $C' \subset \Real^{n-1}$ such that $C'= \rho_{\Real^{n-1}}(C)$ for some cell
$C$ of $\mathcal D$.
Then $\mathcal D$ is a cylindrical cell decomposition of $\Real^n$ if ${\mathcal D}'$ is a
cylindrical cell decomposition of $\Real^{n-1}$.
In this case we call ${\mathcal D}'$ the cylindrical cell decomposition of $\Real^{n-1}$ {\em induced} by $\mathcal D$.
\end{definition}

\begin{definition}
\begin{enumerate}[(i)]
\item
A cylindrical cell decomposition $\mathcal D$ of $\Real^n$ is {\em compatible} with a definable set
$X \subset \Real^n$ if for every cell $C$ of $\mathcal D$ either $C \subset X$ or $C \cap X=\emptyset$.
\item
A cylindrical cell decomposition $\mathcal D'$ of $\Real^n$ is a {\em refinement} of a decomposition
$\mathcal D$ of $\Real^n$ if $\mathcal D'$ is compatible with every cell of $\mathcal D$.
\end{enumerate}
\end{definition}

\begin{remark}
It is easy to prove that for a cylindrical cell decomposition $\mathcal D$ of $\Real^n$ compatible with
$X \subset \Real^n$, the cylindrical cell decomposition ${\mathcal D}'$ of $\Real^{n-1}$ induced by $\mathcal D$
is compatible with $\rho_{\Real^{n-1}}(X)$.
\end{remark}

\begin{remark}\label{re:refin}
Let $\mathcal D$ be a cylindrical cell decomposition of $\Real^n$ and $C$
be a cylindrical cell in $\mathcal D$ such that the dimension of $C':=\rho_{\Real^1} (C)$ equals 0, i.e.,
$C'=\{ c \} \subset \Real^1$ for some $c \in \Real$.
It follows immediately from the definitions that $\mathcal D$ is compatible with the hyperplane $\{ x_1=c \}$
in $\Real^n$, and the set of all cells of $\mathcal D$, contained in $\{ x_1=c \}$, forms a cylindrical cell
decomposition ${\mathcal D}'$ of the hyperplane $\{ x_1=c \}$ when the latter is identified with $\Real^{n-1}$.
Moreover, any refinement of ${\mathcal D}'$ leads to a refinement of $\mathcal D$.
\end{remark}

\begin{proposition}[\cite{Dries}, Theorem~2.11]
Let $A_1, \ldots ,A_m \subset \Real^n$ be definable sets.
There is a cylindrical cell decomposition of $\Real^n$ compatible with each of the sets $A_i$.
\end{proposition}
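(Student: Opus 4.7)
The proof proceeds by induction on $n$. For the base case $n = 0$, the only cylindrical cell is the single point $\mathbf{0}$, and it is trivially compatible with any of the two possible definable subsets of $\Real^0$. For $n = 1$, the defining axiom of an o-minimal structure guarantees that every definable subset of $\Real$ is a finite disjoint union of points and open intervals; collecting all endpoints appearing in the sets $A_1,\ldots,A_m$ yields a finite set $\{c_1 < \cdots < c_k\}$ which determines the required cylindrical decomposition of $\Real$ compatible with each $A_i$.

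For the inductive step, assume the result in dimension $n-1$, and let $\pi:\Real^n \to \Real^{n-1}$ denote the projection onto the first $n-1$ coordinates. The key intermediate step is a ``uniform fiber structure'' lemma: for each set $A_i$, one produces a definable set $B_i \subset \Real^{n-1}$ of dimension strictly less than $n-1$ such that over every sufficiently small connected definable subset $D \subset \Real^{n-1} \setminus B_i$, the preimage $A_i \cap \pi^{-1}(D)$ is a finite union of graphs of continuous definable functions $f_{i,1} < f_{i,2} < \cdots < f_{i,k_i}:D \to \Real$ together with the sectors bounded by consecutive graphs (including the two unbounded sectors above and below). This lemma is established by applying the o-minimal monotonicity theorem to each fiber $A_i \cap \pi^{-1}(x)$, together with a definable choice argument to assemble the fiberwise data into globally continuous functions.

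Now apply the inductive hypothesis to the combined finite family consisting of all $B_i$ together with the projections $\pi(A_i)$, obtaining a cylindrical decomposition $\mathcal{D}'$ of $\Real^{n-1}$ compatible with each of these sets. For every cell $C \in \mathcal{D}'$ one has $C \subset \Real^{n-1} \setminus B_i$ for each $i$, so the restrictions $f_{i,j}|_C$ are continuous; after a further refinement (obtained by reapplying the induction to the definable subsets of $\Real^{n-1}$ where two such functions agree or exchange order) one may assume that any two restrictions $f_{i,j}|_C$ and $f_{i',j'}|_C$ are either identically equal or strictly ordered throughout $C$. The cylinder $C \times \Real$ then partitions into the section cells $\textrm{graph}(f_{i,j}|_C)$ and the sector cells strictly between consecutive graphs, giving a cylindrical decomposition of $\pi^{-1}(C)$. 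Taking the union of these refinements over all $C \in \mathcal{D}'$ produces the required cylindrical decomposition of $\Real^n$ compatible with every $A_i$.

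The main obstacle is the uniform fiber structure lemma: ensuring that the fiberwise discrete data (number of connected components, boundary values, linear order) of $A_i \cap \pi^{-1}(x)$ can be assembled into globally continuous definable functions on the cells of a decomposition of $\Real^{n-1}$. This requires combining the one-variable monotonicity theorem, definable choice, and the inductive cell decomposition in dimension $n - 1$ to isolate the ``bad'' locus $B_i$ where the combinatorial type of the fiber jumps, and then absorbing this locus into the lower-dimensional cells of $\mathcal{D}'$ so that continuity of the graph functions holds on every top-dimensional cell.
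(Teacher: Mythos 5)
The paper does not actually prove this proposition: it is quoted as a known result from van den Dries's book ([Dries], Theorem~2.11), so there is no in-paper argument to compare against. Measured against the standard proof in that reference, your sketch has the right overall architecture (induction on $n$, fiberwise analysis from the one-variable o-minimality axiom, a cell decomposition of the base, then graphs and sectors over each base cell), but it contains a concrete gap. You introduce a bad locus $B_i$ of dimension $<n-1$ outside of which the fibers of $A_i$ have uniform structure, take a decomposition $\mathcal{D}'$ of $\Real^{n-1}$ compatible with the $B_i$ and the $\pi(A_i)$, and then assert that every cell $C$ of $\mathcal{D}'$ satisfies $C \subset \Real^{n-1}\setminus B_i$. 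That is false: compatibility means each cell is either contained in $B_i$ or disjoint from it, and the cells contained in $B_i$ are precisely where your fiber structure lemma gives you nothing. You must still decompose $\pi^{-1}(C)$ cylindrically over those cells. This can be repaired by iterating the construction over $B_i$ (whose own bad locus has strictly smaller dimension, so the recursion terminates), but as written the argument does not cover $\pi^{-1}(B_i)$.

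Second, the ``uniform fiber structure lemma'' is where essentially all the work lies, and ``monotonicity theorem plus definable choice'' does not establish it. One needs (a) a uniform bound on the number of boundary points of the fibers $A_i \cap \pi^{-1}(x)$ --- the uniform finiteness statement --- and (b) that the resulting boundary functions $f_{i,j}$, which are a priori only definable, become continuous on the cells of a suitable refinement --- the piecewise continuity statement. In [Dries] both are proved by a simultaneous induction with cell decomposition itself, not deduced from the one-variable results; your local formulation (``over every sufficiently small connected $D$'') also leaves open how to pass from local continuity to continuity on an entire, not necessarily small, cell. So the proposal is a correct outline of the standard strategy, but the key lemma it rests on is asserted rather than proved, and the treatment of the bad locus is wrong as stated.
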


\begin{definition}\label{def:monot_with_respect}
Let $A_1, \ldots ,A_m \subset \Real^n$ be definable bounded sets.
We say that a cylindrical cell decomposition $\mathcal D$ of $\Real^n$ is {\em monotone with respect to}
$A_1, \ldots ,A_m$ if $\mathcal D$ is compatible with $A_1, \ldots ,A_m$, and each cell contained in
$\bigcup_i A_i$ is a monotone cell.
\end{definition}

\begin{lemma}\label{le:section}
Let $\mathcal D$ be a cylindrical cell decomposition of $\Real^n$, and $c \in \Real$.
Then the collection of sets
$$C \cap \{x_1=c \},\> C \cap \{x_1<c \}\> \text{and}\> C \cap \{x_1>c \}$$
for all cylindrical cells $C$ of $\mathcal D$ forms a refinement ${\mathcal D}'$ of $\mathcal D$.
Moreover, for any cylindrical cell $C$ of $\mathcal D$ which is a monotone cell, all cells of ${\mathcal D}'$
contained in $C$ are monotone cells.
\end{lemma}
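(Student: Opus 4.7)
My plan is to split the lemma into its two assertions, since they rely on different tools. The refinement claim is structural and I would prove it by induction on $n$, reducing each cell's intersection with $\{x_1 = c\}$, $\{x_1<c\}$, $\{x_1>c\}$ to the same operation applied to a lower-dimensional base cell. The monotonicity claim is then an immediate consequence of Proposition~\ref{prop:def_monotone_map}(i).

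For the refinement part, the base case $n=1$ is trivial: each cell of $\mathcal{D}$ is a point or open interval, and its three intersections with $\{x_1=c\}$, $\{x_1<c\}$, $\{x_1>c\}$ are each either empty or again a point or interval. For the inductive step I would analyze section and sector cells separately. If $C$ is a section $(i_1,\ldots,i_{n-1},0)$-cell, then $C=\mathrm{graph}(f)$ for a continuous definable $f$ on the cylindrical cell $C'':=\rho_{\Real^{n-1}}(C)$, and $C\cap\{x_1\diamond c\}$ is the graph of the restriction of $f$ to $C''\cap\{x_1\diamond c\}$ for $\diamond\in\{=,<,>\}$; by the inductive hypothesis applied to the decomposition of $\Real^{n-1}$ induced by $\mathcal{D}$, each such base is empty or cylindrical, so each intersection is empty or cylindrical in $\Real^n$. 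The sector case $C=\{(\x,t)\in C''\times\Real : f(\x)<t<g(\x)\}$ (and its unbounded variants) is analogous: the continuity of $f$ and $g$ and the strict inequality $f<g$ are preserved under restriction to a subset of $C''$. To confirm that the resulting partition $\mathcal{D}'$ is a genuine cylindrical decomposition and not merely a compatible refinement, I would verify that $\rho_{\Real^{n-1}}(\mathcal{D}')$ coincides with the refinement of the induced decomposition by $\{x_1=c\}$, $\{x_1<c\}$, $\{x_1>c\}$ viewed in $\Real^{n-1}$, which is cylindrical by the inductive hypothesis; compatibility with each cell of $\mathcal{D}$ is clear because every cell of $\mathcal{D}'$ is by construction contained in a unique cell of $\mathcal{D}$.

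For the monotonicity claim, let $C$ be a cell of $\mathcal{D}$ that is a monotone cell. The cells of $\mathcal{D}'$ contained in $C$ are exactly the nonempty sets among $C\cap\{x_1=c\}$, $C\cap\{x_1<c\}$, $C\cap\{x_1>c\}$, and Proposition~\ref{prop:def_monotone_map}(i) asserts that each such intersection is either empty or a monotone cell. This disposes of the second assertion. The only mild obstacle I anticipate is bookkeeping the section/sector multi-index across the induction, but since cutting by a hyperplane of the form $\{x_1=c\}$ affects only the base of the tower and leaves the defining section or sector functions intact up to restriction, the multi-index of each new cell agrees with that of the cell of $\mathcal{D}$ containing it, and no further work is needed.
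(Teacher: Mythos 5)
Your proof is correct and takes essentially the same route as the paper's, which simply records that the lemma follows by a straightforward induction on $n$ together with Proposition~\ref{prop:def_monotone_map}; you have just filled in the routine details of that induction. (One cosmetic slip: when $i_1=1$ the slice $C\cap\{x_1=c\}$ is a $(0,i_2,\ldots,i_n)$-cell rather than having the same multi-index as $C$, but this does not affect the argument.)
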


\begin{proof}
A straightforward induction on $n$, taking into account that intersections of a monotone cell with
a hyperplane or a half-space are monotone cells (Proposition~\ref{prop:def_monotone_map}).
\end{proof}

\begin{definition}
A cylindrical cell decomposition $\mathcal D$ of $\Real^n$ satisfies the {\em frontier condition}
if for each cylindrical cell $C$ its frontier $\overline C \setminus C$ is a union of cells of
$\mathcal D$.
\end{definition}

It is clear that if a cylindrical cell decomposition $\mathcal D$ of $\Real^n$ satisfies the
frontier condition, then the induced decomposition (see Definition~\ref{def:cd}) also satisfies the frontier condition.
It is also clear that the side wall of each cell is a union of some cells in $\mathcal D$ of
smaller dimensions.
We next prove that the tops and the bottoms of cells in a cylindrical decomposition satisfying the
frontier condition are each cells of the same decomposition. Before proving this claim, we first
consider an example.

\begin{example}\label{eg:blow-up-2}
One can easily check that there is a cylindrical cell
decomposition of $\Real^3$ containing the cell $C$ from Example \ref{eg:blow-up-1}
and the cells $\{-1<x<1,\; y=|x|\} \times \Real$ and $\{-1<x<1,\; y=1\} \times \Real$.
This decomposition does not satisfy frontier condition.
The following lemma implies that in fact $C$ cannot be a cell in any
cylindrical decomposition that satisfies the frontier condition.
\end{example}

\begin{lemma}\label{le:top_bottom}
Let $\mathcal D$ be a cylindrical decomposition of $\Real^n$ satisfying the frontier condition.
Then, the top and the bottom of each cell of $\mathcal D$ of $\Real^n$ are cells
of $\mathcal D$.
\end{lemma}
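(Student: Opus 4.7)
The plan is to proceed by induction on $n$, with the base case $n \leq 1$ being vacuous since tops and bottoms are defined only for cells in $\Real^m$ with $m \geq 2$. In the inductive step, I would let $\mathcal D'$ denote the induced decomposition on $\Real^{n-1}$, which also satisfies the frontier condition (as noted in the paragraph immediately preceding the lemma), and apply the inductive hypothesis to deduce that the tops and bottoms of cells of $\mathcal D'$ are themselves cells of $\mathcal D'$.

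Given a cell $C \in \mathcal D$, I would split the analysis into two cases. If $C$ is a sector cell over $C' = \rho_{\Real^{n-1}}(C) \in \mathcal D'$, then the cells of $\mathcal D$ projecting onto $C'$ form a stack determined by continuous functions $h_1 < \cdots < h_L$ on $C'$; the cell $C$ is one of the sector cells in this stack, and so its bottom is the graph of the appropriate $h_i$, which is itself a section cell of $\mathcal D$. This case uses only the structure of a cylindrical decomposition and does not need the frontier condition.

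The substantive case is when $C$ is a section cell, the graph of a continuous function $F: C'' \to \Real$ with $C'' = \rho_{\Real^{n-1}}(C)$. Writing $B''$ for the bottom of $C''$, which is a cell of $\mathcal D'$ by the inductive hypothesis, I would first unwrap the recursive definition of the bottom of $C$ to obtain the identity
\[
\mathrm{bottom}(C) = \overline C \cap \rho_{\Real^{n-1}}^{-1}(B'').
\]
The frontier condition implies that this set is a union of cells of $\mathcal D$, and the fact that any two distinct cells of $\mathcal D'$ are disjoint forces every constituent cell to project onto $B''$, hence to lie in the stack of cells of $\mathcal D$ sitting above $B''$. A dimension count gives $\dim \mathrm{bottom}(C) \leq \dim C - 1 = \dim B''$, while sector cells in the stack above $B''$ have dimension $\dim B'' + 1$; thus the bottom consists purely of section cells $\{(q, \psi_i(q)) : q \in B''\}$.

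It remains to rule out more than one such section cell appearing. If two distinct sections $\{\psi_{i_1}\}$ and $\{\psi_{i_2}\}$ with $i_1 < i_2$ both lay in the bottom, then at each $q \in B''$ both $\psi_{i_1}(q)$ and $\psi_{i_2}(q)$ would arise as cluster values of $F$ as $\x \to q$ from within $C''$; by connectedness of the cluster set, the entire interval $[\psi_{i_1}(q), \psi_{i_2}(q)]$ would consist of cluster values, forcing a full sector cell above $B''$ into the bottom and contradicting the dimension bound. The main obstacle I anticipate is giving a clean justification of the connectedness of the cluster set of a definable continuous function on a cell at a frontier point; this should follow from o-minimal curve selection together with the local definable connectedness of cylindrical cells at boundary points, which in turn can be extracted by a descending induction that exploits the cylindrical structure of $C''$ itself. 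The top of $C$ is then handled by the symmetric argument.
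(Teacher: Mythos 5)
Your argument tracks the paper's own proof almost step for step: induction via the induced decomposition, the easy sector case, the identification of the bottom of a section cell $C$ as $\overline C \cap \rho_{\Real^{n-1}}^{-1}(B'')$, the use of the frontier condition to write this set as a union of cells lying over $B''$, and the dimension count that excludes sector cells, leaving a disjoint union of section cells $\psi_1 < \cdots < \psi_s$ over $B''$. All of that is sound and coincides with what the paper does. The divergence, and the gap, is in the last step, where you must show $s=1$.

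The route you propose --- connectedness of the cluster set of $F$ at each $q \in B''$, to be deduced from ``local definable connectedness of cylindrical cells at boundary points'' --- rests on a principle that is false for cylindrical cells in general. For instance, the graph of $(x_2/x_1)^2$ over the sector $\{0<x_1<1,\ -x_1<x_2<x_1\}$ meets every small ball around its frontier point $(0,0,1/2)$ in two connected components (two thin wedges along the rays $x_2 = \pm x_1/\sqrt{2}$), and the continuous definable function $x_2/x_1$ on that cell has the disconnected cluster set $\{\pm 1/\sqrt{2}\}$ there. The failure of cells to be connected near frontier points is precisely the pathology (cf.\ Example 4.3 of \cite{BGV_JEMS11} and Example~\ref{eg:blow-up-1} above) that motivates much of this paper, so it cannot be assumed. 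In my example the bad point lies in the side wall rather than the bottom, so the connectedness you need might still hold at points of $B''$, but that would require a genuine argument which the proposal does not supply, and the general lemma you plan to invoke will not supply it. The paper closes this step differently, with a standard o-minimal fact: the closure of the graph of a continuous definable function on a cell is again a graph except over a subset of the frontier of codimension at least $2$. Since $B''$ has codimension exactly $1$ in $\overline{C''}$, there is a point of $B''$ over which $\overline C$ has a one-point fiber, which is incompatible with two everywhere-disjoint graphs over $B''$. Substituting that fact for your cluster-set step repairs the proof; the rest can stand as written.
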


\begin{proof}
Let $C_T$ be the top of a cylindrical cell $C$ of $\mathcal D$.
Suppose $C$ is a sector cell of $\mathcal D$.
By the definition of a cylindrical cell decomposition, $\rho_{\Real^{n-1}}(C)$ is a cylindrical cell.
Since $C_T$ is contained in a union of some cells of $\mathcal D$, and
$\rho_{\Real^{n-1}}(C)= \rho_{\Real^{n-1}}(C_T)$, it is a cylindrical cell of $\mathcal D$.

Suppose now $C$ is a section cell.
Then $C$ is the graph of a map $C' \to \Real^{n-k}$, where $C'$ is a sector cell in the induced cylindrical cell
decomposition ${\mathcal D}'$ of $\Real^k$.
Applying the above argument to $C'$ we conclude that its top $C'_T$ is a cylindrical cell of ${\mathcal D}'$.
By the frontier condition, $C_T$ is a union of some $(k-1)$-dimensional cells of $\mathcal D$.
This is because the pre-image of a cell in a cylindrical cell decomposition is always
a union of cells, and $C_T$ consists of the cylindrical cells in the pre-image $\rho_{\Real^k}^{-1}(C'_T)$
which are contained in $\bar C$, due to the frontier condition.
As $\overline{C} \setminus C$ is $(k-1)$-dimensional, all cells in $C_T$ are $(k-1)$-dimensional and project surjectively
onto $C'_T$, and thus they are disjoint graphs of continuous functions over $C'_T$.

Finally, the closure of a graph of a definable function is a graph of a definable function everywhere except,
possibly over a subset of codimension at least $2$.
Thus $\overline{C}$ cannot contain two disjoint graphs over the $(k-1)$-dimensional cell $C'_T$.

The proof for the bottom of $C$ is similar.
 \end{proof}

\begin{definition}\label{def:side}
Let $\mathcal D$ be a cylindrical decomposition of $\Real^n$ satisfying the frontier condition.
By Lemma~\ref{le:top_bottom}, the top and the bottom of each cell of $\mathcal D$ are cells of $\mathcal D$.
For a cell $C$ of $\mathcal D$ define {\em vertices} of $C$ by induction as follows.
If $\dim C= 0$ then $C$ itself is its only vertex.
Otherwise, the set of vertices of $C$ is the union of the sets of vertices of its top and of its bottom.
\end{definition}

\begin{lemma}\label{le:monot_map}
Let $X$ be an open subset in $\Real^2$, and $\f=(f_1, \ldots ,f_k):\> X \to \Real^k$
a quasi-affine map.
If each component $f_\ell$ is monotone, then the map $\f$ itself is monotone.
\end{lemma}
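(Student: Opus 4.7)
The plan is to verify the defining condition of Definition~\ref{def:def_monotone_map}: since quasi-affinity of $\f$ is given, it suffices to show that the graph $Y \subset \Real^{2+k}$ satisfies $Y \cap S$ connected for every affine coordinate subspace $S$. The key reduction is that, by iterated application of Proposition~\ref{prop:def_monotone_map}(i), once $Y \cap \{z_j = c\}$ is known to be a monotone cell for every coordinate $z_j \in \{x_1,x_2,y_1,\dots,y_k\}$ and every $c \in \Real$, any further slicing by a coordinate hyperplane preserves the monotone-cell property. Consequently, $Y \cap S$ will be a monotone cell (hence connected by Proposition~\ref{Theorem13}) for every affine coordinate subspace $S$. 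I would begin by noting, via Proposition~\ref{prop:proj} applied to each monotone cell $\mathrm{graph}(f_\ell) \subset \Real^3$, that $X \subset \Real^2$ is itself semi-monotone.

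For the slice $Y \cap \{x_i = c\}$, the domain restricts to the open interval $I := X \cap \{x_i = c\}$, each $f_\ell|_I$ is monotone on $I$ by Proposition~\ref{prop:def_monotone_map}(i), and quasi-affinity of $\f|_I$ descends from that of $\f$ by pulling back any coordinate projection on the restricted graph, upon adjoining the $x_i$-axis, to one on $Y$. This reduces to a straightforward one-dimensional analogue of the lemma: on an interval each monotone component is either constant or strictly monotonic, so $\bigcap_{\ell \in B} f_\ell^{-1}(d_\ell)$ is a finite intersection of intervals and singletons, which is itself an interval or singleton, and a direct case analysis confirms that the graph is a monotone cell in $\Real^{1+k}$.

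For the slice $Y \cap \{y_\ell = c\}$, the level set $M_\ell := f_\ell^{-1}(c)$ is, by Propositions~\ref{prop:def_monotone_map}(i) and \ref{prop:proj}, empty or a monotone cell of dimension $0$, $1$, or $2$. The zero-dimensional case is trivial. When $\dim M_\ell = 1$, the cell $M_\ell$ is a vertical segment, a horizontal segment, or a graph of a strictly monotonic function; I parametrize it via an injective coordinate projection and reduce to the one-dimensional analogue above. When $\dim M_\ell = 2$, the quasi-affinity of $\mathrm{graph}(f_\ell)$ forces $f_\ell \equiv c$ on all of $X$, so I drop this redundant component and apply induction on $k$ to the quasi-affine submap $(f_1,\dots,\widehat{f_\ell},\dots,f_k)$ (base case $k=1$ given by hypothesis); the resulting monotone cell in $\Real^{2+k-1}$ lifts to $Y$ by reinserting the constant coordinate $y_\ell = c$.

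The main obstacle I expect is the bookkeeping for the one-dimensional analogue and, throughout, the consistent verification that quasi-affinity descends under each restriction (to $I$, to $M_\ell$, and to the submap obtained by dropping $f_\ell$). In every case the common observation is that the restriction is cut by coordinate-hyperplane conditions, so any hypothetical injectivity or image-dimension mismatch on the restricted graph would lift to such a mismatch for $\f$ itself, contradicting its quasi-affinity.
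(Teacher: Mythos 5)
Your proposal is correct in substance but organizes the argument differently from the paper. The paper's proof invokes Theorem~9 of \cite{BGV2} to reduce the verification of Definition~\ref{def:def_monotone_map} to showing that the intersections of the graph $\mathbf{F}$ with affine coordinate subspaces of codimension $1$ and $2$ are connected, and then checks connectedness case by case: for codimension $1$ by projecting homeomorphically onto the graph $F_i$ of a single component and using monotonicity of $f_i$; for codimension $2$ by restricting $\f$ to the interval $X\cap\{x_j=c\}$ or to the level curve $X\cap\{f_i=b_i\}$, asserting quasi-affinity of the restriction, and concluding that each component is monotone there. You instead prove the stronger statement that every codimension-one slice $Y\cap\{z_j=c\}$ is a monotone cell and bootstrap with Proposition~\ref{prop:def_monotone_map}(i) to cover arbitrary codimension, which lets you bypass the codimension-$\le 2$ reduction of \cite{BGV2} entirely; the price is that you must run a one-dimensional version of the lemma on each slice (this subsumes the paper's codimension-$2$ analysis) and handle globally constant components by induction on $k$, where the paper disposes of them with a single ``without loss of generality'' at the outset. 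The underlying case analysis is essentially the same; only the scaffolding differs.

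One concrete caution: your blanket justification that ``any hypothetical injectivity or image-dimension mismatch on the restricted graph would lift to such a mismatch for $\f$'' does not work as stated. If $\rho_L$ fails to be injective on a one-dimensional restricted graph while its image is one-dimensional, lifting only shows that $\rho_{L'}$, with $L'=L\oplus\mathrm{span}\{z_j\}$, is non-injective on $Y$; quasi-affinity of $\f$ then forces $\rho_{L'}(Y)$ to have dimension at most $1$, which is perfectly consistent and yields no contradiction, because the threshold dimension drops from $2$ to $1$ under restriction. For the slices $X\cap\{x_i=c\}$ this is harmless: each $f_\ell$ restricted to the interval is a monotone, hence quasi-affine, function of one variable, so it is constant or strictly monotone, and quasi-affinity of the restricted tuple follows directly without any lifting. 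For the level curves $M_\ell=f_\ell^{-1}(c)$ the descent is the genuinely delicate point: it is true, but it requires using quasi-affinity of the full map (for instance via the projection to $\mathrm{span}\{y_\ell,y_j\}$, which rules out two level curves of $f_\ell$ and $f_j$ crossing twice), not merely of the individual components. The paper asserts this descent without proof as well, so your proposal is not weaker than the published argument here, but the heuristic you offer for it should be replaced by an actual verification.
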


\begin{proof}
Without loss of generality, assume that none of the functions $f_1, \ldots ,f_k$ is constant.
Let $X \subset {\rm span} \{x_1, x_2\}$ and let
${\bf F} \subset {\rm span} \{x_1, x_2,y_1, \ldots ,y_k \}$ be the graph of $\f$.
Note that for each $i=1, \ldots ,k$ the graph $F_i \subset
{\rm span} \{ x_1, x_2, y_i \}$ of the function $f_i$ coincides with the
image of the projection of $\bf F$ to ${\rm span} \{ x_1,x_2, y_i \}$, and this projection
is a homeomorphism.
By Theorem~9 in \cite{BGV2}, it is sufficient to prove that the intersection of $\bf F$ with
any affine coordinate subspace of codimension 1 or 2 is connected.

First consider the case of codimension 1.
For every $i=1, \ldots ,k$ and every $b \in \Real$ the image of the projection of
${\bf F} \cap \{ y_i=b \}$ to ${\rm span} \{ x_1,x_2, y_i \}$ coincides
with $F_i \cap \{ y_i=b \}$, and this projection is a homeomorphism.
Since $f_i$ is monotone, the intersection $F_i \cap \{ y_i=b \}$ is connected, hence
the intersection ${\bf F} \cap \{ y_i=b \}$ is also connected.
For every $i=1, \ldots ,k$, every $j=1, 2$, and every $c \in \Real$ the image of the
projection of ${\bf F} \cap \{ x_j=c \}$ to ${\rm span} \{ x_1, x_2, y_i \}$
coincides with $F_i \cap \{ x_j=c \}$, thus ${\bf F} \cap \{ x_j=c \}$ is connected since
$F_i \cap \{ x_j=c \}$ is connected.

Now consider the case of codimension 2.
The intersection ${\bf F} \cap \{ x_1=c_1, x_2=c_2 \}$, for any $c_1,c_2 \in \Real$ is
obviously a single point.
The intersection ${\bf F} \cap \{ x_j=c \}$, for any $j=1,2$ and $c \in \Real$ is the graph of
a continuous map on an interval $X \cap \{ x_j=c \}$, taking values in
${\rm span} \{ y_1, \ldots ,y_k \}$ and this map is quasi-affine.
Hence each component of this map is a monotone function.
It follows that the intersection ${\bf F} \cap \{ x_j=c, y_i=b \}$ for every $i=1, \ldots k$
and every $b \in \Real$ is either empty, or a single point, or an interval.
Finally, the intersection ${\bf F} \cap \{ y_i=b_i \}$, for any $i=1, \ldots ,k$ and $b_i \in \Real$
is the graph of a continuous map on the curve $X \cap \{ f_i=b_i \}$ (this curve is
the graph of a monotone function), taking
values in ${\rm span} \{ y_1, \ldots ,y_{i-1},y_{i+1}, \ldots , y_k \}$, and this map is quasi-affine.
Hence each component of this map is a monotone function.
It follows that the intersection ${\bf F} \cap \{ y_i=b_i, y_\ell=b_\ell\}$ for every
$\ell= 1, \ldots ,i-1, i+1, \ldots ,k$ and every $b_\ell \in \Real$
is either empty, or a single point, or an interval.
\end{proof}

\begin{remark}
Let $V_1, \ldots ,V_k$ be bounded definable subsets in $\Real^n$.
According to Section (2.19) of \cite{Dries} (see also Section~4 of \cite{BGV_JEMS11}), there is a cylindrical cell
decomposition of $\Real^n$ compatible with each of $V_1, \ldots ,V_k$, with cylindrical cells being
{\em van den Dries regular}.
One can prove that one- and two-dimensional van den Dries regular cells are topologically regular cells.
Hence, in case $\dim (V_1 \cup \cdots \cup V_k) \le 2$, there exists a cylindrical cell decomposition of $\Real^n$,
compatible with each of $V_i$, such that cylindrical cells contained in $V_1 \cup \cdots \cup V_k$
are topologically regular.
This covers the greater part of the later work \cite{La10}.
Our first goal will be to generalize these results by proving the existence of a cylindrical cell decomposition
of $\Real^n$, monotone with respect to $V_1, \ldots ,V_k$.
\end{remark}

\begin{lemma}\label{le:make_monot}
Let $f:\> X \to \Real$ be a quasi-affine function on an open bounded domain $X \subset \Real^2$.
Then there is a cylindrical cell decomposition of $\Real^2$ compatible with $X$, obtained
by intersecting $X$ with straight lines of the kind $\{ x_1=c \} \subset \Real^2$,
and half-planes of the kind $\{ x_1 \lessgtr c \} \subset \Real^2$, where $c \in \Real$, such that
the restriction $f|_B$ to each cell $B \subset X$ is a monotone function.
\end{lemma}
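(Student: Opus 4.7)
The plan is to select a finite set $T \subset \Real$ of $x_1$-values and take the cylindrical decomposition of $\Real^2$ determined by the vertical lines $\{x_1 = c\}$ for $c \in T$. I would first invoke the quasi-affine hypothesis on $Y := \mathrm{graph}(f) \subset \Real^3$: the projection $\rho_{\mathrm{span}\{x_1,y\}}\big|_Y$ is injective if and only if its image is $2$-dimensional, which forces each connected component of $X$ into one of two cases. In case (A), $f$ genuinely depends on $x_2$ and $\rho_{\mathrm{span}\{x_1,y\}}\big|_Y$ is injective, so $f(c,\cdot)$ is strictly monotone on every connected component of $X \cap \{x_1 = c\}$. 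In case (B), $f$ depends only on $x_1$. I would handle the two cases separately and take the union of the resulting sets $T$.

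Case (B) is the easier one: apply the o-minimal monotonicity theorem to the univariate definable function $x_1 \mapsto f(x_1)$ and add its finitely many critical points to $T$; also add to $T$ the $x_1$-extrema of the boundary functions of the sector cells of some cylindrical decomposition of $\Real^2$ compatible with $X$. After these cuts, each sector cell $B = \{a < x_1 < b,\; \alpha(x_1) < x_2 < \beta(x_1)\}$ of $X$ has monotone boundary functions and is therefore semi-monotone (its horizontal slices being intervals), while $f|_B$ is a strictly monotone function of $x_1$ alone. A direct case-check of the connectedness of $\mathrm{graph}(f|_B) \cap S$ for every affine coordinate subspace $S \subset \Real^3$ then confirms that the graph is a monotone cell.

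In case (A), I would introduce the definable map $\phi: X \to \Real^2$ given by $\phi(x_1, x_2) := (x_1, f(x_1, x_2))$. Its injectivity is equivalent to the injectivity of $\rho_{\mathrm{span}\{x_1,y\}}\big|_Y$, and by invariance of domain $\phi$ is then a homeomorphism onto its image $X' := \phi(X)$, an open bounded definable subset of $\Real^2$. I would now take a cylindrical cell decomposition of $\Real^2$ compatible with both $X$ and $X'$, and refine it by adjoining to $T$ the $x_1$-extrema of all boundary functions of all sector cells of both $X$ and $X'$. The key structural observation is that for each sector cell $B$ of $X$ in the refined decomposition, $\phi(B)$ is open, connected, and contained in the single vertical strip over which $B$ lies; by disjointness of the sector cells of $X'$ within that strip, $\phi(B)$ must coincide with exactly one sector cell $B'$ of $X'$. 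By construction both $B$ and $B'$ are semi-monotone.

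The remaining step, which I expect to be the main obstacle in the sense that it requires careful bookkeeping rather than any new idea, is to deduce from the semi-monotonicity of $B$ and $B' = \phi(B)$ that the graph $\Gamma := \mathrm{graph}(f|_B) \subset \Real^3$ is a monotone cell. For each affine coordinate subspace $S \subset \Real^3$, the connectedness of $\Gamma \cap S$ reduces either to semi-monotonicity of $B$ (when $S$ has the form $\{x_j = c\}$, in which case $\Gamma \cap S$ is the graph of a continuous function over $B \cap S$), or to semi-monotonicity of $\phi(B) = B'$ (when $S = \{y = c\}$, where $\phi$ identifies $\Gamma \cap S$ with $B' \cap \{y = c\}$ after reinserting $x_2$), or is trivially satisfied for codimension-two subspaces thanks to the strict monotonicities furnished by quasi-affinity.
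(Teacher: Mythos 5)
Your construction is correct in substance and ends up making the same cuts as the paper, but it justifies the key step differently. The paper first cuts $X$ into semi-monotone cylindrical cells (via the locus $\gamma$ of endpoints of the vertical slices and Theorem~1.7 of \cite{BGV_JEMS11}), then cuts further so that $\inf_{x_2}f$ and $\sup_{x_2}f$ become monotone in $x_1$ on each piece, and finally invokes Theorem~3 of \cite{BGV2} as a black box to conclude that the restriction of $f$ to each resulting cell is a monotone function. Your shear map $\phi(x_1,x_2)=(x_1,f(x_1,x_2))$ makes the content of that black box visible: the top and bottom of the sector $\phi(B)$ are exactly $\sup_{x_2}f|_B$ and $\inf_{x_2}f|_B$, so your requirement that $\phi(B)$ be a semi-monotone sector is the same condition as the paper's, and your direct verification that $\mathrm{graph}(f|_B)\cap S$ is connected for every affine coordinate subspace $S$ (semi-monotonicity of $B$ for $S=\{x_2=c\}$, semi-monotonicity of $\phi(B)$ transported through the homeomorphism $\phi^{-1}$ for $S=\{y=c\}$, and quasi-affinity for the codimension-two cases) is a legitimate, self-contained replacement for the citation. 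What your route buys is independence from Theorem~3 of \cite{BGV2}; what the paper's buys is brevity.

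Two points need tightening. First, the dichotomy you extract from quasi-affinity is global on $X$, not per connected component: if $\rho_{\mathrm{span}\{x_1,y\}}(Y)$ is two-dimensional, the projection is injective on all of $Y$, so $f(c,\cdot)$ is strictly monotone on every component of every vertical slice and no mixing of cases (A) and (B) can occur; this only simplifies your argument. Second, the identification $\phi(B)=B'$ is not a consequence of mere disjointness of the cells of $X'$, and it requires care: $X$ and $X'$ live in the same plane and may overlap, in which case a decomposition compatible with both cuts the cells of $X$ by $\partial X'$, so $B$ is no longer a full connected component of $X$ intersected with a strip; moreover $\phi(B)$ is a single sector cell of $X'$ only if, over each strip, the components of $X'$ intersected with that strip are single sectors. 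The clean fix is to retain only the vertical cut-points supplied by the two auxiliary decompositions (which is all the lemma permits anyway); then $\phi$, being a first-coordinate-preserving homeomorphism of $X$ onto $X'$, carries components of $X$ over a strip onto components of $X'$ over the same strip, and both are semi-monotone sectors by your choice of cut-points.
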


\begin{proof}
Every non-empty intersection of the kind $X \cap \{x_1=c \}$, where $c \in \Real$, is a finite union of
pair-wise disjoint intervals.
Let ${\mathcal I}(c)$ be family of such intervals.
Let
$$\gamma:= \{ (x_1,x_2) \in X|\> x_2\> \text{is an endpoint of an interval in}\> {\mathcal I}(x_1)\}.$$
Let the real numbers $c_1, \ldots ,c_t$ be such that the intersection $\gamma \cap \{c_i < x_1 <c_{i+1} \}$,
for each $1 \le i <t$, is a disjoint union of monotone 1-dimensional cells with the images under
$\rho_{\Real^1}$ coinciding with $(c_i,c_{i+1})$.
By Theorem~1.7 in \cite{BGV_JEMS11}, the intersection $X \cap \{c_i < x_1 <c_{i+1} \}$ for every $1 \le i <t$
is a disjoint union of one- and two-dimensional semi-monotone sets.
By the definition of $\gamma$, the intersection $X \cap \{x_1=c_i \}$ for every $1 \le i <t$ is
a disjoint union of intervals.
We have constructed a cylindrical decomposition $\mathcal D$ of $\Real^2$ compatible with $X$ and having
semi-monotone cylindrical cells.

Take any two-dimensional cylindrical cell $C$ in $\mathcal D$.
Then $\rho_{\Real^1}(C)=(c_i,c_{i+1})$ for some $1 \le i <t$.
Since $f$ is quasi-affine, its restriction $f_C$ is also quasi-affine, hence (cf. the second part
of Remark~7 in \cite{BGV2}) $f_C$ is either strictly increasing in or strictly decreasing in or independent
of each of the variables $x_1, \ x_2$.
This also implies that the restriction of $f_C$ to any non-empty $C \cap \{ x_2=c \}$, where $c \in \Real$,
is a monotone function.
Let real numbers $b_1, \ldots ,b_r \in (c_i, c_{i+1})$ be such that the restrictions of both functions
$\inf_{x_2}f$ and $\sup_{x_2}f$ to the interval $(b_j,b_{j+1}) \subset \Real^1$ for each $1 \le j <r$
are monotone functions.
Note that the intersection of $\{ b_j< x_1 <b_{j+1} \}$ with any two-dimensional cylindrical cell
in $\mathcal D$ is also a cylindrical (and semi-monotone) cell, in particular the intersection
$B:=C \cap \{ b_j< x_1 <b_{j+1} \}$ is such a cell.
By Theorem~3 in \cite{BGV2}, the restriction $f_B$ is a monotone function.
We have proved that there exists a cylindrical decomposition of $\Real^2$
monotone with respect to $X$ (in particular, the two-dimensional cells of
of the decomposition, contained in $X$, are semi-monotone), and such that the restrictions of $f$ to each cell is
a monotone function.
\end{proof}

\begin{lemma}\label{le:decomp_graphs}
Let $V_1, \ldots ,V_k$ be bounded definable subsets in $\Real^n$ with $\dim V_i \le 2$ for each
$i=1, \ldots ,k$.
Then there is a cylindrical cell decomposition of $\Real^n$, compatible with every $V_i$,
such that every cylindrical cell contained in $V:=\bigcup_{1 \le i \le k} V_i$ is the graph of a
quasi-affine map.
\end{lemma}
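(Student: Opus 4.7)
The plan is to proceed by induction on $n$. For $n=1$ the claim is immediate, since every cylindrical cell in $\Real^1$ is either a point or an open interval, each trivially the graph of a quasi-affine map. For $n\ge 2$, start with a cylindrical cell decomposition $\mathcal{D}_0$ of $\Real^n$ compatible with $V_1,\ldots,V_k$. The induced decomposition $\mathcal{D}'_0$ of $\Real^{n-1}$ is compatible with the projections $V'_i := \rho_{\Real^{n-1}}(V_i)$, which satisfy $\dim V'_i\le\dim V_i\le 2$. Applying the inductive hypothesis yields a refinement $\mathcal{D}'_1$ of $\mathcal{D}'_0$ in which every cell contained in $V' := \bigcup_i V'_i$ is the graph of a quasi-affine map. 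Intersecting each cell of $\mathcal{D}_0$ with the preimage under $\rho_{\Real^{n-1}}$ of each cell of $\mathcal{D}'_1$ yields a refinement $\mathcal{D}_1$ of $\mathcal{D}_0$ whose induced decomposition on $\Real^{n-1}$ is exactly $\mathcal{D}'_1$.

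Now, for each two-dimensional cell $C\subset V$ of $\mathcal{D}_1$, with $C' := \rho_{\Real^{n-1}}(C)$, I further refine to control the coordinate $x_n$. If $C$ is a section cell, then $C'$ is two-dimensional and, by the inductive hypothesis, is the graph of a quasi-affine map over a two-dimensional base $X$ in some coordinate plane; the cell $C$ adds a continuous function $\varphi:C'\to\Real$ which pulls back via the parameterization $X\to C'$ to a function $\tilde\varphi$ on $X$. Applying Lemma~\ref{le:make_monot} to $\tilde\varphi$ on $X$ (with the appropriate coordinate of $X$ playing the role of $x_1$) refines $X$ into sub-cells on which $\tilde\varphi$ is monotone; this refinement lifts to a refinement of $\mathcal{D}_1$ using Lemma~\ref{le:section} and Remark~\ref{re:refin}. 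If $C$ is a sector cell, then $C'$ is one-dimensional and $C$ is bounded by the graphs of continuous functions $f<g$ on $C'$; one similarly refines so that $f$ and $g$ are strictly monotone or constant on each sub-interval of the parameter of $C'$. Cells of dimension $0$ or $1$ contained in $V$ are handled by the same arguments in lower dimensions.

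The main obstacle is to verify that in the final decomposition $\mathcal{D}$ every two-dimensional cell $C\subset V$ is indeed the graph of a quasi-affine map from a two-dimensional base to $\Real^{n-2}$. This is checked by case analysis over two-dimensional coordinate subspaces $L\subset\Real^n$: projections not involving $x_n$ inherit the condition from the inductive quasi-affineness of the base map for $C'$; projections pairing $x_n$ with a base coordinate reduce to the dichotomy between constancy and strict monotonicity of $\tilde\varphi$ in that variable, now guaranteed by the application of Lemma~\ref{le:make_monot}; and projections pairing $x_n$ with a non-base coordinate $x_i$ of the base map require combining the coordinate-wise monotonicity of $\tilde\varphi$ with that of the corresponding component of the inductive quasi-affine map, possibly invoking further subdivision to rule out degenerate configurations where the image drops below two dimensions while the projection remains injective (or vice versa). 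The analogous analysis applies to two-dimensional sector cells, using the monotonicity of the top and bottom functions $f,g$ together with the one-dimensional quasi-affineness of $C'$ supplied by the inductive hypothesis.
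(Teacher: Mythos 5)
Your proposal has a genuine gap at exactly the point you flag as ``the main obstacle,'' and the missing idea is the heart of the paper's proof. The paper does not induct on $n$ at all: it takes the smooth two-dimensional locus $W$ of $V$, forms for every pair $1\le i<j\le n$ the critical loci $W_{i,j}$ (locally two-dimensional critical points of $\rho_{i,j}\colon W\to{\rm span}\{x_i,x_j\}$) and $W'_{i,j}$ (critical points of local dimension at most $1$), and builds one cylindrical decomposition compatible with all the $V_i$ and all these critical loci. Then on each two-dimensional cell $C$ every projection $\rho_{i,j}$ either has image of dimension at most $1$ (when $C\subset W_{i,j}$) or is everywhere non-critical, and a connectedness argument shows that non-critical implies globally injective. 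That dichotomy, enforced simultaneously for \emph{all} pairs $(i,j)$, is what quasi-affineness means, and nothing in your construction enforces it.

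Concretely, your plan cannot close the case of a projection onto a plane spanned by two ``function-value'' coordinates. Coordinate-wise monotonicity of the component functions does not imply quasi-affineness of the map: on $(0,1)^2$ take $f_1=x_1+x_2$ and $f_2=x_1+x_2+\frac{1}{10}(x_1-x_2)^2$; both are strictly increasing in each variable, yet $(f_1,f_2)$ identifies $(0.3,0.7)$ with $(0.7,0.3)$ while its image is two-dimensional, so the graph is not quasi-affine. Ruling this out requires cutting along the critical locus of $(f_1,f_2)$ (here the diagonal), i.e.\ precisely the compatibility with $W_{i,j}$ and $W'_{i,j}$ that your argument omits and replaces with ``possibly invoking further subdivision.'' There is also a circularity in your use of Lemma~\ref{le:make_monot}: that lemma assumes its input function is already quasi-affine, whereas quasi-affineness is what Lemma~\ref{le:decomp_graphs} is supposed to establish; in the paper, Lemma~\ref{le:make_monot} is applied only afterwards, in the proof of Theorem~\ref{th:cyl_decomp}, once Lemma~\ref{le:decomp_graphs} has supplied quasi-affine cells.
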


\begin{proof}
Let $W$ be the smooth two-dimensional locus of $V$.
Stratify $W$ with respect to critical points of its projections to 2- and 1-dimensional
coordinate subspaces.

More precisely, let $W_{i,j} \subset W$, for $1 \le i < j \le n$, be the set of all locally two-dimensional
critical points of the projection map $\rho_{i,j}:\> W \to {\rm span} \{ x_i,x_j \}$,
and $W'_{i,j}$, for $1 \le i <j \le n$, be the set of all
critical points of the projection map $\rho_{i,j}:\> W \to {\rm span} \{ x_i,x_j \}$,
having local dimension at most 1.

Consider a cylindrical decomposition $\mathcal D$ of $\Real^n$ compatible with each of
$$V_1, \ldots ,V_k, W_{i,j}, W'_{i,j},$$
where $1 \le i < j \le n$.
Let $C$ be a two-dimensional cylindrical cell in this decomposition.
Then $C$ is the graph of a smooth map defined on a cylindrical cell in some 2-dimensional coordinate subspace.
We now prove that this map is quasi-affine.

Note that $C \cap W'_{i,j}= \emptyset$ for all pairs $1 \le i < j \le n$ since $\dim (W'_{i,j})<2$ while
$\mathcal D$ is compatible with $W'_{i,j}$.
Since $\mathcal D$ is compatible with every $W_{i,j}$, if $C \cap W_{i,j} \neq \emptyset$
then $C \subset W_{i,j}$.
Therefore, if $C \cap W_{i,j} \neq \emptyset$ for a pair $i, j,\> 1 \le i < j \le n$, then
$\dim (\rho_{i,j} (C))\leq 1$.

Suppose now that $C \cap W_{i,j}= \emptyset$ for a pair $i, j,\> 1 \le i < j \le n$.
Then $\dim (\rho_{i,j} (C))=2$.
Assume that the projection $\rho_{i,j}|_C$ is not injective, i.e., there are distinct
points ${\bf a}=(a_1, \ldots ,a_n), {\bf b}=(b_1, \ldots ,b_n) \in C$ such that
$\rho_{i,j} ({\bf a})= \rho_{i,j} ({\bf b})$.
There exists $\ell \in \{1, \ldots, n\} \setminus \{i,j \}$ such that $a_\ell \neq b_\ell$.
The set $\rho_{i,j, \ell} (C) \subset {\rm span}\{ x_i,x_j, x_\ell \}$ is two-dimensional, smooth, connected,
and contains points $(a_i, a_j, a_\ell),\> (a_i, a_j, b_\ell)$.
Hence there is a critical point of its projection to the subspace ${\rm span} \{ x_i,x_j \}$.
This contradicts the condition $C \cap (W_{i,j} \cup W'_{i,j})= \emptyset$.
It follows that $C$ is the graph of a quasi-affine map.

Finally, $\mathcal D$ can be refined so that any 1-dimensional cylindrical cell $B$ of the of the refinement
is a monotone (hence quasi-affine) 1-dimensional cell.
Indeed, if $\dim \rho_{\Real^1}(B)=1$, then a refinement exists by Lemma~\ref{le:section}.
Otherwise, $B$ is contained in an affine subspace $\{ x_1=c \}$ for some $c \in \Real$ and a
refinement exists by Remark~\ref{re:refin}.
\end{proof}

\begin{theorem}\label{th:cyl_decomp}
Let $V_1, \ldots ,V_k$ be bounded definable subsets in $\Real^n$ with $\dim V_i \le 2$ for each
$i=1, \ldots ,k$.
Then there is a cylindrical cell decomposition of $\Real^n$ satisfying the frontier condition,
and monotone with respect to $V_1, \ldots ,V_k$.
\end{theorem}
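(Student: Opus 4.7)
The plan is to build the desired decomposition in three stages, each accomplished by refining with coordinate hyperplanes and half-spaces so that monotonicity is preserved throughout by Proposition~\ref{prop:def_monotone_map}(i) and Lemma~\ref{le:section}.

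First I would apply Lemma~\ref{le:decomp_graphs} to obtain a cylindrical cell decomposition $\mathcal D_0$ of $\Real^n$ compatible with every $V_i$, in which every cell contained in $V:=\bigcup_i V_i$ is the graph of a quasi-affine map. In this initial decomposition, every one-dimensional cell of $V$ is already monotone (since a quasi-affine one-dimensional cell is monotone by definition), so only the two-dimensional cells of $V$ may fail to be monotone.

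Second I would refine $\mathcal D_0$ so that every two-dimensional cell of $V$ becomes a monotone cell. For such a cell $C$, Lemma~\ref{le:proj_cyl} presents it as the graph of a quasi-affine map $\f=(f_1,\ldots,f_m)$ over a two-dimensional base cell $C_0$ lying in an appropriate coordinate two-plane. Applying Lemma~\ref{le:make_monot} to each component $f_\ell$ yields finitely many cutting values for the first coordinate of that two-plane. Collecting these values across all components and across all two-dimensional cells of $V$, one obtains a finite collection of coordinate hyperplanes. Refining $\mathcal D_0$ by intersecting each cell with these hyperplanes and with the associated half-spaces (as in Lemma~\ref{le:section}) produces a decomposition $\mathcal D_1$ in which each component of the restricted quasi-affine map is monotone on every new sub-cell. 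By Lemma~\ref{le:monot_map} the whole map is then monotone, so every two-dimensional sub-cell of $V$ is a monotone cell; by Proposition~\ref{prop:def_monotone_map}(i) the one-dimensional monotone cells of $V$ remain monotone as well.

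Third I would refine $\mathcal D_1$ once more, again only by coordinate hyperplanes and half-spaces, in order to enforce the frontier condition. The refinement proceeds by induction on $n$: assuming that the induced decomposition $\mathcal D_1'$ of $\Real^{n-1}$ has already been refined to satisfy the frontier condition, for each cylindrical cell $C$ of $\mathcal D_1$ in $\Real^n$ one locates the finite set of coordinate-hyperplane values along which $\overline C\setminus C$ fails to be a union of cells, and cuts accordingly. Since every such cut is by a coordinate hyperplane, Proposition~\ref{prop:def_monotone_map}(i) guarantees that the monotonicity secured in the previous stage is preserved.

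The main obstacle is the third stage: one must verify that finitely many coordinate-hyperplane cuts genuinely suffice to enforce the frontier condition without clashing with the monotonicity of cells of $V$. The hypothesis $\dim V_i\le 2$ is essential here, because the blow-up pathology illustrated by Example~\ref{eg:blow-up-1} can occur only along one-dimensional loci in the base of a two-dimensional cell of $V$; by Lemma~\ref{le:decomp_graphs} each such locus is already a cell of $\mathcal D_1$, and its projections to the lower-dimensional coordinate subspaces can be isolated by finitely many coordinate-hyperplane cuts, which is exactly what the induction requires.
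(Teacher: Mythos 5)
Your first two stages coincide with the paper's proof: Lemma~\ref{le:decomp_graphs} to reduce to graphs of quasi-affine maps, then Lemma~\ref{le:make_monot} applied componentwise, combined with Lemma~\ref{le:section} and Lemma~\ref{le:monot_map}, to make every two-dimensional cell of $V$ a monotone cell. That part is fine.

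The gap is in your third stage. You claim the frontier condition can be enforced ``again only by coordinate hyperplanes and half-spaces.'' It cannot. The frontier $\overline{X}\setminus X$ of a two-dimensional monotone cell $X$ is a closed curve (a topological circle), and the arcs of this curve that are not already unions of existing one-dimensional cells must themselves be \emph{added} to the decomposition as new one-dimensional cells. Such an arc $T$ is in general a curved interval in $\Real^n$, not a section of an existing cell by a coordinate hyperplane; worse, $T$ may lie inside another two-dimensional cell $Z$ of the decomposition (the boundary circle of $X$ can pass through the interior of $Z$ when $n\ge 3$), and no finite collection of cuts by $\{x_i=c\}$ will ever make $Z$ compatible with $T$. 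The paper's proof handles exactly this: it partitions $\partial X$ into points and monotone curve intervals compatible with the existing one-dimensional cells, cuts by coordinate hyperplanes only through the \emph{endpoints} of these arcs (so that the arcs become cylindrical cells over intervals in $x_1$), and then inserts each arc $T$ as a new cell, splitting the two-dimensional cell $Z$ containing it into two pieces; that these two pieces are again monotone cells is precisely Proposition~\ref{prop:def_monotone_map}(ii) (Theorem~11 of \cite{BGV2}), a tool your argument never invokes but cannot do without. Your appeal to Lemma~\ref{le:decomp_graphs} to dismiss the difficulty does not work either: that lemma makes the decomposition compatible with the critical loci of projections of $V$, but it says nothing about the frontiers of the resulting cells being unions of cells, which is the whole content of the frontier condition.
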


\begin{proof}
First, using Lemma~\ref{le:decomp_graphs}, construct a cylindrical cell decomposition $\mathcal D$ of
$\Real^n$, compatible with every $V_i$, such that each cylindrical cell contained in
$V$ is the graph of a quasi-affine map.

We now construct, inductively on $n$, a refinement of $\mathcal D$, which is a cylindrical cell
decomposition with every cell contained in $V:=\bigcup_{1 \le i \le k} V_i$ being a monotone cell.
The base case $n=1$ is straightforward.
Suppose the construction exists for all dimensions less than $n$.
Each cylindrical cell $X$ in $\mathcal D$ such that $\dim \rho_{\Real^1}(X)=0$ belongs to a
cylindrical cell decomposition in the $(n-1)$-dimensional affine subspace $\{ x_1=c \}$ for some
$c \in \Real$, and in this subspace the refinement can be carried out by the inductive hypothesis.
According to Remark~\ref{re:refin}, this refinement is also a refinement of $\mathcal D$.
Now let $X$ be a cylindrical cell in ${\mathcal D}$, contained in $V$, with $\dim \rho_{\Real^1}(X)=1$.
If $\dim X=1$, then $X$, being quasi-affine, is already a monotone cell.

Suppose $\dim X=2$.
Let $\alpha$ be the smallest number among $\{ 1, \ldots ,n \}$ such that
$X':=\rho_{{\rm span} \{x_1, x_\alpha \}}(X)$ is two-dimensional.
Then $X$ is a graph of a quasi-affine map $\f=(f_1, \ldots ,f_{n-2})$ defined on $X'$.
Since $\f$ is quasi-affine, each $f_j$ is quasi-affine too.
By Lemma~\ref{le:make_monot}, for each $f_j$ there exists a cylindrical decomposition of
${\rm span}\{ x_1, x_\alpha \}$, compatible with $X'$, obtained
by intersecting $X'$ with straight lines of the kind $\{ x_1=c \} \subset {\rm span}\{ x_1, x_\alpha \}$,
and half-planes of the kind $\{ x_1 \lessgtr c \} \subset {\rm span}\{ x_1, x_\alpha \}$,
where $c \in \Real$, such that the
restriction $f_j|_{Y'}$ for each cylindrical semi-monotone cell $Y' \subset X'$ is a monotone function.
According to Lemma~\ref{le:section}, the intersections of all cylindrical cells in ${\mathcal D}$ with
$\{ x_1=c \}$ or $\{ x_1 \lessgtr c \}$ form a cylindrical cell decomposition.
Performing such a refinement for each $f_j$ we obtain a cylindrical cell decomposition of $X'$ into
cylindrical cells $Y''$, such that the restriction $\f|_{Y''}$ is a monotone map
by Lemma~\ref{le:monot_map}.
Therefore all elements of the resulting cylindrical cell decomposition, contained in $X$, are monotone cells.

Decomposing in this way each two-dimensional set $X$ of ${\mathcal D}$ we obtain a refinement
${\mathcal D}'$ of ${\mathcal D}$ which is a cylindrical cell decomposition with monotone
cylindrical cells.

It remains to construct a refinement of the cylindrical cell decomposition ${\mathcal D}'$ satisfying
the frontier condition.
Let $X$ be a two-dimensional cylindrical cell in ${\mathcal D}'$.
Since $X$ is a monotone cell, its boundary $\partial X$ is homeomorphic to a circle.
Let $\mathcal U$ be a partition of $\partial X$ into points and curve intervals so that $\mathcal U$ is
compatible with all 1-dimensional cylindrical cells of ${\mathcal D}'$, and each curve interval
in $\mathcal U$ is a monotone cell.

Let ${\bf c}=(c_1, \ldots, c_n)$ be point (0-dimensional element) in $\mathcal U$ such that $c_1$
is not a 0-dimensional cell in the cylindrical decomposition induced by ${\mathcal D}'$ on $\Real^1$.
By Lemma~\ref{le:section}, intersections of the cylindrical cells of ${\mathcal D}'$ with
$\{x_1=c_1 \}$ or $\{x_1 \lessgtr c_1 \}$ form the refinement of ${\mathcal D}'$ with cylindrical
cells remaining to be monotone cells.
Let $T \in {\mathcal U}$ be one of monotone curve intervals having $\bf c$ as an endpoint.
If $T$ is a subset of a two-dimensional cylindrical cell $Z$ of ${\mathcal D}'$,
then $T$ divides $Z$ into two two-dimensional cylindrical cells, hence by Theorem~11 in \cite{BGV2},
these two cells are monotone cells.
Obviously, adding $T$ to the decomposition, and replacing one two-dimensional cell $Z$ (if it exists) by two cells,
we obtain a refinement of ${\mathcal D}'$.

Let ${\bf c}=(c_1, \ldots, c_n)$ be point in $\mathcal U$ such that $(c_1, \ldots, c_{i-1})$, where $i<n$, is
a 0-dimensional cell in the cylindrical decomposition induced by ${\mathcal D}'$ on $\Real^{i-1}$, while
$(c_1, \ldots, c_i)$ is not a 0-dimensional cell in the cylindrical decomposition induced by ${\mathcal D}'$
on $\Real^i$.
In this case we apply the same construction as in the previous case, replacing $\Real^n$ by
$\{ x_1=c_1, \ldots ,x_{i-1}=c_{i-1} \}$.
By Remark~\ref{re:refin}, the refinement in $\{ x_1=c_1, \ldots ,x_{i-1}=c_{i-1} \}$ is also
a refinement of ${\mathcal D}'$.

Application of this procedure to all two-dimensional cells $X$ of ${\mathcal D}'$, all $\bf c$ and all $T$ completes
the construction of a refinement of ${\mathcal D}'$ which satisfies the frontier condition.
\end{proof}

\begin{corollary}\label{cor:refinement}
Let $U_1, \ldots ,U_m$ be bounded definable subsets in $\Real^n$ with $\dim U_i \le 2$ for each
$i=1, \ldots ,m$ and let ${\mathcal A}$ be a cylindrical decomposition of $\Real^n$.
Then there is a refinement of ${\mathcal A}$, satisfying the frontier condition,
and monotone with respect to $U_1, \ldots ,U_m$.
\end{corollary}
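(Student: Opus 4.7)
The plan is to deduce Corollary~\ref{cor:refinement} from Theorem~\ref{th:cyl_decomp} by treating the cells of $\mathcal{A}$ as additional compatibility constraints at every step of the construction. Any cylindrical cell decomposition of $\Real^n$ that is compatible with every cell of $\mathcal{A}$ is automatically a refinement of $\mathcal{A}$, so it suffices to exhibit a decomposition that is compatible with both the cells of $\mathcal{A}$ and the sets $U_1, \ldots, U_m$, is monotone with respect to the latter, and satisfies the frontier condition.

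First I would invoke the classical existence theorem for cylindrical decompositions compatible with a finite family of definable sets (\cite[Theorem~2.11]{Dries}) applied to the enlarged collection consisting of $U_1, \ldots, U_m$, every cell of $\mathcal{A}$, and the auxiliary sets $W_{i,j}, W'_{i,j}$ for $1 \le i < j \le n$ attached to $U := \bigcup_{i=1}^m U_i$, as in the proof of Lemma~\ref{le:decomp_graphs}. Since $\dim U_i \le 2$, the argument of Lemma~\ref{le:decomp_graphs} still applies and produces a cylindrical cell decomposition $\mathcal{D}$ of $\Real^n$ in which every cell contained in $U$ is the graph of a quasi-affine map. The possibly higher-dimensional cells of $\mathcal{A}$ enter only as compatibility constraints for this initial decomposition; they impose no restriction on the quasi-affine argument, which concerns only the smooth two-dimensional locus of $U$.

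Next, I would run the two remaining refinement stages in the proof of Theorem~\ref{th:cyl_decomp} on $\mathcal{D}$: first, the refinement supplied by Lemmas~\ref{le:make_monot} and \ref{le:monot_map} that converts each two-dimensional cell contained in $U$ into a monotone cell, and second, the refinement that enforces the frontier condition by cutting along coordinate hyperplanes through boundary vertices and inserting monotone curve-interval subdivisions, with Proposition~\ref{prop:def_monotone_map}(ii) and Lemma~\ref{le:section} guaranteeing that monotonicity is preserved. Each such operation either intersects existing cells with hyperplanes $\{x_j=c\}$ and half-spaces $\{x_j \lessgtr c\}$, or splits a two-dimensional monotone cell along a one-dimensional monotone subcell; both kinds of operation trivially preserve compatibility with every previously imposed set, in particular with the cells of $\mathcal{A}$.

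The main step to verify, and really the only one beyond bookkeeping, is that none of the refinements in the proof of Theorem~\ref{th:cyl_decomp} appeals to a dimension bound on the ambient compatibility constraints carried along from $\mathcal{A}$. A pass through that proof confirms that the bound $\dim \le 2$ is used only for the sets on which monotonicity must be achieved (here $U_1,\ldots,U_m$), while cells of $\mathcal{A}$ play the same purely passive role that the input sets $V_1,\ldots,V_k$ of the theorem play for compatibility. Thus the output of the procedure is a cylindrical cell decomposition refining $\mathcal{A}$, satisfying the frontier condition, and monotone with respect to $U_1,\ldots,U_m$, as required.
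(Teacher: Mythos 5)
Your proposal is correct and follows essentially the same route as the paper, whose entire proof is the one-liner: apply Theorem~\ref{th:cyl_decomp} to the family consisting of $U_1,\ldots,U_m$ together with all cylindrical cells of $\mathcal{A}$. Your extra care in unrolling the proof of that theorem is in fact a welcome clarification, since the cells of $\mathcal{A}$ may be unbounded or of dimension greater than two and so do not literally satisfy the theorem's hypotheses; as you observe, they enter only as passive compatibility constraints, and the dimension bound is used only on the sets where monotonicity must be achieved.
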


\begin{proof}
Apply Theorem~\ref{th:cyl_decomp} to the family $V_1, \ldots ,V_k$ consisting of sets
$U_1, \ldots ,U_m$ and all cylindrical cells of the decomposition ${\mathcal A}$.
\end{proof}

The following example shows that there may not exist a cylindrical cell decomposition of $\Real^3$
compatible with a two-dimensional definable subset, such that each component of the side wall of each two-dimensional
cell is a one-dimensional cell of this decomposition.
We will call the latter requirement the {\em strong frontier condition}.

\begin{example}
Let $V= \{ x>y>0,\> z>0,\> y=xz \}$ and $V'=\{x>y>0,\> z>0,\> y=2xz\}$ be two cylindrical cells in $\Real^3$.
Then any cylindrical decomposition of $\Real^3$ compatible with $V$ and $V'$ and satisfying
the strong frontier condition, must be compatible with two intervals
$I_1:= \{ x=y=0,\> 0 \le z \le 1/2 \}$ and $I_2:= \{ x=y=0,\> 1/2 \le z \le 1 \}$ on the $z$-axis,
and the point $v=(0,\ 0,\ 1/2)$.
Observe that the interval $I:=\{ x=y=0,\> 0 \le z \le 1 \}$ is the (only) 1-dimensional component of the side wall
of $V$, while $I_1$ is the (only) 1-dimensional component of the side wall of $V'$.

In order to satisfy the strong frontier condition, we have to partition $V$ into cylindrical cells so that there is a
1-dimensional cell $\gamma\subset V$ such that $v=\overline{\gamma}\cap I$.
Then the tangent at the origin of the projection $\beta:=\rho_{\Real^2}(\gamma)$ would have slope $1/2$.
The lifting $\gamma'$ of $\beta$ to $V'$ (i.e., $\gamma' := (\rho_{\Real^2}|_{V'})^{-1}(\beta)$)
would satisfy the condition $v'=\overline{\gamma'}\cap I$, where $v'=(0,0,1/4)$.
(Note that the tangent to $\gamma$ at $v$ or the tangent to $\gamma'$ at $v'$ may coincide with the
$z$-axis.)

The point $v'$ must be a 0-dimensional cell of the required cell decomposition.
Iterating this process, we obtain an infinite sequence of points $(0,\ 0,\ 2^{-k})$, for all
$k>0$, on $I$, all being 0-dimensional cells of a cylindrical cell decomposition.
This is a contradiction.
\end{example}

\begin{lemma}\label{le:monot_cell}
Let $X$ be a cylindrical {\em sector} cell in $\Real^n$ with respect to the ordering $x_1, \ldots , x_n$
of coordinates.
Suppose that the top and the bottom of $X$ are monotone cells.
Then $X$ itself is a monotone cell.
\end{lemma}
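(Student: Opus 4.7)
The plan is to verify directly from Definitions~\ref{def:quasi-affine} and \ref{def:def_monotone_map} that $X$ is the graph of a monotone map. Write $X = \{(\mathbf{x}, t) \in C \times \Real : f(\mathbf{x}) < t < g(\mathbf{x})\}$ where $C \subset \Real^{n-1}$ is the base of $X$ and $X_B = \textrm{graph}(f), X_T = \textrm{graph}(g)$ are its bottom and top (the degenerate unbounded cases of Definition~\ref{def:cyl_cell} are excluded because monotone cells are bounded). Since $X_B, X_T$ are monotone cells on which $\rho_{\Real^{n-1}}$ is injective, the remark following Definition~\ref{def:monotone_on_monotone} gives that $f, g$ are monotone functions on $C$, and Proposition~\ref{prop:proj} gives that $C$ itself is a monotone cell. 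Expressing $C$ as the graph of a quasi-affine map $\phi$ over its set $L$ of independent coordinates in $\{x_1, \ldots, x_{n-1}\}$, $X$ is the graph of the extension of $\phi$ that is independent of the free variable $x_n$. Quasi-affinity of this extension follows from quasi-affinity of $\phi$: for any coordinate subspace $M \subset \Real^n$, injectivity of $\rho_M|_X$ and maximality of $\dim \rho_M(X)$ both reduce to the corresponding properties of $\rho_{M \cap \Real^{n-1}}|_C$, with $x_n$ contributing a trivial factor when $x_n \in M$ and causing automatic non-injectivity and dimension drop when $x_n \notin M$.

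For connectedness of $X \cap S$ with $S \subset \Real^n$ an arbitrary affine coordinate subspace, I split into two cases. If $S$ does not constrain $x_n$, write $S = S' \times \Real$; the fiberwise normalization $(\mathbf{x}, t) \mapsto (\mathbf{x}, (t - f(\mathbf{x}))/(g(\mathbf{x}) - f(\mathbf{x})))$ realizes a homeomorphism between $X \cap S$ and $(C \cap S') \times (0, 1)$, which is connected because $C \cap S'$ is a monotone cell by Proposition~\ref{prop:def_monotone_map}(i). If $S$ constrains $x_n = c$, write $S = S' \times \{c\}$; then $\rho_{\Real^{n-1}}$ is a homeomorphism from $X \cap S$ onto $\{\mathbf{x} \in C_0 : f(\mathbf{x}) < c < g(\mathbf{x})\}$, where $C_0 := C \cap S'$ is a monotone cell on which $f|_{C_0}, g|_{C_0}$ are monotone functions (their graphs $X_B \cap (S' \times \Real), X_T \cap (S' \times \Real)$ being monotone by Proposition~\ref{prop:def_monotone_map}(i)). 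It thus suffices to show that $\{f < c < g\} \cap C_0$ is connected.

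I will establish this by applying Proposition~\ref{prop:def_monotone_map}(ii) twice. The level sets $\Gamma_f := \{f = c\} \cap C_0$ and $\Gamma_g := \{g = c\} \cap C_0$ are monotone cells, obtained by Propositions~\ref{prop:def_monotone_map}(i) and \ref{prop:proj} applied to the graphs of $f|_{C_0}, g|_{C_0}$ intersected with $\{x_n = c\}$ and projected to $\Real^{n-1}$. The crucial observation is that $f < g$ strictly on $C$ (since $X$ is a sector cell), which forces $\Gamma_f \cap \Gamma_g = \emptyset$ and indeed $\Gamma_g \subset \{f < c\}$ because $g = c$ implies $f < g = c$. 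In the non-degenerate case where both level sets are non-empty of codimension one in $C_0$ (the degenerate cases of empty level sets or constant $f, g$ being trivial), $\Gamma_f$ is closed in $C_0$, so $\partial \Gamma_f \subset \partial C_0$. Applying Proposition~\ref{prop:def_monotone_map}(ii) first to $\Gamma_f \subset C_0$ yields $A := \{f < c\} \cap C_0$ as a monotone cell. Since $\Gamma_g \subset A$ has codimension one in $A$ and satisfies $\partial \Gamma_g \subset \partial C_0 \cap \overline{A} \subset \partial A$, a second application of Proposition~\ref{prop:def_monotone_map}(ii) to $\Gamma_g \subset A$ produces $\{f < c < g\} \cap C_0$ as one of the two monotone cells in $A \setminus \Gamma_g$, hence connected. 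The main technical hurdle is this second application, specifically verifying $\partial \Gamma_g \subset \partial A$, which relies crucially on the strict sector-cell inequality $f < g$ to ensure that $\Gamma_g$ does not accumulate on $\Gamma_f \subset \partial A$.
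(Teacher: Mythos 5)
Your proof is correct, but it takes a genuinely different route from the paper's. The paper's argument is global and very short: writing $X' = \rho_{\Real^{n-1}}(X)$, it observes that the cylinder $C = (X' \times \Real) \cap \{-|a| < x_n < |a|\}$ is a monotone cell (BGV2, Theorems 9--10), that the graphs $F, G$ of the bottom and top functions lie in $C$ with $\partial F, \partial G \subset \partial C$, and then applies the cutting result (BGV2, Theorem 11, i.e.\ Proposition~\ref{prop:def_monotone_map}(ii)) in the ambient $\Real^n$ to exhibit $X$ directly as one of the monotone cells in $C \setminus (F \cup G)$. You instead verify Definitions~\ref{def:quasi-affine}--\ref{def:mon_cell} from scratch: quasi-affinity of the graph presentation, and connectedness of $X \cap S$ for every affine coordinate subspace $S$, which forces you to rerun a sliced version of the same cutting argument inside each hyperplane $\{x_n = c\}$ (cutting $C_0$ first along $\{f = c\}$, then along $\{g = c\}$). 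Both proofs ultimately hinge on the same external tool, Proposition~\ref{prop:def_monotone_map}(ii); the paper's version gets quasi-affinity for free and avoids the per-slice dimension and frontier bookkeeping, while yours is more self-contained at the level of the definitions. Two small remarks on your write-up: in the second application of Proposition~\ref{prop:def_monotone_map}(ii), the inclusion $\partial \Gamma_g \subset \partial A$ does not actually depend on $\Gamma_g$ staying away from $\Gamma_f$ (since $\Gamma_f \subset \partial A$ in any case); the strict inequality $f < g$ is really needed to guarantee $\Gamma_g \subset A$. And you should note explicitly that when both $A \cap \{g < c\}$ and $A \cap \{g > c\}$ are nonempty, each must be exactly one of the two connected components of $A \setminus \Gamma_g$ produced by the proposition, since they are disjoint, open in $A$, and cover $A \setminus \Gamma_g$.
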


\begin{proof}
Let $X':= \rho_{\Real^{n-1}}(X)$.
Then $X= \{ (x_1, \ldots ,x_n) \in \Real^n|\> \x:=(x_1, \ldots ,x_{n-1}) \in X',\>
f(\x) < x_n < g(\x) \}$, where $f,g:\> X' \to \Real$ are monotone functions, having graphs $F$
and $G$ respectively.
Note that $\rho_{\Real^{n-1}} (F)=\rho_{\Real^{n-1}}(G)=X'$.
According to Theorem~10 in \cite{BGV2}, $X'$ is monotone cell.
It easily follows from Theorem~9 in \cite{BGV2} that for any $a \in \Real$ the cylinder
$C:=(X' \times \Real) \cap \{-|a|< x_n <|a| \}$ is a monotone cell.
Choose $a$ so that $-|a| < \inf_{x_n} f$ and $|a| > \sup_{x_n} g$.
Then we have the following inclusions: $F \subset C,\> G \subset C,\> \partial F \subset \partial C$
and $\partial G \subset \partial C$.
By Theorem~11 in \cite{BGV2}, the set $X$ is a monotone cell, being a connected
component of $C \setminus (F \cup G)$.
\end{proof}

The following statement is a generalization of the main result of \cite{La10}.

\begin{corollary}\label{cor:cyl_decomp}
Let $U_1, \ldots ,U_k$ be bounded definable subsets in $\Real^n$, with $\dim U_i \le 3$.
Then there is a cylindrical decomposition of $\Real^n$, compatible with each $U_i$, such that
\begin{enumerate}[(i)]
\item
for $p \le 2$ each $p$-dimensional cell $X \subset \Real^n$ of the decomposition,
contained in $U:= \bigcup_i U_i$, is a monotone cell;
\item
each 3-dimensional {\em sector} cell $X \subset \Real^n$ of the decomposition, contained in $U$,
is a monotone cell;
\item
if $n=3$, then each 3-dimensional cell, contained in $U$, is a semi-monotone set and all cells
of smaller dimensions, contained in $U$, are monotone cells.
\end{enumerate}
\end{corollary}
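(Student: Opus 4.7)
My approach is to reduce the 3-dimensional statement to the already-proved 2-dimensional statement (Theorem~\ref{th:cyl_decomp}, Corollary~\ref{cor:refinement}) by making monotone the 2-dimensional ``scaffolding'' around each 3-dimensional sector cell, and then invoking Lemma~\ref{le:monot_cell} to pass from monotone top/bottom to monotone sector cell.

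First, I would apply the standard o-minimal cylindrical decomposition theorem to obtain a cylindrical decomposition $\mathcal{D}_0$ of $\Real^n$ compatible with each $U_i$ (this decomposition may be taken, without loss of generality, to satisfy the frontier condition). Let $U = \bigcup_i U_i$ and let $\mathcal{V}$ be the finite collection consisting of: (a) every cell of $\mathcal{D}_0$ of dimension at most $2$ contained in $U$, and (b) the top $C_T$ and bottom $C_B$ of every $3$-dimensional sector cell $C$ of $\mathcal{D}_0$ contained in $U$. By Lemma~\ref{le:top_bottom} each set in $\mathcal{V}$ is a cylindrical cell of $\mathcal{D}_0$, and each has dimension at most $2$. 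Now apply Corollary~\ref{cor:refinement} to the family $\mathcal{V}$ (with $\mathcal{A} = \mathcal{D}_0$) to obtain a refinement $\mathcal{D}_1$ of $\mathcal{D}_0$ which satisfies the frontier condition and is monotone with respect to $\mathcal{V}$.

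Next I would verify the three clauses. For (i): any $p$-dimensional cell ($p \le 2$) of $\mathcal{D}_1$ contained in $U$ is contained in a cell of $\mathcal{D}_0$ of dimension at most $2$ inside $U$, hence inside some $V \in \mathcal{V}$, so it is monotone by construction of $\mathcal{D}_1$. For (ii): let $Y$ be a $3$-dimensional sector cell of $\mathcal{D}_1$ contained in $U$; since $\mathcal{D}_1$ satisfies the frontier condition, Lemma~\ref{le:top_bottom} gives that the top $Y_T$ and bottom $Y_B$ of $Y$ are $2$-dimensional cells of $\mathcal{D}_1$. The key point is that $Y_T$ and $Y_B$ are contained in the top and bottom of the $3$-dimensional sector cell of $\mathcal{D}_0$ containing $Y$ (because the refinement from $\mathcal{D}_0$ to $\mathcal{D}_1$ adds cuts that lift to side-wall pieces of sector cells, not to new graphs in the $x_n$-direction), and these tops/bottoms lie in $\mathcal{V}$. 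Consequently $Y_T$ and $Y_B$ are monotone cells, and Lemma~\ref{le:monot_cell} yields that $Y$ itself is a monotone cell. Finally, (iii) follows immediately: in $\Real^3$ every $3$-dimensional cylindrical cell is necessarily a sector cell (its multi-index is forced to be $(1,1,1)$), so (ii) applies; and a monotone cell in $\Real^3$ of dimension $3$ is the graph of the empty map over an open subset of $\Real^3$, which by Definition~\ref{def:mon_cell} is exactly a semi-monotone set.

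\textbf{Main obstacle.} The delicate step is the claim inside (ii) that $Y_T$ and $Y_B$ are contained in elements of $\mathcal{V}$, i.e.\ that the refinement does not create genuinely new $2$-dimensional section graphs internal to a $3$-dimensional cell of $\mathcal{D}_0$ that would then appear as tops or bottoms in $\mathcal{D}_1$. One must analyse how the cuts produced by Corollary~\ref{cor:refinement} (which, tracing through the proof of Theorem~\ref{th:cyl_decomp}, are generated from hyperplane cuts $\{x_1=c\}$, $\{x_1\lessgtr c\}$ and from monotone curve subdivisions of boundaries of $2$-dimensional cells) lift through the cylindrical structure to $3$-dimensional cells as side-wall partitions rather than as new tops/bottoms. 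Once this is secured, Proposition~\ref{prop:def_monotone_map}(i) guarantees that restrictions of the monotone pieces in $\mathcal{V}$ to the refined subcells remain monotone, and the argument above closes.
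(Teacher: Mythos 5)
Your proof follows essentially the same route as the paper's: build an initial cylindrical decomposition compatible with the $U_i$, make its at-most-two-dimensional skeleton monotone via the two-dimensional machinery, and then upgrade the three-dimensional sector cells via Lemma~\ref{le:monot_cell}. In two respects you are in fact more careful than the published argument: you invoke Corollary~\ref{cor:refinement} with ${\mathcal A}={\mathcal D}_0$, so that the new decomposition really is a refinement of ${\mathcal D}_0$ (the paper only applies Theorem~\ref{th:cyl_decomp} to the low-dimensional cells and then asserts the refinement property), and you add the tops and bottoms of the $3$-dimensional sector cells to the family ${\mathcal V}$, which matters because when $U$ is not closed these tops and bottoms need not be contained in $U$ and hence are not automatically covered by clause (i).

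Two steps, however, do not go through as written. First, in your verification of (i) you claim that a $p$-dimensional cell of ${\mathcal D}_1$ ($p\le 2$) contained in $U$ must lie in a cell of ${\mathcal D}_0$ of dimension at most $2$. This is false: it can lie inside a $3$-dimensional cell $C$ of ${\mathcal D}_0$ with $C\subset U$ (for instance a slice $C\cap\{x_1=c\}$ introduced by the hyperplane cuts, or a new section graph created when the decomposition of Lemma~\ref{le:decomp_graphs} is built from scratch), and for such a cell monotonicity is not guaranteed by ``monotone with respect to ${\mathcal V}$'', since it is not contained in $\bigcup{\mathcal V}$. Second, the ``main obstacle'' you flag for (ii) is real and is not resolved by your write-up: the construction underlying Theorem~\ref{th:cyl_decomp} is not a controlled refinement of ${\mathcal D}_0$ and can subdivide a $3$-dimensional cell in the last-coordinate direction, producing sector cells of ${\mathcal D}_1$ whose tops or bottoms are new graphs interior to that cell rather than subsets of members of ${\mathcal V}$. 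To be fair, the paper's own proof of this corollary is equally silent on precisely these two points (``Therefore (i) is satisfied''; ``its top and its bottom are monotone cells''), so your attempt is no less complete than the published one; but as a self-contained argument it has these gaps, and closing them requires either arranging the monotone refinement so that it introduces no new $2$-cells interior to $3$-dimensional cells of ${\mathcal D}_0$, or applying the two-dimensional machinery a second time to all such newly created $2$-cells before invoking Lemma~\ref{le:monot_cell}.
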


\begin{proof}
Construct a cylindrical decomposition $\mathcal D$ of $\Real^n$ compatible with each $U_i$, and let
$V_1, \ldots ,V_r$ be all 0-, 1- and 2-dimensional cells contained in $U$.
Using Theorem~\ref{th:cyl_decomp} obtain a cylindrical decomposition
${\mathcal D}'$ of $\Real^n$ monotone with respect to $V_1, \ldots ,V_r$.

Observe that the decomposition ${\mathcal D}'$ is a refinement of $\mathcal D$ and hence
is compatible with each $U_i$.
Therefore (i) is satisfied.
Since for each 3-dimensional {\em sector} cell $X$, contained in $U$, its top and its bottom are
monotone cells, Lemma~\ref{le:monot_cell} implies that $X$ is itself a monotone cell,
and thus (ii) is satisfied.

If $n=3$, then every 3-dimensional cell in $\mathcal D$ is a sector cell, which implies (iii).
\end{proof}

\section{Monotone families as superlevel sets of definable functions}
\label{sec:upper-semi-continuous}
\begin{convention}\label{con:monot_family}
In what follows we will assume that for each monotone family $\{ S_\delta \}$ in a compact definable set
$K \subset \Real^n$ (see Definition~\ref{def:monot_family})
there is $\delta_1 >0$ such that $S_\delta =\emptyset$ for all $\delta > \delta_1$.
\end{convention}

\begin{lemma}\label{le:germ}
Let  $K \subset \Real^n$ be a compact definable set, and
$\{ S_\delta \}_{\delta >0}$  a monotone definable family of compact subsets of $K$. There exists
$\delta_0 > 0$, such that the monotone definable family $\{S'_\delta\}_{\delta > 0}$ defined by
$$
S'_\delta = \begin{cases} S_\delta & \textrm{for}\quad 0<\delta\le\delta_0\\ \emptyset &
\textrm{for}\quad\delta>\delta_0 \end{cases}
$$
has the following property.
For each $\x \in K$ let $M_{\x}:=\{ \delta \in \Real_{>0}|\> \x \in S'_\delta \}$.
Then, either $M_\x = \emptyset$, or $M_\x = (0,b_\x]$ for some $b_\x > 0$.
\end{lemma}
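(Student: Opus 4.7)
The plan is to apply o-minimal cell decomposition to the graph $\Gamma:=\{(\delta,\x)\in(0,\delta_1]\times K\mid \x\in S_\delta\}$, to locate the finite set of ``discontinuity values'' of the family on the $\delta$-axis, and to choose $\delta_0$ strictly below all of them; Hardt's triviality theorem then supplies enough continuity to force $M_\x$ to be closed on the right. First I would use Definition~\ref{def:monot_family} together with Convention~\ref{con:monot_family} to shrink $\delta_1$ if necessary so that $S_\delta=\emptyset$ for $\delta>\delta_1$ and $S_\delta\subset S_\eta$ for all $0<\eta<\delta\le\delta_1$, so that monotonicity holds on the whole of $(0,\delta_1]$. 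Applying the cylindrical cell decomposition theorem from \cite{Dries}, Theorem~2.11, in $\Real^{n+1}$ with the coordinate order $\delta,x_1,\ldots,x_n$ compatible with $\Gamma$, one obtains only finitely many breakpoints on the $\delta$-axis inside $(0,\delta_1]$; let $\tau\in(0,\delta_1]$ be the smallest positive breakpoint (or $\tau=\delta_1$ if none exist), and pick any $\delta_0\in(0,\tau)$.

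For each $\x\in K$, monotonicity on $(0,\delta_0]$ forces $M_\x$ to be a downward-closed definable subset of $(0,\delta_0]$, so it must be $\emptyset$, $(0,b_\x]$, or $(0,b_\x)$ for some $b_\x=\sup M_\x\in(0,\delta_0]$. The crucial remaining task is to rule out the open case. For this I would invoke the o-minimal version of Hardt's trivialization theorem (\cite{Dries}, Chapter~9) applied to the projection $\pi\colon\Gamma\cap((0,\tau)\times K)\to(0,\tau)$: over the open initial cell $(0,\tau)$ the fibration is definably trivial, yielding a definable homeomorphism $h\colon F\times(0,\tau)\to\Gamma\cap((0,\tau)\times K)$ commuting with projection to $(0,\tau)$, where $F\subset K$ is a compact definable set homeomorphic to each fiber $S_\delta$ for $\delta\in(0,\tau)$. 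Fix $\x$ with $M_\x\neq\emptyset$; since $(0,b_\x)\subset M_\x\subset(0,\tau)$, the definable map $\varphi(\delta):=h^{-1}(\delta,\x)$ is defined on $(0,b_\x)$ and takes values in the compact set $F$.

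The hard part is producing the one-sided limit $y_0:=\lim_{\delta\to b_\x^-}\varphi(\delta)$ inside $F$: I would obtain it by applying the o-minimal monotonicity theorem to each coordinate of $\varphi$ (each is eventually monotone and bounded, hence has a one-sided limit) and then using closedness of $F$. Once $y_0\in F$ is in hand, continuity of $h$ gives $h(y_0,b_\x)=(b_\x,\x)$, so $\x\in S_{b_\x}$ and $M_\x=(0,b_\x]$, as desired.
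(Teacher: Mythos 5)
Your proof is correct, and it rests on the same key tool as the paper's: Hardt's definable trivialization of the family over an initial interval of the parameter. The difference lies in how the trivialization is exploited. The paper trivializes over the half-open interval $(0,\delta_0]$, observes that the total space $S_{[c,\delta_0]}$ over any compact subinterval $[c,\delta_0]$ is compact (being the continuous image of the compact set $[c,\delta_0]\times S_{\delta_0}$ under the trivializing homeomorphism), and concludes at once that $\left\{\delta\in[c,\delta_0]\mid \x\in S_\delta\right\}$ is compact; no limit argument is needed. You instead trivialize over an open interval, extract the definable section $\delta\mapsto\varphi(\delta)$ from $h^{-1}(\delta,\x)$, produce its left limit at $b_\x$ via the o-minimal monotonicity theorem applied coordinate-wise, and push the limit back through $h$ to land $\x$ in $S_{b_\x}$. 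Both arguments are valid; the paper's is shorter because compactness of the image does the work of your limit. Two cosmetic remarks: the preliminary cylindrical decomposition to locate \emph{breakpoints} is redundant, since Hardt's theorem already supplies a finite definable partition of the parameter line and you may simply take $\tau$ so that $(0,\tau)$ lies in its initial piece (the cell-decomposition partition need not coincide with Hardt's); and the step $h(y_0,b_\x)=(b_\x,\x)$ requires $(y_0,b_\x)$ to lie in the domain of $h$, i.e.\ $b_\x<\tau$, which indeed holds because $b_\x\le\delta_0<\tau$ by your choice of $\delta_0$, but this deserves an explicit mention.
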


\begin{proof}
By Hardt's theorem for definable families \cite[Theorem 5.22]{Michel2},
there exists $\delta_0 > 0$, and a fiber-preserving homeomorphism
$H:  (0,\delta_0] \times S_{\delta_0} \rightarrow  S_{(0,\delta_0]}$, where for any subset $I \subset \Real$,
$S_{I} := \{(\delta,\x)|\> \delta \in I , \x \in S_\delta \} \subset \Real^{n+1}$.
Let $\x \in K$ be such that $M_\x$ is not empty.
Then, there exists $c, \;0 < c \le \delta_0$, such that  $\x \in S'_c$.
Then,
$$
M_\x=(0,c] \cup \left\{\delta \in [c,\delta_0]\>|\> \x \in S'_\delta \right\}=
(0,c] \cup \left\{\delta \in [c,\delta_0]\>|\> (\delta,\x) \in H^{-1}\left(S'_{[c,\delta_0]}\right)\right\}.
$$

Now $S'_{[c,\delta_0]}$ is compact, and hence $H^{-1}\left(S'_{[c,\delta_0]}\right)$ is also compact, and since
the projection of a compact set is compact,
$\left\{\delta \in [c,\delta_0] \;\mid\; (\delta,\x) \in H^{-1}\left(S'_{[c,\delta_0]}\right)\right\}$
is compact as well.
\end{proof}

\begin{convention}\label{con:identify_families}
In what follows we identify two families $\{ S_\delta \}$ and $\{ V_\delta \}$ if $S_\delta=V_\delta$
for small $\delta >0$.
In particular, the families $\{ S_\delta \}$ and $\{ S'_\delta \}$ from Lemma~\ref{le:germ} will be identified.
\end{convention}

With any monotone definable family $\{S_\delta\}_{\delta>0}$ of compact sets contained in a compact definable set $K$
(see Definition~\ref{def:monot_family}),
we associate a definable non-negative upper semi-continuous function $f:\> K \to \Real$ as follows.

\begin{definition}
\label{def:association}
Associate with the given family $\{ S_\delta \}_{\delta >0}$ the family
$\{ S'_\delta \}_{\delta >0}$ satisfying the conditions of Lemma \ref{le:germ}.
Define for each $\x \in K$ the value $f(\x)$ as $\max \{ \delta|\> \x \in S'_\delta \}$,
if there is $\delta >0$ with $\x \in S'_\delta$, or as $0$ otherwise
(by Lemma \ref{le:germ}, the function $f$ is well-defined).
Note that, by Convention~\ref{con:monot_family}, the function $f$ is bounded.
\end{definition}

\begin{convention}\label{con:identify}
We identify any two non-negative functions $f,g:K\rightarrow \Real$ if they have the same
level sets $\{ \x \in K|\> f(\x)=\delta \}=\{ \x \in K|\> g(\x)=\delta \}$
for small $\delta>0$.
\end{convention}

\begin{lemma}\label{le:S_by_function}
For a compact definable set $K \subset \Real^n$, there is a bijective correspondence between monotone definable families
$\{ S_\delta \}_{\delta >0}$ of compact subsets of $K$ and non-negative upper semi-continuous definable functions
$f:\> K \to \Real$, with $S_\delta=\{\x \in K|\> f(\x) \ge \delta\}$.
\end{lemma}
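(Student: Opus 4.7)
The plan is to verify that the two maps (family to function and function to family) land in the correct collections, and then check that they are mutual inverses modulo Conventions~\ref{con:identify_families} and \ref{con:identify}.

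First I would go from families to functions. Start with a monotone definable family $\{S_\delta\}_{\delta>0}$, replace it by the germ-equivalent family $\{S'_\delta\}_{\delta>0}$ from Lemma~\ref{le:germ}, and define $f$ as in Definition~\ref{def:association}. Definability of $f$ follows from definability of the family and standard o-minimal arguments, and non-negativity is immediate. The key observation is that for any $\delta>0$, by the structure of $M_\x=(0,b_\x]$ provided by Lemma~\ref{le:germ},
\[
\{\x \in K \mid f(\x) \geq \delta\}=\{\x \in K \mid b_\x \geq \delta\}=\{\x \in K \mid \delta \in M_\x\}=S'_\delta,
\]
which is compact by hypothesis. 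Thus every strict superlevel set $\{f\ge\delta\}$ with $\delta>0$ is closed in $K$, and $\{f\ge 0\}=K$ is closed as well. This is exactly upper semi-continuity of $f$.

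Next I would go from functions to families. Given a non-negative upper semi-continuous definable function $f:K\to\Real$, set $S_\delta:=\{\x\in K\mid f(\x)\ge\delta\}$. Each $S_\delta$ is closed in the compact set $K$, hence compact; definability is clear. For $0<\delta<\eta$ one has $S_\eta\subset S_\delta$, so $\{S_\delta\}_{\delta>0}$ is monotone. Since a real-valued upper semi-continuous function on a compact set attains its maximum and is therefore bounded above, $S_\delta=\emptyset$ for all $\delta$ larger than $\sup_K f$, matching Convention~\ref{con:monot_family}.

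Finally I would check that the two constructions invert each other. Starting from $\{S_\delta\}$, passing to $f$, and then taking superlevel sets gives $\{\x\mid f(\x)\ge\delta\}=S'_\delta$ by the computation above, and $\{S'_\delta\}$ is identified with $\{S_\delta\}$ under Convention~\ref{con:identify_families}. Conversely, starting from $f$ and forming $S_\delta=\{f\ge\delta\}$, the associated function $g$ satisfies, for each $\x\in K$ with $f(\x)>0$,
\[
g(\x)=\max\{\delta>0\mid \x\in S_\delta\}=\max\{\delta>0\mid f(\x)\ge\delta\}=f(\x),
\]
and $g(\x)=0=f(\x)$ otherwise, so $f=g$ on the small-$\delta$ level sets as required by Convention~\ref{con:identify}. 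The only mildly delicate point is the appeal to Lemma~\ref{le:germ} to ensure that the maximum defining $f$ exists and that superlevel sets coincide with the compact sets $S'_\delta$; everything else is essentially the standard equivalence between upper semi-continuity and closedness of superlevel sets, adapted to the o-minimal setting.
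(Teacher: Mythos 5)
Your proof is correct and follows essentially the same route as the paper's: the bijection is the one from Definition~\ref{def:association}, and everything reduces to the equivalence between upper semi-continuity and closedness of superlevel sets, together with the identifications in Conventions~\ref{con:identify_families} and \ref{con:identify}. The paper's own proof is simply a terser statement of exactly these points, so your verification (in particular the identity $\{\x \in K \mid f(\x)\ge\delta\}=S'_\delta$ via Lemma~\ref{le:germ}) is a faithful expansion of it.
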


\begin{proof}
The bijection is defined in Definition \ref{def:association}. The lemma follows
from the fact that a function $\psi:\> X \to \Real$ on a topological
space $X$ is upper semi-continuous
if and only if the set $\{ \x \in X|\> \psi (\x) < b \}$ is open for every $b \in \Real$,
and the identifications made in Conventions~\ref{con:identify_families} and \ref{con:identify}.
\end{proof}

\begin{remark}\label{re:S_by_function}
Observe that due to the correspondence in Lemma~\ref{le:S_by_function}, the union
$S:=\bigcup_{\delta} S_\delta$ coincides with $\{ \x \in K|\> f(\x) >0 \}$,
the complement $K \setminus S$ coincides with the 0-level set $\{ \x \in K|\> f(\x)=0 \}$
of the function $f$.
\end{remark}

\begin{lemma}
For a compact definable set $K \subset \Real^n$,
there is a bijective correspondence between arbitrary non-negative definable functions
$h:\> K \to \Real$ and monotone definable families $\{ S_\delta \}_{\delta>0}$
of subsets of $K$, with $S_\delta= \{ \x \in K|\> h(\x) \ge \delta \}$, satisfying the
following property.
There exists a cylindrical decomposition
${\mathcal D}$ of $\Real^n$, compatible with $K$, such that for small $\delta >0$ and
every cylindrical cell $X$ in ${\mathcal D}$, the intersection $S_\delta \cap X$ is closed in $X$.
\end{lemma}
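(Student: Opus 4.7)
The plan is to establish the bijection $h \leftrightarrow \{S_\delta\}$ in both directions, interpreting the equality $S_\delta = \{\x \in K \mid h(\x) \ge \delta\}$ modulo Conventions~\ref{con:identify_families} and~\ref{con:identify}.

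\textbf{Forward direction.} Given a non-negative definable $h : K \to \Real$, I set $S_\delta := \{\x \in K \mid h(\x) \ge \delta\}$ and invoke the standard o-minimal cell decomposition theorem to produce a cylindrical decomposition $\mathcal D$ of $\Real^n$, compatible with $K$, such that $h|_X$ is continuous on each cell $X \subset K$ of $\mathcal D$. Then $S_\delta \cap X = (h|_X)^{-1}([\delta,+\infty))$ is closed in $X$ for every $\delta > 0$, yielding the required cell-closed property.

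\textbf{Inverse direction.} Given a monotone family satisfying the cell-closed property relative to some $\mathcal D$, I define
$$
h(\x) := \sup\{\delta > 0 \mid \x \in S_\delta\},
$$
with the convention $\sup \emptyset := 0$; this is a non-negative definable function, and by monotonicity combined with o-minimality each set $M_\x := \{\delta > 0 \mid \x \in S_\delta\}$ is either empty or of the form $(0,b_\x)$ or $(0,b_\x]$ with $b_\x = h(\x)$. The inclusion $S_\delta \subseteq \{h \ge \delta\}$ is immediate, so the remaining task is to verify $\{h \ge \delta\} \subseteq S_\delta$ for small $\delta > 0$, i.e.\ that the supremum in the definition of $h(\x)$ is attained uniformly in $\x$. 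For this I would apply cylindrical cell decomposition to the total space
$$
\widehat S := \{(\x,\delta) \in K \times \Real_{>0} \mid \x \in S_\delta\} \subset \Real^{n+1}
$$
with $\delta$ as the last coordinate, and refine $\mathcal D$ accordingly to $\mathcal D'$ so that $h|_X$ is continuous on each cell $X$ of $\mathcal D'$ contained in $K$. Over such $X$ the slice $\widehat S \cap (X \times \Real_{>0})$ becomes, for $\delta$ below some threshold $\delta_X > 0$, either the open subgraph $\{(\x,\delta) \mid 0 < \delta < h(\x)\}$ or the closed subgraph $\{(\x,\delta) \mid 0 < \delta \le h(\x)\}$. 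The cell-closed hypothesis rules out the open alternative whenever $h|_X$ is non-constant on the connected cell $X$, since then $\{h > \delta\}$ would be a proper clopen subset of $X$; the constant case is absorbed by Convention~\ref{con:identify_families}. Hence $S_\delta \cap X = \{h \ge \delta\} \cap X$ for $\delta \in (0,\delta_X]$, and taking $\delta$ smaller than $\min_X \delta_X$ over the finitely many cells of $\mathcal D'$ gives the required equality globally.

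\textbf{Main obstacle.} The central difficulty is the reverse inclusion in the inverse direction: translating the purely topological cell-closed hypothesis into the structural statement that $\widehat S$ over each cell is a closed subgraph of the continuous function $h|_X$. This rests on combining o-minimal finiteness of the jump values of $\{S_\delta \cap X\}$ with the cylindrical structure of $\widehat S$. Once this step is in hand, bijectivity modulo the identification conventions is routine.
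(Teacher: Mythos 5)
Your forward direction coincides with the paper's: both decompose so that $h$ is continuous on each cell of a cylindrical decomposition compatible with $K$ and the graph of $h$, whence $S_\delta\cap X=(h|_X)^{-1}([\delta,\infty))$ is closed in $X$. In the converse direction you take a genuinely different route. The paper passes to the compact sets $\overline{S_\delta\cap X}\subset\overline X$, applies the already-established Lemma~\ref{le:S_by_function} to obtain an upper semi-continuous $h_X$ on $\overline X$, and uses the closedness hypothesis only at the last step, to recover $S_\delta\cap X=\overline{S_\delta\cap X}\cap X=\{\x\in X\mid h_X(\x)\ge\delta\}$; the question of whether the supremum is attained is thereby delegated to Lemma~\ref{le:germ} (Hardt's theorem). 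You instead define $h$ directly as a supremum and prove attainment by decomposing the total space $\widehat S$ and showing that the open-subgraph alternative would make $S_\delta\cap X$ a non-closed subset of the connected cell $X$. Both mechanisms are sound; the paper's version is shorter because it recycles Lemma~\ref{le:S_by_function}, while yours is more self-contained. Two spots to tighten. First, your clopen argument yields a contradiction only when $\inf_X h|_X=0<\sup_X h|_X$: if $h|_X$ is non-constant with $\inf_X h|_X=c>0$, then $\{h|_X>\delta\}=X$ for $\delta<c$ and the open alternative is not excluded --- but then $S_\delta\cap X=X=\{h\ge\delta\}\cap X$ for small $\delta$, so the desired identity holds regardless; this case should be stated rather than folded into the ``constant'' case. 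Second, the dichotomy ``open subgraph versus closed subgraph, uniformly over $X$'' needs the decomposition of $\Real^{n+1}$ to be compatible with both $\widehat S$ and the graph of $h$, so that the graph over $X$ is either wholly contained in $\widehat S$ or disjoint from it; this is implicit in your refinement step and worth making explicit.
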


\begin{proof}
Let $h:\> K \to \Real$ be a non-negative definable function.
Consider a cylindrical decomposition ${\mathcal D}'$ of $\Real^{n+1}$ compatible with
$K$ and the graph of $h$ in $\Real^{n+1}$.
Note that ${\mathcal D}'$ induces a cylindrical decomposition ${\mathcal D}$ of $\Real^n$
compatible with $K$.
Then the family $\{\{ \x \in K|\> h(\x) \ge \delta \}\}_{\delta >0}$ and the decomposition
${\mathcal D}$ satisfy the requirements, since by Definition~\ref{def:cd} (of a cylindrical decomposition),
for every cell $X$ the restriction $h|_X$ is a continuous function.

Conversely, given a family $\{ S_\delta \}_{\delta>0}$ and a cylindrical decomposition
${\mathcal D}$ such that for small $\delta>0$ and
every cylindrical cell $X$ in ${\mathcal D}$, the intersection $S_\delta \cap X$ is closed in $X$,
consider the family of compact sets  $\{ \overline {S_\delta \cap X} \}_{\delta >0}$ in
$\overline X$ (note that $\overline {S_\delta \cap X} \cap X= S_\delta \cap X$, since $S_\delta$ is
closed in $X$).
Applying Lemma~\ref{le:S_by_function} to  $\{ \overline {S_\delta \cap X} \}_{\delta >0}$, we
obtain an upper semi-continuous function $h_X:\> \overline X \to \Real$ such that
$\overline {S_\delta \cap X}= \{\x \in \overline X|\> h_X(\x) \ge \delta\}$, and therefore,
$S_\delta \cap X= \{ \x \in X|\> h_X(\x) \ge \delta\}$.
The function $h:\> K \to \Real$ is now defined by the restrictions of $h_X$ on all
cylindrical cells $X$ of ${\mathcal D}$.
\end{proof}

\begin{definition}\label{def:monot_respect_function}
Let $K \subset \Real^n$ be a compact definable set, $\{ S_\delta \}_{\delta>0}$ a monotone definable family of
compact subsets of $K$, and $f$ the corresponding non-negative upper semi-continuous definable function.
Let $F$ be the graph of the function $f$.
We say that a cylindrical cell decomposition ${\mathcal C}$ of $\Real^{n+1}$ is {\em monotone with respect to
the function} $f$ if ${\mathcal C}$ is monotone with respect to sets $F$ and $\{ x_{n+1}=0 \}$.
\end{definition}

\begin{remark}\label{re:S_by_monot}
By Theorem~\ref{th:cyl_decomp}, for each $K$ and $\{ S_\delta \}_{\delta>0}$ there exists a cylindrical cell
decomposition of $\Real^{n+1}$ satisfying the frontier condition and monotone with respect to $f$.
Let $\mathcal D$ be the cylindrical decomposition induced by ${\mathcal C}$ on $\Real^n$.
Then
\begin{enumerate}[(i)]
\item
all cylindrical cells $Z$ of ${\mathcal C}$ contained in the graph $F$ and all cylindrical cells $Y$ of
${\mathcal D}$ contained in $K$ are monotone cells;
\item
for every $Y$ of $\mathcal D$ contained in $K$, the restriction $f|_Y$ is a monotone function on $Y$
(see Definition~\ref{def:monotone_on_monotone}), either positive or identically zero, and such that
$$S_\delta \cap Y = \{ \x \in Y|\> f|_Y(\x) \ge \delta \}$$
for small $\delta >0$.
\end{enumerate}
Indeed, each cylindrical cell $Z$ of ${\mathcal C}$ is a monotone cell hence, by Proposition~\ref{prop:proj},
each cylindrical cell $Y$ in ${\mathcal D}$ is a monotone cell.
Since the graph of the restriction $f|_Y$ is a cylindrical cell in ${\mathcal C}$, it is a monotone cell.
The function $f|_Y$ is either positive or identically zero due to Remark~\ref{re:S_by_function}.
\end{remark}

\section{Separability and Basic Conditions}
\label{sec:separability}
\begin{definition}
By the {\em standard $m$-simplex} in $\Real^m$ we mean the set
$$\Delta^m:= \{ (t_1, \ldots ,t_m) \in \Real_{>0}^m|\> t_1 + \cdots +t_m < 1 \}.$$
We will assume that the vertices of $\Delta^m$ are labeled by numbers $0, \ldots ,m$ so that
the vertex at the origin has the number $0$, while the vertex with $x_i=1$ has the number $i$.
We will be dropping the upper index $m$ for brevity, in cases when this does not lead to a confusion.
\end{definition}

\begin{definition}
An {\em ordered $m$-simplex in} $\Real^n$ is an open simplex with some total order on its vertices.
An {\em ordered simplicial complex} is a finite simplicial complex, such that all its simplices
are ordered and the orders are compatible on the faces of simplices.
\end{definition}

\begin{remark}
For each ordered $m$-simplex $\Sigma$ there is a canonical affine map from $\Delta^m$ to a standard simplex
$\Sigma$ preserving the order of vertices.
\end{remark}

\begin{definition}
An {\em ordered definable triangulation} of a compact definable set $K \subset \Real^n$ {\em compatible
with subsets} $A_1, \ldots ,A_r \subset K$ is a definable
homeomorphism $\Phi:\> C \to K$, where $C$ is an ordered simplicial complex, such that each
$A_i$ is the union of images by $\Phi$ of some simplices in $C$.
\end{definition}

\begin{proposition}[\cite{Michel2}]
Let $K$ be a compact definable subset in $\Real^n$ and $A_1, \ldots ,A_r$ be definable subsets in $K$.
Then there exists an ordered definable triangulation of $K$ compatible with $A_1, \ldots ,A_r$.
\end{proposition}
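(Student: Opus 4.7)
The plan is to combine a definable cell decomposition with an inductive coning construction. First I would invoke the cell decomposition theorem in o-minimal structures to produce a finite partition $\mathcal{D}$ of $\Real^n$ into definable cells, compatible with $K$ and each $A_i$. I would refine $\mathcal{D}$ so that it satisfies the frontier condition and so that each cell contained in $K$ is a topologically regular cell (a definable closed ball whose boundary is a sphere). This pre-processing is essential for the coning step that follows, and handles compatibility with the sets $A_i$ automatically since the triangulation will refine $\mathcal{D}$ cell-by-cell.

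Next I would triangulate inductively on the dimension $d$ of cells. For $d=0$, each $0$-cell is already a vertex. Assuming that all cells of dimension $<d$ contained in $K$ have been compatibly triangulated, consider a $d$-cell $C \subset K$. By the frontier condition, $\partial C = \overline{C}\setminus C$ is a union of lower-dimensional cells of $\mathcal{D}$, hence carries the inductively constructed triangulation, which is a triangulation of the topological sphere $\partial C$. Choose a distinguished definable interior point $\mathbf{c}_C \in C$ (for instance a suitable definable center of mass) and extend the triangulation of $\partial C$ to $\overline{C}$ by coning every simplex $\sigma$ of $\partial C$ to $\mathbf{c}_C$ as $\mathbf{c}_C * \sigma$. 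The topological regularity of $C$ guarantees that this yields a definable homeomorphism from the resulting simplicial cone onto $\overline{C}$, and the construction agrees on the common boundary of adjacent $d$-cells because their boundary triangulations were built in the previous inductive step.

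Finally, to equip the resulting simplicial complex with an order I would pick any total order on its finite vertex set; it restricts to a total order on the vertices of every simplex and is automatically compatible under face inclusions, giving the ordered structure.

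The main obstacle is the first step: obtaining a definable cell decomposition in which every cell contained in $K$ is a topologically regular cell whose frontier is a sphere triangulated by lower-dimensional cells of the decomposition. In arbitrary dimension this is subtle --- the present paper devotes considerable effort to a similar result even in the cylindrical setting --- but for the purpose of triangulation it suffices to use not-necessarily-cylindrical cells, as in \cite{Michel2}, where the regularity of cells can be built inductively in tandem with the triangulation itself rather than secured in advance.
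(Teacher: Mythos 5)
The paper offers no proof of this proposition; it is quoted verbatim from \cite{Michel2}, where it is established by the classical projection-and-lifting argument: after a generic linear change of coordinates one decomposes cylindrically, triangulates the base $\Real^{n-1}$ inductively so that the cells over each base simplex are graphs and bands of continuous definable functions, and then subdivides each resulting prism into simplices explicitly in the last coordinate. Your coning strategy is a genuinely different route, and it has two real gaps.

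First, your entire construction rests on obtaining a definable cell decomposition in which every cell contained in $K$ is topologically regular and which satisfies the frontier condition. You acknowledge this is the ``main obstacle'' but then defer it to \cite{Michel2} ``where the regularity of cells can be built inductively in tandem with the triangulation itself.'' That is circular as stated: the reference builds regularity via the triangulation, not as an input to it, and the present paper stresses in its introduction that securing topologically regular cells in a cell decomposition is delicate (it requires a generic change of coordinates, and for cylindrical decompositions with a fixed coordinate order it is open in general; Sections 2--3 of the paper are devoted to achieving it in dimension $\le 2$ via monotone cells). So the preprocessing step you need is not available off the shelf.

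Second, the coning step itself does not go through as written. If $\sigma$ is a simplex of the boundary triangulation sitting inside $\partial C \subset \Real^n$ and $\mathbf{c}_C \in C$, the geometric join $\mathbf{c}_C * \sigma$ (the union of straight segments from $\mathbf{c}_C$ to points of $\sigma$) need not be contained in $\overline{C}$ unless $\overline{C}$ is star-shaped about $\mathbf{c}_C$, which a general regular cell is not. If instead you mean the abstract simplicial cone, then you must produce a \emph{definable} homeomorphism from that cone onto $\overline{C}$ extending the already-constructed boundary parametrization. Topological regularity gives only the existence of \emph{some} homeomorphism $(\overline{C},C)\cong(\overline{B^d},B^d)$; extending a prescribed boundary homeomorphism to the interior (Alexander's trick) is a purely topological device and does not stay inside the o-minimal structure. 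This definability of the extension is precisely the content one would need to prove, and it is exactly what the lifting construction in \cite{Michel2, Dries} supplies by writing the homeomorphisms explicitly in terms of the cell-defining functions.
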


\begin{definition}\label{def:simplex}
For $m \le n$, by an {\em ordered definable $m$-simplex} we mean a pair $( \Lambda, \Phi)$ where
$\Lambda \subset \Real^n$ is a bounded definable set, and
$\Phi:\> |C| \to \overline \Lambda$ is an ordered definable triangulation of its closure $\overline \Lambda$,
where $C$ is the complex consisting of a standard $m$-simplex $\Delta$ and all of its faces,
such that $\Phi(\Delta)=\Lambda$.
The images by $\Phi$ of the faces of $\Delta$ are called {\em faces of} $\Lambda$.
Zero-dimensional faces are called {\em vertices of} $\Lambda$.
If $(\Sigma, \Psi)$ is another definable $m$-simplex in $\Real^n$, then
a homeomorphism $h:\> \overline\Lambda \to \overline\Sigma$ is called {\em face-preserving} if
$\Psi^{-1} \circ h \circ \Phi$ is a face-preserving homeomorphism of $\overline \Delta$
(i.e., sends each face to itself).
\end{definition}

\begin{convention}
In what follows, whenever it does not lead to a confusion, we will assume that for a given
bounded definable set $\Lambda$ the map $\Phi$ is fixed, and will refer to an ordered definable simplex
by just $\Lambda$.
\end{convention}

Obviously, in the definable triangulation of any compact definable set the image by $\Phi$ of any
simplex of the simplicial complex is a definable simplex.

Let $\{ S_\delta \}_{\delta>0}$ be a monotone definable family  of subsets of a compact
definable set $K \subset \Real^n$, and $f:\> K \to \Real$
the corresponding upper semi-continuous function, so that $S_\delta=\{\x \in K|\> f(\x) \ge \delta\}$.

\begin{convention}\label{con:family_in_simplex}
In what follows, for a given definable simplex $\Lambda$ we will write, slightly abusing the notation,
$S_\delta$ instead of $S_\delta \cap \Lambda$, and will say that
$S_\delta$ is a {\em family in} $\Lambda$.
A family $S_\delta$ is {\em proper} if neither $S_\delta = \Lambda$, nor $S_\delta = \emptyset$.
\end{convention}

\begin{notation}
For a subset $A \subset \Lambda$ of a definable simplex $\Lambda$ we will use the following
terms and notations.
\begin{itemize}
\item
The interior ${\rm int} (A):= \{ \x \in A|\> \text{ a neighbourhood of}\> \x\> \text{in}\> \Lambda\>
\text{lies in}\> A \}$.
\item
The boundary $\partial A:= \overline A \setminus {\rm int} (A)$.
\item
The boundary in $\Lambda$, $\partial_\Lambda A:=\partial A \cap \Lambda$.
\end{itemize}
\end{notation}

\begin{definition}
The set $\{ f=\delta \} \cap \partial_\Lambda S_\delta$ is called the {\em moving part} of
$\partial_\Lambda S_\delta$, the set $\{ f>\delta \} \cap \partial_\Lambda S_\delta$ is called the
{\em stationary part} of $\partial_\Lambda S_\delta$.
\end{definition}

\begin{remark}\label{re:moving}
Obviously the moving part and the stationary part form a partition of $\partial_\Lambda S_\delta$.
It is easy to see that the stationary part of $\partial_\Lambda S_\delta$ coincides with
$$\{\x \in \Lambda|\> \liminf_{\y \in \Lambda,\, \y \to \x} f(\y) \le \delta <f(\x)\}.$$
At each point of stationary part of the boundary, the function $f|_\Lambda$ is discontinuous.
In particular, if $f|_\Lambda$ is continuous then the stationary part is empty.
\end{remark}

\begin{definition}[\cite{GV07}, Definition~5.7]\label{def:separable2}
A family $S_\delta$ in a definable $n$-simplex $\Lambda$ is called {\em separable} if for
small $\delta >0$ and every face $\Lambda'$ of $\Lambda$, the inclusion
$\Lambda' \subset \overline{\Lambda \setminus S_\delta}$ is equivalent to
$\overline S_\delta \cap \Lambda'= \emptyset$.
\end{definition}

\begin{remark}\label{re:separable2}
Observe that the implication
$$(\overline S_\delta \cap \Lambda'= \emptyset) \Rightarrow
(\Lambda' \subset \overline{\Lambda \setminus S_\delta})$$
is true regardless of separability,
since $\overline S_\delta \cap \Lambda'= \emptyset$ is equivalent to $\Lambda' \subset
\overline \Lambda \setminus \overline S_\delta$, while
$\overline \Lambda \setminus \overline S_\delta \subset \overline{\Lambda \setminus S_\delta}$.
\end{remark}

\begin{notation}
It will be convenient to label the face of an ordered definable simplex $\Lambda$ opposite to vertex
$j$ by $\Lambda_j$, and
the face of $\Lambda$ opposite to vertices $j_1, \ldots ,j_k$ by $\Lambda_{j_1, \ldots ,j_k}$.
Clearly, $\Lambda_{j_1, \ldots ,j_k}$ does not depend on the order of $j_1, \ldots, j_k$.
More generally, for a subset $A \subset \Lambda$, let $A_j:= \overline A \cap \Lambda_j$ and
$A_{j_1, \ldots, j_k} = \overline{A_{j_1, \ldots ,j_{k-1}}} \cap \Lambda_{j_1, \ldots ,j_k}$.
Note that the set $A_{j_1, \ldots, j_k}$ generally depends on the order of $j_1, \ldots, j_k$.
We call $A_{j_1, \ldots, j_k}$ a {\em restriction} of $A$ to $\Lambda_{j_1, \ldots ,j_k}$.
\end{notation}

\begin{definition}\label{def:canonical}
Given the ordered standard simplex $\Delta^m$, the {\em canonical} simplicial map
({\em identification}) between a face $\Delta^m_{j_1, \ldots ,j_k}$ and the ordered standard simplex
$\Delta^{m-k}$ is the simplicial map preserving the order of vertex labels.
\end{definition}

We still adhere to Convention~\ref{con:family_in_simplex}, i.e.,
for a given (ordered) definable simplex $\Lambda$ we write, slightly abusing the notation,
$S_\delta$ instead of $S_\delta \cap \Lambda$, and say that $S_\delta$ is a {\em family in} $\Lambda$.
Similarly, for an upper semi-continuous function $f:\> K \to \Real$, defined on a compact set
$K \subset \Real^n$, and a definable simplex $\Lambda \subset K$ we write $f$ instead of $f|_{\Lambda}$.

\begin{definition}\label{def:ext}
Let $f:\> \Lambda \to \Real$ be an upper semi-continuous definable function on a definable ordered
$m$-simplex $\Lambda$.
Define $f_j:\> (\Lambda \cup \Lambda_j) \to \Real$, where $0 \le j \le m$, as the unique extension,
by semicontinuity, of $f$ to the facet $\Lambda_j$.
Define the function $f_{j_1, \ldots ,j_k}:\> (\Lambda \cup \Lambda_{j_1, \ldots ,j_k}) \to \Real$, where
$(j_1, \ldots ,j_k)$ is a sequence of pair-wise distinct numbers in $\{ 0, \ldots ,m \}$, by induction on $k$,
as the unique extension, by semicontinuity, of $f_{j_1, \ldots ,j_{k-1}}$ to the face $\Lambda_{j_1, \ldots ,j_k}$.
\end{definition}

Consider a monotone family $S_\delta$ in a definable ordered $n$-simplex $\Lambda$.
According to Lemma~\ref{le:S_by_function}, there is a non-negative upper semi-continuous definable function
$f:\>  \Lambda \to \Real$, with $S_\delta=\{\x \in \Lambda|\> f(\x) \ge \delta\}$.
Similarly, for any pair-wise distinct numbers $j_1, \ldots ,j_k$ in $\{ 0, \ldots ,n \}$,
we have
$$(S_\delta)_{j_1, \ldots ,j_k}=\{\x \in \Lambda_{j_1, \ldots ,j_k}|\>
f_{j_1,\ldots,j_k}(\x) \ge \delta\}.$$

\begin{convention}\label{con:labeling}
The vertices of a face $\Lambda_{j_1, \ldots ,j_k}$ of $\Lambda$ inherit the labels of vertices from
$\Lambda$.
When considering a family $(S_\delta)_{j_1, \ldots ,j_k}$ in $\Lambda_{j_1, \ldots ,j_k}$
we rename the vertices so that they have labels $0,1, \ldots ,n-k$ but the order of labels
is the same as it was in $\Lambda$.
\end{convention}

\begin{definition}\label{def:hereditary}
We say that a property of a family $S_\delta$ in $\Lambda$ is {\em hereditary} if it holds true
for any face of $\Lambda$ and any restriction of $S_\delta$ to this face (assuming the
Convention~\ref{con:labeling}).
\end{definition}

Consider the following {\em Basic Conditions} (A)--(D), satisfied for small $\delta >0$.

\begin{enumerate}[(A)]
\item
If $S_\delta \neq \emptyset$ then $(S_\delta)_j \neq \emptyset$ for any $j>0$.
\item
If $(S_\delta)_0= \Lambda_0$ then $S_\delta = \Lambda$.
\item
For every pair $\ell,m$ such that $0 \le \ell <m \le n$ and $(\ell, m) \neq (0,1)$
we have $(S_\delta)_{\ell,m}=(S_\delta)_{m,\ell}$.
\item
Either $S_\delta= \emptyset$ or $\bigcup_\delta  {\rm int} (S_\delta)= \Lambda$.
\end{enumerate}

\begin{lemma}\label{le:restriction_sep}
Let a family $S_\delta$ in a definable $n$-dimensional simplex $\Lambda$ be separable, satisfy the basic
condition (D), and this condition is hereditary.
Then for each $n$-dimensional simplex $\Sigma$ of any triangulation of $\Lambda$ (in particular, of a barycentric
subdivision) the restriction $S_\delta \cap \Sigma$ is separable in $\Sigma$.
\end{lemma}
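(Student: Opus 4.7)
The plan is to verify the separability equivalence for each face $\Sigma'$ of the $n$-dimensional simplex $\Sigma$ by relating $\Sigma'$ to the smallest face $\Lambda^* = \Lambda^*(\Sigma')$ of $\Lambda$ whose relative interior contains $\Sigma'$. Since $\Sigma$ is an open $n$-simplex with $\overline{\Sigma}\subset\overline{\Lambda}$, we first observe that $\Sigma\subset\Lambda$ (the open interior of $\overline{\Lambda}$), and that by compatibility of the triangulation with the faces of $\Lambda$ the face $\Lambda^*$ is well defined. By Remark~\ref{re:separable2}, the implication $\overline{S_\delta\cap\Sigma}\cap\Sigma'=\emptyset \Rightarrow \Sigma'\subset\overline{\Sigma\setminus S_\delta}$ is automatic, so only the converse requires attention.

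Assume $\Sigma'\subset\overline{\Sigma\setminus S_\delta}$ for small $\delta>0$. Since $f$ is upper semi-continuous, $S_\delta$ is closed, giving the inclusion $\overline{S_\delta\cap\Sigma}\cap\Sigma'\subset\overline{S_\delta}\cap\Lambda^*$. If $\overline{S_\delta}\cap\Lambda^*=\emptyset$, the conclusion is immediate. Otherwise, separability of $S_\delta$ in $\Lambda$ applied to the face $\Lambda^*$ yields $\Lambda^*\not\subset\overline{\Lambda\setminus S_\delta}$, so some $\y\in\Lambda^*$ has a neighborhood $U$ in $\Real^n$ with $U\cap\Lambda\subset S_\delta$; moreover $f|_{\Lambda^*}\not\equiv 0$, and (D) hereditary at $\Lambda^*$ provides, for each $\x\in\Lambda^*$, a $\Lambda^*$-neighborhood of $\x$ on which $f\ge\delta_\x>0$. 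The plan in this case is to derive a contradiction with the hypothesis: one shows that for small $\delta$, every $\x\in\Sigma'\subset\Lambda^*$ must in fact possess a neighborhood $U$ in $\Real^n$ with $U\cap\Lambda\subset S_\delta$, so that $U\cap\Sigma$ is an open subset of $\Sigma$ that contains $\x$ in its closure and avoids $\Sigma\setminus S_\delta$, forcing $\x\notin\overline{\Sigma\setminus S_\delta}$ and contradicting the hypothesis. The implication therefore holds vacuously.

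The main obstacle is precisely this propagation step: upgrading the intrinsic lower bound of $f$ on $\Lambda^*$ supplied by (D) hereditary into a full $\Real^n$-local lower bound at every $\x\in\Sigma'\subset\Lambda^*$. Separability at $\Lambda^*$ alone produces only one such $\y$ with the stronger property, not all of $\Lambda^*$. The plan to overcome this is to invoke separability and (D) hereditary at every face of $\Lambda$ whose closure contains $\Lambda^*$ --- in particular at the vertices, where separability already forces full continuity of $f$ from the $\Lambda$-direction --- and to propagate the lower bound inductively on the dimension of the ambient subface of $\overline{\Lambda^*}$, ruling out any discontinuity of $f$ at $\x$ when $\x$ is approached through the interior $\Lambda$.
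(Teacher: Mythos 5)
Your overall skeleton matches the paper's: both arguments invoke Remark~\ref{re:separable2} to reduce to the single implication $\Sigma'\subset\overline{\Sigma\setminus S_\delta}\Rightarrow\overline{S_\delta\cap\Sigma}\cap\Sigma'=\emptyset$, both locate $\Sigma'$ inside the unique face $\Lambda^*$ of $\Lambda$ carrying it (the case $\Lambda^*=\Lambda$ being killed by condition (D)), and both dispose of the case $\overline{S_\delta}\cap\Lambda^*=\emptyset$ via the inclusion $\overline{S_\delta\cap\Sigma}\cap\Sigma'\subset\overline{S_\delta}\cap\Lambda^*$. The divergence is in the remaining case. The paper argues forward: from $\Sigma'\subset\overline{\Sigma\setminus S_\delta}\subset\overline{\Lambda\setminus S_\delta}$ and hereditary (D) it concludes that the \emph{whole} face satisfies $\Lambda^*\subset\overline{\Lambda\setminus S_\delta}$, and then one application of separability of the original family to the face $\Lambda^*$ yields $\overline{S_\delta}\cap\Lambda^*=\emptyset$, which finishes the proof by the same inclusion as before; no local lower bound for $f$ near $\Sigma'$ is ever required. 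You argue by contradiction and therefore need the contrapositive: that $\Lambda^*\not\subset\overline{\Lambda\setminus S_\delta}$ forces some (indeed every) point $\x\in\Sigma'$ to have a full $\Real^n$-neighborhood $U$ with $U\cap\Lambda\subset S_\delta$.

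That step is the entire content of the lemma, and you do not prove it --- you flag it yourself as ``the main obstacle'' and offer only a plan (invoke separability and hereditary (D) at every face, propagate inductively on dimension). As you note, separability at $\Lambda^*$ produces a single witness $\y$, while hereditary (D) controls only the intrinsic extension $f_{j_1,\ldots,j_k}$ on $\Lambda^*$, which is built from $\limsup$'s of $f$ from $\Lambda$ and says nothing about $\liminf_{\y\to\x,\,\y\in\Lambda}f(\y)$ at a given $\x\in\Sigma'$; these genuinely differ for families satisfying hereditary (D) but not separability (the model family of type (3) exhibits exactly this at the vertex labelled $2$). The proposed induction is not set up: no inductive statement is formulated, and no mechanism is given for transporting the full-neighborhood property from the one witness $\y$, or from the vertices of $\overline{\Lambda^*}$, to an arbitrary point of $\Sigma'$. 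So the proposal reduces the lemma to an unproved claim that is precisely the contrapositive of the paper's key assertion, and stops there. To repair it along the paper's lines, reverse the direction: show that $\Sigma'\subset\overline{\Lambda\setminus S_\delta}$ for all small $\delta$ forces $\Lambda^*\subset\overline{\Lambda\setminus S_\delta}$, and then let separability, applied once to $\Lambda^*$, deliver $\overline{S_\delta}\cap\Lambda^*=\emptyset$ directly.
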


\begin{proof}
By Remark~\ref{re:separable2}, it is sufficient to prove that for small $\delta >0$ and every face
$\Sigma'$ of $\Sigma$, if $\Sigma' \subset \overline{\Sigma \setminus S_\delta}$ then
$\overline{S_\delta \cap \Sigma} \cap \Sigma'=\emptyset$.
Take $\Sigma'$ such that $\Sigma' \subset \overline{\Sigma \setminus S_\delta}$.
By the basic condition (D), if $\Sigma' \subset \Lambda$ then $\Sigma' \not\subset \overline{\Sigma \setminus S_\delta}$
for small $\delta >0$, hence $\Sigma' \subset \Lambda'$ for some face $\Lambda'$ of $\Lambda$.
Again by (D), $\Lambda' \subset \overline{\Lambda \setminus S_\delta}$, and therefore
$\overline S_\delta \cap \Lambda'=\emptyset$.
It follows that the intersection $\overline{S_\delta \cap \Sigma} \cap \Sigma'=\emptyset$ of smaller sets
is also empty.
\end{proof}

\section{Standard and model families}
\label{sec:standard_and_model}

\begin{convention}
In this section we assume that all monotone definable families satisfy the basic conditions (A)--(D), and these conditions
are hereditary.
\end{convention}

\begin{definition}\label{def:boolean}
Let $S_\delta$ be a monotone family in a definable ordered  $n$-simplex $\Lambda$.
We assign to $S_\delta$ a Boolean function $\psi :\> \{ 0,1 \}^n \to \{ 0,1 \}$ using the
following inductive rule.
\begin{itemize}
\item
If $n=0$ (hence $\Lambda$ is the single vertex 0), then there are two possible
types of $S_\delta$.
If $S_\delta= \Lambda$, then $\psi \equiv 1$, otherwise $S_\delta= \emptyset$ and $\psi \equiv 0$.
\item
If $n>0$, then $\psi|_{x_j=0}$ is assigned to $(S_\delta)_j$ for every $j \neq 0$,
and $\psi|_{x_1=1}$ is assigned to $(S_\delta)_0$
(here vertices of $\Lambda_j$ are renamed $i \to i-1$ for all $i>j$, cf. Definition~\ref{def:canonical}).
\end{itemize}
\end{definition}

\begin{remark}\label{re:bool_funct}
\begin{enumerate}[(i)]
\item
It is obvious that the function $\psi$ is completely defined by its restrictions
$\psi|_{x_1=1}$ and $\psi|_{x_1=0}$, hence by the restrictions $(S_\delta)_0$
and $(S_\delta)_1$.

\item
The basic condition (A) implies that if $\psi (0, \ldots , 0)=0$ then
$\psi \equiv 0$ and $S_\delta= \emptyset$.
The condition (B) implies that if
$\psi(1, \ldots ,1)=1$ then $\psi \equiv 1$ and $S_\delta=\Lambda$.

\item
Because of the basic condition (C), for every pair $0 \le \ell <m \le n$ such that
$\ell <m$ and $(\ell, m) \neq (0,1)$
the restrictions $(S_\delta)_{\ell, m}$ and $(S_\delta)_{m, \ell}$
have the same Boolean function assigned to them.
It follows that under (C) Definition~\ref{def:boolean} is consistent.
It is easy to give an example of a family $S_\delta$ where (C) is not satisfied
and Definition~\ref{def:boolean} becomes contradictory.
\end{enumerate}
\end{remark}

\begin{definition}
A Boolean function $\psi$ is {\em monotone} (decreasing) if replacing 0 by 1 at any position of its
argument (while keeping other positions unchanged) either preserves the value of $\psi$ or
changes it from 1 to 0.
\end{definition}

\begin{lemma}\label{le:boolean_monotone}
The Boolean function $\psi$ assigned to $S_\delta$ is monotone.
\end{lemma}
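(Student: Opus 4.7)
The plan is to induct on $n=\dim\Lambda$. The base case $n=0$ is immediate, since $\psi$ is a constant on the one-point domain $\{0,1\}^0$. For the inductive step, it suffices to show that whenever $a,a'\in\{0,1\}^n$ differ in a single coordinate $i$ with $a_i=0<1=a'_i$ (and $a_j=a'_j$ for $j\ne i$), one has $\psi(a)\ge\psi(a')$. I would split this verification into two cases, according to the remaining coordinates of $a$.

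\emph{Case 1: there exists $j\ne i$ with $a_j=a'_j=0$.} Then both $\psi(a)$ and $\psi(a')$ are determined by the restriction $\psi|_{x_j=0}$, which by Definition~\ref{def:boolean} is the Boolean function associated to the monotone family $(S_\delta)_j$ in the $(n-1)$-simplex $\Lambda_j$. By heredity of the basic conditions (A)--(D), $(S_\delta)_j$ satisfies them as well, so the inductive hypothesis applies and $\psi|_{x_j=0}$ is monotone decreasing. Under the relabeling of Convention~\ref{con:labeling}, the coordinate $x_i$ is carried to a well-defined variable of $\psi_{(S_\delta)_j}$, and the move from $a$ to $a'$ is precisely the operation of flipping that variable from $0$ to $1$; hence $\psi(a)\ge\psi(a')$.

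\emph{Case 2: $a_j=1$ for every $j\ne i$.} Then $a'=(1,\ldots,1)$. If $\psi(a')=0$ there is nothing to prove; otherwise $\psi(1,\ldots,1)=1$, and by Remark~\ref{re:bool_funct}(ii) (an iterated descent through $\psi|_{x_1=1}$ culminating in basic condition~(B)) we obtain $\psi\equiv 1$, so in particular $\psi(a)=1$.

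The only delicate point is bookkeeping with Convention~\ref{con:labeling}: upon restricting to $\Lambda_j$, the remaining vertices are renumbered $0,1,\ldots,n-1$ in an order-preserving way, and one must verify that flipping position $i\ne j$ in the original argument corresponds to flipping a specific, well-defined variable of $\psi_{(S_\delta)_j}$. Once this correspondence is in hand, the induction proceeds cleanly; Case~1 covers the generic situation via the inductive hypothesis, while Case~2 is precisely where the asymmetric role of vertex~$0$---encoded in basic condition~(B)---intervenes.
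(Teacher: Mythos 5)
Your proof is correct and follows essentially the same route as the paper's: induction on $n$, reducing a generic coordinate flip to a facet restriction $\psi|_{x_j=0}$ (or $\psi|_{x_1=1}$) handled by the inductive hypothesis, with the all-ones corner treated separately via basic condition (B). The paper phrases that exceptional case as a contradiction with $S_\delta\neq\Lambda$ rather than invoking Remark~\ref{re:bool_funct}(ii) directly, but the underlying argument is the same.
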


\begin{proof}
If $S_\delta= \Lambda$, then $\psi \equiv 1$, hence $\psi$ is trivially monotone.
Now assume that $S_\delta \neq \Lambda$, and continue the proof by induction on $n$.
The base of the induction, for $n=0$, is obvious.
Restriction of $\psi$ to $\{ x_j=0 \}$, for any $j$, or to $\{ x_1=1 \}$, is the Boolean function assigned to
a facet of $\Lambda$, which is monotone by the inductive hypothesis.
Hence, if Boolean values are assigned to $n-1$ variables among $x_1,\ldots,x_n$, except on $x_2= \ldots =x_n=1$,
the function $\psi$ is monotone in the remaining variable.
Suppose $\psi$ is not monotone in $x_1$ with $x_2= \cdots =x_n=1$, i.e., $\psi (1,1, \ldots, 1)=1$
and $\psi (0,1, \ldots, 1)=0$.
Then $\psi$ is identically 1 on $\{ x_1=1 \}$, i.e., $(S_\delta)_0= \Lambda_0$,
while $S_\delta \neq \Lambda$.
This contradicts the basic condition (B).
\end{proof}

\begin{definition}
Two monotone families $S_\delta$ and $V_\delta$, in definable simplices $\Sigma$ and
$\Lambda$ respectively, are {\em combinatorially equivalent}
if for every sequence $(j_1, \ldots, j_k)$, where the numbers $j_1, \ldots ,j_k \in
\{ 0, \ldots ,n \}$ are pair-wise distinct, the restrictions
$(S_\delta)_{j_1, \ldots ,j_k} \subset \overline \Sigma$ and
$(V_\delta)_{j_1, \ldots ,j_k} \subset \overline \Lambda$
are simultaneously either empty or non-empty.
\end{definition}

\begin{remark}\label{re:comb_equiv}
Obviously, if two families $S_\delta$ and $V_\delta$ are combinatorially equivalent, then
for every sequence $(j_1, \ldots, j_k)$ of pair-wise distinct numbers the
families $(S_\delta)_{j_1, \ldots , j_k}$ and $(V_\delta)_{j_1, \ldots , j_k}$ are combinatorially
equivalent.
\end{remark}

\begin{remark}\label{re:strongly}
Let $S_\delta$ and $V_\delta$ be two proper monotone families in a 2-simplex $\Lambda$, such that
$\partial_\Lambda S_\delta$ and $\partial_\Lambda V_\delta$ are curve intervals.
Then $S_\delta$ and $V_\delta$ are combinatorially
equivalent if and only if the endpoints of $\partial_\Lambda S_\delta$ can be mapped onto endpoints of
$\partial_\Lambda V_\delta$ so that the corresponding endpoints belong to the same faces of $\Lambda$
for small $\delta >0$.
\end{remark}

\begin{lemma}\label{le:same_function}
Two families $S_\delta$ and $V_\delta$ are assigned the same Boolean function if and only if
these families are combinatorially equivalent.
\end{lemma}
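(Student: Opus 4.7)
The plan is to proceed by induction on $n = \dim \Lambda = \dim \Sigma$, which is the relevant parameter because the definition of combinatorial equivalence requires both simplices to have the same set of vertex indices $\{0,\dots,n\}$. The base case $n=0$ is immediate: each of the two families is either empty or the full vertex, which in both formalisms corresponds to $\psi\equiv 0$ or $\psi\equiv 1$.

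For the inductive step, I would treat the two directions separately, both times pivoting through restrictions to facets. For the implication \emph{combinatorially equivalent} $\Rightarrow$ \emph{same Boolean function}: by Remark~\ref{re:comb_equiv}, combinatorial equivalence passes to the restrictions, so in particular the pairs $(S_\delta)_0,(V_\delta)_0$ and $(S_\delta)_1,(V_\delta)_1$ are combinatorially equivalent on the $(n-1)$-dimensional facets $\Lambda_0$ and $\Lambda_1$ (using Convention~\ref{con:labeling} to relabel vertices). The inductive hypothesis then gives that each of these pairs is assigned the same Boolean function. Finally, Remark~\ref{re:bool_funct}(i) recovers the function $\psi$ of $S_\delta$ uniquely from the Boolean functions of $(S_\delta)_0$ and $(S_\delta)_1$ (and similarly for $V_\delta$), so $S_\delta$ and $V_\delta$ are assigned the same Boolean function.

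For the reverse implication \emph{same Boolean function} $\Rightarrow$ \emph{combinatorially equivalent}: fix an arbitrary sequence $(j_1,\dots,j_k)$ of pairwise distinct indices in $\{0,\dots,n\}$. When $k=0$, Remark~\ref{re:bool_funct}(ii) identifies $S_\delta=\emptyset$ with $\psi\equiv 0$ (equivalently, $\psi(0,\dots,0)=0$), so equal $\psi$ forces $S_\delta$ and $V_\delta$ to be simultaneously empty. When $k\ge 1$, Definition~\ref{def:boolean} explicitly exhibits the Boolean function of $(S_\delta)_{j_1}$ as a restriction of $\psi$ (namely $\psi|_{x_{j_1}=0}$ if $j_1\ge 1$, or $\psi|_{x_1=1}$ if $j_1=0$), and the same recipe applied to $V_\delta$ yields the same restriction. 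Thus $(S_\delta)_{j_1}$ and $(V_\delta)_{j_1}$ are assigned identical Boolean functions on the facet $\Lambda_{j_1}$, and by the inductive hypothesis they are combinatorially equivalent. Applying this equivalence to the shorter sequence obtained from $(j_2,\dots,j_k)$ after the vertex relabeling of Convention~\ref{con:labeling}, and unfolding $A_{j_1,\dots,j_k}=\overline{A_{j_1,\dots,j_{k-1}}}\cap \Lambda_{j_1,\dots,j_k}$ iteratively, shows that $(S_\delta)_{j_1,\dots,j_k}$ and $(V_\delta)_{j_1,\dots,j_k}$ are simultaneously empty or non-empty.

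The logical content is routine two-way induction; the only point requiring care is the bookkeeping of indices under Convention~\ref{con:labeling}, making sure that after each restriction the surviving vertex labels match up correctly in both $\Lambda$ and $\Sigma$. In particular, one has to verify that the two ways of specifying the same restricted family---via Definition~\ref{def:boolean}'s inductive rule using $\psi|_{x_j=0}$ for $j\ge 1$ and $\psi|_{x_1=1}$ for $j=0$ versus directly iterating $A \mapsto \overline{A}\cap \Lambda_j$---produce consistent Boolean functions; this consistency is exactly what basic condition (C) and Remark~\ref{re:bool_funct}(iii) guarantee, so no new input is needed beyond the tools already established.
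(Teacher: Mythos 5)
Your proposal is correct and follows essentially the same route as the paper: induction on the dimension, passing to facet restrictions via Remark~\ref{re:comb_equiv} and Remark~\ref{re:bool_funct}(i) for one direction, and unfolding Definition~\ref{def:boolean} together with the fact that a restriction is empty exactly when its Boolean function is identically zero (Remark~\ref{re:bool_funct}(ii), which rests on the standing hereditary basic conditions) for the other. The paper's write-up is terser but the logical content and the dependencies on Convention~\ref{con:labeling} and condition (C) are the same.
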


\begin{proof}
Suppose $S_\delta$ and $V_\delta$ are assigned the same Boolean function.
According to Definition~\ref{def:boolean}, for any $(j_1, \ldots, j_k)$ the restrictions
$(S_\delta)_{j_1, \ldots , j_k}$ and $(V_\delta)_{j_1, \ldots , j_k}$ are also assigned the same
Boolean function (after renaming the vertices whenever appropriate).
In particular, this function is identical 0 or not identical 0 simultaneously for both faces.

We prove the converse statement by induction on $n$, the base case $n=0$, being obvious.
According to Remark~\ref{re:comb_equiv}, for every $j$ the restrictions $(S_\delta)_j$ and $(V_\delta)_j$
are combinatorially equivalent.
By the inductive hypothesis these restrictions are assigned the same $(n-1)$-variate Boolean function.
According to Definition~\ref{def:boolean}, for every $j$, the restrictions to $x_j=0$ of
Boolean functions $\psi$ and $\varphi$, assigned to $S_\delta$ and $V_\delta$
respectively coincide, and also restrictions of $\psi$ and $\varphi$ to $x_1=1$ coincide.
Hence, $\psi = \varphi$.
\end{proof}

Observe that when $n=1$ (respectively, $n=2$) there are exactly three (respectively, six) distinct monotone
Boolean functions.
Therefore, Lemma~\ref{le:same_function} implies that in this case there are exactly three
(respectively, six) distinct combinatorial equivalence classes of monotone families.

\begin{definition}
\label{def:lex-monotone}
A Boolean function $\psi:\> \{ 0,1 \}^n \to \{ 0,1 \}$ is {\em lex-monotone} if it is monotone with
respect to the lexicographic order of its arguments, assuming $x_1 < \cdots < x_n$.
\end{definition}

Note that when $n=1$ all monotone functions are lex-monotone, whereas for $n=2$ all functions except one
are lex-monotone.
In general, for the number of all monotone Boolean functions ({\em Dedekind number}) no closed-form expression is
known at the moment of writing.
On the other hand, the number of lex-monotone functions is easy to obtain.

\begin{lemma}\label{le:number_of_lex-monotone}
\begin{enumerate}[(i)]
\item
There are $2^n+1$ lex-monotone functions $\psi:\> \{ 0,1 \}^n \to \{ 0,1 \}$.
\item
If $\psi$ is lex-monotone then its restriction $\psi|_{x_i= \alpha}$, for
any $i= 1, \ldots ,n$ and any $\alpha \in \{ 0,1 \}$, is a lex-monotone function in $n-1$
variables.
\end{enumerate}
\end{lemma}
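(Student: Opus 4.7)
For part (i), I would begin by recognizing that the lexicographic order on $\{0,1\}^n$ is a \emph{total} linear order, turning the Boolean cube into a single chain $a_0 < a_1 < \cdots < a_{2^n-1}$ of length $2^n$. By Definition~\ref{def:lex-monotone}, $\psi$ is lex-monotone precisely when it is monotone (decreasing) along this chain, i.e.\ non-increasing in the ordinary sense. A $\{0,1\}$-valued non-increasing function on a chain of length $N$ is determined uniquely by its cut index $k\in\{0,1,\ldots,N\}$, namely the number of initial elements on which the value equals $1$ (the constants $\psi\equiv 1$ and $\psi\equiv 0$ correspond to $k=N$ and $k=0$, respectively). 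Taking $N=2^n$ yields exactly $2^n+1$ lex-monotone functions.

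For part (ii), fix $i\in\{1,\ldots,n\}$ and $\alpha\in\{0,1\}$, and consider the slice $A:=\{a\in\{0,1\}^n\mid a_i=\alpha\}$. My plan is to observe that $A$, equipped with the inherited lexicographic order, is order-isomorphic to $\{0,1\}^{n-1}$ (with its own lex order on the remaining variables, after relabeling positions $>i$) via the natural map that deletes the $i$-th coordinate. This is immediate: any two elements of $A$ agree at position $i$, so the lex comparison between them is decided at some other position and therefore coincides with the lex comparison of their images in $\{0,1\}^{n-1}$. Since the restriction of a non-increasing function to a sub-chain is automatically non-increasing, $\psi|_{x_i=\alpha}$ is lex-monotone in the $n-1$ remaining variables.

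Neither step presents a real obstacle; the only mild subtlety is verifying the order-isomorphism in part (ii), which becomes mechanical once the total-order viewpoint from part (i) is adopted. As a sanity check, $n=2$ gives $2^2+1=5$ lex-monotone functions, matching the remark immediately preceding the lemma that for $n=2$ all monotone Boolean functions except one are lex-monotone.
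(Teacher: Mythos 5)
Your proposal is correct and follows essentially the same route as the paper: part (i) counts non-increasing $\{0,1\}$-valued functions on the chain $(\{0,1\}^n,\le_{\mathrm{lex}})$ as the $2^n+1$ possible cut positions, which is exactly the paper's argument, and part (ii) merely spells out the order-isomorphism that the paper dismisses as straightforward.
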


\begin{proof}
\noindent (i)\ Every monotone function $\psi:\> A \to \{ 0,1 \}$, where
$A$ is a totally ordered finite set of cardinality $k$, can be represented as a $k$-sequence
of the kind $\psi=(1, \ldots ,1,0, \ldots ,0)$.
There are $k+1$ such sequences.
In our case $k=2^n$.
\medskip

\noindent (ii)\ Straightforward.
\end{proof}

\begin{definition}\label{def:standard}
A combinatorial equivalence class of a family $S_\delta$ is called {\em combinatorially standard}, and
the family itself is called {\em combinatorially standard}, if the corresponding Boolean
function is lex-monotone.
\end{definition}

The following picture shows representatives of all combinatorially standard families in cases $n=1$ and $n=2$.

\begin{figure}[hbt]
\vspace*{-0.75in}
       \centerline{
          \scalebox{0.6}{
             \includegraphics{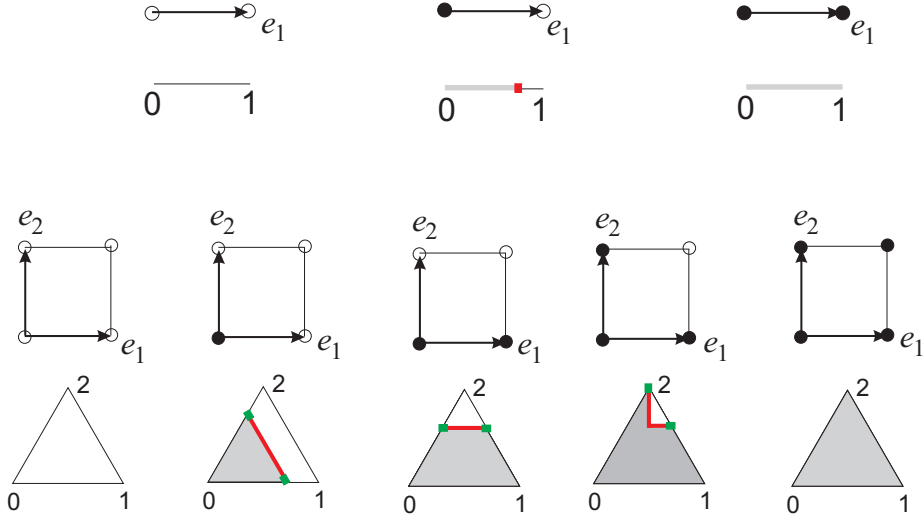}
             }
           }
\vspace*{-1.5in}
\caption{Standard families in dimensions one and two, with corresponding lex-monotone functions.}
\label{fig:1D-2D}
\end{figure}

\begin{lemma}\label{le:codim}
If a family $S_\delta$ is combinatorially standard, then the family $(S_\delta)_{j_1, \ldots , j_k}$
for each $(j_1, \ldots , j_k)$ is combinatorially standard.
\end{lemma}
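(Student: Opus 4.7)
\medskip
\noindent\textbf{Proof proposal.} My plan is to reduce the general $k$ to a single facet restriction and then quote Lemma~\ref{le:number_of_lex-monotone}(ii). I would induct on the length $k$ of the index sequence. The base case $k=0$ is tautological. For the inductive step with $k\ge 2$, the notation introduced just before Definition~\ref{def:canonical} expresses $(S_\delta)_{j_1,\ldots,j_k}$ as a restriction of $(S_\delta)_{j_1,\ldots,j_{k-1}}$ to a single facet of the re-labeled simplex $\Lambda_{j_1,\ldots,j_{k-1}}$; the inductive hypothesis says this last family is combinatorially standard, so applying the $k=1$ case to it closes the loop. Thus the whole argument reduces to the case $k=1$.

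For $k=1$, let $\psi\colon\{0,1\}^n\to\{0,1\}$ be the lex-monotone Boolean function assigned to $S_\delta$. Unpacking Definition~\ref{def:boolean}, after the vertex-renaming prescribed by Convention~\ref{con:labeling} the Boolean function attached to $(S_\delta)_j$ is the restriction $\psi|_{x_j=0}$ when $j\ne 0$, and the restriction $\psi|_{x_1=1}$ when $j=0$. In either case Lemma~\ref{le:number_of_lex-monotone}(ii) states that this restriction is a lex-monotone function in $n-1$ variables, which is precisely the statement that $(S_\delta)_j$ is combinatorially standard.

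The only point I would verify with care is that the vertex renaming (which sends an old index $i$ to $i-1$ for $i$ greater than the suppressed vertex) does not disturb lex-monotonicity. This is cosmetic: the renaming is order-preserving, so the lexicographic order $x_1<\cdots<x_{n-1}$ on the renamed variables agrees with the restriction of the original lexicographic order to the surviving indices, and monotonicity with respect to one order coincides with monotonicity with respect to the other. I do not expect any genuine obstacle here; all of the mathematical content is already contained in Lemma~\ref{le:number_of_lex-monotone}(ii), and the rest is bookkeeping about the re-labeling convention.
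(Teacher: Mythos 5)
Your proof is correct and is essentially the argument the paper intends: the paper disposes of this lemma in one line (``follows immediately from the definition''), and your write-up simply unpacks that via Definition~\ref{def:boolean} and Lemma~\ref{le:number_of_lex-monotone}(ii), including the (correct) observation that the order-preserving relabeling of vertices is harmless for lex-monotonicity. No gaps.
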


\begin{proof}
Follows immediately from the definition of the combinatorial equivalence.
\end{proof}

\begin{definition}\label{def:model}
Let $\psi (x_1, \ldots ,x_n)$ be a lex-monotone Boolean function.
A family $V_\delta$ in the ordered standard simplex $\Delta^n$ is called the {\em model family}
assigned to $\psi$, if it is constructed inductively as follows.
\begin{enumerate}
\item
The {\em non-proper} family $V_\delta=\emptyset$ is assigned to $\psi \equiv 1$ when $\delta >1$, and
to $\psi \equiv 0$.
The {\em non-proper} family $V_\delta=\Delta^n$ is assigned to $\psi \equiv 1$, when $\delta \le 1$.
In case $n=0$, there are no other families.

\item
If $\psi\not\equiv 0$ while $\psi|_{x_1=1}\equiv 0$, then let
$V_\delta=\Delta^n \cap \{t_1+ \cdots +t_n \le 1- \delta \}$.

\item
If $\psi|_{x_1=0}=\psi|_{x_1=1}$, then $V_\delta$ is the pre-image of the model family in
$\Delta^{n-1}$ assigned to $\psi|_{x_1=0}$ under the projection map $\Delta^n \to \Delta^{n-1}$ gluing
together vertices $0$ and $1$ of $\Delta^n$.

\item
Let $\psi|_{x_1=0}\ne \psi|_{x_1=1} \not\equiv 0$.
Define $U_\delta$ in $\Delta^n$ as the family
assigned as in (3), to the Boolean function $\varphi (x_1, \ldots, x_n)$ such that
$\varphi|_{x_1=0}=\varphi|_{x_1=1}=\psi|_{x_1=0}$.
Define $W_{\delta}$ in $\Delta^n$ as the family
assigned as in (3), to the Boolean function $\xi (x_1, \ldots ,x_n)$ such that
$\xi|_{x_1=0}=\xi|_{x_1=1}=\psi|_{x_1=1}$.
Let $\widehat \Delta$ denote the closure of $\Delta^n \cap \{ 2t_1+t_2 \cdots +t_n < 1 \}$.
Define the family $V_\delta$ in $\Delta^n$ as $(U_\delta \cap \widehat \Delta) \cup W_\delta$.
\end{enumerate}
\end{definition}

Here are the lists of all {\em model families} in ordered $\Delta^n$ in cases $n=1$ and $n=2$.

{\bf Case} $n=1$.

\begin{itemize}
\item[(0)]
$\emptyset$
\item[(1)]
$\{t \le 1-\delta \} \cap \Delta^1$
\item[(2)]
$(0,1)= \Delta^1$
\end{itemize}

{\bf Case} $n=2$.
\begin{itemize}
\item[(0)]
$\emptyset$
\item[(1)]
$\{ t_1+t_2 \le 1- \delta \} \cap \Delta^2$
\item[(2)]
$\{ t_2 \le 1-\delta \} \cap \Delta^2$
\item[(3)]
$ (\{t_2\le 1-\delta\}\cup\{2 t_1+t_2\le 1\}) \cap \Delta^2$
\item[(4)]
$\Delta^2$
\end{itemize}

In cases $n \le 2$, the Figure~\ref{fig:1D-2D} actually shows all model families.

In the case $n=1$, all monotone Boolean functions are lex-monotone, hence families (0)--(2) represent
all combinatorial classes of families satisfying the basic conditions, and these classes are
combinatorially standard.

In the case $n=2$, there is only one monotone function which is not lex-monotone,
$\psi|_{x_1=0} \equiv 1, \> \psi|_{x_1=1} \equiv 0$, and we can define the {\em non-standard
model family} for $\psi$ as

\begin{itemize}
\item[(5)]
$(\{t_1+t_2\le 1-\delta\}\cup\{2 t_1+t_2\le 1\})\cap\Delta^2$.
\end{itemize}

\begin{remark}
The non-standard model family (5) can be obtained as a result of the procedure in item (4) of
Definition~\ref{def:model}.
Also (5) is combinatorially equivalent to $\{ t_1/(1-\delta) +t_2 \le 1 \} \cap \Delta^2$
which corresponds to the standard blow-up at the vertex labeled by 2 of $\Delta^2$.
\end{remark}

Note that for large $n$ most of the monotone Boolean functions are not lex-monotone (Dedekind number grows
superexponentially), hence most families are not standard.

\begin{lemma}\label{le:family_to_function}
There is a bijection between all combinatorially standard equivalence classes and all lex-monotone
Boolean functions.
\end{lemma}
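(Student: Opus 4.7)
The plan is to deduce the bijection from Lemma~\ref{le:same_function} together with a realizability statement provided by the model families of Definition~\ref{def:model}. By Lemma~\ref{le:same_function}, the assignment $S_\delta\mapsto\psi$ of a Boolean function to a monotone family descends to a \emph{well-defined and injective} map from combinatorial equivalence classes to monotone Boolean functions; moreover, Definition~\ref{def:standard} declares a class to be combinatorially standard precisely when the image function is lex-monotone. Hence the map under consideration is automatically a well-defined injection, and the only remaining issue is surjectivity: every lex-monotone Boolean function $\psi$ must be realized as the function assigned to some monotone family.

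The natural witness is the model family $V_\delta$ assigned to $\psi$ in Definition~\ref{def:model}. I would prove, by induction on $n$, the statement: for every lex-monotone $\psi:\{0,1\}^n\to\{0,1\}$, the model family $V_\delta$ is a monotone family in $\Delta^n$ satisfying the hereditary basic conditions (A)--(D), and its assigned Boolean function equals $\psi$. The base case $n=0$ is immediate from item (1) in each of Definitions~\ref{def:boolean} and~\ref{def:model}. For the inductive step, Lemma~\ref{le:number_of_lex-monotone}(ii) ensures that each restriction $\psi|_{x_i=\alpha}$ is lex-monotone, so the recipe of Definition~\ref{def:model} can be applied to each restriction and the inductive hypothesis invoked. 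It then suffices to compute the facet restrictions $(V_\delta)_0$ and $(V_\delta)_j$ for $j\ge 1$, and compare them term-by-term with the prescription of Definition~\ref{def:boolean} for $\psi$.

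Cases (2) and (3) of Definition~\ref{def:model} are straightforward: in Case (2) the inequality $t_1+\cdots+t_n\le 1-\delta$ gives empty restrictions to every facet through the vertex $0$ and the full vertex-$0$-opposite facet (up to the shrinking of $\delta$), exactly matching $\psi|_{x_1=1}\equiv0$; in Case (3) the projection $\Delta^n\to\Delta^{n-1}$ gluing vertices $0$ and $1$ commutes with passing to the facets $\Delta^n_j$ for $j\ge 2$, and the facets $\Delta^n_0,\Delta^n_1$ both map to $\Delta^{n-1}_0$, yielding the equality $(V_\delta)_0=(V_\delta)_1$ that encodes $\psi|_{x_1=0}=\psi|_{x_1=1}$. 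The main obstacle is Case (4), the combination $V_\delta=(U_\delta\cap\widehat\Delta)\cup W_\delta$: one must verify that the ``cut'' hyperplane $\{2t_1+t_2+\cdots+t_n=1\}$ separates things so that the restriction to $\Delta^n_1$ equals $(U_\delta)_1$ (thereby realizing $\psi|_{x_1=0}$), the restriction to $\Delta^n_0$ equals $(W_\delta)_0$ (thereby realizing $\psi|_{x_1=1}$), and that on the remaining facets $\Delta^n_j$, $j\ge 2$, the two pieces agree and reproduce $\psi|_{x_j=0}$ — this last coincidence is forced by the symmetry $\varphi|_{x_1=0}=\varphi|_{x_1=1}=\psi|_{x_1=0}$ built into the definitions of $U_\delta$ and $W_\delta$.

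Finally, a cardinality check confirms the result: the inductive recipe produces one class for $\psi\equiv 0$, one for $\psi\equiv 1$, and then splits further according to Cases (2)--(4), giving the recursion $a_n=a_{n-1}+2^{n-1}$ and hence $a_n=2^n+1$, in agreement with Lemma~\ref{le:number_of_lex-monotone}(i). Together with the already established injectivity, this completes the bijection.
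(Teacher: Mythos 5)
Your proposal is correct and follows essentially the same route as the paper: well-definedness and injectivity come from Lemma~\ref{le:same_function}, and surjectivity comes from checking that the model family of Definition~\ref{def:model} is assigned its defining lex-monotone function (a verification the paper declares ``straightforward'' and you carry out case by case). The extra cardinality check is a harmless confirmation rather than a new ingredient.
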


\begin{proof}
According to Lemma~\ref{le:same_function}, to any two combinatorially equivalent families the
same Boolean function is assigned.
It is straightforward to show that the model family, corresponding by Definition~\ref{def:model}
to a given lex-monotone function,
is assigned this same function by Definition~\ref{def:boolean}.
By Lemma~\ref{le:same_function}, any two families having the same Boolean function belong to the same
equivalence class.
\end{proof}

\begin{definition}
A lex-monotone Boolean function $\psi (x_1, \ldots, x_n)$ is called {\em separable} if
the set $\{(x_1, \ldots, x_n)|\> \psi (x_1, \ldots, x_n)=1 \}$ consists of either 0 or $2^k$ points,
for some $k \in \{ 0, \ldots ,n \}$.
\end{definition}

\begin{remark}\label{re:number_of_separable}
The number of separable functions in $n$ variables is $n+2$, since for each
$k \in \{0, \ldots , 2^n \}$ there is the unique lex-monotone function $\psi$ with
cardinality of $\{ \psi=1 \}$ equal to $k$ (cf. the proof of Lemma~\ref{le:number_of_lex-monotone}, (i)).
It follows that all lex-monotone
functions are separable for $n \le 1$ and there is a single non-separable lex-monotone function for $n=2$.
\end{remark}

\begin{lemma}\label{le:separableBoolean}
A lex-monotone Boolean function $\psi (x_1, \ldots, x_n)$, where $n \ge 0$, is separable if and
only if either $\psi|_{x_1=1} \equiv 0$, or $\psi|_{x_1=0}$ and $\psi|_{x_1=1}$ are separable and equal.
\end{lemma}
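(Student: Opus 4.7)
The plan is to reduce the lemma to a direct combinatorial verification using the characterization of lex-monotone functions given in the proof of Lemma~\ref{le:number_of_lex-monotone}(i). That proof identifies each lex-monotone $\psi : \{0,1\}^n \to \{0,1\}$ with the integer $k := |\psi^{-1}(1)| \in \{0, 1, \ldots, 2^n\}$: the function $\psi$ equals $1$ on precisely the first $k$ elements of $\{0,1\}^n$ in the lex order. In these terms separability is exactly the arithmetic condition $k = 0$ or $k = 2^j$ for some $j \in \{0, \ldots, n\}$, and I will rewrite the right-hand side of the lemma as a similar condition on $k$.

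The key structural observation is that, in the paper's lex order, consecutive pairs of elements differ only in $x_1$: the elements in positions $2i - 1$ and $2i$ of the enumeration are $(0, y_2, \ldots, y_n)$ and $(1, y_2, \ldots, y_n)$ for a common tail. (The relevant interpretation of ``lexicographic order assuming $x_1 < \cdots < x_n$'' is the one in which $x_1$ is the least significant variable; this is also forced by Definition~\ref{def:model}, since otherwise $\psi(x_1, x_2) = 1 - x_1$ would be lex-monotone and would have to appear among the standard $2$-ary families, contradicting the stated list after that definition.) From this pairing one reads off three equivalent reformulations: $\psi|_{x_1 = 0}$ and $\psi|_{x_1 = 1}$ are lex-monotone on $\{0,1\}^{n-1}$ with thresholds $\lceil k/2 \rceil$ and $\lfloor k/2 \rfloor$ respectively; they coincide iff $k$ is even; and $\psi|_{x_1 = 1} \equiv 0$ iff $k \in \{0, 1\}$.

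The two alternatives on the right-hand side of the lemma therefore translate to ``$k \in \{0, 1\}$'' and ``$k$ is even with common threshold $k/2$ equal to $0$ or a power of $2$''. Their union as subsets of $\{0, 1, \ldots, 2^n\}$ is $\{0, 1\} \cup \{0, 2, 4, 8, \ldots, 2^n\} = \{0\} \cup \{2^j : 0 \le j \le n\}$, which is exactly the separability condition on $k$; this yields both directions of the equivalence at once. The base case $n = 0$ is trivial, as both lex-monotone functions on $\{0,1\}^0$ are separable and the right-hand side is vacuous. The only real ``obstacle'' is identifying the correct lex-order convention (in which $x_1$ is the least significant) so that the pairing by $x_1$-flipping is visible; once that is set up, the proof is a one-line index count.
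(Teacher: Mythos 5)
Your proof is correct, and it rests on the same foundation as the paper's: the identification of a lex-monotone function with its threshold $k=|\psi^{-1}(1)|$ from the proof of Lemma~\ref{le:number_of_lex-monotone}(i), together with the observation that separability is exactly the condition $k\in\{0\}\cup\{2^j:0\le j\le n\}$. Where you diverge is in the converse direction. The paper verifies directly only that the right-hand side implies separability (if $\psi|_{x_1=1}\equiv 0$ then $k\le 1$; otherwise $k$ is twice the cardinality of $\{\psi|_{x_1=0}=1\}$, which is a power of $2$ by hypothesis), and then obtains the reverse implication by a counting argument: the functions satisfying the right-hand side and the separable functions each number exactly $n+2$ (Remark~\ref{re:number_of_separable}), so the inclusion is an equality. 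You instead compute the thresholds of the two restrictions explicitly as $\lceil k/2\rceil$ and $\lfloor k/2\rfloor$, using the pairing of consecutive elements of the lex order, and check that the two sides of the lemma carve out the same subset of $\{0,\ldots,2^n\}$; this yields both implications at once and replaces the counting step by a direct set identity, at the cost of having to pin down the lex-order convention ($x_1$ least significant). Your identification of that convention is correct; besides your argument via Definition~\ref{def:model}, it is also forced by the paper's assertion that the unique non-lex-monotone monotone function for $n=2$ is the one with $\psi|_{x_1=0}\equiv 1$, $\psi|_{x_1=1}\equiv 0$. The only loose end, shared with the paper, is the degenerate case $n=0$, where the restrictions $\psi|_{x_1=\alpha}$ are not literally defined.
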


\begin{proof}
Let $\psi|_{x_1=1} \equiv 0$, or $\psi|_{x_1=0}$ and $\psi|_{x_1=1}$ be separable and equal.
By Remark~\ref{re:number_of_separable}, $\psi$ is separable when $n=0$.
If $\psi|_{x_1=1} \equiv 0$ then, by lex-monotonicity, $\{\psi|_{x_1=0}=1 \}$ consists of a single point.
If $\psi|_{x_1=1} \not\equiv 0$ then the cardinality of
$\{\psi =1 \}$ is twice the cardinality of $\{\psi |_{x_1=0}=1 \}$.
The latter, by the assumption, is a power of 2.

Conversely, there are exactly $n+2$ such distinct functions $\psi$, while the number of different separable
functions is also $n+2$, by Remark~\ref{re:number_of_separable}.
\end{proof}

\begin{lemma}
Let $V_\delta=\{ f \ge \delta \} \cap \Delta$ be a model family, and $\psi$ its corresponding lex-monotone
Boolean function.
The following properties are equivalent.
\begin{enumerate}[(i)]
\item
$V_\delta$ is separable;
\item
$\psi$ is separable;
\item
$f$ is continuous in $\overline \Delta$.
\end{enumerate}
\end{lemma}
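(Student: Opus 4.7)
The plan is to argue all three equivalences together by induction on $n=\dim\Delta$, following the four-case recursive structure of Definition~\ref{def:model}. The base case $n=0$ is immediate: $\Delta^0$ is a single point, and in the only two possibilities ($\psi\equiv 0$ or $\psi\equiv 1$) all three properties hold trivially. For the inductive step, Lemma~\ref{le:separableBoolean} tells us that $\psi$ is separable precisely when we are in Cases 1--3 of Definition~\ref{def:model} (with the appropriate restrictions of $\psi$ separable), while Case 4 is exactly when $\psi$ is non-separable, so it will suffice to check, case by case, that (iii) and (i) match this.

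In Cases 1 and 2, the function $f$ is either constant or the linear function $1-(t_1+\cdots+t_n)$, hence continuous on $\overline\Delta$; the model family $V_\delta$ is empty, all of $\Delta$, or the sub-simplex $\{t_1+\cdots+t_n\le 1-\delta\}$, and separability follows by a direct face-by-face check (using Remark~\ref{re:separable2} for the always-true direction). In Case 3 ($\psi|_{x_1=0}=\psi|_{x_1=1}$) the model family is the pullback $V_\delta^n=\pi^{-1}(V_\delta^{n-1})$ under $\pi(t_1,\ldots,t_n)=(t_2,\ldots,t_n)$, so $f^n=f^{n-1}\circ\pi$ and continuity of $f^n$ is equivalent to continuity of $f^{n-1}$. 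For separability I would dissect the faces of $\Delta^n$: the faces $\Lambda_0^n$ and $\Lambda_1^n$ each map bijectively onto $\overline{\Delta^{n-1}}$ with $V_\delta^n\cap\Lambda_j^n$ identified with $V_\delta^{n-1}$, while $\Lambda_j^n$ for $j\ge 2$ project to the corresponding face of $\Delta^{n-1}$. This lets me transfer the separability condition between $V_\delta^n$ and $V_\delta^{n-1}$, and the inductive hypothesis closes the case.

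In Case 4 ($\psi|_{x_1=0}\ne\psi|_{x_1=1}\not\equiv 0$) all three properties fail. By construction $V_\delta=(U_\delta\cap\hat\Delta)\cup W_\delta$, where $U_\delta$, $W_\delta$ are the Case 3 families for $\psi|_{x_1=0}$ and $\psi|_{x_1=1}$; letting $f_U,f_W$ be the associated functions (Lemma~\ref{le:S_by_function}), one has $f=\max(f_U,f_W)$ on $\hat\Delta$ and $f=f_W$ outside. Picking a vertex $v\in\{0,1\}^{n-1}$ with $\psi|_{x_1=0}(v)=1>0=\psi|_{x_1=1}(v)$ and a point $p\in\partial\hat\Delta\cap\Delta=\{2t_1+t_2+\cdots+t_n=1\}$ whose projection $(p_2,\ldots,p_n)$ lies near the vertex corresponding to $v$, one verifies $f_U(p)>f_W(p)$; then $f$ drops on crossing $\partial\hat\Delta$ outward, giving the required discontinuity. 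For non-separability of $V_\delta$, I would examine the face $\Lambda'$ of $\Delta$ corresponding to $v$: the piece $U_\delta\cap\hat\Delta$ produces a point of $\overline{V_\delta}\cap\Lambda'$, whereas $\psi|_{x_1=1}(v)=0$ forces $\Lambda'\subset\overline{\Delta\setminus V_\delta}$, contradicting Definition~\ref{def:separable2}.

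The main obstacle will be Case 4: converting the combinatorial inequality $\psi|_{x_1=0}\ne\psi|_{x_1=1}$ into an explicit witness face of $\Delta$ for failure of separability requires unpacking the nested definitions of $U_\delta$ and $W_\delta$ (themselves built by Case 3 from the restrictions of $\psi$) and tracking, with the help of the explicit geometry of $\hat\Delta$ near the offending face, simultaneous approach sequences to points of that face from within $V_\delta$ and from within $\Delta\setminus V_\delta$.
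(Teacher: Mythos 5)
Your overall strategy---induction on $n$ driven by Lemma~\ref{le:separableBoolean} and the recursive case structure of Definition~\ref{def:model}---is the same as the paper's, and your Cases (1)--(3) match its separable case essentially verbatim. One framing sentence is wrong: ``Case 4 is exactly when $\psi$ is non-separable'' is false, since Case (3) with a non-separable common restriction also yields a non-separable $\psi$; your Case (3) argument does in fact cover that situation (transferring all three properties through the projection and invoking the inductive hypothesis works whether or not the restriction is separable), but this should be said explicitly since it is where part of the ``only if'' direction lives. The genuine divergence is in the non-separable case. The paper splits it via Lemma~\ref{le:separableBoolean}: when at least one restriction is non-separable it simply passes to the facet $\Delta^n_1$ and applies the inductive hypothesis there, so no witness has to be built from scratch; only in the subcase where both restrictions are separable but different does it argue directly, by comparing the maximal faces $\Delta'\subset\overline{\Delta^n_0\setminus(V_\delta)_0}$ and $\Delta''\subset\overline{\Delta^n_1\setminus(V_\delta)_1}$, which are distinct and nested. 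You instead attack all of Definition-case (4) uniformly with an explicit witness vertex $v$ and the geometry of $\widehat{\Delta}$. This can be made to work, but as written the two hardest claims are asserted rather than proved: that $\psi|_{x_1=1}(v)=0$ forces the corresponding face of $\Delta$ to lie in $\overline{\Delta\setminus V_\delta}$ (the Boolean value only says one particular iterated restriction is empty, and the two chains of restrictions attached to $(0,v)$ and $(1,v)$ must be shown to land on the same vertex), and that $f_U(p)>f_W(p)$ at some $p$ on $\{2t_1+\cdots+t_n=1\}$. For the latter, note that this hyperplane section maps bijectively onto $\Delta^{n-1}$ under the gluing projection, so it suffices to observe that containment of the model family for $\psi|_{x_1=0}$ in the one for $\psi|_{x_1=1}$ would force $\psi|_{x_1=0}\le\psi|_{x_1=1}$, contradicting the choice of $v$. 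You flag this obstacle yourself; the paper's facet-induction sidesteps most of it, at the price of a separate ad hoc argument in the remaining subcase.
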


\begin{proof}
We prove the lemma by induction on $n$, the basis for $n=0$ being obvious.

Let $\psi$ be separable, and $n>0$.
By Lemma~\ref{le:separableBoolean}, either $\psi|_{x_1=1} \equiv 0$, or
$\psi|_{x_1=0}$ and $\psi|_{x_1=1}$ are separable and equal.
In the first case, by Definition~\ref{def:model}, (2), the family
$V_\delta$ coincides with $\Delta^n \cap \{t_1+ \cdots +t_n \le 1- \delta \}$, and hence is separable.
It is clear that the function $f$ for this family is continuous.

In the second case, by Definition~\ref{def:model}, (3),
$V_\delta$ is the pre-image of the separable (by the inductive hypothesis) family in
$\Delta^{n-1}$, assigned to the separable function $\psi|_{x_1=0}$ and having a continuous
defining function, under the projection map $\Delta^n \to \Delta^{n-1}$ gluing together vertices
$0$ and $1$ of $\Delta^n$, and hence is separable.
It is clear that the the function $f$ for $V_\delta$ is also continuous.

Let $\psi$ be not separable.
Then, by Lemma~\ref{le:separableBoolean}, there are two possibilities.
First, $\psi|_{x_1=1} \not\equiv 0$, and at least one of the restrictions $\psi|_{x_1=0}$ or
$\psi|_{x_1=1}$ is not separable.
Let, for definiteness, it be $\psi|_{x_1=0}$, and let $\Delta'$ be a face of $\Delta^{n-1}_1$ such that
$\Delta' \subset \overline{\Delta^{n-1}_1 \setminus (V_\delta)_1}$.
Note that $\Delta'$ is also a face of $\Delta^n$, and
$\Delta' \subset \overline{\Delta^{n-1} \setminus V_\delta}$.
By the inductive hypothesis, $(\overline V_\delta)_1 \cap \Delta' \neq \emptyset$, hence
$\overline V_\delta \cap \Delta' \neq \emptyset$, and we conclude that $V_\delta$ is not separable.
Also by the inductive hypothesis, the defining function $f_1$ of the family $(V_\delta)_1$
(see Definition~\ref{def:ext}) is not continuous, hence the function $f$ is not continuous.

The second possibility is that both restrictions, $\psi|_{x_1=0}$ and $\psi|_{x_1=1}$, are separable
but different functions.
Then the faces $\Delta'$ and $\Delta''$ of $\Delta^n$ of the largest dimensions such that
$\Delta' \subset \overline{\Delta^n_0 \setminus (V_\delta)_0}$ and
$\Delta'' \subset \overline{\Delta^n_1 \setminus (V_\delta)_1}$ are also different, one of them is a face
of another (say, $\Delta'' \subset \overline{\Delta'}$), and both lie in
$\overline{\Delta^n \setminus V_\delta}$.
It follows that $\overline V_\delta \cap \Delta' \neq \emptyset$, hence $V_\delta$ is not separable.
The values of the function $f$ at points of $\Delta^n_0$ sufficiently close to $\Delta'$ and sufficiently
far from $\Delta''$, are close to 0, while the values of $f$ at points of $\Delta^n_1$ with the same property
are separated from 0.
It follows that $f$ is not continuous.
\end{proof}

\begin{lemma}\label{le:model_regular}
For each model family $V_\delta$ in the ordered standard simplex $\Delta^n$
its interior ${\rm int}(V_\delta)$ is a semi-monotone set.
\end{lemma}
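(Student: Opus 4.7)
The proof proceeds by induction on $n$, with a strengthened inductive hypothesis. Namely, I claim that for each model family $V_\delta$ in $\Delta^n$ the interior ${\rm int}(V_\delta)$ satisfies property (i) of Lemma~\ref{le:cond_for_semi}: for every $\x \in {\rm int}(V_\delta)$, the box $B_\x$ is contained in ${\rm int}(V_\delta)$. By that lemma, the strengthened claim implies semi-monotonicity, so it gives exactly what the lemma demands. The equivalent form (ii), asking that for every $\x \in {\rm int}(V_\delta)$ and every $j$ the axis-aligned interval $I_{\x,j}$ remains in ${\rm int}(V_\delta)$, is the more convenient form for the induction.

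The base case $n=0$ is vacuous. For the inductive step I case-split along the four branches of Definition~\ref{def:model}. Cases (1) and (2) are immediate: ${\rm int}(V_\delta)$ is either empty, all of $\Delta^n$, or the open convex slice $\Delta^n \cap \{t_1+\cdots+t_n < 1-\delta\}$, and in each of these decreasing any coordinate $t_j$ preserves membership, so property (ii) holds. For case (3), let $\pi\colon \Delta^n \to \Delta^{n-1}$ denote the projection forgetting $t_1$, so that $V_\delta = \pi^{-1}(V'_\delta)$ for the model family $V'_\delta$ in $\Delta^{n-1}$ attached to $\psi|_{x_1=0}$. Since $\pi$ is a continuous open surjection, one checks that ${\rm int}(V_\delta) = \pi^{-1}({\rm int}(V'_\delta))$; property (ii) at $\x$ in direction $j$ is then trivial when $j=1$ (as $\pi(\y)=\pi(\x)$) and reduces to the inductive hypothesis applied to $V'_\delta$ when $j\ge 2$, via the observation that $\pi(\y) \in I_{\pi(\x),\,j-1}$.

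Case (4) is the substantive one. Write $U_\delta = \pi^{-1}(V^0_\delta)$ and $W_\delta = \pi^{-1}(V^1_\delta)$, where $V^0_\delta$ and $V^1_\delta$ are the model families in $\Delta^{n-1}$ corresponding to $\psi|_{x_1=0}$ and $\psi|_{x_1=1}$. The pointwise monotonicity of $\psi$ gives $V^0_\delta \supset V^1_\delta$, hence $U_\delta \supset W_\delta$, so $V_\delta$ coincides with $U_\delta$ on the open set $\widehat\Delta^{\circ} = \Delta^n \cap \{2t_1+t_2+\cdots+t_n < 1\}$ and with $W_\delta$ on its complement in $\Delta^n$. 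I will identify ${\rm int}(V_\delta) = A \cup B$, with $A := {\rm int}(U_\delta) \cap \widehat\Delta^{\circ}$ and $B := {\rm int}(W_\delta)$. The only non-obvious inclusion is for $\x \in {\rm int}(V_\delta)$ lying on the hyperplane $H = \{2t_1+t_2+\cdots+t_n = 1\}$: choosing a ball $B(\x, r) \subset V_\delta$, the half-ball $B(\x,r) \cap \{2t_1+t_2+\cdots+t_n > 1\}$ is a non-empty open subset of $W_\delta$, and pushing it forward by the open map $\pi$ produces an open neighborhood of $\pi(\x)$ inside $V^1_\delta$, forcing $\x \in {\rm int}(W_\delta) = B$. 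Property (ii) for $A \cup B$ then reduces to the case (3) arguments applied to $V^0_\delta$ or $V^1_\delta$ according to whether $\x \in A$ or $\x \in B$, together with the fact that decreasing any $t_j$ only decreases $2t_1+t_2+\cdots+t_n$.

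The main obstacle is the case (4) computation of ${\rm int}(V_\delta)$ at points on $H$, where the interior condition must be checked on both sides of the hyperplane; openness of $\pi$ is essential here, letting one convert a half-ball contained in $W_\delta$ into a genuine open neighborhood of the projected point inside $V^1_\delta$.
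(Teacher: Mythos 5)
Your proof is correct and follows essentially the same route as the paper's: induction on $n$, verifying condition (ii) of Lemma~\ref{le:cond_for_semi} case by case along Definition~\ref{def:model}, and in case (4) invoking the stability of that condition under unions and intersections for the decomposition ${\rm int}(V_\delta)=({\rm int}(U_\delta)\cap{\rm int}(\widehat\Delta))\cup{\rm int}(W_\delta)$. Your treatment of points on the hyperplane $\{2t_1+t_2+\cdots+t_n=1\}$ via openness of the projection is a welcome justification of an identity the paper states without comment, but it is a refinement of the same argument rather than a different one.
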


\begin{proof}
Consider cases (1)--(4) of Definition~\ref{def:model}.

If $V_\delta$ is defined according to either (1) or (2), then ${\rm int}(V_\delta)$ is
a convex set, hence semi-monotone.

For other cases
we prove by induction on $n$, that the family ${\rm int}(V_\delta)$ satisfies condition (ii) in
Lemma~\ref{le:cond_for_semi}, this implies its semi-monotonicity.
The base for $n=0$ is obvious.

Suppose $V_\delta$ is defined according to case (3), and let $V'_\delta$ be the family in
$\Delta^{n-1}$ assigned to $\psi|_{x_1=0}$ under the projection $\Delta^n \to \Delta^{n-1}$.
Take any $\x=(\x',x_1) \in {\rm int}(V_\delta)$ and $j \in \{2, \ldots ,n\}$,
then, by the inductive hypothesis, the interval
$$I'_{\x',j}:= \{(y_2, \ldots ,y_n) \in \Real^{n-1}|\> 0<y_j<x_j,\> y_i=x_i\> \text{for}\> i\ne j\}$$
lies in ${\rm int}(V'_\delta)$.
Since $\Delta^n$ is convex, it follows that $(I'_{\x',j} \times {\rm span} \{ x_1 \}) \cap \Delta^n$
lies in ${\rm int}(V_\delta)$, therefore $I_{\x,j} \subset {\rm int}(V_\delta)$ and
$I_{\x, 1} \subset {\rm int}(V_\delta)$, i.e., ${\rm int}(V_\delta)$
satisfies the condition (ii) in Lemma~\ref{le:cond_for_semi}.

Now suppose that $V_\delta$ is defined according to case (4).
By the same argument as was applied to $V_\delta$ in case (3), we show that families ${\rm int}(U_\delta)$ and
${\rm int}(W_\delta)$ satisfy the condition (ii) in Lemma~\ref{le:cond_for_semi}.
Simplex $\widehat \Delta$ satisfies this condition trivially.
Then Lemma~\ref{le:cond_for_semi} implies that the set
$${\rm int}(V_\delta)=({\rm int}(U_{\delta}) \cap {\rm int} (\widehat \Delta)) \cup {\rm int}(W_{\delta})$$
also satisfies the condition (ii) in Lemma~\ref{le:cond_for_semi}.

Using Lemma~\ref{le:cond_for_semi}, we conclude that in all cases, ${\rm int}(V_\delta)$ is semi-monotone.
\end{proof}

\begin{remark}\label{re:model_boundary}
For a proper model family $V_\delta$ its boundary $\partial_{\Delta^n} V_\delta$ in $\Delta^n$
is not necessarily a monotone cell.
However, $\partial_{\Delta^n} V_\delta$ is always a regular $(n-1)$-cell, because its complement in
the whole boundary $\partial V_\delta$ is a union of monotone (hence, regular) $(n-1)$-cells
glued together with the same nerve as that of the complement of a vertex in the boundary of
the simplex $\Delta^n$ (considered as a simplicial complex of all of its faces).
An exception is the family with $\psi|_{x_1=1}\equiv 0$, for which the nerve is the same as for the
complement of an $(n-1)$-face instead of a vertex.
\end{remark}

\begin{lemma}\label{le:model_prop}
Every model family $V_\delta$ in the standard ordered simplex $\Delta^n$ satisfies the
following properties.
\begin{enumerate}
\item
$V_\delta$ satisfies the basic conditions (A)--(D), and these conditions are hereditary.
\item
For any $(j_1, \ldots, j_k)$ the restriction
$(V_\delta)_{j_1, \ldots , j_k}$
is a model family.
\item
${\rm int}(V_\delta)$ is a regular $n$-cell while $\partial_\Delta V_\delta$ is a regular $(n-1)$-cell.
\end{enumerate}
\end{lemma}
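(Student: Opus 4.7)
The plan is an induction on $n$, proving items (1), (2), (3) simultaneously. The base case $n=0$ is immediate, since the only model families are $\emptyset$ and $\Delta^0$. For the inductive step we organise the argument by the four construction cases of Definition~\ref{def:model}: the non-proper families in case (1); the simplex sublevel set $V_\delta=\{t_1+\cdots+t_n\le 1-\delta\}\cap\Delta^n$ in case (2); the pull-back $V_\delta=\pi^{-1}(V'_\delta)$ under the gluing projection $\pi:\Delta^n\to\Delta^{n-1}$ in case (3); and the gluing $V_\delta=(U_\delta\cap\widehat\Delta)\cup W_\delta$ in case (4). Cases (1)--(3) will be essentially direct from the explicit descriptions together with the inductive hypothesis, while case (4) carries the real content.

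Item (3) is almost free: Lemma~\ref{le:model_regular} has already shown that ${\rm int}(V_\delta)$ is semi-monotone in all four cases, hence a topologically regular $n$-cell by Proposition~\ref{Theorem13}. That $\partial_\Delta V_\delta$ is a regular $(n-1)$-cell follows from the description in Remark~\ref{re:model_boundary}: its complement in $\partial V_\delta$ is a union of monotone (hence regular) $(n-1)$-cells, namely the pieces $\overline V_\delta\cap\Lambda_j$ on the facets of $\Delta^n$, glued with the same combinatorial nerve as the complement of a vertex in $\partial\Delta^n$ (or, in the exceptional subcase $\psi|_{x_1=1}\equiv 0$, of a facet).

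For item (2) we compute $(V_\delta)_j$ on each facet $\Lambda_j$. In case (3), $\pi$ restricts to bijections $\Lambda_0\to\Delta^{n-1}$ and $\Lambda_1\to\Delta^{n-1}$, so $(V_\delta)_0$ and $(V_\delta)_1$ are canonically identified with $V'_\delta$, while for $j\ge 2$, $\pi|_{\Lambda_j}$ is itself the gluing projection in one lower dimension, so $(V_\delta)_j$ is again a case-(3) model family. In case (4) the decisive observations are: on the open face $\Lambda_0$ the inequality defining $\widehat\Delta$ reads $t_1\le 0$, which together with $t_1>0$ is impossible, so $U_\delta\cap\widehat\Delta$ contributes nothing and $(V_\delta)_0=(W_\delta)_0$, which by induction is the model family for $\psi|_{x_1=1}$ on $\Lambda_0$; on $\Lambda_1=\{t_1=0\}$ the inequality $2t_1+t_2+\cdots+t_n\le 1$ is automatic, so $\widehat\Delta\supset\Lambda_1$ and $(V_\delta)_1=(U_\delta)_1\cup(W_\delta)_1$, and lex-monotonicity of $\psi$ yields $\psi|_{x_1=0}\ge\psi|_{x_1=1}$ pointwise, which by the chain of inclusions among model families of lex-monotone functions forces $(W_\delta)_1\subset(U_\delta)_1$, collapsing the union to $(U_\delta)_1$, the model family for $\psi|_{x_1=0}$. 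For $j\ge 2$ the restrictions of $U_\delta$, $W_\delta$ and $\widehat\Delta$ to $\Lambda_j$ are again of the same type and the construction of $V_\delta$ commutes with the restriction.

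With item (2) in hand, the heredity in item (1) reduces to verifying (A)--(D) for $V_\delta$ itself in each case. (D) is immediate because every construction exhausts $\Delta^n$ as $\delta\searrow 0$; (A) is clear because each proper $V_\delta$ contains a neighbourhood of the origin and hence meets every facet $\Lambda_j$ with $j>0$; (B) is exactly where lex-monotonicity is used, since $(V_\delta)_0=\Lambda_0$ forces $\psi|_{x_1=1}\equiv 1$ and then lex-monotonicity forces $\psi\equiv 1$, i.e.\ $V_\delta=\Delta^n$. The main obstacle is condition (C) in case (4): the gluing $(U_\delta\cap\widehat\Delta)\cup W_\delta$ is not manifestly symmetric in $t_2,\ldots,t_n$ relative to $t_1$. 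The key point is that both $U_\delta$ and $W_\delta$ are pull-backs under the projection forgetting $t_1$, and $\widehat\Delta$ is symmetric in $t_2,\ldots,t_n$, so iterated restrictions that avoid the pair $(0,1)$ commute; the asymmetric role of $t_1$ is precisely what the exclusion of $(\ell,m)=(0,1)$ in (C) absorbs.
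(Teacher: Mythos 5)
Your proposal is correct and follows essentially the same route as the paper: the paper's own proof simply asserts that (1) and (2) "follow immediately from Definition~\ref{def:model}" and deduces (3) from Lemma~\ref{le:model_regular} (semi-monotone sets are regular cells) together with Remark~\ref{re:model_boundary}, exactly as you do. Your contribution is to write out the case-by-case verification of (1) and (2) that the paper leaves implicit, and that verification (the facet computations in case (4), the use of lex-monotonicity for (B) and for the nesting $(W_\delta)_1\subset(U_\delta)_1$) is sound.
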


\begin{proof}
Properties (1) and (2) follow immediately from Definition~\ref{def:model}.

The property (3) follows from Lemma~\ref{le:model_regular}, the fact that a semi-monotone set is a regular cell,
and Remark~\ref{re:model_boundary}.
\end{proof}

\begin{lemma}\label{le:model_bary}
Let $V_\delta$ be a model family in the standard ordered simplex $\Delta^n$.
Then for each simplex $\Sigma$ of the barycentric subdivision of $\Delta^n$ the restriction $V_\delta \cap \Sigma$
is a separable family.
\end{lemma}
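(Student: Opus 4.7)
The plan is to proceed by induction on $n$. For $n \le 1$ the statement is trivial: every lex-monotone Boolean function in at most one variable is separable (Remark~\ref{re:number_of_separable}), so $V_\delta$ is itself separable in $\Delta^n$ and Lemma~\ref{le:restriction_sep} applies directly. For the inductive step I split on whether the Boolean function $\psi$ associated with $V_\delta$ is separable. If $\psi$ is separable, then by the unnamed characterization lemma preceding Lemma~\ref{le:model_regular} the family $V_\delta$ is itself separable in $\Delta^n$; since model families satisfy the hereditary basic condition (D) by Lemma~\ref{le:model_prop}, Lemma~\ref{le:restriction_sep} immediately yields separability of $V_\delta \cap \Sigma$ for every sub-simplex $\Sigma$ of the barycentric subdivision.

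If $\psi$ is not separable, then by Lemma~\ref{le:separableBoolean} combined with lex-monotonicity I first argue that $\psi|_{x_1=0}\equiv 1$ and $\psi|_{x_1=1}\not\equiv 0$ (indeed, non-triviality of $\psi|_{x_1=1}$ forces $\psi(1,0,\ldots,0)=1$, and lex-monotonicity then gives $\psi|_{x_1=0}\equiv 1$), placing $V_\delta$ in case~(4) of Definition~\ref{def:model}. Unraveling the recursion then produces the explicit form
\[
V_\delta \;=\; \bigcup_{i=1}^{j}\{2t_i + t_{i+1} + \cdots + t_n \le 1\}\;\cup\; \{t_{j+1} + \cdots + t_n \le 1-\delta\}
\]
for some $1 \le j \le n-1$. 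The key structural observation is that the first hyperplane $H_1 = \{2t_1+\cdots+t_n = 1\}$ is respected by the barycentric subdivision: the barycenter $b_F$ of a face $F\subseteq\{0,\ldots,n\}$ satisfies $2t_1+\cdots+t_n = 1+([1\in F]-[0\in F])/|F|$, and in a monotonically increasing chain of faces one cannot have barycenters on strictly opposite open half-spaces of $H_1$ (whichever order of the inclusion one assumes, it would force removing $0$ or $1$ when passing to a larger face, which is impossible). Consequently each top-dimensional barycentric sub-simplex $\Sigma$ is either on the $\widehat{\Delta}$ side of $H_1$, in which case $V_\delta\cap\Sigma = \Sigma$ is trivially separable, or on the opposite side, where $V_\delta\cap\Sigma = W_\delta\cap\Sigma$ with $W_\delta = \pi^{-1}(V^{(1)}_\delta)$ the cylinder over the lower-dimensional model family.

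For the latter alternative I would iterate the argument inside the cylinder $W_\delta$. The expectation is that on $\Sigma$ exactly one of the half-spaces in the decomposition of $V_\delta$ above is \emph{effectively active}, the others either covering $\Sigma$ entirely or contributing only boundary points; this reduces $V_\delta\cap\Sigma$ to a single linear inequality $\{L\le c\}\cap\Sigma$ with $c\in\{1,\,1-\delta\}$. For small $\delta$ the threshold $c$ lies strictly between the $\delta$-independent rational values of $L$ at the vertices of $\Sigma$, so $L$ cannot be identically equal to $c$ on any proper face $\Sigma'$ of $\Sigma$; this is precisely the condition that upgrades the one-sided implication of Remark~\ref{re:separable2} to the full equivalence defining separability. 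The main obstacle will be making this iteration precise, because the higher-index hyperplanes $\{2t_i+t_{i+1}+\cdots+t_n = 1\}$ with $i\ge 2$ are not in general respected by the barycentric subdivision of $\Delta^n$; one must argue that the associated failures to cut barycentric simplices are harmless because the relevant sub-simplices are already trivialized either by an earlier half-space of index $< i$ or by the final $\delta$-dependent term.
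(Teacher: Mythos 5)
Your base case and your handling of the case where $\psi$ is separable are fine, and your observation that the hyperplane $\{2t_1+t_2+\cdots+t_n=1\}$ never cuts a simplex of the barycentric subdivision is correct (it is, in essence, why the paper's argument goes through). The problem is the reduction of the non-separable case, which fails for $n\ge 3$. Under the paper's lexicographic order the variable $x_1$ occupies the \emph{least} significant position -- this is forced by the proof of Lemma~\ref{le:separableBoolean}, where $\psi|_{x_1=1}\equiv 0$ must imply that $\{\psi=1\}$ is a single point -- so the only argument lex-smaller than $(1,0,\ldots,0)$ is the origin, and $\psi(1,0,\ldots,0)=1$ does \emph{not} give $\psi|_{x_1=0}\equiv 1$. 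Concretely, for $n=3$ the lex-monotone function with $|\{\psi=1\}|=3$ (equal to $1$ exactly at $(0,0,0)$, $(1,0,0)$, $(0,1,0)$) is non-separable and has $\psi|_{x_1=1}\not\equiv 0$ but $\psi|_{x_1=0}\not\equiv 1$; the function with $|\{\psi=1\}|=6$ is non-separable yet has equal restrictions and therefore falls under case~(3) of Definition~\ref{def:model}, not case~(4). So neither your case split nor the explicit closed form of $V_\delta$ built on it is established. On top of that, you explicitly leave open the step you yourself identify as the main obstacle: the hyperplanes $\{2t_i+t_{i+1}+\cdots+t_n=1\}$ with $i\ge 2$ are not respected by the barycentric subdivision of $\Delta^n$, and the appeal to one inequality being ``effectively active'' on each sub-simplex is not an argument.

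The paper avoids all of this by running the induction on the recursive structure of Definition~\ref{def:model} rather than on an explicit formula. In case~(3) it pulls back, under the projection gluing vertices $0$ and $1$, a partition of $\Delta^{n-1}$ into pieces on which the $(n-1)$-dimensional model family is separable; in case~(4) it cuts only along the first hyperplane, producing $\Delta^n_<$, $\Delta^n_>$ and $\Delta^{n-1}_=$, and recurses on each piece. This produces a triangulation, coarser than the barycentric subdivision, on whose simplices $V_\delta$ is already separable; since the barycentric subdivision refines it, Lemma~\ref{le:restriction_sep} (applicable because condition (D) is hereditary for model families, by Lemma~\ref{le:model_prop}) finishes the proof. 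To salvage your approach, replace the separable/non-separable dichotomy by the case analysis of Definition~\ref{def:model} and cut only along the first hyperplane at each level of the recursion, handling the deeper hyperplanes inside the lower-dimensional inductive step rather than in $\Delta^n$ itself.
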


\begin{proof}
We first describe, by induction on $n$, a triangulation of $\Delta$ (which is coarser than the barycentric
subdivision) such that the restriction of $V_\delta$ to each its simplices is separable.

If $n=0$ then the family is already separable.

If $V_\delta$ is defined according to either (1) or (2) of Definition~\ref{def:model}, then
it is already separable.

Suppose $V_\delta$ is defined according to case (3), and let $V'_\delta$ be the family in
$\Delta^{n-1}$ assigned to $\psi|_{x_1=0}$ under the projection $\Delta^n \to \Delta^{n-1}$.
By the inductive hypothesis, $V'_\delta$ can be partitioned into separable families.
The pre-images of these families form a partition of $V_\delta$ into separable families.

Now suppose that $V_\delta$ is defined according to case (4).
Partition $\Delta^n$ into two $n$-simplices,
$\Delta^n_<:= \Delta^n \cap \{ 2t_1+t_2 \cdots +t_n < 1 \}$ and
$\Delta^n_>:= \Delta^n \cap \{ 2t_1+t_2 \cdots +t_n >1 \}$, and one $(n-1)$-simplex
$\Delta^{n-1}_= := \Delta^n \cap \{ 2t_1+t_2 \cdots +t_n =1 \}$,
where the vertex $(1/2, 0, \ldots ,0)$ on the edge $\Delta^n_{2, \ldots,n}$ has label 1 in $\Delta^n_<$
and label 0 in $\Delta^n_>$.

Define $U'_{\delta}:= U_\delta \cap \Delta^n_<$ and $W'_{\delta}:= W_\delta \cap \Delta^n_<$.
Families $U'_\delta$ and $W'_\delta$ are partitioned into separable families as in the case (3),
while the $(n-1)$-dimensional model family
$(\overline U'_\delta \cup \overline W'_\delta) \cap \Delta^{n-1}_=$
(it corresponds to the Boolean function $\psi|_{x_1=0}$) is partitioned into separable families, according
to the inductive hypothesis.

There is a refinement of the described triangulation of $\Delta^n$ which is the barycentric subdivision of
$\Delta^n$.
By Lemma~\ref{le:restriction_sep}, the restriction of $V_\delta$ to each simplex of this barycentric subdivision
is separable.
\end{proof}

\section{Topological equivalence}
\label{sec:top-equivalence}

\begin{definition}\label{def:equiv}
Consider two monotone families $S_\delta$ and $V_\delta$ in definable ordered $m$-simplices
$(\Sigma, \Phi)$ and $(\Lambda, \Psi)$ in $\Real^n$ respectively, where
$\Phi:\> \overline \Delta \to \overline \Sigma$, $\Psi:\> \overline \Delta \to \overline \Lambda$ are
homeomorphisms, and $\overline \Delta$ is the closure of the standard ordered $m$-simplex in $\Real^m$
(see Definition~\ref{def:simplex}).

Families $S_\delta$ and $V_\delta$ are {\em topologically equivalent} if
there exist two face-preserving homeomorphisms
$h_1:\> \overline \Lambda \to \overline \Sigma$ and $h_2:\> \overline \Sigma \to \overline \Lambda$,
not depending on $\delta$, such that for small $\delta>0$ the inclusions $S_\delta \subset h_1(V_\delta)$ and
$V_\delta \subset h_2(S_\delta)$ are satisfied.

Families $S_\delta$ and $V_\delta$ are {\em strongly topologically equivalent} if they are topologically equivalent, and
for small $\delta >0$ there is a face-preserving homeomorphism $h_\delta:\> \overline \Sigma \to \overline \Lambda$
such that $h_\delta(S_\delta)=V_\delta$.
\end{definition}

\begin{remark}
It is clear that if two families $S_\delta$ and $V_\delta$ are topologically equivalent with homeomorphisms
$h_1,\ h_2$, then there exist two face-preserving homeomorphisms, namely,
$$h':=\Phi^{-1} \circ h_1 \circ \Psi,\>
h'':= \Psi^{-1} \circ h_2 \circ \Phi:\> \overline \Delta \to \overline \Delta,$$
with the property that
for small $\delta>0$ the inclusions $\Phi^{-1}(S_\delta) \subset h'(\Psi^{-1}(V_\delta))$ and
$\Psi^{-1}(V_\delta) \subset h''(\Phi^{-1}(S_\delta))$ are satisfied.
Conversely, given two homeomorphisms $h',\> h'':\> \overline \Delta \to \overline \Delta$,
satisfying these inclusion properties, there are homeomorphisms
$$h_1:=\Phi \circ h' \circ \Psi^{-1}:\> \overline \Lambda \to \overline \Sigma\quad
\text{and}\quad h_2:= \Psi \circ h'' \circ \Phi^{-1}:\> \overline \Sigma \to \overline \Lambda,$$
realizing the topological equivalence of $S_\delta$ and $V_\delta$.
\end{remark}

Introduce the following new basic condition on a monotone family $S_\delta$ in a definable ordered $m$-dimensional
simplex $\Lambda$.
\begin{itemize}
\item[(E)]
The interior ${\rm int} (S_\delta)$ is either empty or a regular $m$-cell and
the boundary $\partial_\Lambda S_\delta$ in $\Lambda$ is either empty or a regular $(m-1)$-cell.
\end{itemize}

\begin{convention}
In this section we assume that all monotone definable families satisfy the basic conditions (A)--(E), and these
conditions are hereditary.
\end{convention}

\begin{remark}\label{re:(F)}
If $\overline S_\delta$ contains a facet $\Lambda_j$, then $S_\delta$ contains a neighborhood of $\Lambda_j$ in
$\Lambda$ for all small positive $\delta$.
If, in addition, $\overline{(S_\delta)}_i$ contains a face $\Lambda_{i,j}$ for every $i \neq j$, then
$\overline S_\delta$ contains the neighborhood in $\overline{\Lambda}$ of the closed facet
$\overline {\Lambda_j}$.
\end{remark}

\begin{lemma}\label{le:limit}
Given a proper family $S_\delta$ in a definable $m$-dimensional simplex $\Lambda$,
the Hausdorff limit $L$ of $\partial_\Lambda S_\delta$, as $\delta \searrow 0$,
is the closure of a face $\Lambda_{0,1, \ldots , r}$ of $\Lambda$ for some $r \in \{0,1, \ldots ,m-1 \}$.
\end{lemma}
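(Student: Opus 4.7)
The strategy is induction on $m=\dim\Lambda$, using the defining function $f:\Lambda\to\Real_{\ge 0}$ of $S_\delta$ from Lemma~\ref{le:S_by_function}, so that $S_\delta=\{f\ge\delta\}$. Condition (D) applied to a proper family gives $f>0$ on $\Lambda$ and, together with monotonicity, shows that every $\x\in\Lambda$ has a neighbourhood contained in $\mathrm{int}(S_\delta)$ for all sufficiently small $\delta$; hence $\x\notin\partial_\Lambda S_\delta$ and $\x\notin L$, so $L\subset\partial\Lambda$. The base case $m=1$ is immediate: (E) makes $\partial_\Lambda S_\delta$ a single point; (A) places vertex $0$ in $\overline{S_\delta}$, (B) keeps vertex $1$ out, and (D) drives the moving point to vertex $1$, so $L=\overline{\Lambda_0}$ with $r=0$.

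For $m\ge 2$, each facet $\Lambda_j$ carries a restriction $(S_\delta)_j$ that still satisfies (A)--(E) hereditarily, and is either empty, equal to $\Lambda_j$, or proper; by (B), $(S_\delta)_0\ne\Lambda_0$, and by (A), $(S_\delta)_j\ne\emptyset$ for $j\ge 1$. In the proper case the inductive hypothesis (after the renaming of Convention~\ref{con:labeling}) gives $L_j=\overline{(\Lambda_j)_{0,\ldots,r_j}}$, a closed face of $\Lambda_j$. I claim that in every case $L\cap\overline{\Lambda_j}$ is itself a closed face of $\Lambda_j$: equal to $L_j$ in the proper case; equal to $\overline{\Lambda_0}$ when $(S_\delta)_0=\emptyset$ (because then the upper semi-continuous extension of $f$ vanishes on $\Lambda_0$, pushing the moving part of $\partial_\Lambda S_\delta$ to sweep all of $\overline{\Lambda_0}$ as $\delta\searrow 0$); and contained in $\partial\Lambda_j$ when $(S_\delta)_j=\Lambda_j$ (the neighbourhood argument of the first paragraph applied on $\Lambda_j$ itself excludes interior points of $\Lambda_j$ from $L$). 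Thus $L$ is a union of closed faces of $\Lambda$.

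To conclude, I show that the vertex set $V_L=\{j:\text{vertex }j\in L\}$ is a non-empty, upward-closed subset of $\{0,\ldots,m\}$ that avoids $0$. Non-emptiness follows from (E), since $\partial_\Lambda S_\delta$ is a non-empty $(m-1)$-cell for proper $S_\delta$ and by the previous paragraph its Hausdorff limit $L$ must contain at least one closed face, hence a vertex. Avoidance of vertex $0$ uses (B): if vertex $0$ were in $L$, then the connectedness of $\partial_\Lambda S_\delta$ from (E) and the monotone filling from (D) would force $S_\delta$ to eventually cover everything except a shrinking neighbourhood of vertex $0$, so $\overline{S_\delta}\supset\overline{\Lambda_0}$, contradicting (B). For upward-closedness, given vertex $j\in V_L$ and $k>j$, restrict $S_\delta$ to the $2$-face $\Lambda'$ spanned by vertices $0,j,k$ (obtained by iterated facet restriction, consistent by (C)); the restricted family inherits (A)--(E), so by the fully classified $2$-dimensional picture (Figure~\ref{fig:1D-2D}) its sub-limit contains vertex $k$ whenever it contains vertex $j$. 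A closure/diagonalisation argument, using the regularity of $\partial_\Lambda S_\delta$ from (E) and the definable trivialisation granted by Hardt's theorem, then lifts this membership from $\Lambda'$ back to $\Lambda$, giving vertex $k\in V_L$. Thus $V_L=\{r+1,\ldots,m\}$ for some $r\in\{0,\ldots,m-1\}$, and the face decomposition above glues into $L=\overline{\Lambda_{0,\ldots,r}}$.

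The hardest step is this last lifting: the Hausdorff limit of the restricted family on $\Lambda'$ need not automatically equal $L\cap\overline{\Lambda'}$, and the only control over the relationship between the two limits comes from the combined regularity of $\partial_\Lambda S_\delta$ and the hereditary structure of the basic conditions, so making the propagation rigorous in arbitrary dimension requires the full force of (A)--(E) together with Hardt triviality.
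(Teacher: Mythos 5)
There is a genuine gap, and you have in fact located it yourself: the step that relates the Hausdorff limit $L$ of $\partial_\Lambda S_\delta$ to the Hausdorff limits of the boundaries of the restricted families on faces is asserted but never proved. Your second paragraph claims $L\cap\overline{\Lambda_j}=L_j$ in the proper case, and your third paragraph needs to transfer vertex membership from the sub-limit on a $2$-face back to $L$; both reduce to exactly this unproved identification, and the appeal to ``Hardt triviality plus regularity'' is not an argument. In addition, even granting everything else, knowing that $L$ is a union of closed faces whose vertex set $V_L$ is upward-closed and avoids $0$ does not by itself force $L$ to be the single closed face spanned by $V_L$ (a union such as $\overline{\Lambda_{\{2,3\}}}\cup\overline{\Lambda_{\{3,4\}}}$ has upward-closed vertex set but is not a face); ruling this out again requires the unproved facet identification. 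Your ``avoidance of vertex $0$'' argument is also only a sketch.

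The paper's proof avoids the entire facet-by-facet assembly. It first proves the much stronger statement $L\subset\overline{\Lambda_0}$ directly: a point of $L$ not in $\overline{\Lambda_0}$ would lie in a face $\Lambda'$ whose vertex set contains $0$, and the hereditary condition (D) on $\Lambda'$ forces $\x\in\overline{S_\delta}\cap\Lambda'$ for small $\delta$, which produces nearby points of $\Lambda$ that never enter $S_\delta$, contradicting (D) on $\Lambda$. This kills vertex $0$ and every face containing it in one stroke and reduces the problem to the single facet $\Lambda_0$. The identification you are missing is then supplied by condition (E) together with Newman's theorem: $\partial(\partial_\Lambda S_\delta)$ is an $(m-2)$-sphere bounding the closed $(m-1)$-ball $\overline{S_\delta}\cap\partial\Lambda$, whence $\partial(\partial_\Lambda S_\delta)\cap\Lambda_0=\partial_{\Lambda_0}(S_\delta)_0$; passing to Hausdorff limits and using hereditary (D) for the reverse inclusion gives $L=L_0$ whenever $L\neq\overline{\Lambda_0}$, and induction on $m$ applied to $(S_\delta)_0$ finishes the proof. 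If you want to salvage your approach, the place to invest effort is precisely this sphere/ball boundary identification; without it the induction does not close.
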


\begin{proof}
\noindent (1)\quad We first prove that $L \subset \partial \Lambda$.
Let, contrary to claim, $L \cap \Lambda \neq \emptyset$.
Then, by the basic condition (D), there exists $\x \in \Lambda$ such that $\x \in L \cap {\rm int}(S_\delta)$ for all sufficiently
small $\delta$, which is a contradiction.
\medskip

\noindent (2)\quad Next we prove that $L \subset \overline{\Lambda}_0$.
Suppose that $\x \in L$ but $\x \not\in \overline{\Lambda}_0$.
Since, by (1), $L \subset \partial \Lambda$, the point $\x$ lies in a face $\Lambda'$ of $\Lambda$ such that
$\Lambda' \cap \overline \Lambda_0= \emptyset$.
By the hereditary basic condition (D) applied to $\Lambda'$, we have that $\x \in \overline S_\delta \cap \Lambda'$ for small $\delta$.
Then in the neighbourhood of $\x$ in $\Lambda$ there is a point $\y$ such that $\y \in \Lambda \setminus S_\delta$
for any small $\delta$.
This contradicts to the basic condition (D).
\medskip

\noindent (3)\quad We now prove that $\partial_{\Lambda_0}(S_\delta)_0 = \partial(\partial_\Lambda S_\delta) \cap \Lambda_0$.
By the basic condition (E), $\partial(\partial_\Lambda S_\delta)$ is an $(m-2)$-sphere in $\partial \Lambda$.
By Newman's theorem (3.13 in \cite{RS}), it is the boundary of the closed $(m-1)$-ball $\overline S_\delta \cap \partial \Lambda$.
Hence $\partial(\partial_\Lambda S_\delta) \cap \Lambda_0$ is the boundary in $\Lambda_0$ of
$\overline S_\delta \cap \Lambda_0=(S_\delta)_0$ which we conventionally denote by $\partial_{\Lambda_0}(S_\delta)_0$.
\medskip

\noindent (4)\quad Now we are able to finalise the proof the lemma by induction on $m$, the base being trivial.
If $L= \overline \Lambda_0$ we are done.
Otherwise, let $L_0$ be the Hausdorff limit of $\partial_{\Lambda_0} (S_\delta)_0$, as $\delta \searrow 0$,
and we prove that $L=L_0$.

We have $L_0 \subset L$, because, by (3),
$\partial_{\Lambda_0}(S_\delta)_0 \subset \partial(\partial_\Lambda S_\delta) \subset \overline{ \partial_\Lambda S_\delta}$
for small $\delta$, and we can pass to Hausdorff limit
in these inclusions.
Conversely, there does not exist $\x \in L$ such that $\x \not\in L_0$, because, by the hereditary basic condition (D),
the latter implies that $\x \in \overline{(S_\delta)}_0$ for all small $\delta$, hence $\x \not\in L$.

By the inductive hypothesis, $L_0$ is a closure of a face $\Lambda_{0,1, \ldots , r}$ of $\Lambda$ for some
$r \in \{1, \ldots ,m-1 \}$, hence the same is true for $L$.
\end{proof}

\begin{lemma}\label{le:mutual_separable}
Let $S_\delta$ be a separable family in the standard simplex $\Delta$,
and $V_\delta$ the combinatorially equivalent model family in $\Delta$.
Then for any $\delta>0$ there exists $\eta>0$ such that $S_\delta \subset V_\eta$ and $V_\delta \subset S_\eta$.
\end{lemma}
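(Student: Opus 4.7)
The strategy is to prove each inclusion separately, exploiting the continuity of the defining function of the model family. By Lemma~\ref{le:S_by_function}, write $V_\delta=\{f\geq\delta\}\cap\Delta$ and $S_\delta=\{g\geq\delta\}\cap\Delta$ for upper semi-continuous definable functions $f,g:\overline{\Delta}\to\Real$. Because $V_\delta$ is a separable model family, the unnumbered lemma immediately preceding Lemma~\ref{le:model_regular} ensures that $f$ is continuous on $\overline{\Delta}$. Trivial cases ($V_\delta$ identically empty or identically $\Delta$) reduce $S_\delta$ to the same via combinatorial equivalence together with the hereditary conditions (A) or (B); so I assume both families are proper for small $\delta>0$.

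For the inclusion $S_\delta\subset V_\eta$: by the inductive construction in Definition~\ref{def:model} restricted to the separable cases (1)(a,b), (2), (3), the zero set $\{f=0\}\cap\overline{\Delta}$ is a finite union of closed faces of $\overline{\Delta}$ (in case (2) it equals $\overline{\Delta_0}$; in case (3) it is the preimage under the gluing projection of the inductively described zero set of the lower-dimensional separable model). For each open face $\Delta'\subset\{f=0\}$ we have $(V_\delta)_{\Delta'}=\emptyset$ for all small $\delta$, so combinatorial equivalence forces $(S_\delta)_{\Delta'}=\emptyset$ as well; iterating across all open subfaces of $\{f=0\}$ yields $\overline{S_\delta}\cap\{f=0\}=\emptyset$. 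Since $\overline{S_\delta}\subset\overline{\Delta}$ is compact, $\{f=0\}$ closed, and $f$ continuous, we obtain $\eta:=\min_{\overline{S_\delta}}f>0$, whence $\overline{S_\delta}\subset\{f\geq\eta\}\cap\overline{\Delta}=\overline{V_\eta}$ and therefore $S_\delta\subset V_\eta$.

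For the inclusion $V_\delta\subset S_\eta$: I argue by induction on $n=\dim\Delta$, with trivial base case $n=0$. In the inductive step, the inductive hypothesis applied to each facet $\Delta_j$ (where the restrictions are separable and combinatorially equivalent by the hereditary assumptions) and iterated to all faces produces a uniform $\eta_0>0$ with $\overline{V_\delta}\cap\partial\Delta\subset\overline{S_{\eta_0}}$. I then cover the compact set $\overline{V_\delta}$ by open neighborhoods $U_\y\subset\overline{\Delta}$ with $U_\y\cap\Delta\subset S_{\eta_\y}$ for some $\eta_\y>0$: for $\y\in V_\delta\subset\Delta$ the neighborhood is supplied directly by hereditary (D) applied to $S$, which places $\y$ in $\mathrm{int}(S_{\eta_\y})$; for $\y\in\overline{V_\delta}\cap\partial\Delta$ on a face $\Delta'$, the neighborhood is built by combining the inductive inclusion $\y\in\overline{S_{\eta_0}}$ with separability of $S$ on $\Delta'$ and the uniform approach of $V_\delta$ to $\Delta'$ forced by the continuity of $f$. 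A finite subcover, and taking $\eta$ to be the minimum of the associated parameters, yields $V_\delta\subset S_\eta$. The main technical obstacle is the construction of $U_\y$ for boundary points in this second inclusion: separability of $S$ only guarantees that \emph{some} point on the face $\Delta'$ admits such a neighborhood, and extending this to the specific point $\y$ requires a careful iterative interplay of hereditary separability, hereditary (D), and the uniform filling property of the model family $V_\delta$ near $\Delta'$.
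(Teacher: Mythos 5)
Your first inclusion is sound and is essentially the paper's own argument in different clothing: the paper measures $\mathrm{dist}(\cdot,\overline{\Delta'})$, where $\overline{\Delta'}$ is the Hausdorff limit of $\partial_\Delta S_\delta$ supplied by Lemma~\ref{le:limit}, while you use the continuous defining function $f$ of the model family, whose zero set is exactly that closed face. One small point to tighten: emptiness of the iterated restrictions $(S_\delta)_{j_1,\ldots,j_k}$ does not by itself give $\overline{S_\delta}\cap\Delta_{j_1,\ldots,j_k}=\emptyset$, since the iterated restriction is in general a proper subset of $\overline{S_\delta}\cap\Delta_{j_1,\ldots,j_k}$; to conclude $\overline{S_\delta}\cap\{f=0\}=\emptyset$ you must invoke separability of $S_\delta$ on the subfaces (which you have as a hypothesis), not only combinatorial equivalence.

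The second inclusion is where the real content lies, and your argument stops exactly at the hard point: you concede that you cannot construct the neighborhoods $U_\y$ for boundary points $\y\in\overline{V_\delta}\cap\partial\Delta$, and the ``careful iterative interplay'' you defer to is precisely the unproved step. The missing idea is to apply Lemma~\ref{le:limit} symmetrically to $S$: the Hausdorff limit of $\partial_\Delta S_\eta$ as $\eta\searrow 0$ is the closure of a single face $\Delta'$, and by combinatorial equivalence this is the same face as for $V$. It follows that $\overline{\Delta\setminus S_\eta}$ eventually lies inside any prescribed neighborhood of $\overline{\Delta'}$: for interior points this is condition (D), and for a boundary point $\y\notin\overline{\Delta'}$ a neighborhood of $\y$ misses $\partial_\Delta S_\eta$ for small $\eta$ (by the Hausdorff convergence) and hence, again by (D) and connectedness, lies in $S_\eta$. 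Thus $\max_{\x\in\Delta\setminus S_\eta}\mathrm{dist}(\x,\overline{\Delta'})\to 0$, while separability of the model family gives $\min_{\x\in V_\delta}\mathrm{dist}(\x,\overline{\Delta'})>0$ for the fixed $\delta$; choosing $\eta$ so that the former is smaller than the latter yields $V_\delta\subset S_\eta$ immediately. This is the paper's proof, it treats both inclusions by the same two monotone quantities, and it removes the need for induction on dimension and for the covering argument altogether.
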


\begin{proof}
For non-proper families the statement is trivial.
Let $S_\delta$ be proper.
Let $\Delta'$ be the face of $\Delta$ such that its closure is the Hausdorff limit of
$\partial_\Delta S_\delta$ as $\delta \searrow 0$ (such $\Delta'$ exists by Lemma~\ref{le:limit}).
Then $\Delta'$ is the unique face of $\Delta$ of the maximal dimension such that
$\Delta' \subset \overline{\Delta \setminus S_\delta}$.
Let
\begin{eqnarray*}
\min(S_\delta) &=& \min_{\x \in S_\delta} \mathrm{dist}(\x,\overline{\Delta'}), \\
\max(S_\delta) &=& \max_{\x \in S_\delta} \mathrm{dist}(\x,\overline{\Delta'}), \\
\min(V_\delta) &=& \min_{\x \in S_\delta} \mathrm{dist}(\x,\overline{\Delta'}), \\
\max(V_\delta) &=& \max_{\x \in S_\delta} \mathrm{dist}(\x,\overline{\Delta'}).
\end{eqnarray*}
All functions $\max( \cdot),\> \min(\cdot)$ are monotone decreasing with $\delta \searrow 0$.
The separability of both families implies that the minima are positive.
For a given $\delta$, choosing $\eta$ so that $\max(V_\eta) < \min(S_\delta)$, we get $S_\delta \subset V_\eta$,
while choosing $\eta$ so that $\max(S_\eta) < \min(V_\delta)$, we get $V_\delta \subset S_\eta$.
\end{proof}

\begin{definition}\label{def:standard_family}
A combinatorial equivalence class of a family $S_\delta$ (satisfying the basic conditions (A)--(E))
is called {\em standard}, and the family itself is called {\em standard}, if the corresponding
Boolean function is lex-monotone.
\end{definition}

\begin{remark}\label{re:standard}
A {\em combinatorially standard} family, from Definition~\ref{def:standard},
does not need to satisfy the condition (E).
Consider, for example, $S_\delta= \Lambda^2 \setminus B_\delta$, where $B_\delta \subset \Lambda^2$ is
homeomorphic to an open disk, with $\overline{B_\delta}$ intersecting the boundary of $\Lambda^2$ at a point
$\x \neq \Lambda^2_{1,2}$, such that $B_\delta$ contracts to $\x$ as $\delta \searrow 0$.
The family $S_\delta$ satisfies (A)--(D), is combinatorially standard (combinatorially equivalent to
$V_\delta= \Lambda^2$) but not standard.
\end{remark}

\begin{lemma}\label{le:equivalencies}
Let $\{ T^1_\delta, \ldots T^r_\delta \}$ be a finite set of standard monotone families in the standard ordered
$1$-simplex $\Delta^1$.
There is a face-preserving homeomorphism $h^1:\> \overline{\Delta^1} \to \overline{\Delta^1}$
not depending on $\delta$, satisfying the following property.
For every $i=1, \ldots ,r$ there exists one of the model families, $W^i_\delta$,
(among types (1), (2), (3)) of the same combinatorial type as $T_\delta^i$, such that for small $\delta >0$
the inclusions $T^i_\delta \subset h^1(W^i_\delta)$ and $W^i_\delta \subset h^1(T^i_\delta)$ hold.
\end{lemma}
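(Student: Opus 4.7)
The plan is to identify $\overline{\Delta^1}$ with $[0,1]$ (vertex $0$ at $0$, vertex $1$ at $1$), so that face-preserving homeomorphisms correspond to definable increasing self-homeomorphisms of $[0,1]$ fixing both endpoints, and then to produce a single such $h^1$ that accommodates all $r$ families simultaneously. By Lemma~\ref{le:number_of_lex-monotone} and Definition~\ref{def:standard_family}, every standard family in $\Delta^1$ falls into one of the three combinatorial types listed in Section~\ref{sec:standard_and_model}. Types~(0) and~(2) are handled trivially by setting $W^i_\delta=T^i_\delta$ and letting $h^1$ be the identity on their portion of the construction; the real task is to treat the families of type~(1).

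For a type~(1) family the basic conditions (A)--(E), together with Lemma~\ref{le:limit}, force $\overline{T^i_\delta}$ to be a closed subinterval $[0,b^i(\delta)]$ with $b^i\colon (0,\delta_0]\to(0,1)$ definable; after shrinking $\delta_0$, the o-minimal monotonicity theorem renders $b^i$ continuous and monotone, with $b^i(\delta)\nearrow 1$ as $\delta\searrow 0$. The corresponding model family is $W^i_\delta=(0,1-\delta]$, and the two inclusions $T^i_\delta\subset h^1(W^i_\delta)$ and $W^i_\delta\subset h^1(T^i_\delta)$ translate into the pair of endpoint inequalities
\[
b^i(\delta)\le h^1(1-\delta)\qquad\text{and}\qquad 1-\delta\le h^1(b^i(\delta)).
\]
Writing $B(\delta)=\max_{i}b^i(\delta)$ and $b(\delta)=\min_{i}b^i(\delta)$, it is enough to arrange $h^1(1-\delta)\ge B(\delta)$ and $h^1(b(\delta))\ge 1-\delta$ for all sufficiently small $\delta>0$.

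To construct $h^1$ I would set $u(x):=\max\bigl(B(1-x),\,1-b^{-1}(x)\bigr)$ on a left neighborhood of $1$, so that both required inequalities reduce to the single pointwise bound $h^1(x)\ge u(x)$ there; the function $u$ is definable with $u(x)\to 1^{-}$ as $x\to 1^{-}$. I would then define $h^1$ to be the identity on a compact sub-interval $[0,1-\eta]$ and, on $[1-\eta,1]$, to equal a definable increasing majorant of $u$ fixing $1$ (for instance, the monotone hull of $x\mapsto 1-\tfrac12(1-u(x))$), glued continuously at $x=1-\eta$ with $\eta>0$ chosen small enough that $u(1-\eta)<1-\eta/2$. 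The main obstacle is ensuring that a single $h^1$ can accommodate all $r$ possibly very different asymptotic rates $b^i(\delta)\nearrow 1$ while remaining a genuine monotone homeomorphism fixing the vertices; this is precisely the place where o-minimality is essential, as it guarantees that the competing rates can be combined into a tame definable function $u$ admitting such a definable monotone majorant strictly below $1$.
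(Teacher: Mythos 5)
Your proposal is correct and takes essentially the same route as the paper's proof: the paper likewise dismisses the non-proper families, writes each proper family as $T^i_\delta=(0,u_i(\delta)]$ with $u_i(\delta)\to 1$, and takes $h^1$ to be any monotone homeomorphism fixing $0$ and $1$ whose graph lies above the parametric curves $(u_i(\delta),1-\delta)$ and $(1-\delta,u_i(\delta))$ — exactly your two endpoint inequalities. Your explicit majorant $u(x)=\max\bigl(B(1-x),\,1-b^{-1}(x)\bigr)$ merely makes concrete the existence assertion that the paper leaves implicit.
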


\begin{proof}
Observe that for non-proper families any homeomorphism $h^1$ is suitable.
The basic conditions (A)--(E) imply that every proper family $T_\delta^i$ coincides with $(0,u_i(\delta)]$
for some functions $u_i(\delta):(0,1) \to (0,1)$, such that $\lim_{\delta \to 0} u_i(\delta) = 1$
(in particular, functions $u_i$ are monotone decreasing for small positive $\delta$).
In this case homeomorphism $h^1$ can be defined by any monotone function $h^1(t)$ satisfying the conditions:
\begin{itemize}
\item
$h^1(0)=0$,
\item
$h^1(1)=1$,
\item
$h^1(u_i(\delta))>1-\delta$ and $h^1(1-\delta)>u_i(\delta)$ for each $i$ such that $T^i_\delta$ is proper, and for
small $\delta>0$.
\end{itemize}
To achieve the last property, the graph of $h^1|_{(0,1)}$ should be situated above parametric curves
$(u_i(\delta), 1- \delta),\> (1- \delta, u_i(\delta)) \subset (0,1)^2$ for small $\delta >0$.
\end{proof}

\begin{theorem}\label{th:equivalencies}
Let $\{ T^1_\delta, \ldots ,T^r_\delta \}$ and $\{ S^1_\delta, \ldots , S^k_\delta \}$ be finite sets of
standard monotone families in the standard ordered $1$-simplex $\Delta^1$ and $2$-simplex
$\Delta^2$, respectively.
There exist face-preserving homeomorphisms $h^1:\> \overline{\Delta^1} \to \overline{\Delta^1}$ and
$h^2:\> \overline{\Delta^2} \to \overline{\Delta^2}$ not depending on $\delta$, satisfying the
following properties.
The homeomorphism $h^1$ satisfies Lemma~\ref{le:equivalencies}.
The restriction of $h^2$ to closures of all facets of $\Delta^2$ coincide with $h^1$,
after canonical identification of each facet with the standard ordered $1$-simplex
(see Definition~\ref{def:canonical}).
For every $i=1, \ldots ,k$ there exists a model family,
$V^i_\delta$, such that for every small $\delta >0$
the inclusions $S^i_\delta \subset h^2(V^i_\delta)$ and $V^i_\delta \subset h^2(S^i_\delta)$)
hold.
\end{theorem}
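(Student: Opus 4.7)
The plan is to reduce the two-dimensional statement to the one-dimensional Lemma~\ref{le:equivalencies} on the boundary of $\Delta^2$ and then to build the extension to the interior by a curvilinear sweep that is the direct two-dimensional analogue of the construction in that lemma. First, I collect all one-dimensional data that must be handled simultaneously: the given families $T^j_\delta$ in $\Delta^1$ together with, for each $i \in \{1, \ldots, k\}$ and each $j \in \{0, 1, 2\}$, the facet restriction $(S^i_\delta)_j$ regarded as a family in $\Delta^1$ via the canonical identification of Definition~\ref{def:canonical}. By Lemma~\ref{le:codim} each such restriction is standard, so applying Lemma~\ref{le:equivalencies} to this combined collection produces a single face-preserving homeomorphism $h^1$ of $\overline{\Delta^1}$. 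For each $i$, I take $V^i_\delta$ to be the model family assigned to the lex-monotone Boolean function of $S^i_\delta$ by Lemma~\ref{le:same_function} and Definition~\ref{def:model}; by Lemma~\ref{le:model_prop}(2) the restriction $(V^i_\delta)_j$ is itself a model family combinatorially equivalent to $(S^i_\delta)_j$, hence agrees with the model family that Lemma~\ref{le:equivalencies} associates to $(S^i_\delta)_j$ on that facet.

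Next I define $h^2$ on $\partial \overline{\Delta^2}$ by transplanting $h^1$ onto each closed facet via the canonical identification. Because $h^1$ is face-preserving it fixes the two vertices of $\overline{\Delta^1}$, so the three copies agree at each shared vertex of $\overline{\Delta^2}$ and glue to a continuous face-preserving self-homeomorphism of $\partial \overline{\Delta^2}$. The remaining task is to extend this to a face-preserving homeomorphism $h^2$ of $\overline{\Delta^2}$ satisfying, for every $i$ and every small $\delta > 0$, the inclusions $S^i_\delta \subset h^2(V^i_\delta)$ and $V^i_\delta \subset h^2(S^i_\delta)$, which together read $(h^2)^{-1}(V^i_\delta) \subset S^i_\delta \subset h^2(V^i_\delta)$.

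For the extension I use the structure of standard families in dimension two. By Basic Condition~(E), each proper $\partial_{\Delta^2} S^i_\delta$ is a regular $1$-cell, hence a topological arc whose endpoints lie on $\partial \overline{\Delta^2}$ at positions determined by the combinatorial type. By Lemma~\ref{le:limit} these arcs Hausdorff converge as $\delta \searrow 0$ to the closure of a face $F^i \subset \partial \overline{\Delta^2}$, and the explicitly piecewise linear arcs $\partial_{\Delta^2} V^i_\delta$ converge to the same $F^i$ (combinatorial equivalence forces the same limit face). Invoking Theorem~\ref{th:cyl_decomp}, I pick a cylindrical decomposition of $\Real^2 \supset \overline{\Delta^2}$ monotone with respect to $\overline{\Delta^2}$ and with respect to the graphs of the upper semi-continuous functions defining all the $S^i_\delta$ and $V^i_\delta$ (Lemma~\ref{le:S_by_function}). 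On each two-dimensional monotone cell I introduce a transverse coordinate that monotonically parameterizes the two foliations $\{\partial_{\Delta^2} S^i_\eta\}_\eta$ and $\{\partial_{\Delta^2} V^i_\eta\}_\eta$ simultaneously, and then choose, on that cell, a definable face-preserving homeomorphism whose action on the transverse coordinate lies strictly between the two curves forced by the desired nested inclusions; this is the direct two-dimensional analogue of the "graph above parametric curves" device used in the proof of Lemma~\ref{le:equivalencies}. Finally I assemble the cell-by-cell pieces into a global $h^2$, matching them on shared faces using $h^1$ on the $1$-skeleton and the gluing properties of monotone cells from \cite{BGV2}.

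The main obstacle I expect is simultaneity: the arcs $\partial_{\Delta^2} S^i_\delta$ and $\partial_{\Delta^2} S^{i'}_\delta$ for $i \ne i'$ may cross inside $\Delta^2$, so the inclusion inequalities demanded by distinct indices are not obviously jointly realizable by a single face-preserving homeomorphism. The resolution rests on two points: standardness forces each $S^i_\delta$ into one of the five combinatorial types depicted in Figure~\ref{fig:1D-2D}, so near each limit face $F^i$ the arcs $\partial_{\Delta^2} S^i_\eta$ are essentially level curves of a single monotone function (making the crossings with other families transverse and controlled); and the monotone cylindrical decomposition supplied by Theorem~\ref{th:cyl_decomp}, refined through the graphs of all the $f^i$ and $g^i$ together, provides enough slack in each elementary cell to choose local pieces satisfying all the required two-sided bounds at once. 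The cell-by-cell local constructions then glue because each piece is already face-preserving and already compatible with $h^1$ on the one-dimensional strata of the decomposition.
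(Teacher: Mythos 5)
Your overall outline---fix $h^1$ by the one-dimensional lemma, take $V^i_\delta$ to be the model family of the same combinatorial type, and then extend to the interior---is the same as the paper's, and your choice of $V^i_\delta$ and the observation that its facet restrictions match via Lemma~\ref{le:model_prop}(2) are fine. But there are two genuine gaps. First, you build $h^1$ by feeding Lemma~\ref{le:equivalencies} only the $T^j_\delta$ and the facet restrictions $(S^i_\delta)_j$. This is not enough data: since $h^2$ restricted to the facets is forced to equal $h^1$, the boundary map must already dominate \emph{interior} quantitative information about the $S^i_\delta$ that the facet restrictions do not see. Concretely, for a family of type (2) or (3) one needs a collar width $a(\delta)$ with $\{t_2\le a(\delta)\}\cap\Delta^2\subset S^i_\delta$ and the inclusion $V^i_\delta\subset h^2(S^i_\delta)$ forces something like $h^1(a(\delta))>1-\delta$; the restriction of $S^i_\delta$ to the edges $\Delta^2_1,\Delta^2_2$ is the whole edge and carries no trace of $a(\delta)$. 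The paper handles this by applying the one-dimensional lemma to three additional \emph{auxiliary} families $(0,b(\delta)]$, $(0,1-a^{-1}(1-\delta)]$, $(0,c(\delta)]$ manufactured from the interior behavior of the $S^i_\delta$ (and from Lemma~\ref{le:mutual_separable} for the separable ones). With your $h^1$ the boundary values are already wrong in general and no interior extension can repair them.

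Second, the extension itself is asserted rather than constructed. The essential difficulty is that $h^2$ must be independent of $\delta$ yet realize the two-sided inclusions for all small $\delta$ and all $i$ simultaneously, and the hard case is the non-separable type (3), whose stationary part is a $\delta$-independent ``tongue'' reaching the vertex $2$. The paper's device is to use the curve selection lemma plus monotonicity of the family to produce two $\delta$-\emph{independent} curves $t_1=p(t_2)$ and $t_1=q(t_2)$ with $\{t_1\le p(t_2)\}\subset S^i_\delta$ and $\{q(t_2)\le t_1\}\cap\{t_2\ge b(\delta)\}$ disjoint from $S^i_\delta$, for every non-separable $i$ at once, and then to write $h^2=\psi\circ\varphi$ with an explicit $\psi$ carrying $p$ to the model line $2t_1+t_2=1$ and that line to $q$. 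Your ``transverse coordinate lying strictly between the two curves forced by the inclusions'' presupposes exactly the existence of such $\delta$-independent envelopes, which is the point that needs proof; the appeal to Theorem~\ref{th:cyl_decomp} and to ``enough slack'' in monotone cells does not supply it. (A cylindrical decomposition of $\Real^2$ compatible with the graphs of the defining functions is also a decomposition of $\Real^3$-data in disguise and does not by itself yield a single homeomorphism of $\overline{\Delta^2}$ working uniformly in $\delta$.)
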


\begin{proof}
We may assume all families $S^i_\delta$ to be proper (of type (1), (2) or (3)).

We first define the homeomorphism $h^1$.
It will satisfy Lemma~\ref{le:equivalencies} for families $T^1_\delta, \ldots ,T^r_\delta$  and three other
special families in $\Delta^1$.
These special families are needed for $h^1$ to become the restriction to edges of $\Delta^2$ of the
homeomorphism $h^2$, required in the theorem.

To construct the special families, we introduce four auxiliary functions
$s(\delta)$, $a(\delta)$, $b(\delta)$, $c(\delta)$.
Each of these functions is defined and continuous for small $\delta>0$, has values in $(0,1)$ and tends to 1
as $\delta \to 0$.
In particular, each function is monotone decreasing for small positive $\delta$.

Let $s(\delta):= \max_i \{s_i(\delta) \}$, where $s_i(\delta)$ is the boundary of the monotone family $S_\delta^i$
of the type (2) or (3), restricted to $\Delta_0^2$
(under the canonical affine map identifying this edge with $\Delta^1$).

Let $a(\delta)$ be a function
such that for all families $S_\delta^i$ with combinatorial types either (2) or (3), the inclusion
$\{t_2\le a(\delta) \} \cap \Delta^2 \subset S_\delta^i$ holds for all small $\delta >0$.
Such a function exists, since by Remark~\ref{re:(F)}, the closure
$\overline{S_\delta^i}$ contains the neighborhood in $\overline{\Delta^2}$ of the closed edge
$\overline {\Delta_2^2}$.

Let $b(\delta)$ be a function, such that $s(\delta)<b(\delta)<1$.
Then for any $S_\delta^i$ the difference $(\{t_2=b(\delta)\} \cap \overline{\Delta^2}) \setminus S_\delta^i$ contains
a neighborhood in the segment $\{t_2=b(\delta)\} \cap \overline{ \Delta^2}$ of its
end $(1-b(\delta),b(\delta))$ on the edge $\Delta_0^2$ of $\Delta^2$.

Let $S^i_\delta$ be of the type either (1) or (2), and $V_\delta^i$ the combinatorially equivalent model family.
According to Lemma~\ref{le:mutual_separable}, for any $\delta>0$ there exists $\eta(\delta)>0$ such that
$S_\delta \subset V_{\eta(\delta)}$ and $V_\delta \subset S_{\eta(\delta)}$.
Let $c_i(\delta)$ be a monotone continuous function defined for all small positive $\delta$
so that $c_i(\delta)>1-\eta(\delta)$.
Define the function $c(\delta)$ as $c(\delta):= \max_i \{ c_i(\delta) \}$.

We construct the homeomorphism $h^1$, as in Lemma~\ref{le:equivalencies}, for
$T^1_\delta, \ldots ,T^r_\delta$ and the following three
families: $T_\delta^{r+1}=(0,b(\delta)]$, $T_\delta^{r+2}=(0,1-a^{-1}(1-\delta)]$, and
$T_\delta^{r+3}=(0, c(\delta)]$, all defined on $\Delta^1$.
It follows, in particular, that
$h^1(1-\delta)>b(\delta)$, $h^1(a(\delta))>1-\delta$, and $h^1(1- \delta) >c(\delta)$ for all small $\delta>0$.
We will also assume, without loss of generality, that $h^1(t)>t$ for all $t \in \Delta^1$.

To describe the homeomorphism $h^2$, we need two more auxiliary monotone decreasing functions,
$p(t)$ and $q(t)$.

Let $p(t)$ be a function on $[0,1]$ satisfying the following properties.
\begin{enumerate}[(i)]
\item
$0<p(t)/(1-t) < 1/2$ for all $t \in (0,1)$, and $p(0)=1/2$;
\item
$p(t)/(1-t)$ is decreasing as a function in $t$;
\item
there exists $\delta_0>0$ such that,
for each $S_\delta^i$ with combinatorial type (3), the inclusion
$(\{t_1=p(t_2)\} \cap \Delta^2) \subset S_{\delta_0}^i$ holds.
\end{enumerate}
Such a function exists.
The property (iii) can be satisfied because, by Remark~\ref{re:(F)}, a family $S_{\delta}^i$ of the combinatorial
type (3) contains a neighborhood of the open edge $\Delta_{1}^2$ in $\Delta^2$ for all small positive $\delta$.
By the curve selection lemma (Lemma 3.2 in \cite{Michel2}) there exists $\delta_0$ and a function $p(t)$ such that
$(p(t_2),t_2)\in S_{\delta_0}^i$ for all $t_2 \in (0,1)$.
Since $S_\delta^i$ is monotone, this implies the inclusion $(\{t_1=p(t_2)\} \cap \Delta^2) \subset S_\delta^i$
for all positive $\delta<\delta_0$.

Finally, there exists a function $=q(t)$ on $[0,1]$ such that
\begin{enumerate}[(i)]
\item
$1>q(t)/(1-t) > 1/2$ for all $t \in (0,1)$ and $q(0)=1/2$;
\item
the point $(t_1,b(\delta))\notin S_\delta^i$ for all small positive $\delta$ and all
$t_1\in[q(b(\delta)),1-b(\delta))$.
\end{enumerate}
Since $S_\delta^i$ is a monotone family and $b(\delta)$ is monotone decreasing for small $\delta$, it follows
that $(t_1,t_2)\notin S_\delta^i$ for all small positive $\delta$ and all
$t_2 \in [b(\delta),1),\> t_1\in[q(t_2),1-t_2)$.

Now we describe the homeomorphism $h^2:\> \overline{\Delta^2} \to \overline{\Delta^2}$.

We set $h^2:=\psi \circ \varphi$, where $\psi$ and $\varphi$ are homeomorphisms
$\overline{\Delta^2} \to \overline{\Delta^2}$ defined as follows.

Let
$$\varphi(t_1,t_2):=((1-h^1(t_2))h^1(t_1/(1-t_2)),h^1(t_2))\> \text{when}\> t_2<1,\>
\text{and}\> \varphi(t_1,1)=(0,1).$$
Then $\varphi$ restricted to all edges of $\Delta^2$
equals to $h^1$ (after identification of these edges with $\Delta^1$).

Next, we define the homeomorphism $\psi$ so that:
\begin{enumerate}
\item
$\psi$ preserves the segments $\{ t_2= {\rm const} \} \cap \Delta^2$,
\item
$\psi(p(t_2),t_2)=((1-t_2)/2,t_2)$ for all $t_2$ close to 1,
\item
$\psi((1-t_2)/2,t_2)=(q(t_2),t_2)$ for all $t_2$ close to 1.
\end{enumerate}
Observe that one can choose $\psi$ to be piecewise linear on each
segment $\{ t_2= {\rm const} \} \cap \Delta^2$.
Then $\psi$ acts as identity on the boundary of $\Delta^2$.

We now prove that the homeomorphism $h^2$ satisfies the property, required in the theorem,
for the case of a family $S_\delta$ combinatorially equivalent to the model family
$V_\delta= \{t_2\le 1-\delta\}\cup\{2 t_1+t_2\le 1\}$ in the equivalence class (3).
The proofs for other combinatorial types are similar (and simpler).

Define
$$A_\delta:=(\{t_2 \le a(\delta)\}\cup\{t_1\le p(t_2)\}) \cap \Delta^2$$
and
$$B_\delta:=(\{t_2 \le b(\delta)\}\cup\{t_1\le q(t_2)\}) \cap \Delta^2.$$
Then $A_\delta$ and $B_\delta$ have the same combinatorial type as $S_\delta$ and $V_\delta$, and
$A_\delta \subset S_\delta \subset B_\delta$ for all small $\delta >0$
(the inclusion $A_\delta \subset S_\delta$ uses the fact that $S_\delta$ is a 2-ball, according to the basic condition (E)).
We now prove that $V_\delta\subset h^2(A_\delta)$ and $B_\delta\subset h^2(V_\delta)$
for all small positive $\delta$, which immediately implies the theorem.

Let
$$A'_\delta := (\{t_2\le 1-\delta\}\cup\{t_1\le p(t_2)\}) \cap \Delta^2$$
and
$$V'_\delta := (\{t_2\le b(\delta)\}\cup\{2 t_1+t_2\le 1\}) \cap \Delta^2.$$
Then it is easy to check, using the properties of functions $p(t_2)$, $q(t_2)$, $a(\delta)$, $b(\delta)$,
that, for all small $\delta >0$, the following inclusions hold.
\begin{enumerate}
\item
$A'_\delta \subset \varphi(A_\delta)$,
\item
$V'_\delta \subset \varphi(V_\delta)$,
\item
$V_\delta\subset\psi(A'_\delta)$,
\item
$B_\delta\subset\psi(V'_\delta)$.
\end{enumerate}

For example, let us prove (1).
According to the construction of the homeomorphism $h^1$, we have $h^1(a(\delta))>1-\delta$, hence
$$(\{ t_2 \le 1- \delta \} \cap \Delta^2) \subset  \varphi (\{ t_2 \le a(\delta) \} \cap \Delta^2).$$
Furthermore, $\varphi$ maps each interval $\{ t_2= {\rm const} \} \cap \Delta^2$ parallel to itself
along the rays through the vertex 2.
Condition that $p(t_2)/(1-t_2)$ is decreasing as a function of $t_2$, guarantees that the image of
the curve $(p(t_2),t_2)$ under $\varphi$ is to the right (along the coordinate $t_1$) of that curve, hence
$$( \{ t_1 \le p(t_2) \} \cap \Delta^2) \subset \varphi (\{ t_1 \le p(t_2) \} \cap \Delta^2),$$
and (1) is proved.
Proofs of the inclusions (2)--(4) are analogous.

Inclusions (1) and (3) imply $V_\delta\subset h^2(A_\delta)$, while (2) and (4) imply
$B_\delta\subset h^2(V_\delta)$.
\end{proof}

\begin{remark}
\begin{enumerate}[(i)]
\item
The statement analogous to Lemma~~\ref{le:equivalencies} or Theorem~\ref{th:equivalencies} for families in
$\Delta^0$ is trivial (the homeomorphism is the identity).
\item
The proof of Theorem~\ref{th:equivalencies} remains valid for the case when the families $S^i_\delta$
satisfying basic conditions, are not necessarily standard, but we do not  need this fact in the sequel.
For $n=2$ there is only one {\em non-standard model family}, of type (5).
The only addition one has to make to accommodate type (5), is in the description of the function $p(t)$.
More precisely, in item (iii) of the description of this function and in the proof of its existence,
the phrase ``combinatorial type (3)'' should be replaced by ``combinatorial type either (3) or (5)''.

Accordingly, Corollary~\ref{cor:equivalencies} and Lemma~\ref{le:strong} below remain true for
not necessarily standard families satisfying basic conditions.
\item
Theorem~\ref{th:equivalencies} can be generalized to standard monotone families in the standard ordered
$m$-simplex $\Delta^m$ for any $m$.
The proof will appear elsewhere.
\end{enumerate}
\end{remark}

\begin{corollary}\label{cor:equivalencies}
Let $m \in \{0,1,2 \}$.
Given a finite set of standard monotone families $\{ S^1_\delta, \ldots ,S^k_\delta \}$ in {\em definable}
$m$-simplices $(\Sigma_i,\> \Phi_{i})$ with $\Phi_{i}:\> \Delta \to \Sigma_i$, there is
a homeomorphism $h:\> \overline \Delta \to \overline \Delta$ such that the homeomorphisms
$$h_{i,j}:\> \Phi_j \circ h \circ \Phi_i^{-1}:\> \overline \Sigma_i \to \overline \Sigma_j\quad
\text{and}\quad
h_{j,i}:\> \Phi_i \circ h \circ \Phi_j^{-1}:\> \overline \Sigma_j \to \overline \Sigma_i$$
provide topological equivalence in pairs of combinatorially equivalent families in the set.

In particular, if two families $S^i_\delta,\> S^j_\delta$ are combinatorially equivalent, then
they are topologically equivalent.
\end{corollary}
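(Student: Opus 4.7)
The plan is to reduce everything to the standard simplex via the given parametrizations $\Phi_i$, and then invoke Theorem~\ref{th:equivalencies} (or Lemma~\ref{le:equivalencies} when $m=1$; the case $m=0$ is trivial since the identity map works). So I would first pull back each family, defining $F^i_\delta := \Phi_i^{-1}(S^i_\delta) \subset \Delta$. Because each $\Phi_i$ is a face-preserving homeomorphism of ordered simplices, $F^i_\delta$ is a standard monotone family in $\Delta$ combinatorially equivalent to $S^i_\delta$, and combinatorially equivalent families $S^i_\delta,\ S^j_\delta$ produce combinatorially equivalent pullbacks $F^i_\delta,\ F^j_\delta$.

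Next I would apply Theorem~\ref{th:equivalencies} to the finite set $\{F^1_\delta,\ldots,F^k_\delta\}$ to obtain a single face-preserving homeomorphism $h^2\colon\overline\Delta\to\overline\Delta$, together with, for each $i$, a model family $V^i_\delta$ of the same combinatorial type as $F^i_\delta$ satisfying the inclusions $F^i_\delta\subset h^2(V^i_\delta)$ and $V^i_\delta\subset h^2(F^i_\delta)$ for all small $\delta>0$. By Lemma~\ref{le:family_to_function}, a combinatorial equivalence class uniquely determines its model family, so whenever $S^i_\delta$ and $S^j_\delta$ are combinatorially equivalent we have $V^i_\delta = V^j_\delta$; call this common model $V_\delta$.

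Now I would set $h := h^2 \circ h^2$ and verify that this single homeomorphism produces the required equivalences. Indeed, chaining the inclusions from the theorem gives
\[
F^j_\delta \;\subset\; h^2(V_\delta) \;\subset\; h^2\bigl(h^2(F^i_\delta)\bigr) \;=\; h(F^i_\delta),
\]
and symmetrically $F^i_\delta \subset h(F^j_\delta)$. Translating back through $\Phi_i$ and $\Phi_j$, this reads $S^j_\delta \subset h_{i,j}(S^i_\delta)$ and $S^i_\delta \subset h_{j,i}(S^j_\delta)$, which is exactly the condition for topological equivalence in Definition~\ref{def:equiv}. The ``in particular'' clause is the special case where the collection consists of just the two families $S^i_\delta$ and $S^j_\delta$.

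The corollary is essentially a bookkeeping exercise once Theorem~\ref{th:equivalencies} is in hand; the only point requiring (trivial) care is to check that pulling back by a face-preserving homeomorphism preserves both the basic conditions and the combinatorial type of the family. There is no genuine obstacle here, since the real technical content, namely the construction of the uniform homeomorphism $h^2$ sending every family of a given combinatorial type to a neighborhood of its model family, has already been carried out in the proof of Theorem~\ref{th:equivalencies}.
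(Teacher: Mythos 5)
Your proof is correct and takes essentially the same route as the paper, whose entire argument is to apply Theorem~\ref{th:equivalencies} to the pulled-back families $\Phi_i^{-1}(S^i_\delta)$ in $\Delta$. Your explicit choice $h = h^2\circ h^2$, chaining the two inclusions through the common model family $V_\delta$ of a combinatorial class, correctly supplies the one detail the paper leaves implicit (note that $h=h^2$ alone would not suffice, so this composition is the right bookkeeping).
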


\begin{proof}
Apply Theorem~\ref{th:equivalencies} to the set of standard monotone families
$\Phi_{i}^{-1} (S^i_\delta)$, $i=1, \ldots ,k$ in $\Delta$.
\end{proof}

The following lemma shows, in particular, that for $m \le 2$, the property to be
{\em topologically equivalent}
becomes unnecessary in the definition of the {\em strong topological equivalence}
(this is not true in the case $m=3$).

\begin{lemma}\label{le:strong}
For two standard monotone families $S_\delta$ and $V_\delta$ in definable ordered simplices
$\Sigma$ and $\Lambda$ respectively with $\dim \Sigma= \dim \Lambda \le 2$,
the following three statements are equivalent.
\begin{enumerate}
\item
The families are combinatorially equivalent.
\item
The families are topologically equivalent.
\item
The families are strongly topologically equivalent.
\item
For small $\delta>0$ there is a face-preserving homeomorphism $h_\delta:\> \overline \Sigma \to \overline \Lambda$
mapping $S_\delta$ onto $V_\delta$.
\end{enumerate}
\end{lemma}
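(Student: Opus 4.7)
The equivalences form a short cycle. Implications $(3) \Rightarrow (2)$ and $(3) \Rightarrow (4)$ are immediate from Definition~\ref{def:equiv}. For $(2) \Rightarrow (1)$, any face-preserving homeomorphism $h_1 : \overline{\Lambda} \to \overline{\Sigma}$ satisfies $(h_1(V_\delta))_{j_1, \ldots, j_k} \supset (S_\delta)_{j_1, \ldots, j_k}$; combined with the symmetric inclusion from $h_2$, this forces the corresponding restrictions to be simultaneously empty or non-empty, which is combinatorial equivalence. The implication $(4) \Rightarrow (1)$ is the same argument applied to $h_\delta$ in place of $h_1$, and $(1) \Rightarrow (2)$ is exactly Corollary~\ref{cor:equivalencies}. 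The only non-trivial step is therefore $(1) \Rightarrow (4)$, which, together with the already established $(1) \Rightarrow (2)$, also yields $(1) \Rightarrow (3)$.

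To prove $(1) \Rightarrow (4)$, I reduce to the case where $V_\delta$ is the model family $V_\delta^\psi \subset \Delta^m$ corresponding to the common lex-monotone Boolean function: if each of $S_\delta$, $V_\delta$ admits such an $h_\delta$ to $V_\delta^\psi$, then composing gives the desired homeomorphism between them. I then induct on $m \le 2$. The case $m = 0$ and all non-proper cases are trivial. For $m = 1$, the basic conditions (A)--(E) force $\Phi^{-1}(S_\delta) = (0, u(\delta)]$ with $u(\delta) \nearrow 1$ as $\delta \searrow 0$, and a piecewise linear self-homeomorphism of $\overline{\Delta^1}$ fixing $0$ and $1$ and sending $u(\delta)$ to $1 - \delta$ supplies the required $h_\delta$.

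For $m = 2$, condition (E) ensures that $S_\delta$ and $V_\delta^\psi$ are regular $2$-cells with regular $1$-cell boundaries $\partial_\Sigma S_\delta$ and $\partial_{\Delta^2} V_\delta^\psi$; by combinatorial equivalence the two endpoints of each arc lie on the same prescribed pair of $1$-faces. The construction proceeds in three steps. First, apply the inductive step on each $1$-face to obtain a face-preserving homeomorphism $\partial \Sigma \to \partial \Delta^2$ mapping $(S_\delta)_j$ onto $(V_\delta^\psi)_j$ for every $j$, which in particular matches the boundary endpoints of the two arcs. Second, extend this to a face-preserving homeomorphism $\overline{\Sigma} \to \overline{\Delta^2}$ carrying $\partial_\Sigma S_\delta$ onto $\partial_{\Delta^2} V_\delta^\psi$; this is possible by the Jordan--Schoenflies theorem applied to the two arcs joining the two already-matched points. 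Third, these arcs partition each closed simplex into two regular $2$-cells that correspond pairwise under the map just constructed, and a further Schoenflies-type adjustment inside each cell (relative to its prescribed boundary) yields an $h_\delta$ with $h_\delta(S_\delta) = V_\delta^\psi$.

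The main obstacle is carrying out the last step in the definable o-minimal category so that the extensions on the two complementary $2$-cells agree along $\partial_\Sigma S_\delta$; this is handled via definable collar neighbourhoods of regular cells, and no continuity of $h_\delta$ in $\delta$ is demanded by the statement, so only existence for each small $\delta$ needs to be secured. The hypothesis $m \le 2$ enters precisely through the use of the planar Jordan--Schoenflies theorem, which is why the equivalence of (1)--(2) with (3)--(4) breaks down for $m = 3$ as noted just before the statement.
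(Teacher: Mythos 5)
Your proof is correct and follows essentially the same route as the paper: $(1)\Leftrightarrow(2)$ via Corollary~\ref{cor:equivalencies} and face-preservation, $(1)\Rightarrow(4)$ via the planar Sch\"onflies theorem applied to the arc $\partial_\Sigma S_\delta$ guaranteed by condition (E), and $(3)$ obtained as the conjunction of $(2)$ and $(4)$. The only organizational difference is in assembling the disk homeomorphism for $(1)\Rightarrow(4)$: you match the boundary of the simplex (hence the arc endpoints) first and then fill in the two complementary $2$-cells, whereas the paper first produces a homeomorphism of pairs $(\overline\Sigma,\, \overline{\partial_\Sigma S_\delta}) \to (\overline\Lambda,\, \overline{\partial_\Lambda V_\delta})$ and only afterwards corrects it on the vertices by extending boundary homeomorphisms of the two complementary $2$-balls; your version is, if anything, more attentive to face-preservation, and your worry about definability of $h_\delta$ is moot since Definition~\ref{def:equiv} does not require it.
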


\begin{proof}
The statement (1) implies (2) by Corollary~\ref{cor:equivalencies}.
The statement (2) implies (1) since the homeomorphisms $h,\> \Phi$ and $\Psi$ are face-preserving.

Let $v_0,\ v_1,\ v_2$  be the vertices of $\Sigma$  having labels $0,\ 1,\ 2$ respectively,
let $w_0,\ w_1,\ w_2$  be the vertices of $\Lambda$  having labels $0,\ 1,\ 2$ respectively.

Due to the basic condition (E), the boundaries $\partial_\Sigma S_\delta$ of $S_\delta$ and
$\partial_\Lambda V_\delta$ of $V_\delta$ in $\Sigma$ and $\Lambda$ respectively,
are homeomorphic to intervals, hence (a very special case of the Sch\"onflies
theorem, Proposition~6.10 in \cite{BGV_JEMS11}) the pairs $( \overline \Sigma,\>
\overline{\partial_\Sigma S_\delta})$ and $(\overline \Lambda,\> \overline{\partial_\Lambda V_\delta})$
are homeomorphic, in particular there is a homeomorphism
$h'_\delta:\> \overline \Sigma \to \overline \Lambda$, mapping $S_\delta$ onto $V_\delta$.
We prove that there is another homeomorphism, $h''_\delta:\> \overline \Lambda \to \overline \Lambda$,
which preserves $V_\delta$ and maps the images $h'_\delta(v_0), h'_\delta(v_1),h'_\delta(v_2)$ in $\Lambda$
of vertices of $\Sigma$ into vertices $w_1,\ w_2,\ w_3$ respectively of $\Lambda$.

Indeed, consider the closed 2-ball $\overline{V_\delta}$, and the boundary $\partial_\Lambda V_\delta$
of $V_\delta$ in $\Lambda$.
The set $\partial_\Lambda V_\delta$ lies in the boundary of $\overline{V_\delta}$ which is a 1-sphere.
It is easy to construct a homeomorphism
$g$, of the boundary of $\overline{V_\delta}$ onto itself such that
the restriction $g|_{\partial_\Lambda V_\delta}$ is the identity,
and $g(h'_\delta(v_i))=w_i$ for each $h'_\delta(v_i) \in \overline{V_\delta}$.
The homeomorphism $g$ can be extended from the boundary to the homeomorphism
$g_1:\> \overline{V_\delta} \to \overline{V_\delta}$ of the whole 2-ball (\cite{RS}, Lemma~1.10).
Repeating the same argument for the closed 2-ball $\overline{\Lambda \setminus V_\delta}$ we obtain a
homeomorphism $g_2$ of this ball onto itself, such that  $g_2|_{\partial_\Lambda V_\delta}$ is the identity
and $g_2(h'_\delta(v_i))=w_i$ for each $h'_\delta(v_i) \in \overline{\Lambda \setminus V_\delta}$.
Define $h''_\delta$ to coincide with $g_1$ on $\overline{V_\delta}$ and with $g_2$ on
$\overline{\Lambda \setminus V_\delta}$.

Taking $h_\delta:= h''_\delta \circ h'_\delta$, we conclude that the statement (1) implies (4).

By Remark~\ref{re:strongly}, the statement (4) implies (1).
Since the statement (3), according to the definition, is the conjunction of (2) and (4), all the
statements (1)--(4) are pair-wise equivalent.
\end{proof}

\begin{corollary}
A monotone family $S_\delta$ in a definable ordered simplex
$\Sigma$ with $\dim \Sigma \le 2$, satisfying the basic conditions (A)--(E) is standard if and only if it is
topologically equivalent to a standard family.
\end{corollary}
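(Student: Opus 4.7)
The forward implication is immediate: any standard family is topologically equivalent to itself, which is a standard family.

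For the reverse implication, assume $S_\delta$ satisfies the basic conditions (A)--(E) and is topologically equivalent to a standard family $V_\delta$ in an ordered simplex $\Lambda$ of the same dimension as $\Sigma$. I will first argue that $S_\delta$ is combinatorially equivalent to $V_\delta$. Combined with Lemma~\ref{le:same_function} (the Boolean function assigned to a family depends only on its combinatorial equivalence class) and the hypothesis that $S_\delta$ satisfies (A)--(E), Definition~\ref{def:standard_family} then yields that $S_\delta$ is standard, because the Boolean function of the standard family $V_\delta$ is lex-monotone.

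To establish combinatorial equivalence from topological equivalence, I would unwind Definition~\ref{def:equiv}: there are face-preserving homeomorphisms $h_1:\overline{\Lambda}\to\overline{\Sigma}$ and $h_2:\overline{\Sigma}\to\overline{\Lambda}$ with $S_\delta\subset h_1(V_\delta)$ and $V_\delta\subset h_2(S_\delta)$ for small $\delta>0$. Since $h_1$ is a homeomorphism of compact sets, $\overline{S_\delta}\subset h_1(\overline{V_\delta})$, and since $h_1$ is face-preserving, $h_1(\overline{V_\delta})\cap\Sigma_j=h_1(\overline{V_\delta}\cap\Lambda_j)$. Hence $(S_\delta)_j\subset h_1((V_\delta)_j)$. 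Iterating this by induction on $k$, using the inductive definition $A_{j_1,\ldots,j_k}=\overline{A_{j_1,\ldots,j_{k-1}}}\cap\Lambda_{j_1,\ldots,j_k}$ together with the fact that $h_1$ restricts to a face-preserving homeomorphism on each closed face, I get $(S_\delta)_{j_1,\ldots,j_k}\subset h_1((V_\delta)_{j_1,\ldots,j_k})$; the symmetric chain of inclusions with $h_2$ holds for $V_\delta$. In particular, each restriction of $S_\delta$ is empty if and only if the corresponding restriction of $V_\delta$ is empty, which is exactly combinatorial equivalence.

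The only real content is therefore the observation that face-preserving topological equivalence automatically yields combinatorial equivalence, without any assumption of standardness on $S_\delta$ itself (note that Lemma~\ref{le:strong} presumes both families to be standard, so it is the preceding step rather than that lemma which is used here). No genuine obstacle arises; the mild technical point is that the iterated closure-intersection operation defining $(\cdot)_{j_1,\ldots,j_k}$ behaves well under a face-preserving homeomorphism, which is verified by the straightforward induction outlined above.
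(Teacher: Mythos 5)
Your proof is correct and follows essentially the same route as the paper, which simply cites Definition~\ref{def:standard_family} and Lemma~\ref{le:strong}; the implication you spell out (face-preserving topological equivalence $\Rightarrow$ combinatorial equivalence) is exactly the content of the paper's proof of ``(2) implies (1)'' in Lemma~\ref{le:strong}, which indeed uses only face-preservation and not standardness of both families. Your observation that the lemma as stated presumes both families standard, so that its literal invocation would be circular and the relevant implication must be extracted from its proof, is a fair and worthwhile remark about the paper's citation.
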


\begin{proof}
Follows directly from Definition~\ref{def:standard_family} and Lemma~\ref{le:strong}.
\end{proof}

\section{Constructing families of monotone curves}
\label{sec:monotone-curves}

Let $a_1 < b_1, \ldots ,a_{n}< b_{n}$, where $n \ge 2$, be real numbers and consider an open box
$$B:= \{ (x_1, \ldots ,x_n) \in \Real^{n}|\>  a_i <x_i <b_i \}.$$
Let $X \subset B$ be a monotone cell, with $\dim X=2$,
such that points $${\bf a}:=(a_1, \ldots ,a_{n}),\> {\bf b}:=(b_1, \ldots ,b_{n})$$
belong to $\overline X$.

\begin{theorem}\label{th:curves}
There is a definable family $\{ \gamma_t|\> 0<t<1 \}$ of disjoint open curve intervals (``curves'') in $X$
such that the following properties are satisfied.
\begin{enumerate}[(i)]
\item
The curve $\gamma_t$ for each $t$ is a monotone cell,
that is, each coordinate function among $x_1, \ldots ,x_n$ is monotone on $\gamma_t$.
\item
The endpoints of each $\gamma_t$ are ${\bf a}$ and ${\bf b}$.
\item
The union $\bigcup_{0<t<1} \gamma_t$ coincides with $X$.
\end{enumerate}
\end{theorem}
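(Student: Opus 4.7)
The plan is to reduce the problem to a planar setting via coordinate projection, establish the monotonicity of the relevant boundary data, and then interpolate explicitly between the two boundary curves of the projected set.

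First, since $\dim X = 2$, the Remark after Definition~\ref{def:def_monotone_map} provides a $2$-dimensional coordinate subspace $L$ on which $\rho_L|_X$ is a homeomorphism onto a semi-monotone set $X' := \rho_L(X)$. After reindexing I may take $L = {\rm span}\{x_1, x_2\}$, so that $X$ is the graph of a monotone map $\f = (f_3, \ldots, f_n): X' \to \Real^{n-2}$. Write $X' = \{(c, y) : c \in (a_1, b_1),\ \alpha(c) < y < \beta(c)\}$ for continuous definable boundary functions $\alpha, \beta$, with $\alpha(a_1^+) = a_2$ and $\beta(b_1^-) = b_2$. Each $f_i$ ($i \geq 3$) is quasi-affine, hence in each of $x_1, x_2$ is either strictly monotone or constant; since $f_i$ takes values in $(a_i, b_i)$ on $X'$ with limits $a_i$ at $(a_1, a_2)$ and $b_i > a_i$ at $(b_1, b_2)$, it cannot be strictly decreasing in either variable, so it is non-decreasing in both and strictly increasing in at least one. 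By a parallel argument using the semi-monotonicity of $X'$, I would show that $\alpha$ and $\beta$ are themselves non-decreasing on $(a_1, b_1)$: a strict local maximum of $\alpha$ at some interior $c^*$ would, for $y$ just below $\alpha(c^*)$, give $(c^* \pm \epsilon, y) \in X'$ (since $\beta > \alpha$ forces $\beta(c^*) > y$ in a neighborhood) but $(c^*, y) \notin X'$, disconnecting the horizontal slice and contradicting semi-monotonicity; combined with $\alpha(a_1^+) = \inf_{\overline{X'}} x_2$, this forces $\alpha$ non-decreasing, and analogously for $\beta$.

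Second, I construct the family explicitly in $X'$ and lift it. Fix a definable family $h_t: (a_1, b_1) \to (0, 1)$, $t \in (0, 1)$, with each $h_t$ continuous and strictly increasing, $h_t(a_1^+) = 0$, $h_t(b_1^-) = 1$, and such that for each fixed $c$ the map $t \mapsto h_t(c)$ is a bijection onto $(0, 1)$; after rescaling $u = (c - a_1)/(b_1 - a_1) \in (0, 1)$ a concrete piecewise-linear choice is $h_t(u) = ut/(1-t)$ on $u \leq 1 - t$ and $h_t(u) = t + (u - (1-t))(1-t)/t$ on $u \geq 1 - t$. Put
\[
g_t(c) := \alpha(c) + h_t(c)\bigl(\beta(c) - \alpha(c)\bigr), \qquad \gamma'_t := \{(c, g_t(c)) : c \in (a_1, b_1)\},
\]
and define $\gamma_t \subset X$ by lifting: $\gamma_t := \{(c, g_t(c), f_3(c, g_t(c)), \ldots, f_n(c, g_t(c))) : c \in (a_1, b_1)\}$. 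The identity
\[
g_t(c_2) - g_t(c_1) = (1 - h_t(c_2))(\alpha(c_2) - \alpha(c_1)) + h_t(c_2)(\beta(c_2) - \beta(c_1)) + (h_t(c_2) - h_t(c_1))(\beta(c_1) - \alpha(c_1))
\]
displays $g_t(c_2) - g_t(c_1)$ as a sum of nonnegative terms, strictly positive for $c_1 < c_2$, so $g_t$ is strictly increasing; combined with the monotonicity of each $f_i$ from the first step, every coordinate is strictly monotone along $\gamma_t$, making it a monotone $1$-cell with endpoints ${\bf a}, {\bf b}$ (by the continuous extensions of $\alpha, \beta, h_t, f_i$ to the corners). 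For any $(c, y) \in X'$ the equation $g_t(c) = y$ has a unique solution $t \in (0, 1)$ (since $g_t(c)$ is strictly increasing in $t$ with range $(\alpha(c), \beta(c))$), so the curves $\gamma_t$ are pairwise disjoint and their union is $X$.

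The main obstacle is the structural claim, in the first step, that the boundary functions $\alpha, \beta$ of $X'$ are non-decreasing. The local-maximum sketch is clean in generic position, but a complete argument has to handle plateau values of $\alpha$ or $\beta$, the pinched-versus-fat behavior of $X'$ at each corner, and the precise interaction between $\alpha$ and $\beta$; without this non-decreasing property some points of $X'$ would lie outside the ``monotone envelope'' $\{\sup_{c' \leq c}\alpha(c') \leq y \leq \inf_{c' \geq c}\beta(c')\}$ and would be unreachable by any coordinate-monotone curve from $(a_1, a_2)$ to $(b_1, b_2)$, and the filling conclusion would fail.
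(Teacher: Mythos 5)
There is a genuine gap, and it sits exactly where the paper's proof does its real work: in the \emph{choice} of the two-dimensional coordinate subspace onto which $X$ is projected. You take an arbitrary coordinate $2$-plane on which $\rho_L|_X$ is injective and then claim that each component $f_i$ of the graphing map is non-decreasing in both $x_1$ and $x_2$, deducing this from the fact that $f_i$ tends to $a_i$ at $(a_1,a_2)$ and to $b_i$ at $(b_1,b_2)$. This claim is false. Take $n=3$, let $X':=\{0<x_1<2,\ 0<x_2<1,\ 0<x_1-x_2<1\}$ (a bounded open convex, hence semi-monotone, set) and let $X$ be the graph of $f_3(x_1,x_2)=x_1-x_2$ over $X'$. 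Then $X$ is a monotone cell contained in $B=(0,2)\times(0,1)\times(0,1)$, both ${\bf a}=(0,0,0)$ and ${\bf b}=(2,1,1)$ lie in $\overline X$, and indeed $f_3\to 0$ at $(0,0)$ and $f_3\to 1$ at $(2,1)$ --- yet $f_3$ is strictly \emph{decreasing} in $x_2$. (The infimum of a function decreasing in $x_2$ need not be attained where $x_2$ is large once the domain is slanted.) Consequently a curve in $X'$ along which $x_1$ and $x_2$ both increase need not lift to a monotone curve in $X$, and your explicit construction actually fails here: here $\alpha(c)=\max(0,c-1)$, $\beta(c)=\min(c,1)$, and for $t=0.9$ your piecewise-linear $h_t$ gives $g_t(c)=4.5c^2$ on $(0,0.2]$, so the third coordinate $c-g_t(c)$ along $\gamma_t$ increases on $(0,1/9)$, decreases on $(1/9,0.2)$, and increases again afterwards. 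The curves $\gamma_t$ are therefore not monotone cells and property (i) fails.

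This is precisely the difficulty the paper's argument is built to overcome: it does not use an arbitrary injective coordinate projection, but uses a generic curve germ at ${\bf a}$ and the cyclic order of the level curves of \emph{all} $n$ coordinate functions to select a specific pair of ``neighbouring'' coordinates $x_\ell,x_m$ with the property that monotonicity of $x_\ell$ and $x_m$ along a curve in $\rho_{{\rm span}\{x_\ell,x_m\}}(X)$ forces monotonicity of every other coordinate (in the example above this selects $\{x_2,x_3\}$ rather than $\{x_1,x_2\}$, since $x_1=x_2+x_3$ is automatically monotone when $x_2$ and $x_3$ are). Downstream of the projection your interpolation scheme is essentially sound, and the obstacle you flag --- that the bottom and top $\alpha,\beta$ of $X'$ are non-decreasing --- is in fact unproblematic and is used in the paper for the same reason (horizontal slices of a semi-monotone set are connected, so $\alpha$ has no interior local maximum, and its infimum is approached at the left corner). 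But without the correct choice of the projection pair, the lifting step cannot be repaired as written.
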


\begin{proof}
Since at $\bf a$ the minima of all coordinate functions $x_1, \ldots ,x_n$ on $X$ are attained,
on any germ of a smooth curve in $X$ at $\bf a$, all coordinate
functions are monotone increasing from $\bf a$.
Choose generically such a germ $\alpha$ and a point ${\bf c} \in \alpha$.
Consider all level curves through $\bf c$ of all coordinate functions
(the same level curve can correspond to different functions).
Observe that the cyclic order (of coordinate functions) in which the level curves pass through $\bf c$
is the same as the cyclic order in which level curves pass through any other point in $X$, because $X$
is a monotone cell.
The level curves through $\bf c$ divide $X$ into open sectors, and there is a unique pair
of neighbours in the ordered set of level curves such that $\alpha$ belongs to
the intersection of two of sectors bounded by these curves.
Let this pair of neighbours correspond to coordinate functions $x_\ell, x_m$ (if a level curve
corresponds to several coordinate functions, choose any corresponding function).

Let $X':= \rho_{{\rm span} \{x_\ell, x_m \}} (X)$, and
$$B':= \{ (x_\ell,x_m)|\> a_\ell<x_\ell<b_\ell,\> a_m<x_m<b_m \}= \rho_{{\rm span} \{x_\ell, x_m \}} (B).$$
Note that $X'$ is a semi-monotone set and $X' \subset B'$.

It is clear, from the choice of coordinate functions $x_\ell$ and $x_m$, that if on a curve
in $X'$ both  $x_\ell$ and $x_m$ are monotone, then all other coordinate functions are also monotone.
It follows that it is sufficient to construct a family $\{ \gamma'_t|\> 0<t<1 \}$ of curves in $X'$
with properties analogous to (i)--(iii), i.e.,
\begin{itemize}
\item
each $\gamma'_t$ is the graph of a monotone function in either $x_\ell$ or $x_m$;
\item
the endpoints of each $\gamma'_t$ are $(a_\ell,a_m)$ and $(b_\ell,b_m)$;
\item
the union $\bigcup_{0<t<1} \gamma'_t$ coincides with $X'$.
\end{itemize}
Then, for each $t$, define $\gamma_t$ as $(\rho_{{\rm span} \{x_\ell, x_m \}}|_{X})^{-1}(\gamma'_t)$.

Note first that the top and the bottom of the semi-monotone set $X'$ are graphs of non-decreasing functions
on the interval $X'' \subset {\rm span} \{x_\ell \}$.
It follows that for each $c \in \Real$ such that $a_\ell+a_m<c<b_\ell+b_m$, the
line $\{ x_\ell+x_m=c \}$ intersects $X'$ by an open interval
$(p(c)=(p_\ell(c),p_m(c)),\> q(c)=(q_\ell(c),q_m(c)))$ with
$p_\ell(c), p_m(c), q_\ell(c), q_m(c)$ being continuous non-decreasing functions of $c$.
The limits of both $p(c)$ and $q(c)$ as $c \to a_\ell+a_m$
(respectively, as $c \to b_\ell+b_m$) are equal to
$(a_\ell,a_m)$ (respectively, to $(b_\ell,b_m)$).

For each $t \in(0,1)$ define the curve
$$\beta_t:=\{ (1-t) p(c)+t q(c)|\> c \in (a_\ell+a_m, b_\ell+b_m)\} \subset X'.$$
For each $t$ the curve $\beta_t$ is continuous, and connects points $\bf a$ and $\bf b$.
As the parameter $c$ varies from $a_\ell+a_m$ to $b_\ell+b_m$
both coordinates of points on $\beta_t$ are non-decreasing as the functions of $c$,
and strictly increasing everywhere except the parallelograms
in $X'$ where either both $p_\ell(c)$ and $q_\ell(c)$ are constants or both $p_m(c)$ and $q_m(c)$
are constants.
It remains to modify the family of curves inside each such parallelogram $P$.

Suppose that $p_m(c)$ and $q_m(c)$ are constant in $P$, so the curves $\beta_t \cap P$ are horizontal.
The left and the right sides of $P$ can be parameterized by $t \in (0,1)$.
Consider a monotone increasing continuous function $\phi(t):\> [0,1] \to [0,1]$ such that
$\phi (0)=0$, $\phi (1)=1$, and $t < \phi (t)$ for every $t \in (0,1)$ (e.g., strictly concave).

For each $t$, connect a point corresponding to $t$ on the left side of
$P$ with the point corresponding to $\phi(t)$ on the right side.
This gives a family of monotone (with respect to $c$) curves $\alpha_{t,P}$ inside $P$.
The family $\gamma_t$ can now be defined as $\beta_s$ outside all parallelograms and $\alpha_{s,P}$
inside each parallelogram $P$, with the natural re-parameterizations.
\end{proof}

\section{Combinatorial equivalence to standard families}
\label{sec:combinatorial-equivalence}

Let $K \subset \Real^n$ be a compact definable set with $\dim K \le 2$, and
$\{ S_\delta \}_{\delta >0}$ a monotone definable family of compact subsets of $K$.
According to Lemma~\ref{le:S_by_function}, there is a non-negative upper semi-continuous definable
function $f:\> K \to \Real$ such that $S_\delta=\{\x \in K|\> f(\x) \ge \delta\}$.
Moreover, by Remark~\ref{re:S_by_monot}, there is a cylindrical decomposition ${\mathcal D}'$ of
$\Real^{n+1}$, satisfying the frontier condition, and monotone with respect to the function $f$
(Definition~\ref{def:monot_respect_function}).

Let $\mathcal D$ be the cylindrical cell decomposition induced by ${\mathcal D}'$ on $\Real^n$.

\begin{theorem}\label{th:triang}
There is a definable ordered triangulation of $K$, which is a refinement of the decomposition $\mathcal D$,
with the following properties.
\begin{enumerate}[(i)]
\item
Every definable simplex $\Lambda$ of the triangulation is
a monotone cell.
\item
For every simplex $\Lambda$, the restriction $f|_\Lambda$ is a monotone function.
\item
For every two-dimensional simplex $\Lambda$ and every $i=0,1,2$ the function $(f|_\Lambda)_i$
(see Definition~\ref{def:ext}) is monotone.
\item
For every simplex $\Lambda$ the family $S_\delta \cap \Lambda$ satisfies the basic conditions (A)--(E),
and these conditions are hereditary.
\item
For every simplex $\Lambda$
the family $S_\delta \cap \Lambda$ is a standard family (see Definition~\ref{def:standard_family}).
\end{enumerate}
\end{theorem}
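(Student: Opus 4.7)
\emph{Setup and low-dimensional cells.} My plan is to refine the decomposition $\mathcal D$ into a simplicial triangulation in two stages: first producing monotone triangles using the curve construction of Theorem~\ref{th:curves}, and then arranging vertex orderings so that every assigned Boolean function is lex-monotone. By Remark~\ref{re:S_by_monot}, each cell $Y\subset K$ of $\mathcal D$ is already a monotone cell with $f|_Y$ a monotone function, so clauses (i)--(ii) are preserved under any refinement by monotone sub-cells. The zero- and one-dimensional cells of $\mathcal D$ contained in $K$ are trivially ordered simplices, and on a 1-simplex the monotonicity of $f$ forces $S_\delta\cap\Lambda$ into one of the three one-dimensional model families (0)--(2), all lex-monotone, so clauses (iii)--(v) hold without further work in low dimensions.

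\emph{Triangulation of 2-cells.} Let $C\subset K$ be a 2-dimensional cell of $\mathcal D$; its boundary $\partial C$ is already partitioned into vertices and 1-cells of $\mathcal D$ by the frontier condition, and by Remark~\ref{re:S_by_monot}(i) the graph of $f|_C$ in $\Real^{n+1}$ is a two-dimensional monotone cell. I apply Theorem~\ref{th:curves} to this graph to produce a definable family $\{\gamma_t\}$ of monotone curves in $C$ on which all coordinates and $f$ itself are monotone; a finite selection of such curves, attached to the vertices of $\partial C$ and a suitable interior center, yields a stellar-style subdivision of $\overline C$ into closed definable 2-simplices $\Lambda$. Each such simplex is a monotone cell by Proposition~\ref{prop:def_monotone_map}(ii) applied iteratively, since each cut is along a codimension-one monotone sub-cell whose boundary lies in $\partial\Lambda$; the restriction $f|_\Lambda$ is automatically monotone, and a final cylindrical refinement of the type of Lemma~\ref{le:make_monot} ensures that the edge extensions $(f|_\Lambda)_i$ of Definition~\ref{def:ext} are monotone as well, thereby securing (i)--(iii).

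\emph{Lex-monotone labeling: the main obstacle.} The essential difficulty lies in clause (v). By Lemma~\ref{le:boolean_monotone} the Boolean function $\psi_\Lambda$ attached to each triangle is automatically monotone, so the only way standardness can fail is via the single monotone-but-not-lex-monotone function on $\{0,1\}^2$, namely $\psi|_{x_1=0}\equiv 1,\ \psi|_{x_1=1}\equiv 0$. Geometrically this corresponds to the non-standard model family of type (5): a triangle $\Lambda$ sharing a vertex $v$ between one edge fully contained in $S_\delta$ and an opposite edge disjoint from $S_\delta$. Whenever this configuration appears I would subdivide the offending $\Lambda$ along an additional monotone curve from $v$ into $\Lambda$, again supplied by Theorem~\ref{th:curves}; each of the two resulting sub-triangles then admits a vertex relabeling that places $v$ at the label-2 position and the limiting behavior of $\partial_\Lambda S_\delta$ along a full face, yielding a lex-monotone Boolean function of type (1), (2), or (3). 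The remaining basic conditions of (iv) follow cleanly: (A), (B), and (D) come from the hereditary monotonicity of $f|_\Lambda$ together with the chosen ordering; (C) is vacuous in dimension two, since the only excluded pair $(0,1)$ is the sole non-trivial one; and (E) follows from Proposition~\ref{Theorem13} applied to the monotone super-level cell $\{f|_\Lambda\ge\delta\}$. Heredity of (A)--(E) is automatic because every face of $\Lambda$ carries a standard one-dimensional family of type (0)--(2).
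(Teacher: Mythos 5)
Your outline follows the paper's broad strategy (stellar subdivision of each $2$-cell by monotone curves, then a second subdivision with a prescribed labeling to kill the non-standard type (5)), but two essential ingredients are missing, and without them the argument does not go through.

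First, you invoke Theorem~\ref{th:curves} to connect an interior center of a $2$-cell to arbitrary boundary points, but that theorem only applies to a monotone cell $X$ sitting in a box $B$ with \emph{both} corner points $\mathbf a,\mathbf b$ in $\overline X$. To connect $\mathbf v\in Z$ to $\mathbf u\in\partial Z$ one must apply it to $Z\cap B$ for the box spanned by $\mathbf u$ and $\mathbf v$, and the needed hypothesis $\mathbf u\in\overline{Z\cap B}$ can fail: this is exactly the phenomenon of a \emph{separatrix} $Z\cap\{x_i=c\}$ (Definition~\ref{def:separatrix}). The paper must first refine the cylindrical decomposition so that no $2$-cell contains a separatrix (Lemmas~\ref{le:no_separatrix1} and~\ref{le:no_separatrix}, via Corollary~\ref{cor:refinement}), and only then does Lemma~\ref{le:edges} give the monotone connecting curves. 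Your proposal skips this entirely, so the claimed stellar subdivision by monotone curves need not exist.

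Second, your treatment of clause (v) is too coarse. The type (5) configuration occurs precisely at a blow-up point $\mathbf w$ of $f|_Y$ on $\partial Y$, where $f$ has a whole interval of limit values; merely cutting the offending triangle by one more monotone curve from $\mathbf w$ and ``relabeling'' does not decide which combinatorial type the two sub-triangles acquire. The paper controls this by choosing the \emph{limit value} $\alpha$ of $f$ along each new edge approaching $\mathbf w$: $\alpha=0$ along the first-stage edge from the center of $Y$ (Corollary~\ref{cor:triang_down}(vii)), and $\alpha>0$ along the second-stage edges of $\mathcal B(\Lambda)$. These opposite choices are exactly what the proof of Lemma~\ref{le:standard_sigma} uses to derive a contradiction in each of the two sub-triangles at $\mathbf w$; without them both sub-triangles can still be of type (5). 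Moreover, ad hoc relabeling is constrained by the requirement that the triangulation be an \emph{ordered} complex with orders compatible on shared faces, and conditions (A)--(C) are label-dependent; the paper instead fixes a global scheme ($0$ at the barycenter, $1$ at edge midpoints, $2$ at the original vertices), which guarantees that each final simplex has at most one blow-up vertex and that it sits at $\Sigma_{0,1}$. Finally, your claim that condition (C) is vacuous for $n=2$ is false: only the pair $(0,1)$ is excluded, and $(0,2)$, $(1,2)$ must be verified — the paper does so precisely by locating the unique possible blow-up vertex at label $2$.
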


The theorem will follow from a series of lemmas.
First we introduce some necessary notations.

Let $Z$ be a cylindrical two-dimensional section cell of ${\mathcal D}'$, and $Y$ corresponding two-dimensional
cell of ${\mathcal D}$, i.e., $Z$ is the graph of $f|_Y$.
Let $\ell$ be minimal positive integer for which $\dim \rho_{\Real^\ell} Y=1$, and $m$, with
$\ell <m  \le n$, be minimal for which $\dim \rho_{\Real^m} Y=2$.
Then, by Lemma~\ref{le:proj_cyl},
$$X:= \rho_{{\rm span} \{ x_\ell,x_m \}} (Y)=\rho_{{\rm span} \{ x_\ell,x_m \}} (Z)$$
is a two-dimensional cylindrical cell, $Y$ and $Z$ are graphs of continuous definable maps on $X$.
Note that $X$ is a semi-monotone set, while $Y$ and $Z$ are graphs of monotone maps on $X$.
Also observe that $\rho_{{\rm span} \{ x_\ell,x_m \}}$ maps $\overline Y$ and $\overline Z$
onto $\overline X$, but $\overline Y$ and $\overline Z$ are not necessarily graphs of some
maps over $\overline X$.
Let $(a,b) \subset {\rm span}\> \{ x_\ell \}$ be
the common image $\rho_{{\rm span}\> \{ x_\ell \}}(Z)=\rho_{{\rm span}\> \{ x_\ell \}}(Y)=
\rho_{{\rm span}\> \{ x_\ell \}}(X)$.

Notice that the bottom $U$ and the top $V$ of $Z$ (see Definition~\ref{def:cyl_cell}) are open curve
intervals in $\Real^{n+1}$, and that
$$\rho_{{\rm span}\> \{ x_\ell \}}(Z)=\rho_{{\rm span}\> \{ x_\ell \}}(Y)=
\rho_{{\rm span}\> \{ x_\ell \}}(X)=$$
$$=\rho_{{\rm span}\> \{ x_\ell \}}(U)=\rho_{{\rm span}\> \{ x_\ell \}}(V)=(a,b).$$
The side wall $W$ of $Z$ is the pre-image $(\rho_{{\rm span}\> \{ x_\ell \}}|_{\overline Z})^{-1} (\{ a, b \})$,
and by Lemma~\ref{le:side_wall} has exactly two connected components each of which is either a single
point or a closed curve interval.

For a 1-dimensional cylindrical cell $C$ of $\mathcal D$, contained in the closure of $Y$,
denote by $f_{Y,C}$ the unique extension, by semicontinuity, of $f|_{Y}$ to $C$.

\begin{definition}\label{def:blowup}
Let $\varphi:\> W \to \Real$ be a continuous function on a definable set $W \subset \Real^n$, having the graph
$\Phi \subset \Real^{n+1}$.
A point $\x \in \overline W \setminus W$ is called a {\em blow-up point} of $\varphi$ if
$\rho_{\Real^n}^{-1}(\x) \cap \overline \Phi$ contains an open interval.

Infimum of a blow-up point $\x$ is $\inf \{y|\> \{ (\x,y) \in \rho_{\Real^n}^{-1}(\x) \cap \overline \Phi \}\}$.
\end{definition}

\begin{lemma}\label{le:hausdorff}
\begin{enumerate}[(i)]
\item
The set $\partial S_\delta \cap Y$, for small $\delta >0$ is either empty or a 1-dimensional regular cell,
i.e., an open curve interval with distinct endpoints on $\partial Y$.
\item
The Hausdorff limit of a non-empty $\partial S_\delta \cap Y$ is either a vertex of one of the 1-dimensional
cylindrical cells on $\partial Y$ or a curve interval in $\partial Y$ with the endpoints at two distinct vertices.
\item
For small $\delta >0$, an endpoint of a non-empty  $\partial S_\delta \cap Y$ is a vertex of one of the
1-dimensional cylindrical cells on $\partial Y$ if and only if this endpoint is a blow-up point of $f|_Y$ with
infimum $0$.
\end{enumerate}
\end{lemma}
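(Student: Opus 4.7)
The plan is to first dispose of the trivial case $f|_Y \equiv 0$ (in which $\partial S_\delta \cap Y$ is empty for all small $\delta>0$) and then assume, by Remark~\ref{re:S_by_monot}(ii), that $f|_Y$ is a strictly positive continuous monotone function on $Y$ whose graph $Z$ is a two-dimensional monotone cell lying in $\{x_{n+1}>0\}$. Continuity of $f|_Y$ kills the stationary part of $\partial_Y S_\delta$, yielding the identification $\partial S_\delta \cap Y = \{f|_Y=\delta\} \cap Y$, and the projection $\rho_{\Real^n}|_Z\colon Z\to Y$ is a homeomorphism through which every structural question about level curves in $Y$ can be lifted to $Z$.

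For (i), I would lift the level set to $Z$: the graph homeomorphism identifies $\{f|_Y=\delta\} \cap Y$ with $Z \cap \{x_{n+1}=\delta\}$. By Proposition~\ref{prop:def_monotone_map}(i), this intersection is a monotone cell whenever it is non-empty, hence a topologically regular cell by Proposition~\ref{Theorem13}. For every small $\delta>0$ for which the level curve is non-empty, the dimension is exactly one (it is neither all of $Z$ nor empty), so the level curve is an open curve interval with two distinct endpoints; these endpoints lie in $\partial Y$ because the level set is closed in $Y$ by continuity, while the curve interval itself is not closed in $\Real^n$.

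For (ii), I would identify the Hausdorff limit $L$ with $\rho_{\Real^n}(\overline Z \cap \{x_{n+1}=0\})$, using that the Hausdorff limit of the closed sets $Z \cap \{x_{n+1}=\delta\}$ as $\delta \searrow 0$ is $\overline Z \cap \{x_{n+1}=0\}$, and that $\rho_{\Real^n}$ is injective on $\{x_{n+1}=0\}$. Since $\overline Z$ is a closed topological $2$-disk contained in $\{x_{n+1}\ge 0\}$, the intersection lies on the Jordan curve $\partial Z$; connectedness of each $Z \cap \{x_{n+1}=\delta\}$ (as a monotone cell) passes to the Hausdorff limit, so $\overline Z \cap \{x_{n+1}=0\}$ is either a single point or a closed arc. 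The frontier condition and compatibility of $\mathcal D'$ with $\{x_{n+1}=0\}$ force this set to be a union of cells of $\mathcal D'$ lying in $\{x_{n+1}=0\}$; as $f>0$ on $Y$, these cells project to cells of $\mathcal D$ on $\partial Y$, so $L$ is either a $0$-cell (vertex) or a closed arc in $\partial Y$ bounded by two distinct $0$-cells.

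For (iii), the two endpoints of the level curve are definable functions of $\delta$, so by o-minimality each one takes values in a single cell of $\mathcal D$ on $\partial Y$ once $\delta$ is restricted to a sufficiently small interval $(0,\delta_0)$. If that cell is a vertex $v$, then $(v,\delta)\in\overline Z$ for every $\delta\in(0,\delta_0)$, whence $\rho_{\Real^n}^{-1}(v)$ contains the open interval $\{v\}\times(0,\delta_0)\subset\overline Z$, making $v$ a blow-up point of $f|_Y$ with infimum $0$. Conversely, a blow-up point $\x \in \partial Y$ with infimum $0$ produces a vertical open interval in $\partial Z$, which by Lemma~\ref{le:side_wall} is a connected component of the side wall of $Z$; the frontier condition then forces this component to be a single $1$-cell of $\mathcal D'$ whose projection is a $0$-cell of $\mathcal D$, so $\x$ is a vertex. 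The main obstacle throughout is the bookkeeping between structures in $\overline Z\subset\Real^{n+1}$ and $\overline Y\subset\Real^n$, since $\rho_{\Real^n}|_{\overline Z}$ is a homeomorphism only over the regular part and collapses each vertical blow-up interval to a single vertex on $\partial Y$.
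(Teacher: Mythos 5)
Your argument is correct, and for part (i) it is essentially the paper's: lift the level set to the graph $Z$, invoke Proposition~\ref{prop:def_monotone_map} to see that $Z\cap\{x_{n+1}=\delta\}$ is a monotone cell, and push back down (the paper uses Proposition~\ref{prop:proj} where you use the graph homeomorphism, and it quotes Remark~\ref{re:moving} for the identification $\partial S_\delta\cap Y=\{f|_Y=\delta\}\cap Y$). For (ii)--(iii) your route is organized differently. The paper proves (ii) by contradiction downstairs in $\Real^n$: if the limit had a non-vertex point (or a non-vertex endpoint) $\x$ in a $1$-cell $C\subset\partial Y$, then the semicontinuous extension $f_{Y,C}$ would vanish at $\x$ but be positive nearby on $C$, contradicting the fact that $\mathcal D'$ satisfies the frontier condition and is monotone with respect to $f$ (hence compatible with $\{x_{n+1}=0\}$); it then declares (iii) an immediate consequence of (ii) and Definition~\ref{def:blowup}. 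You instead work upstairs: you identify the Hausdorff limit with $\rho_{\Real^n}(\overline Z\cap\{x_{n+1}=0\})$ and use the frontier condition plus compatibility with $\{x_{n+1}=0\}$ to see this set as a union of cells, and for (iii) you give an explicit two-way argument (vertical segment $\{v\}\times(0,\delta_0)\subset\overline Z$ in one direction; a vertical interval forcing a sector cell of $\mathcal D'$ over a $0$-cell of $\mathcal D$ in the other). The two arguments rest on the same facts; yours has the advantage of actually supplying the content of (iii), which the paper leaves implicit, at the cost of needing the Hausdorff-limit identification.

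Two points in your write-up deserve tightening. First, the equality of the Hausdorff limit with $\overline Z\cap\{x_{n+1}=0\}$ (rather than mere containment) needs a word of justification, e.g.\ definable curve selection at each point of $\overline Z\cap\{x_{n+1}=0\}$ together with monotonicity of $x_{n+1}$ along the selected curve. Second, a closed connected subset of the circle $\partial Z$ could a priori be the entire circle; connectedness alone does not reduce you to ``a point or a closed arc.'' This case is excluded because a non-constant positive monotone (hence quasi-affine) function on $Y$ is strictly increasing in some coordinate and therefore cannot tend to $0$ along the whole of $\partial Y$ --- a point the paper's own proof also leaves implicit. Finally, your appeal to Lemma~\ref{le:side_wall} in the converse of (iii) is not quite what is needed (the vertical interval over $\x$ need not be a full side-wall component, and a component need not be a single cell); what does the work is simply that a $1$-cell of $\mathcal D'$ meeting a vertical line in an interval must be a sector over the $0$-cell $\{\x\}$ of $\mathcal D$.
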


\begin{proof}
Due to Proposition~\ref{prop:def_monotone_map}, the intersection $\{ x_{n+1}=\delta \} \cap Z$, for each $\delta$,
is either empty or a monotone cell.
By Proposition~\ref{prop:proj}, the projection
$\rho_{\Real^n} (\{ x_{n+1}=\delta \} \cap Z)= \{ f|_Y=\delta \} \cap Y$
is also either empty or a monotone cell.
According to Remark~\ref{re:moving}, $\{ f|_Y=\delta \} \cap Y$ coincides with $\partial S_\delta \cap Y$.
Hence (i) is proved.

Let the Hausdorff limit of a non-empty $\partial S_\delta \cap Y$ be either a single point $\x$ which is not a vertex
of any 1-dimensional cylindrical cell on $\partial Y$ and belongs to a 1-dimensional cell $C$ on $\partial Y$,
or an interval with at least one endpoint $\x$ belonging to a 1-dimensional cell $C$ on $\partial Y$.
Then $f_{Y,C}(\x)=0$ while at some other point in the neighbourhood of $\x$ in $C$ the function
$f_{Y,C}$ is positive.
This contradicts the supposition that the cylindrical decomposition ${\mathcal D}'$
satisfies the frontier condition, and is monotone with respect to the function $f$
(see Definition~\ref{def:monot_respect_function}).
This proves (ii).

The item (iii) follows directly from item (ii) and Definition~\ref{def:blowup}.
\end{proof}

\begin{definition}\label{def:separatrix}
Let $Z$ be a two-dimensional cell of the decomposition ${\mathcal D}'$, $i \in \{1, \ldots ,n+1 \}$, and $c \in \Real$.
The intersection $Z \cap \{ x_i=c \}$ is called a {\em separatrix in} $Z$ if
\begin{itemize}
\item
the sets $\{ x_i>c \} \cap Z$ and $\{ x_i<c \} \cap Z$ are not empty;
\item
the set $\{x_i=c\} \cap \overline Z$ is not contained in $\overline{\{x_i > c\} \cap  Z}
\cap \overline{\{x_i<c\} \cap Z}$.
\end{itemize}

If $Z \cap \{ x_i=c \}$ is a separatrix in $Z$ then its {\em extension} is
$\overline Z \cap \{ x_i=c \}$.
\end{definition}

Observe that for each fixed $i=1, \ldots n+1$ any two different separatrices are disjoint and their extensions
are disjoint.

\begin{lemma}\label{le:no_separatrix1}
If a cylindrical cell $Z$ has no separatrices, then in any cylindrical decomposition $\mathcal C$ of
$\Real^{n+1}$ compatible with $Z$, each cylindrical cell contained in $Z$ has no separatrices.
\end{lemma}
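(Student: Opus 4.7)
\smallskip

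The natural strategy is to argue the contrapositive: show that if some cell $Z'$ of $\mathcal{C}$ with $Z'\subseteq Z$ has a separatrix at $\{x_i=c\}$, then $Z$ itself must have a separatrix at the same hyperplane, contradicting the hypothesis. The first two conditions in Definition~\ref{def:separatrix} transfer immediately from $Z'$ to $Z$, since $Z\supseteq Z'$ forces both $\{x_i>c\}\cap Z$ and $\{x_i<c\}\cap Z$ to be nonempty. So the content of the proof lies in producing, from a closure-witness of the separatrix in $Z'$, a closure-witness for one in $Z$.

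Concretely, fix a point $p\in\{x_i=c\}\cap\overline{Z'}$ such that (after relabeling sides) $p\notin\overline{\{x_i>c\}\cap Z'}$. Then $p\in\{x_i=c\}\cap\overline{Z}$ because $\overline{Z'}\subseteq\overline{Z}$, and it would suffice to show $p\notin\overline{\{x_i>c\}\cap Z}$. Using compatibility of $\mathcal{C}$ with $Z$, write $Z=\bigsqcup_\alpha Z_\alpha$ as the disjoint union of finitely many cells of $\mathcal{C}$ contained in $Z$. Since closure commutes with finite unions,
\[
\overline{\{x_i>c\}\cap Z}=\bigcup_\alpha\overline{\{x_i>c\}\cap Z_\alpha}.
\]
If $p$ were to lie in this union, it would have to lie in $\overline{\{x_i>c\}\cap Z''}$ for some cell $Z''\neq Z'$ of $\mathcal{C}$ with $Z''\subseteq Z$, since by choice $p\notin\overline{\{x_i>c\}\cap Z'}$. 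This would force $p\in\overline{Z'}\cap\overline{Z''}$ with $Z'$ and $Z''$ two disjoint cells of $\mathcal{C}$ both contained in $Z$.

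The main obstacle is to rule out this configuration using the rigid cylindrical structure of $\mathcal{C}$. I would proceed by induction on the position of the coordinate $x_i$ in the coordinate ordering underlying $\mathcal{C}$, reducing via the induced decompositions to the case where $x_i$ is the top coordinate. In that case the cells $Z_\alpha\subseteq Z$ that project to a given cell of the induced decomposition stack as alternating section and sector cells, and the bounding graphs of these stacks are continuous definable functions; under this explicit description, a cell $Z''$ contributing points of $\{x_i>c\}$ arbitrarily close to $p\in\overline{Z'}$ either coincides with $Z'$ (ruled out by the choice of $p$), or its stacking position next to $Z'$ already forces $p\in\overline{Z'\cap\{x_i>c\}}$ through the continuity of the common bounding graph, again contradicting the choice of $p$. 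For $i$ not the top coordinate, one projects $Z$, $Z'$, $Z''$ onto a subspace that eliminates the higher coordinates and applies the inductive hypothesis to the projected decomposition, observing that projection preserves separatrices at $\{x_i=c\}$ by compatibility. The crux of the argument is precisely the step verifying that two distinct cells of a cylindrical decomposition cannot share a closure point $p$ on a coordinate hyperplane in a way that creates a ``one-sided'' approach from one cell but a ``two-sided'' approach from the union; this is where the cylindrical stacking structure is essential, and hence the hardest part of the proof.
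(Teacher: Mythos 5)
Your proposal defers its central step rather than proving it, and that step is not salvageable in the form you state it. You reduce the lemma to the claim that a witness point $p$ for a separatrix of a subcell $Z'$ is automatically a witness point for a separatrix of $Z$, and you correctly identify the obstruction: $p$ could be approached from the missing side through a different cell $Z''\subset Z$. You then assert that the ``cylindrical stacking structure'' and the continuity of the common bounding graphs rule this configuration out. They do not. Take $Z=(0,1)^2\subset\Real^2$ and a continuous definable $g\colon(0,1)\to(0,1)$ with a strict local maximum value $1/2$ at $x=1/4$ and with $g(3/4)=3/4$. The decomposition whose cells over the base cell $(0,1)$ include $Z'=\{0<y<g(x)\}$, the graph of $g$, and $Z''=\{g(x)<y<1\}$ is cylindrical and compatible with $Z$. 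The point $p=(1/4,1/2)$ lies in $\overline{Z'}$ and is not in $\overline{\{y>1/2\}\cap Z'}$, while both $\{y>1/2\}\cap Z'$ and $\{y<1/2\}\cap Z'$ are nonempty, so $Z'\cap\{y=1/2\}$ is a separatrix of $Z'$; yet $p\in\overline{\{y>1/2\}\cap Z''}$, and $Z$ itself has no separatrices at all. So the configuration you must exclude genuinely occurs for cylindrical decompositions with merely continuous bounding graphs, and no induction on the coordinate ordering can close the gap.

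What this example shows is that the lemma is not a statement about the cylindrical structure alone: it relies on the standing hypotheses, in force where the lemma is applied, that $Z$ and the cells of $\mathcal C$ contained in it, together with their tops and bottoms, are \emph{monotone} cells. In the example the graph of $g$ fails to be a monotone cell precisely because of the interior local maximum. The paper's proof is built entirely on this: it takes a witness point $\mathbf u\in\overline C$ of a putative separatrix of a subcell $C$, rules out $\mathbf u$ lying on the top or bottom of $C$ because those are monotone cells (so $x_i$ would have to be constant on them, forcing $x_i$ to be constant, everywhere $>c$, or everywhere $<c$ on $C$, contradicting the definition of a separatrix), and rules out $\mathbf u$ lying on the side wall using the monotonicity of the coordinate $x_j$ on the monotone cell $Z\cap\{x_i=c\}$. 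Your argument never invokes monotonicity anywhere, so it cannot succeed; any repair would have to bring Proposition~\ref{prop:def_monotone_map} and the monotonicity of the bounding cells into play, at which point you would essentially be reconstructing the paper's direct contradiction rather than the witness-transfer argument you propose.
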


\begin{proof}
Suppose, contrary to the claim of the lemma, that there is a cylindrical 2-cell $C$ of $\mathcal C$ which
contains a separatrix $Z \cap \{ x_i=c \}$.

By Definition~\ref{def:separatrix}, there is a point ${\bf u}=(u_1, \ldots ,u_{n+1}) \in \overline C$ with
$u_i=c$ such that the closure of one of the sets, $\{ x_i>c \} \cap C$ or $\{ x_i<c \} \cap C$,
contains ${\bf u}$, while the closure of the other one does not contain ${\bf u}$.

The point $\bf u$ can belong neither to the top $V$ nor to the bottom $U$ of $C$.
Indeed, $V$ is a monotone cell, hence if ${\bf u} \in V$, then the function $x_i$ is constant on $V$.
It follows that either $x_i$ is constant on whole $C$, or $x_i > c$, or $x_i <c$ on $C$.
Any of these alternatives contradicts to $C \cap \{ x_i=c \}$ being a separatrix.
The same argument shows that ${\bf u} \not\in U$.

Now let the point $\bf u$ belong to one of the two connected components $W$ of the side wall of $C$.
Observe that $W=Z \cap \{ x_j=a \}$ for some $j \neq i$ and $a \in \Real$.
The function $x_j$ is a monotone function on the monotone cell $Z \cap \{ x_i=c\}$.
On the other hand, it is constant on some interval in $Z \cap \{ x_i=c\}$ and non-constant on the whole
$Z \cap \{ x_i=c\}$, which is a contradiction.
\end{proof}

\begin{lemma}\label{le:no_separatrix}
There is a refinement ${\mathcal E}'$ of the cell decomposition ${\mathcal D}'$ monotone with respect to $f$,
and such that no 2-cell $Z$ in ${\mathcal E}'$ contains a separatrix.
\end{lemma}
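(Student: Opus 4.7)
The strategy is to locate the hyperplanes carrying separatrices of the two-dimensional cells of ${\mathcal D}'$, show there are only finitely many of them, and then use Lemma~\ref{le:section} to cut ${\mathcal D}'$ along each one. After such a cut, the corresponding hyperplane becomes part of a side wall of every refined cell adjacent to it, and hence can no longer support a separatrix.

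First I would establish the following finiteness claim: for each two-dimensional cell $Z$ of ${\mathcal D}'$ and each coordinate $x_i$, only finitely many $c \in \Real$ produce a separatrix $Z \cap \{x_i = c\}$. The set $\Sigma_i(Z)$ of such $c$ is definable, hence by o-minimality a finite union of points and open intervals. The key point is to exclude open intervals: for every $c$ in such a hypothetical interval $I$ there would be a point $\mathbf{u}(c) \in \{x_i = c\} \cap \overline{Z}$ not approachable from one of the two sides within $Z$. Using the frontier condition (so $\overline{Z} \setminus Z$ is a union of cells of ${\mathcal D}'$) together with the monotonicity of $Z$, such a family $\{\mathbf{u}(c)\}_{c\in I}$ would trace a one-dimensional arc on $\partial Z$ along which $x_i$ is strictly monotone, yet which lies in a component of $\partial Z$ that cannot be approached from one side — contradicting the fact that every one-dimensional cell on $\partial Z$ is itself a monotone cell of ${\mathcal D}'$.

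Second, I would let $H$ be the finite collection of hyperplanes $\{x_i = c\}$ arising as $c$ ranges over $\Sigma_i(Z)$ for all two-dimensional cells $Z$ of ${\mathcal D}'$ and all $i \in \{1,\dots,n+1\}$. Applying Lemma~\ref{le:section} iteratively, with Remark~\ref{re:refin} handling the case that the hyperplane already sits inside a coordinate slice of ${\mathcal D}'$, produces a cylindrical refinement ${\mathcal E}'$ of ${\mathcal D}'$. By Proposition~\ref{prop:def_monotone_map}, intersections of monotone cells with coordinate hyperplanes and half-spaces remain monotone, so ${\mathcal E}'$ consists of monotone cells. Since the graph $F$ of $f$ and the hyperplane $\{x_{n+1}=0\}$ are unions of cells of ${\mathcal D}'$, they remain unions of monotone cells of ${\mathcal E}'$, so ${\mathcal E}'$ is still monotone with respect to $f$ in the sense of Definition~\ref{def:monot_respect_function}.

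Third, I would verify that no 2-cell $C$ of ${\mathcal E}'$ admits a separatrix. Let $C \subset Z$ with $Z \in {\mathcal D}'$ and suppose $C \cap \{x_j = c'\}$ is a separatrix. If $\{x_j = c'\} \in H$, the slice has been placed inside a side wall of $C$, so one of $\{x_j > c'\}\cap C$ or $\{x_j < c'\}\cap C$ is empty, a contradiction. If $\{x_j = c'\} \notin H$, then by construction $Z \cap \{x_j = c'\}$ is not a separatrix of $Z$, and an argument parallel to the proof of Lemma~\ref{le:no_separatrix1} — analysing top, bottom, and side walls of $C$, each a monotone cell — rules out a separatrix of $C$ along $\{x_j = c'\}$. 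The principal obstacle is the finiteness claim in the first step: one must exploit the monotone structure of $Z$ together with the frontier condition to eliminate a would-be interval of separatrix values; once this is in hand, the remaining steps are routine applications of the refinement machinery developed earlier.
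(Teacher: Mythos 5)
Your overall skeleton (finiteness of separatrices, a refinement killing them, then a check that no new ones appear) matches the paper's, and your first and third steps are close in spirit to what the paper does: the paper also gets finiteness from o-minimality, though it works with the definable set $A\subset\partial Z$ of witness points rather than with the set of values $c$, and it disposes of new separatrices in the refined cells exactly via Lemma~\ref{le:no_separatrix1}. Your interval-exclusion argument in step one is sketchy but not the main problem.

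The genuine gap is in your second step. Lemma~\ref{le:section} only permits cutting a cylindrical decomposition along hyperplanes $\{x_1=c\}$ perpendicular to the \emph{first} coordinate, and Remark~\ref{re:refin} only extends this to $\{x_1=c_1,\dots,x_{i-1}=c_{i-1},\,x_i=c\}$ when $(c_1,\dots,c_{i-1})$ is already a $0$-cell of the induced decomposition. For a general hyperplane $\{x_i=c\}$ with $i>1$ (which is exactly where separatrices live, since by definition a separatrix can occur for any coordinate $x_i$), intersecting all cells with $\{x_i=c\}$ and $\{x_i\lessgtr c\}$ does \emph{not} yield a cylindrical decomposition: already for a sector cell $C=\{x_1\in(a,b),\ f(x_1)<x_2<g(x_1)\}$ in $\Real^2$, the set $C\cap\{x_2>c\}$ can project onto a union of several intervals in $\Real^1$, breaking the inductive cylindrical structure. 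So the collection $H$ of hyperplanes cannot be ``cut in'' by the tools you invoke, and your first case in step three (``the slice has been placed inside a side wall'') has no justification. The paper circumvents this entirely: it takes the finitely many separatrices themselves --- which are one-dimensional definable curves --- as additional sets $U_1,\dots,U_m$ and applies Corollary~\ref{cor:refinement} (i.e., re-runs the Theorem~\ref{th:cyl_decomp} machinery) to obtain a refinement ${\mathcal E}'$, monotone with respect to $f$, whose $2$-cells are disjoint from every separatrix of every $2$-cell of ${\mathcal D}'$; Lemma~\ref{le:no_separatrix1} then finishes the argument. Replacing your hyperplane-slicing step by an appeal to Corollary~\ref{cor:refinement} applied to the separatrices (or their extensions) would repair the proof and bring it in line with the paper's.
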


\begin{proof}
First notice that for given $Z$ and $i$ there is a finite number of separatrices.
Indeed, the 1-dimensional set $A$ of points ${\bf u}=(u_1, \ldots ,u_{n+1}) \in \partial Z$ such that
$Z \cap \{ x_i=u_i \}$ is a separatrix, is definable, hence has a finite number of connected components.
Since $A$ and each separatrix extension are 1-dimensional and compact, each connected component of
$A$ is contained in one of these extensions.
It follows that the number of separatrix extensions, and hence separatrices, does not exceed the
number of connected components of $A$.

Apply Corollary~\ref{cor:refinement} to the set of all separatrices of all 2-cells of ${\mathcal D}'$
(as the sets $U_1, \ldots ,U_m$ in the corollary) and the decomposition ${\mathcal D}'$ (as ${\mathcal A}$).
By the corollary, there is a refinement ${\mathcal E}'$ of ${\mathcal D}'$,
monotone with respect to the function $f$ and $U_1, \ldots ,U_m$, such that no 2-cell $Z$ of
${\mathcal E}'$ intersects with a separatrix of any 2-cell of ${\mathcal D}'$.

According to Lemma~\ref{le:no_separatrix1}, no 2-cell in ${\mathcal E}'$ contains a separatrix.
\end{proof}

\begin{lemma}\label{le:edges}
Suppose that a 2-cell $Z$ in a cylindrical decomposition ${\mathcal E}'$ does not contain a separatrix.
Then any point ${\bf v}=(v_1, \ldots ,v_{n+1}) \in Z$ can be connected to any point
${\bf u}=(u_1, \ldots ,u_{n+1}) \in \overline Z$ by a curve $\gamma$ which is 1-dimensional monotone cell.
\end{lemma}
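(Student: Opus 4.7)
The plan is to reduce the problem to two dimensions via a coordinate projection preserving monotonicity of curves, and then apply Theorem~\ref{th:curves} inside an open rectangle having $\mathbf v$ and $\mathbf u$ as opposite corners.

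For the first step, let $(\ell, m)$ be a pair of coordinates adjacent in the cyclic order of level curves of coordinate functions on the $2$-dimensional monotone cell $Z$; the existence and intrinsic nature of this cyclic order is established in the opening paragraph of the proof of Theorem~\ref{th:curves}. With this choice, $\rho := \rho_{\mathrm{span}\{x_\ell, x_m\}}$ restricts to a definable homeomorphism of $\overline Z$ onto the closure of a $2$-dimensional semi-monotone set $X \subset \mathrm{span}\{x_\ell, x_m\}$, and any curve in $\overline X$ on which both $x_\ell$ and $x_m$ are monotone lifts via $\rho^{-1}$ to a monotone $1$-cell in $\overline Z$ (the cyclic-order argument is direction-independent, so this holds regardless of the orientation of monotonicity). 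Setting $\mathbf p := \rho(\mathbf v)$ and $\mathbf q := \rho(\mathbf u)$, the task becomes: connect $\mathbf p \in X$ to $\mathbf q \in \overline X$ by a curve in $\overline X$ along which both coordinates are monotone.

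For the second step, let $B^\circ$ denote the open axis-parallel rectangle in $\mathrm{span}\{x_\ell, x_m\}$ with $\mathbf p$ and $\mathbf q$ as opposite corners. Reflecting $x_\ell$ and/or $x_m$ if needed, we may assume $\mathbf p$ is the lower-left and $\mathbf q$ the upper-right corner of $B^\circ$, matching the setup of Theorem~\ref{th:curves}; the degenerate cases where $\mathbf p$ and $\mathbf q$ agree in one coordinate reduce to a one-dimensional version of the argument, and the case $\mathbf p = \mathbf q$ is trivial. By Proposition~\ref{prop:def_monotone_map}, $X' := X \cap B^\circ$ is either empty or a $2$-dimensional monotone cell. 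Since $\mathbf v$ is interior to $Z$, perturbations of $\mathbf p$ into $B^\circ$ remain in $X$, so $\mathbf p \in \overline{X'}$.

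The crucial step is to prove that $\mathbf q \in \overline{X'}$, using the no-separatrix hypothesis. Suppose for contradiction that $\mathbf q \notin \overline{X'}$; then in a neighborhood of $\mathbf q$, the set $X$ avoids the open quadrant containing $\mathbf p$, so for at least one choice of coordinate, say $x_\ell$, the set $X$ avoids the half-plane $\{x_\ell < q_\ell\}$ near $\mathbf q$. Yet $\mathbf p \in X$ satisfies $p_\ell < q_\ell$, so $X \cap \{x_\ell < q_\ell\} \ne \emptyset$. Transferring through $\rho^{-1}$, both sides of the hyperplane $\{x_\ell = u_\ell\}$ meet $Z$, while $\mathbf u \in \overline Z \cap \{x_\ell = u_\ell\}$ is not approached by $Z \cap \{x_\ell < u_\ell\}$, which by Definition~\ref{def:separatrix} makes $Z \cap \{x_\ell = u_\ell\}$ a separatrix in $Z$, contradicting the hypothesis. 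Having established $\mathbf p, \mathbf q \in \overline{X'}$ as opposite corners of $B^\circ$, Theorem~\ref{th:curves} supplies a family of curves in $\overline{X'}$ from $\mathbf p$ to $\mathbf q$ with both coordinates monotone; taking any such curve $\beta$ and setting $\gamma := \rho^{-1}(\beta)$ yields the required monotone $1$-cell in $\overline Z$ joining $\mathbf v$ and $\mathbf u$. The main obstacle is the separatrix step, and in particular arguing that the failure of $\mathbf q$ to lie in $\overline{X'}$ localizes to a single coordinate hyperplane even when $\mathbf q$ sits on several lower-dimensional boundary pieces of $\overline X$ at once; the case analysis above must be made precise to pick out the correct witnessing coordinate.
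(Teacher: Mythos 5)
There is a genuine gap, and it sits exactly where your argument diverges from the paper's. You project to ${\rm span}\{x_\ell,x_m\}$ \emph{first} and assert that $\rho:=\rho_{{\rm span}\{x_\ell,x_m\}}$ restricts to a homeomorphism of $\overline Z$ onto $\overline X$. That is false in general: $\rho|_Z$ is injective by quasi-affinity, but over a boundary point of $X$ the fiber of $\rho|_{\overline Z}$ can be a whole interval (the map whose graph is $Z$ can blow up; the paper warns of precisely this in Section~\ref{sec:combinatorial-equivalence}, where it notes that $\overline Z$ need not be a graph over $\overline X$, cf.\ Example~\ref{eg:blow-up-1}). Consequently, once Theorem~\ref{th:curves} hands you a curve $\beta\subset\overline{X'}$ from ${\bf p}$ to ${\bf q}$, the lift of $\beta\cap X$ to $Z$ has ${\bf v}$ as one endpoint, but its other endpoint is merely \emph{some} point of $(\rho|_{\overline Z})^{-1}({\bf q})$, determined by the asymptotic direction of $\beta$ at ${\bf q}$ --- not necessarily ${\bf u}$. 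Concretely, for $Z=\{0<y<x<1,\ z=y/x\}$ (a monotone cylindrical $2$-cell with no separatrices) and ${\bf u}=(0,0,1/2)$, your construction reaches ${\bf u}$ only if $\beta$ approaches the origin with slope $1/2$, and nothing in your argument selects that slope. Your rectangle $B^\circ$ pins down only two coordinates of the target, whereas the endpoint condition constrains all $n+1$ coordinates of ${\bf u}$.

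The paper avoids this by never projecting at the level of this lemma: it forms the full box $B=\{u_i<x_i<v_i\}\subset\Real^{n+1}$ (after normalizing the sign condition of ${\bf v}-{\bf u}$), uses the no-separatrix hypothesis to conclude ${\bf u}\in\overline{Z\cap B}$, and applies Theorem~\ref{th:curves} to $Z\cap B$ with $({\bf v},{\bf u})$ as the box corners $({\bf a},{\bf b})$; the theorem then guarantees that the endpoints are exactly ${\bf v}$ and ${\bf u}$. (The $2$-dimensional projection does occur, but inside the proof of Theorem~\ref{th:curves}, where the prescribed endpoints are the corners of the ambient box, which is what forces the lift to land correctly.) Repairing your proof would require showing that the absence of separatrices lets you choose $\beta$ whose lift converges to the \emph{prescribed} point of the fiber over ${\bf q}$, which amounts to proving ${\bf u}\in\overline{Z\cap B}$ --- i.e., to redoing the paper's argument upstairs. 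A secondary, self-acknowledged weakness: the claim that failure of ${\bf q}\in\overline{X'}$ localizes to a single half-plane $\{x_\ell<q_\ell\}$ near ${\bf q}$ does not follow from avoidance of a quadrant and would need a separate argument.
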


\begin{proof}
The point ${\bf v}$ belongs to a certain sign condition set of the functions $x_i-u_i$
for all $i=1, \ldots ,n+1$.
Let, for definiteness, ${\bf v} \in \{ x_1>u_1, \ldots , x_{n+1}>u_{n+1} \}$.
Let
$$B:= \{ (x_1, \ldots ,x_{n+1}) \in \Real^{n+1}|\>  u_i <x_i <v_i \}.$$
Observe that $Z \cap B$ is a monotone cell.
The closure $\overline{ Z \cap B}$ contains ${\bf u}$, otherwise there would be a separatrix
$Z \cap \{ x_i=u_i \}$ for some $i$, which contradicts the main property of the cylindrical
decomposition ${\mathcal E}'$ (Lemma~\ref{le:no_separatrix}).
Applying Theorem~\ref{th:curves} to $Z \cap B$ as $\bf F$, and $({\bf v},\> {\bf u})$ as
$({\bf a},\> {\bf b})$, connect ${\bf v}$ to ${\bf u}$ by a curve $\gamma$, which
is a monotone cell.
\end{proof}

\begin{lemma}\label{le:triang_up}
Let $Z$ be a cylindrical cell of ${\mathcal E}'$ having side wall $W'$, bottom $U'$, and top $V'$.
Let $P'$ be a finite subset of $W'$, containing all vertices of $Z$, and ${\bf v}'=(v'_1, \ldots ,v'_{n+1})$
a point in $Z$.
Introduce ${\bf a}':=U' \cap \{ x_\ell=v'_\ell \}$ and ${\bf b}':=V' \cap \{ x_\ell=v'_\ell \}$, where
$\ell$ is the minimal positive integer for which $\dim \rho_{\Real^\ell} Z=1$.
There is a definable triangulation ${\mathcal B}'$ of $\overline Z$ such that
\begin{enumerate}[(i)]
\item
the triangulation ${\mathcal B}'$ is a cylindrical decomposition of $\overline Z$, in particular
each simplex is a cylindrical cell;
\item
each simplex is a monotone cell;
\item
each 2-simplex does not contain a separatrix;
\item
the set of all vertices of the triangulation is
$P' \cup \{ {\bf v}',\> {\bf a}',\> {\bf b}' \}$;
\item
the edges connecting ${\bf v}'$ with ${\bf a}'$ and ${\bf b}'$ are contained in $\{ x_\ell=v'_\ell \}$.
\end{enumerate}
\end{lemma}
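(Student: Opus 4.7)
The plan is to perform a stellar subdivision of $\overline{Z}$ centred at ${\bf v}'$, in two phases. First, by Proposition~\ref{prop:def_monotone_map}(i), the intersection $\overline{Z}\cap\{x_\ell=v'_\ell\}$ is a monotone $1$-cell whose closure is a curve interval meeting $\partial\overline{Z}$ exactly at ${\bf a}'$ and ${\bf b}'$ and containing ${\bf v}'$ in its interior. By Lemma~\ref{le:section} the intersections of cells of ${\mathcal E}'|_{\overline Z}$ with $\{x_\ell=v'_\ell\}$ and $\{x_\ell\lessgtr v'_\ell\}$ form a cylindrical refinement, which splits $\overline{Z}$ into cylindrical monotone cells $Z^-$ and $Z^+$ via Proposition~\ref{prop:def_monotone_map}(ii); the sub-arcs $[{\bf a}',{\bf v}']$ and $[{\bf v}',{\bf b}']$ of the splitting curve serve as the two edges required by~(v).

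In the second phase I add spokes from ${\bf v}'$ to each marked point. Working in $\overline{Z^-}$ (the argument for $\overline{Z^+}$ is symmetric), list the points of $(P'\cup\{{\bf a}',{\bf b}'\})\cap\overline{Z^-}$ in cyclic order along $\partial\overline{Z^-}$ as $\y_0={\bf a}',\y_1,\ldots,\y_s,\y_{s+1}={\bf b}'$, and insert spokes one at a time. At each stage the current partition consists of sectors $\overline{\Sigma}$, each bounded by two already-inserted spokes from ${\bf v}'$ and a boundary arc between two consecutive inserted marked points; each such $\Sigma$ is a monotone $2$-cell by iterated application of Proposition~\ref{prop:def_monotone_map}(ii). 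To insert the next marked point $\y_j$, which lies on the boundary arc of some sector $\Sigma$, apply the argument of Lemma~\ref{le:edges} inside $\Sigma$ (which has no separatrix, by Lemma~\ref{le:no_separatrix1}) to produce a monotone curve $\gamma_j\subset\overline{\Sigma}$ from ${\bf v}'$ to $\y_j$; by Proposition~\ref{prop:def_monotone_map}(ii) this curve splits $\Sigma$ into two monotone $2$-cells. After all insertions, the resulting combinatorial subdivision of $\overline{Z}$ into $2$-simplices has vertex set exactly $P'\cup\{{\bf v}',{\bf a}',{\bf b}'\}$, giving~(ii) and~(iv); condition~(iii) follows from Lemma~\ref{le:no_separatrix1} applied to the sub-cells.

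To promote this combinatorial partition into a cylindrical decomposition ${\mathcal B}'$ of $\overline{Z}$ and thereby verify condition~(i), I would invoke Corollary~\ref{cor:refinement} with ${\mathcal E}'|_{\overline Z}$ as the ambient decomposition and the constructed spokes together with the vertical splitting curve as the distinguished sets; the resulting refinement is cylindrical, monotone with respect to these curves, and satisfies the frontier condition. The degenerate cases $\dim Z\in\{0,1\}$ are handled trivially: a $0$-cell is $\{{\bf v}'\}$ itself, and a $1$-cell is an interval which ${\bf v}'$ splits into two sub-intervals.

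The main obstacle is ensuring that the cylindrical refinement produced by Corollary~\ref{cor:refinement} introduces no additional vertices beyond the prescribed set $P'\cup\{{\bf v}',{\bf a}',{\bf b}'\}$: the spokes are monotone curves, but a generic cylindrical refinement could in principle force further subdivisions in order to reconcile the curves with coordinate projections. Showing that this does not occur requires that the spokes be chosen in sufficiently general position at each inductive step, exploiting the absence of separatrices in ${\mathcal E}'$ (Lemma~\ref{le:no_separatrix}) to preclude the need for any further refinement.
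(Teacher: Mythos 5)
Your overall strategy -- a stellar subdivision of $\overline Z$ centred at ${\bf v}'$, with spokes produced by (the argument of) Lemma~\ref{le:edges} and with monotonicity of the resulting $2$-cells obtained by repeatedly splitting off one sector at a time via Proposition~\ref{prop:def_monotone_map}(ii) -- is exactly the paper's, and conditions (ii)--(v) are handled correctly. The problem is condition (i), where you have a genuine gap that you yourself flag but do not close.

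Invoking Corollary~\ref{cor:refinement} to ``promote'' the fan to a cylindrical decomposition is the wrong move: that corollary produces some refinement compatible with the spokes, and a refinement is free to introduce new $0$- and $1$-cells, which would destroy condition (iv). Your closing paragraph concedes this and appeals to choosing the spokes ``in sufficiently general position,'' but no such genericity argument is given, and none is needed. The point you are missing is that cylindricity is automatic from the structure of the construction, because of where the points of $P'$ sit. By hypothesis $P'\subset W'$, and the side wall $W'$ is precisely $(\rho_{{\rm span}\{x_\ell\}}|_{\overline Z})^{-1}(\{a,b\})$ where $(a,b)=\rho_{{\rm span}\{x_\ell\}}(Z)$. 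Hence every spoke from ${\bf v}'$ to a point of $P'$, being a monotone $1$-cell, is the graph of a monotone map over the \emph{entire} half-interval $(a,v'_\ell)$ or $(v'_\ell,b)$ of ${\rm span}\{x_\ell\}$; the only exceptions are the two edges to ${\bf a}'$ and ${\bf b}'$, which lie in $\{x_\ell=v'_\ell\}$. Consequently any two consecutive spokes on the same side are graphs of functions over the same interval, and the $2$-simplex between them is literally a sector cell in the sense of Definition~\ref{def:cyl_cell}. The fan is therefore already a cylindrical decomposition of $\overline Z$, with vertex set exactly $P'\cup\{{\bf v}',{\bf a}',{\bf b}'\}$, and no appeal to Corollary~\ref{cor:refinement} or to general position is required. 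With that observation substituted for your last paragraph, the proof is complete and coincides with the paper's.
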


\begin{proof}
If $\bf u$ is any point in $P'$, then by Lemma~\ref{le:edges}, there exists a curve $\gamma$ which is 1-dimensional
monotone cell, connecting points ${\bf v}'$ and $\bf u$.
If $\bf u$ be either ${\bf a}'$ or ${\bf b}'$, then $u_\ell=v'_\ell$ and we connect ${\bf v}'$ to $\bf u$
by a monotone cell (interval) $\gamma \subset \{ x_\ell=v'_\ell \} \cap Z$.
Thus we have constructed a triangulation of $\overline Z$, denote it by ${\mathcal B}'$.

Each $\Lambda'$ in ${\mathcal B}'$ is a cylindrical cell.
Indeed, since all 1-dimensional simplices are monotone cells, each such simplex, except the ones contained in
$\{ x_\ell=v'_\ell \} \cap Z$, is the graph of a monotone function on an interval in ${\rm span} \{ x_\ell \}$,
hence is a cylindrical 1-cell.
Each two-dimensional simplex has the top and the bottom among these graphs,
moreover the corresponding two functions are defined on the same interval.
Hence each two-dimensional simplex is a cylindrical two-cell.
According to the Lemma~\ref{le:no_separatrix1}, each two-dimensional simplex contains no separatrix.

Now we show that each two-dimensional $\Lambda'$ in ${\mathcal B}'$ is a monotone cell.
Let $\Lambda'$ be the definable simplex contained in the monotone cell $Z \cap \{ x_\ell <v'_\ell \}$,
bounded in $Z$ by $Z \cap \{x_\ell=v'_\ell \}$ and the curve $\gamma$, connecting ${\bf v}'$
with a vertex of $Z$.
Then, by Theorem~11 in \cite{BGV2}, the curve $\gamma$ divides $Z \cap \{ x_\ell <v'_\ell \}$ into two
monotone cells, one of which is $\Lambda'$, and the other is the union of the remaining
definable two-dimensional simplices and their boundaries in $Z \cap \{ x_\ell <v'_\ell \}$.
Applying inductively the same argument, we prove that each of these remaining simplices is a monotone cell,
and all simplices in $Z \cap \{ x_\ell > v'_\ell \}$ are also monotone cells.
Hence each simplex $\Lambda'$ is a monotone cell.
\end{proof}

\begin{corollary}\label{cor:triang_down}
Let $Y$ be a cylindrical cell of ${\mathcal E}$ having side wall $W$, bottom $U$, and top $V$.
Let $P$ be a finite subset of $W$, containing all vertices of $Y$, and ${\bf v}=(v_1, \ldots ,v_n)$ a point in $Y$.
Let $Q \subset P$ be the subset of all points ${\bf w} \in P$ at which the function $f|_Y$ has a blow-up,
and for each ${\bf w} \in Q$ fix one of the limit values $\alpha$ of $f|_Y$ at $\bf w$.
Introduce ${\bf a}:=U \cap \{ x_\ell=v_\ell \}$ and ${\bf b}:=V \cap \{ x_\ell=v_\ell \}$, where
$\ell$ is the minimal positive integer for which $\dim \rho_{\Real^\ell} Y=1$.
There is a definable triangulation $\mathcal B$ of $\overline Y$ such that
\begin{enumerate}[(i)]
\item
the triangulation ${\mathcal B}$ is a cylindrical decomposition of $Y$, in particular
each simplex is a cylindrical cell;
\item
each simplex is a monotone cell;
\item
each 2-simplex does not contain a separatrix;
\item
the set of all vertices of the triangulation is
$P \cup \{ {\bf v},\> {\bf a},\> {\bf b} \}$;
\item
the edges connecting $\bf v$ with ${\bf a}$ and ${\bf b}$ are contained in $\{ x_\ell=v_\ell \}$;
\item
for every simplex $\Lambda$, the restriction $f|_\Lambda$ is a monotone function;
\item
for each ${\bf w} \in Q$, the limit of $f|_Y$ at $\bf w$ along the edge connecting $\bf v$ and
$\bf w$ equals $\alpha$.
\end{enumerate}
\end{corollary}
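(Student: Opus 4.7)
The plan is to lift the given data from $\overline{Y}$ to $\overline{Z}$ (where $Z$ is the graph of $f|_Y$), apply Lemma~\ref{le:triang_up} there, and then push the resulting triangulation down via $\rho_{\Real^n}$, with a small merging step at blow-up points.

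First I would fix the lifts: set ${\bf v}':=({\bf v},f({\bf v}))$, ${\bf a}':=U'\cap\{x_\ell=v_\ell\}$ and ${\bf b}':=V'\cap\{x_\ell=v_\ell\}$, where $U', V'$ are the bottom and top of $Z$; for each ${\bf w}\in P\setminus Q$ let ${\bf w}^*$ be the unique point of $\overline Z$ above ${\bf w}$; and for each ${\bf w}\in Q$ set ${\bf w}^*:=({\bf w},\alpha)$ using the prescribed limit value. If needed, I would invoke Corollary~\ref{cor:refinement} to refine ${\mathcal E}'$ so that each ${\bf w}^*$ becomes a vertex of the decomposition while preserving monotonicity with respect to $f$. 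Taking $P'$ to be the union of all vertices of $Z$ in the refined decomposition with the chosen set $\{{\bf w}^*:{\bf w}\in P\}$, I would apply Lemma~\ref{le:triang_up} to obtain a triangulation ${\mathcal B}'$ of $\overline Z$ into monotone cylindrical cells containing no separatrices, with edges from ${\bf v}'$ to ${\bf a}'$ and ${\bf b}'$ contained in $\{x_\ell=v_\ell\}$.

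Next, for each ${\bf w}\in P\cup\{{\bf a},{\bf b}\}$ I would set $\gamma_{\bf w}:=\rho_{\Real^n}(\gamma'_{{\bf w}^*})$, where $\gamma'_{{\bf w}^*}$ is the edge of ${\mathcal B}'$ connecting ${\bf v}'$ to the corresponding lift. By Proposition~\ref{prop:proj}, each $\gamma_{\bf w}$ is a monotone 1-dimensional cell in $\overline Y$ from ${\bf v}$ to ${\bf w}$. The triangulation $\mathcal B$ is then defined by taking as its edges the $\gamma_{\bf w}$'s together with the arcs of $\partial Y$ between consecutive boundary vertices, and as its 2-simplices the regions bounded by two consecutive $\gamma_{\bf w}$'s and the arc joining their endpoints. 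Properties (iv)--(vii) would then be immediate: the vertex set is $P\cup\{{\bf v},{\bf a},{\bf b}\}$ by construction; the edges to ${\bf a},{\bf b}$ lie in $\{x_\ell=v_\ell\}$ since this containment is preserved by $\rho_{\Real^n}$; monotonicity of $f$ along every edge and 2-simplex mirrors monotonicity of $x_{n+1}$ on the corresponding lifts in ${\mathcal B}'$; and the required limit $\alpha$ of $f$ along $\gamma_{\bf w}$ at ${\bf w}\in Q$ is realized because $\gamma'_{{\bf w}^*}$ terminates at $({\bf w},\alpha)$ and $x_{n+1}$ is continuous on $\overline{\gamma'_{{\bf w}^*}}$.

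The main obstacle will be verifying property (ii), that every 2-simplex of $\mathcal B$ is a monotone cell. A naive projection argument fails, because at each ${\bf w}\in Q$ the vertical fiber $\rho_{\Real^n}^{-1}({\bf w})\cap\overline Z$ collapses under $\rho_{\Real^n}$, so the 2-simplices of ${\mathcal B}'$ incident to the two sub-intervals of this fiber (above and below ${\bf w}^*$) project to degenerate ``bigons'' that must be absorbed into adjacent non-degenerate simplices. Instead I would argue directly in $\overline Y$: each $\gamma_{\bf w}$ is a monotone cell lying inside the monotone cell $\overline Y$ with $\partial\gamma_{\bf w}\subset\partial\overline Y$, so by Theorem~11 of \cite{BGV2} each $\gamma_{\bf w}$ partitions $\overline Y$ into two monotone cells. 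Iterating this cut-argument for all ${\bf w}\in P\cup\{{\bf a},{\bf b}\}$ in a suitable order yields that each 2-simplex of $\mathcal B$ is a monotone cell; cylindricity (i) then follows from Lemma~\ref{le:proj_cyl} together with an explicit check at the merged simplices, and the no-separatrix condition (iii) is inherited from ${\mathcal B}'$ via Lemma~\ref{le:no_separatrix1}.
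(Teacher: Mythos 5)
Your proposal follows essentially the same route as the paper: lift the data to the graph $Z=\mathrm{graph}(f|_Y)$, taking the prescribed point $({\bf w},\alpha)$ as the lift of each blow-up point ${\bf w}\in Q$, apply Lemma~\ref{le:triang_up} there, project the resulting edges down by $\rho_{\Real^n}$, and re-run the cut argument of Theorem~11 of \cite{BGV2} directly in $\overline Y$ to get monotonicity of the $2$-simplices (which is exactly what the paper means by proving (i)--(v) ``as in Lemma~\ref{le:triang_up}''), with (vi) coming from the graphs of $f|_\Lambda$ being monotone cells upstairs and (vii) from the choice of lift over $Q$. Your preliminary refinement via Corollary~\ref{cor:refinement} is unnecessary since Lemma~\ref{le:triang_up} already accepts an arbitrary finite $P'$ in the side wall, but your explicit handling of the degenerate ``bigons'' over blow-up points is a point the paper's proof passes over in silence, and your fix is the right one.
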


\begin{proof}
Let $Z$ be a cylindrical cell of ${\mathcal E}'$ such that $\rho_{\Real^n} (Z)=Y$, i.e., $Z$ is the graph
of $f|_Y$.
Since $Q$ is the set of all blow-up points of $f|_Y$, for each point
${\bf u} \in P \setminus Q$ there is a unique pre-image $\rho^{-1}({\bf u})$ in the side wall of $Z$.
Applying Lemma~\ref{le:triang_up} to $Z$ and the union of the set
$(\rho_{\Real^n}|_{\overline Z})^{-1}(P \setminus {\bf w}) \cup \{ ({\bf w}, \alpha)|\> {\bf w} \in Q \}$
and the set of all vertices of $Z$ as $P'$, and the point $(\rho_{\Real^n}|_{Z})^{-1}({\bf v})$ as ${\bf v}'$,
we obtain a triangulation ${\mathcal B}'$ of $Z$.
In particular, all 1-dimensional simplices of ${\mathcal B}'$ are monotone cells.
Projections by $\rho_{\Real^n}$ of these 1-dimensional simplices are monotone cells, connecting $\bf v$
with the points in $P$, ${\bf a}$ and ${\bf b}$.

The properties (i)--(v) of the triangulation $\mathcal B$ can be proved exactly as the analogous properties
of the triangulation ${\mathcal B}'$ in Lemma~\ref{le:triang_up}.

For each simplex $\Lambda$ in $\mathcal B$, the function $f|_\Lambda$ is monotone since its graph is
a simplex $\Lambda'$  of the triangulation ${\mathcal B}'$, and hence is a monotone cell.

Finally, the property (vii) of $\mathcal B$ is valid by the choice of edges connecting $\bf v$ and
points ${\bf w} \in Q$.
\end{proof}

The triangulation constructed in Corollary~\ref{cor:triang_down} is not ordered.
To label the vertices of simplices so that conditions (iv) and (v) of Theorem~\ref{th:triang} are satisfied we
will need to perform a further refinement of the triangulation $\mathcal B$, as follows.

Let $Y$ be a cylindrical cell of the cell decomposition $\mathcal E$.
For the side wall $W$ of $Y$ define the finite set $P \subset W$ as the set of all vertices of cylindrical cells
of $\mathcal E$, contained in $W$ (including vertices of $Y$).

Apply Corollary~\ref{cor:triang_down} to $Y$, with this $P$, and arbitrary $\bf v$.
If there is a vertex $\bf w$ of $Y$ where $f|_Y$ has a blow-up point with infimum $0$, then choose the value
$\alpha =0$.
For all other points ${\bf w} \in Q$ choose $\alpha$ arbitrarily.
According to the corollary, there is a definable triangulation ${\mathcal B}(Y)$ of $\overline Y$ satisfying
properties (i)--(vii).

Let $\Lambda$ be a simplex of the triangulation ${\mathcal B}(Y)$.
By Corollary~\ref{cor:triang_down}, (i), the simplex $\Lambda$ is a cylindrical cell.
One of the connected components of its side wall is a curve interval, while another is a single point.
Apply Corollary~\ref{cor:triang_down} to $\Lambda$ (as $Y$), with the set $P$ consisting of all three vertices
of $\Lambda$ and the point in the middle of the 1-dimensional component of its side wall.
If any of the vertices of $\Lambda$ is a blow-up point $\bf w$ of $f|_\Lambda$ with infimum $0$, then choose a value
$\alpha >0$.
The resulting definable triangulation ${\mathcal B}(\Lambda)$ is then a barycentric subdivision of $\Lambda$.
Label the vertices of ${\mathcal B}(\Lambda)$ as follows: the center of the subdivision is
assigned 0, the vertices of $\Lambda$ are assigned 2, while the remaining three vertices are assigned 1.

Observe that any simplex $\Sigma$ of the triangulation ${\mathcal B}(\Lambda)$ may have at most one vertex at
which $f|_Y$ has a blow-up, and this vertex is $\Sigma_{0,1}$ (having the label 2).

This construction is illustrated on Figire~\ref{fig:gen-barycentric-0}.
In this picture the triangulation ${\mathcal B}(\Lambda)$ is shown for just one simplex $\Lambda$, the only simplex
of the triangulation ${\mathcal B}(Y)$ to which the restriction of the family $S_\delta$ is not separable.

\begin{figure}[hbt]
\vspace*{0.2in}
       \centerline{
          \scalebox{0.45}{
             \includegraphics{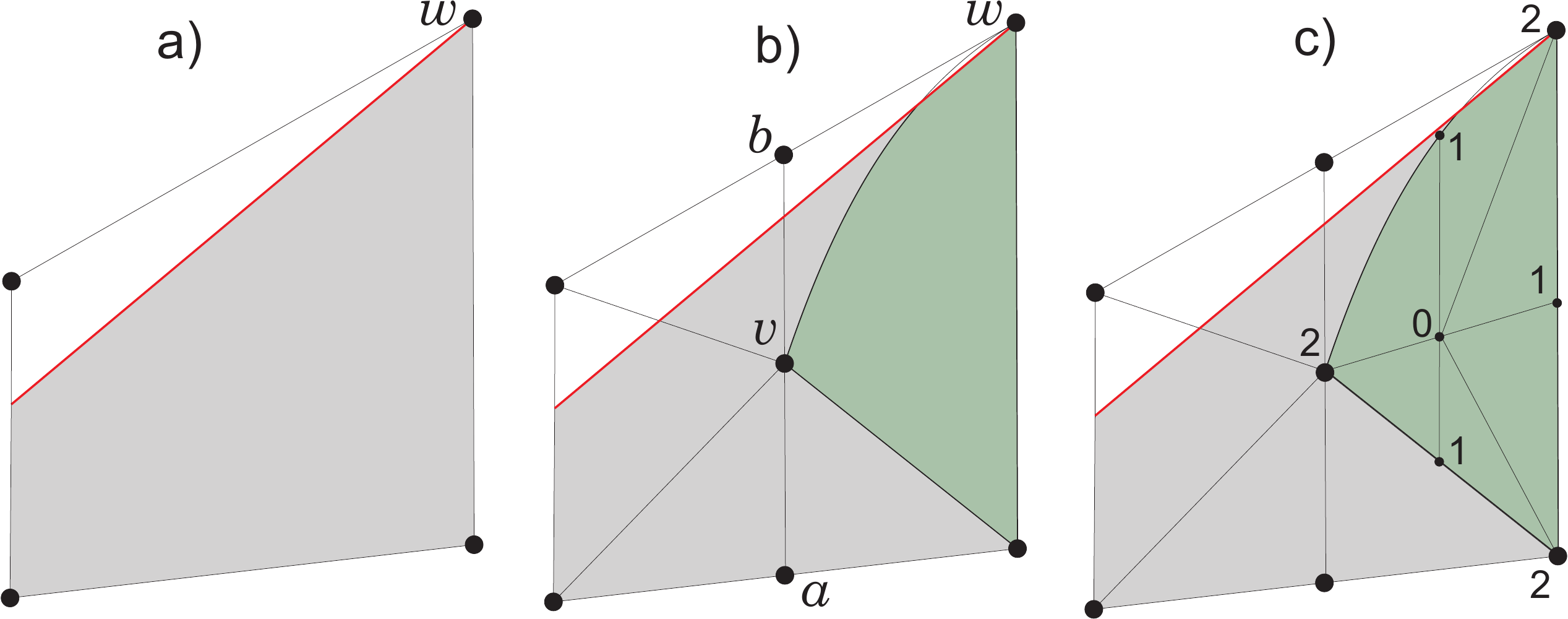}
             }
           }
\vspace*{-0.1in}
\caption{Iterated stellar subdivision of $Y$.}
\label{fig:gen-barycentric-0}
\end{figure}

According to Convention~\ref{con:family_in_simplex}, for a simplex $\Sigma$ of
${\mathcal B}(\Lambda)$ we will write $S_\delta$ meaning the monotone family $\{ S_\delta \cap \Sigma \}$.

\begin{lemma}\label{le:edges1}
For all cylindrical cells $Y$ in $\mathcal E$, for all simplices $\Lambda$ in ${\mathcal B}(Y)$,
for all 1-dimensional simplices $\Sigma$ in ${\mathcal B}(\Lambda)$, and for all $i=0,1,2$
the restriction $(S_\delta)_i$ has one of the three combinatorial types of 1-dimensional model families.
\end{lemma}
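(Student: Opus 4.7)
The plan is to show, by case analysis on the labels of the two endpoints of $\Sigma$, that $f|_\Sigma$ gives rise to a monotone Boolean function on $\{0,1\}$ of one of the three possible kinds---identically $0$, identically $1$, or the function $\psi(0)=1,\psi(1)=0$---which are exactly the Boolean functions of the three $1$-dimensional model families.

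First, I would invoke Corollary~\ref{cor:triang_down}(vi), applied in the construction of ${\mathcal B}(\Lambda)$: every simplex of the stellar subdivision is a monotone cell and the restriction of $f$ to it is a monotone function. In particular, on each $1$-simplex $\Sigma$ in ${\mathcal B}(\Lambda)$, the function $f|_\Sigma$ is monotone along $\Sigma$. Therefore, for small $\delta>0$, the superlevel set $\{f|_\Sigma\ge\delta\}$ is empty, all of $\Sigma$, or a proper subinterval containing exactly one endpoint. The only remaining point is to check, in the last case, that this subinterval contains the endpoint with the smaller label (after the renaming of Convention~\ref{con:labeling}), which is precisely what is needed for combinatorial equivalence to the model family of type~(1).

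Next, I would split into the three possible label pairs of $\Sigma$, namely $(0,1)$, $(0,2)$, $(1,2)$, corresponding to center--midpoint, center--vertex-of-$\Lambda$, and midpoint--vertex-of-$\Lambda$ edges. For every edge incident to a vertex of $\Lambda$ (labelled $2$), the key input is Corollary~\ref{cor:triang_down}(vii): at each vertex $\mathbf w$ of $\Lambda$ that is a blow-up of $f|_\Lambda$ with infimum $0$, we chose the limit value $\alpha>0$, so $f$ is bounded away from $0$ along the corresponding new edge from the centre; by monotonicity, the family is all of $\Sigma$ (type~(2)). If instead the label-$2$ endpoint is not a blow-up and $f$ equals $0$ there, the monotonicity of $f|_\Sigma$ forces the family to be a proper subinterval omitting a neighbourhood of that endpoint and therefore containing the lower-labeled one (type~(1)); if $f$ is positive at both endpoints, the family is again full. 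For $(0,1)$-edges I would additionally arrange the construction so that the chosen centre $\mathbf v$ of the stellar subdivision satisfies $f(\mathbf v)\ge f(\mathbf m)$ for each midpoint $\mathbf m$ of a $1$-dimensional sidewall component of $\Lambda$. This is permissible because $\mathbf v$ is an arbitrary interior point in Corollary~\ref{cor:triang_down}, and because $f|_\Lambda$ is a monotone function on the monotone $2$-cell $\Lambda$, its superlevel set $\{f|_\Lambda\ge\max_\mathbf m f(\mathbf m)\}$ has non-empty interior whenever it dominates the finitely many midpoint values. Monotonicity along $\Sigma$ then forces the family to be empty, full, or a proper subinterval containing the centre.

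The main obstacle is precisely the $(0,1)$-case: controlling the direction of monotonicity of $f$ along edges from the centre to a midpoint requires a careful, non-arbitrary choice of $\mathbf v$ in the construction of ${\mathcal B}(\Lambda)$. Once $\mathbf v$ is chosen so that $f(\mathbf v)$ dominates the (finitely many) midpoint values, and the $\alpha>0$ convention is used at blow-ups with infimum $0$, the case analysis outlined above covers all three label-pairs simultaneously, and the Boolean function assigned to $S_\delta\cap\Sigma$ via Definition~\ref{def:boolean} is in each case one of the three monotone functions on one variable; by Lemma~\ref{le:same_function} this identifies $S_\delta\cap\Sigma$ with one of the three $1$-dimensional model families, which proves the lemma.
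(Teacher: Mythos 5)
Your overall strategy (case analysis on the edges by label pairs, using monotonicity of $f$ along edges and the $\alpha>0$ convention at blow-up vertices) overlaps with the paper's proof, but two points go wrong. First, you identify the $(0,1)$-edges as ``the main obstacle'' and propose to re-engineer the construction so that the barycentre $\mathbf v$ of ${\mathcal B}(\Lambda)$ satisfies $f(\mathbf v)\ge f(\mathbf m)$ for all midpoints $\mathbf m$. This is not needed, and is not clearly achievable either: two of the three midpoints ($\mathbf a$ and $\mathbf b$ in Corollary~\ref{cor:triang_down}) are themselves determined by the choice of $\mathbf v$, so the condition is circular, and $f|_\Lambda$ need not attain its supremum at an interior point. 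The correct (and much simpler) argument, which is the one the paper uses, is that the label-$0$ vertex lies in the open cell $Y$, where $f|_Y$ is a monotone --- hence continuous --- function that is either identically zero or strictly positive (Remark~\ref{re:S_by_monot}); so whenever the family is non-empty, the label-$0$ vertex belongs to $S_\delta$ for all small $\delta$, and since the superlevel set along the edge is an interval, both edges $\Sigma_1,\Sigma_2$ automatically get type (1) or (2). The direction of monotonicity of $f$ along these edges is irrelevant.

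Second, and more seriously, you do not treat the genuinely delicate case: the edge $\Sigma_0$ that lies entirely in $\partial Y$ (label-$1$ midpoint and label-$2$ vertex both on the boundary of $Y$). There the restriction $(S_\delta)_0=\overline{S_\delta\cap\Sigma}\cap\Sigma_0$ is the superlevel set of the \emph{upper semicontinuous extension} of $f|_\Sigma$ to the edge (Definition~\ref{def:ext}), not of $f$ restricted to the edge, which is a different branch of $f$; so Corollary~\ref{cor:triang_down}(vi) (``$f|_\Sigma$ is monotone along $\Sigma$'') does not apply to this edge, and your dichotomy ``$f=0$ at the label-$2$ endpoint vs.\ $f$ positive at both endpoints'' omits the bad configuration in which the extension is positive near the label-$2$ vertex but vanishes near the label-$1$ vertex, which would produce the non-monotone Boolean function. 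Ruling this out is exactly what the paper's appeal to Lemma~\ref{le:hausdorff}(ii)--(iii) accomplishes: the edge $\Sigma_0$ is contained in a single $1$-cell $C$ of $\mathcal E$, the Hausdorff limit of the moving boundary $\partial S_\delta\cap Y$ is compatible with $C$ and can only terminate at vertices of $1$-cells (equivalently, at blow-up points of $f|_Y$ with infimum $0$), so it cannot end at the midpoint. Without some version of this argument your proof has a gap precisely where the conclusion could fail.
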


\begin{proof}
Since the cylindrical decomposition ${\mathcal D}'$ is monotone with respect to
the function $f$ (see Definition~\ref{def:monot_respect_function}) the vertex $\Sigma_{1,2}$ (labelled by 0)
belongs to
$S_\delta \cap Y$ for small $\delta >0$ whenever $S_\delta \cap Y \neq \emptyset$.
It follows that the restrictions $(S_\delta)_1$ and $(S_\delta)_2$ have the combinatorial type either
(1) or (2) of 1-dimensional model family.
If in the edge $\Sigma_0$ the vertex $\Sigma_{0,2}$ (labelled by 1) lies in $Y$, then $(S_\delta)_0$
has the type either (1) or (2) by the same argument.
If, on the other hand, in $\Sigma_0$ the vertex $\Sigma_{0,2}$ lies in $\partial Y$, then it, and the whole
edge $\Sigma_0$ lies in a 1-dimensional cell $C$ of the decomposition $\mathcal E$, and vertices of $C$
are labelled by 2, in particular the vertex $\Sigma_{0,2}$ lies in $C$.
By Lemma~\ref{le:hausdorff}, (iii), $C$ is compatible with the Hausdorff limit of $\partial_{\Sigma}S_\delta$,
hence $(S_\delta)_0$ may have any of the three combinatorial types of 1-dimensional model family.
\end{proof}

\begin{lemma}\label{le:basic_sigma}
For all cylindrical cells $Y$ in $\mathcal E$, for all simplices $\Lambda$ in ${\mathcal B}(Y)$,
and for all simplices $\Sigma$ in ${\mathcal B}(\Lambda)$, the family $S_\delta$
satisfies the basic conditions (A)--(E), and these conditions are hereditary.
\end{lemma}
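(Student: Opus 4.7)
The plan is to verify the five basic conditions for the family $S_\delta \cap \Sigma$ on each simplex $\Sigma$ of ${\mathcal B}(\Lambda)$ and, simultaneously, on each face of $\Sigma$. I would reduce first to the interesting case: if $f|_Y \equiv 0$, then $S_\delta \cap Y = \emptyset$ for small $\delta>0$ and every basic condition is vacuous. Otherwise, by Remark~\ref{re:S_by_monot}, $f|_Y$ is a strictly positive continuous monotone function on the monotone cell $Y$, and $f|_\Lambda$ is monotone on the monotone cell $\Lambda$. The three key structural facts I will exploit are: the center $\mathbf{v}$ of ${\mathcal B}(\Lambda)$ (labelled $0$) is an interior point of $\Lambda$ at which $f$ is continuous and positive; the midpoint labelled $1$ lies in the interior of a $1$-dimensional cylindrical cell of $\mathcal E$ along which $f$ extends continuously (by monotonicity of ${\mathcal E}'$ with respect to $f$); and the vertex labelled $2$ is the only place where $f|_\Sigma$ can blow up, with a preassigned limit $\alpha$.

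Conditions (A), (B), and (D) follow from these observations together with monotonicity of $f|_\Sigma$. For (A), the center $\mathbf{v}$ lies on both $\Sigma_1$ and $\Sigma_2$, so for $\delta \le f(\mathbf{v})$ both restrictions are non-empty whenever $S_\delta \cap \Sigma$ is non-empty. For (D), continuity of $f$ on the open cell $\Sigma$ gives $\x \in {\rm int}(S_\delta \cap \Sigma)$ for all $\delta \le f(\x)$, so $\bigcup_\delta {\rm int}(S_\delta \cap \Sigma) = \Sigma$. For (B), suppose $(S_\delta)_0 = \Sigma_0$; the monotonicity of $f|_\Sigma$ on the monotone cell $\Sigma$ together with the position of the center --- which, by the construction in Corollary~\ref{cor:triang_down}, sits at an extremal coordinate position relative to the opposite edge in the direction picked out by the monotone function $f$ --- forces the inequality $f \ge \delta$ to propagate to all of $\Sigma$, yielding $S_\delta = \Sigma$.

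For condition (C), the equalities $(S_\delta)_{\ell,m} = (S_\delta)_{m,\ell}$ for $(\ell,m) \in \{(0,2),(1,2)\}$ assert that iterated semicontinuous extension of $f$ to each $0$-dimensional face is independent of the order of extension. At the vertex labelled $2$, a single limiting value $\alpha$ is attained along each incident edge because $f$ is monotone on that edge and both edge-extensions agree with the preassigned limit fixed in the construction of ${\mathcal B}(\Lambda)$; at the vertex labelled $1$, $f$ is already continuous, so the two iterated limits trivially coincide. For condition (E), the graph $Z_\Sigma$ of $f|_\Sigma$ is a two-dimensional monotone cell, so $Z_\Sigma \cap \{x_{n+1} \ge \delta\}$ is a monotone cell by Proposition~\ref{prop:def_monotone_map}, and its projection $S_\delta \cap \Sigma$ is a monotone cell by Proposition~\ref{prop:proj}. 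Proposition~\ref{Theorem13} yields regularity of the interior $2$-cell, while $\partial_\Sigma S_\delta$, being the projection of the monotone cell $Z_\Sigma \cap \{x_{n+1} = \delta\}$, is a regular $1$-cell.

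Finally, the hereditary character of the conditions is checked by applying the same arguments to each face $\Sigma'$ of $\Sigma$: for $\dim \Sigma' = 1$ the combinatorial information is supplied by Lemma~\ref{le:edges1}, which classifies the restriction to every edge as one of the three $1$-dimensional model types, and the conditions (A)--(E) in the $1$-dimensional setting reduce to the statement that $S_\delta \cap \Sigma'$ is an interval with the correct endpoint configuration; for $\dim \Sigma' = 0$ the conditions are vacuous. The principal obstacle I anticipate is condition (E) applied to $\partial_\Sigma S_\delta$ --- specifically, controlling its two endpoints to lie on distinct faces of $\Sigma$ for all small $\delta>0$. Here the absence of separatrices in $2$-cells of ${\mathcal E}'$ (Lemma~\ref{le:no_separatrix}), combined with the placement of the preassigned limit $\alpha$ at the potentially blown-up vertex labelled $2$, should force the endpoint configuration dictated by one of the five lex-monotone combinatorial types, as described by Lemma~\ref{le:hausdorff}.
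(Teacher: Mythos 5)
Your treatment of (A), (D), (E) and of heredity via Lemma~\ref{le:edges1} matches the paper's proof. The genuine gap is in condition (B). You assert that the center of ${\mathcal B}(\Lambda)$ ``sits at an extremal coordinate position relative to the opposite edge in the direction picked out by the monotone function $f$'', and that monotonicity of $f|_\Sigma$ then propagates $f\ge\delta$ from $\Sigma_0$ to all of $\Sigma$. Neither claim holds: the center in Corollary~\ref{cor:triang_down} is an essentially arbitrary interior point with no extremality property, and a monotone function on a two-dimensional monotone cell can perfectly well satisfy $f\ge\delta$ near the edge $\Sigma_0$ while dropping below $\delta$ towards the opposite vertex --- monotonicity in each coordinate says nothing about which end of $\Sigma$ is the ``large'' one, so (B) is not automatic and can genuinely fail for an arbitrary monotone $f$. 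The actual reason (B) holds here is the deliberate choice, made when ${\mathcal B}(\Lambda)$ is constructed, of the limit value $\alpha>0$ of $f$ along the edge joining the center (label $0$) to the unique possible blow-up vertex (label $2$): granting (C), the only way (B) could fail is to have $(S_\delta)_0=\Sigma_0$ while $(S_\delta)_{1,0}=\emptyset$, which would force $f|_{\Sigma_1}\to 0$ at $\Sigma_{0,1}$, contradicting $f|_{\Sigma_1}\to\alpha>0$ guaranteed by Corollary~\ref{cor:triang_down}~(vii). You mention $\alpha$ elsewhere in the proposal but never deploy it in the one place where it is indispensable.

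A secondary confusion occurs in (C): you argue that at the vertex labelled $2$ ``a single limiting value $\alpha$ is attained along each incident edge''. That vertex is $\Sigma_{0,1}$, i.e.\ exactly the pair $(0,1)$ that condition (C) excludes, and the two edge-limits there genuinely need not agree --- that is precisely why a blow-up is permitted there and why the pair $(0,1)$ is exempted. What (C) actually requires is agreement at the vertices labelled $1$ (pair $(0,2)$) and $0$ (pair $(1,2)$), and this holds because, by construction, neither of these is a blow-up point of $f|_Y$; your remark about continuity at the vertex labelled $1$ is the correct half of this argument, and the vertex labelled $0$ is handled identically since it is an interior point of $Y$.
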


\begin{proof}
Recall that by Definition~\ref{def:standard_family}, the standard family satisfies the basic conditions (A)--(E).

Since the cylindrical decomposition ${\mathcal D}'$ is monotone with respect to
the function $f$ (see Definition~\ref{def:monot_respect_function}) the vertex $\Sigma_{1,2}$ (labelled by 0)
belongs to
$S_\delta \cap Y$ for small $\delta >0$ whenever $S_\delta \cap Y \neq \emptyset$.
It follows that the basic condition (A) is satisfied.

Since, by the construction, $\Sigma$ may have at most one vertex at which $f|_Y$ has a blow-up, and this
vertex is $\Sigma_{0,1}$ (having the label 2), the vertex $\Sigma_{0,2}$ (having the label 1) is not
a blow-up vertex.
Thus, it cannot happen that one of the two sets
$(S_\delta)_{0,2},\> (S_\delta)_{2,0}$ is empty while another is not.
It follows that the basic condition (C) is satisfied.

Because of the basic condition (C), the only possibility for the basic condition (B) to fail
for a simplex $\Sigma$ would be to have $(S_\delta)_0= \Sigma_0$ while $(S_\delta)_{1,0} = \emptyset$
for small $\delta >0$.
In this case the vertex $\Sigma_{0,1}$ of $\Sigma$, labeled by 2, is a blow-up point of $f|_\Sigma$.
By the construction, the function $f|_{\Sigma_1}(\x) \to \alpha >0$ as $\x \to \Sigma_{0,1}$.
Since $f_Y$ is monotone, hence continuous, we have $f_{\Sigma,\Sigma_1}=f|_{\Sigma_1}$.
It follows that $(S_\delta)_{1,0} \neq \emptyset$, for small $\delta >0$, which is a contradiction.

The basic condition (D) holds true since the cylindrical decomposition ${\mathcal D}'$ is monotone with respect to
the function $f$.

Let $\Sigma'$ be the graph of the function $f|_\Sigma$.
Due to Proposition~\ref{prop:def_monotone_map}, for every $\Sigma$, the intersections
$\Sigma' \cap \{ x_{n+1}=c \}$,
$\Sigma' \cap \{ x_{n+1} \lessgtr c \}$ for each $c \in \Real$ are either empty or monotone cells.
By Proposition~\ref{prop:proj}, the projections of these sets to $\Real^n$, in particular the $\delta$-level sets
$\partial_{\Sigma}S_\delta$ of the function $f|_{\Sigma}$, are also either empty or monotone cells.
Hence, the basic condition (E) is satisfied.

All basic conditions are hereditary for the restrictions of $S_\delta$ to edges of $\Sigma$ by Lemma~\ref{le:edges1}.
\end{proof}

\begin{lemma}\label{le:standard_sigma}
For all cylindrical cells $Y$ in $\mathcal E$, for all simplices $\Lambda$ in ${\mathcal B}(Y)$,
and for all simplices $\Sigma$ in ${\mathcal B}(\Lambda)$, the family $S_\delta$ is standard.
\end{lemma}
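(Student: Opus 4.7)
By Lemma~\ref{le:basic_sigma} the family $S_\delta\cap\Sigma$ satisfies the basic conditions (A)--(E) hereditarily, and by Lemma~\ref{le:boolean_monotone} its Boolean function $\psi$ is monotone. If $\dim\Sigma\le 1$, every monotone Boolean function in at most one variable is automatically lex-monotone (compare Lemma~\ref{le:number_of_lex-monotone}(i) with the total count of monotone Boolean functions for $n\le 1$), so $S_\delta\cap\Sigma$ is standard by Definition~\ref{def:standard_family} and nothing more is needed. The substantive case is $\dim\Sigma=2$, where the unique monotone but non-lex-monotone Boolean function on $\{0,1\}^2$ is the one with $\psi|_{x_1=0}\equiv 1$ and $\psi|_{x_1=1}\equiv 0$---the Boolean function of the non-standard model family~$(5)$ in the list after Definition~\ref{def:model}---and it suffices to rule this combinatorial type out.

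I argue by contradiction, assuming $(S_\delta)_1=\Sigma_1$ and $(S_\delta)_0=\emptyset$ for all sufficiently small $\delta>0$. Write $\mathbf{w}:=\Sigma_{0,1}$ and $m:=\Sigma_{0,2}$ for the vertices of $\Sigma$ labeled $2$ and $1$ respectively; the construction of ${\mathcal B}(\Lambda)$ guarantees that $\mathbf{w}$ is the only vertex of $\Sigma$ at which $f|_Y$ can blow up. If $\mathbf{w}$ is not a blow-up of $f|_\Sigma$, then $f$ admits a single limit value at $\mathbf{w}$ from within $\overline\Sigma$, and continuity combined with the two combinatorial conditions forces both $f(\mathbf{w})\ge\delta$ and $f(\mathbf{w})\le\delta$ for all sufficiently small $\delta>0$, which is absurd. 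If instead $m$ lies in the interior of $Y$, or on a $1$-cell $C$ of $\mathcal{E}$ with $f|_C>0$, then $f(m)>0$ and the continuity of $f$ on the open half-edge $\Sigma_0$ near $m$ produces points with $f\ge\delta$ there, contradicting $(S_\delta)_0=\emptyset$.

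The only remaining configuration is that $\mathbf{w}$ is a blow-up of $f|_\Sigma$, $m\in\partial Y$, and the $1$-cell $C\subset\partial Y$ containing $m$ satisfies $f|_C\equiv 0$. Since the graph of $f|_\Sigma$ sits inside the graph of $f|_\Lambda$, the vertex $\mathbf{w}$ is then also a blow-up of $f|_\Lambda$. On the other hand, both edges of $\Lambda$ incident to $\mathbf{w}$---the one going into the interior of $Y$ and the one lying on $\partial Y$---carry $f\to 0$ at $\mathbf{w}$: the interior edge by the forced choice $\alpha=0$ made in the construction of ${\mathcal B}(Y)$ (applicable because $\mathbf{w}$ is a blow-up of $f|_Y$ of infimum $0$), and the boundary edge because it coincides with $C$ and $f|_C\equiv 0$. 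I plan to close the argument by using that the graph of $f|_\Lambda$ is a monotone cell in $\Real^{n+1}$ (Corollary~\ref{cor:triang_down}(ii),(vi) together with Definition~\ref{def:monotone_on_monotone}), so that the vertical fibre of its closure at $\mathbf{w}$ is connected and, being pinned down to~$0$ along the two boundary data, reduces to the single value~$0$; this contradicts the blow-up of $f|_\Lambda$ at $\mathbf{w}$. The main obstacle is precisely this last step---rigorously deducing the single-point vertical fibre at $\mathbf{w}$ from the monotone-cell structure of the graph and upper semi-continuity of $f$---for which I expect to exploit the connectedness of level sets $\{f=c\}\cap\Lambda$ given by Proposition~\ref{prop:def_monotone_map}(i).
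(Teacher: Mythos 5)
Your reduction is the correct one and coincides with the paper's: by Lemma~\ref{le:basic_sigma} and Lemma~\ref{le:boolean_monotone} the only thing to exclude is the unique non-lex-monotone type in two variables, namely type~(5), and the relevant case split is according to which of the two simplices of ${\mathcal B}(\Lambda)$ at the vertex $\mathbf{w}=\Sigma_{0,1}$ one is in; your observation that $m=\Sigma_{0,2}\in Y$ kills one of them is exactly the paper's first case. But in the decisive remaining case you do not give a proof: you write ``I plan to close the argument by\dots'' and concede that ``the main obstacle is precisely this last step,'' so the lemma is not established. That step is also genuinely delicate and not a formality: for a general continuous function on a two-dimensional cell, $f\to 0$ along both boundary arcs at $\mathbf{w}$ does \emph{not} force the vertical fibre of the closed graph over $\mathbf{w}$ to be $\{0\}$ (consider functions vanishing on two tangent boundary arcs but not in between); to make it work one must exploit quasi-affineness of $f|_\Lambda$ — strict monotonicity or constancy in each coordinate after projecting to a suitable coordinate $2$-plane — and treat separately the positions of $\mathbf{w}$ relative to the top, bottom and side wall of the projected cell. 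The paper closes this case differently and more cheaply: the type-(5) hypothesis itself gives $\overline{S_\delta}\supset\Sigma_1$, i.e.\ $f\ge\delta$ along the edge from the barycentre of $\Lambda$ to $\mathbf{w}$, and since $\{f|_Y=\delta\}$ is a single monotone arc with only one end at $\mathbf{w}$ (Lemma~\ref{le:hausdorff}), this forces $f|_Y>\delta$ on the edge $\gamma$ of $\Lambda$ joining the interior point of $Y$ to $\mathbf{w}$ — directly contradicting the choice $\alpha=0$ made for $\gamma$ in the construction of ${\mathcal B}(Y)$. No statement about the entire fibre of the closed graph at $\mathbf{w}$ is needed.

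A second, smaller defect: in your case (b) with $m\in\partial Y$, producing points of $\Sigma_0$ with $f\ge\delta$ does not by itself contradict $(S_\delta)_0=\emptyset$, because $(S_\delta)_0$ is $\overline{S_\delta\cap\Sigma}\cap\Sigma_0$, the trace on $\Sigma_0$ of the closure of the family in the \emph{open} simplex. Since $f$ is only upper semi-continuous, $f|_C$ can be positive while the extension $f_{Y,C}$ of $f|_Y$ by continuity to $C$ vanishes identically, and it is $f_{Y,C}$, not $f|_C$, that determines whether $\overline{S_\delta\cap\Sigma}$ reaches $\Sigma_0$. Your trichotomy should therefore be stated in terms of $f_{Y,C}$; with that correction case (b) goes through, but the main gap above remains.
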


\begin{proof}
By Lemma~\ref{le:boolean_monotone}, since the basic conditions (A)--(E) are satisfied for the family
$S_\delta$ for every $\Sigma$, it belongs to either one of the standard combinatorial types, or to the combinatorial
type (5) of the non-standard model family
$$(\{t_1+t_2\le 1-\delta\}\cup\{2 t_1+t_2\le 1\})\cap\Delta^2.$$
However the latter alternative is not possible.
Suppose this is the case.
Then the vertex $\Sigma_{0,1}$ in $\Sigma$ (labeled by 2), is a blow-up point $\bf w$ of $f|_Y$.
Since $\Sigma$ is an element of a barycentric subdivision of some $\Lambda$, it is one simplex of the two, in the
subdivision, having the vertex $\bf w$.

First let $\Sigma$ be the simplex whose edge $\Sigma_0$ lies in the edge of $\Lambda$ connecting  the internal
point of $Y$ with $\bf w$.
Then the vertex $\Sigma_{0,2}$ (labeled by 1) of $\Sigma$ belongs to $S_\delta$ for small $\delta >0$, which
contradicts to the family $S_\delta$ being of the type (5).

Now let $\Sigma$ be the other simplex in the barycentric subdivision of $\Lambda$ with the vertex $\bf w$.
Recall that, by the construction, $f|_Y(\x) \to 0$ as $\x \to {\bf w}$ along the edge $\gamma$ of $\Lambda$ connecting
the internal point of $Y$ with $\bf w$.
Under the supposition that $S_\delta$ is of the type (5), at each point $\x \in \gamma$ in the neighbourhood of $\bf w$
we have $f|_Y(\x)> \delta$ for small $\delta >0$ which is a contradiction.
\end{proof}

\begin{proof}[Proof of Theorem~\ref{th:triang}]
The theorem  follows immediately from Lemma~\ref{le:no_separatrix}, Corollary~\ref{cor:triang_down},
Lemmas~\ref{le:basic_sigma} and \ref{le:standard_sigma}.
\end{proof}

Corollary~\ref{cor:equivalencies} and Theorem~\ref{th:triang} immediately imply the following theorem.

\begin{theorem}\label{th:main}
When $\dim K \le 2$ there exists a definable triangulation of $K$ such that
for each simplex $\Lambda$ and small $\delta >0$, the intersection
$S_\delta \cap \Lambda$ is topologically equivalent to
one of the model families $V_\delta$ in the standard simplex $\Delta$.
\end{theorem}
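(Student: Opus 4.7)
The plan is to assemble the two main results established in the preceding sections, namely Theorem~\ref{th:triang} and Corollary~\ref{cor:equivalencies}, in that order.

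First, I would invoke Theorem~\ref{th:triang} applied to $K$ and the monotone family $\{S_\delta\}$. That theorem produces a definable ordered triangulation of $K$ refining the cylindrical decomposition $\mathcal{D}$, with the crucial fifth clause that for every simplex $\Lambda$ the restricted family $S_\delta\cap\Lambda$ is standard in the sense of Definition~\ref{def:standard_family}. In particular, for each such $\Lambda$ the family satisfies the basic conditions (A)--(E) hereditarily, and the Boolean function assigned to it by Definition~\ref{def:boolean} is lex-monotone.

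Next, for each simplex $\Lambda$ of the triangulation, let $V_\delta^\Lambda$ denote the model family in the standard simplex $\Delta^{\dim\Lambda}$ associated, via Definition~\ref{def:model}, to the lex-monotone Boolean function assigned to $S_\delta\cap\Lambda$. By Lemma~\ref{le:family_to_function} (or directly from the inductive construction of $V_\delta^\Lambda$), the family $S_\delta\cap\Lambda$ is combinatorially equivalent to $V_\delta^\Lambda$.

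Finally, I would apply Corollary~\ref{cor:equivalencies} to the finite collection of standard families $\{S_\delta\cap\Lambda\}_\Lambda$ in definable $m$-simplices with $m\le 2$, together with the corresponding model families. The corollary produces, for each $\Lambda$, a pair of face-preserving homeomorphisms between $\overline{\Lambda}$ and $\overline{\Delta^{\dim\Lambda}}$ realizing the topological equivalence in the sense of Definition~\ref{def:equiv} between $S_\delta\cap\Lambda$ and $V_\delta^\Lambda$. That is exactly the conclusion of Theorem~\ref{th:main}.

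There is no genuine obstacle remaining at this final assembly; the substantive work is packaged inside Theorem~\ref{th:triang} (handling blow-up vertices, separatrices, ordered labels and the verification of the basic conditions) and inside Theorem~\ref{th:equivalencies}/Lemma~\ref{le:strong} (the Schoenflies-type upgrading of combinatorial equivalence to topological equivalence in dimension at most $2$). The only subtle point worth flagging is that the hypothesis $\dim K\le 2$ is used essentially here: Corollary~\ref{cor:equivalencies} is stated only for $m\in\{0,1,2\}$ precisely because combinatorial equivalence of standard families is not yet known to imply topological equivalence in higher dimensions, which is why the general case is left as a conjecture.
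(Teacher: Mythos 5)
Your proposal is correct and follows the paper's own proof exactly: the paper deduces Theorem~\ref{th:main} immediately from Theorem~\ref{th:triang} (which yields a triangulation with standard restricted families) combined with Corollary~\ref{cor:equivalencies} (which upgrades combinatorial to topological equivalence for $m\le 2$). Your additional remarks on where the dimension hypothesis enters are accurate and consistent with the paper's discussion.
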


\section{Triangulations with separable families}\label{sec:triang_sep}

Consider the triangulation of $K$ from Theorem~\ref{th:main}, i.e., the definable homeomorphism
$\Phi:\> |C| \to K$, where $C$ is the finite ordered simplicial complex.
Let $\Sigma_1, \ldots ,\Sigma_r$ be all 1-simplices in $C$ and $\Delta_1, \ldots ,\Delta_k$ be all
2-simplices in $C$.
Then there are affine face-preserving homeomorphisms $\Psi_i:\> \Delta^1 \to \Sigma_i$ and
$\Phi_i:\> \Delta^2 \to \Delta_i$, where $\Delta^1$ and $\Delta^2$ are a standard ordered 1-simplex and
2-simplex respectively.
Let $\Lambda^1_i:=\Phi(\Sigma_i)$, $\Lambda^2_j:=\Phi(\Delta_j)$.
In $\Delta^1$ consider definable families $T^i_\delta:=\Psi_i^{-1}(\Phi^{-1}(S_\delta \cap \Lambda^1_i))$, and
in $\Delta^2$ consider definable families $S^i_\delta:=\Phi_i^{-1}(\Phi^{-1}(S_\delta \cap \Lambda^2_i))$.
For each $i=1, \ldots, r$, in $\Delta^1$ consider the model family $W^i_\delta$ combinatorially equivalent to
the family $T^i_\delta$, and for each $j=1, \ldots k$ in $\Delta^2$ consider the model family $V^j_\delta$
combinatorially equivalent to the family $S^j_\delta$.
Let $V_\delta:= \bigcup_{i} \Psi_i(W^i_\delta) \cup \bigcup_j \Phi_j(V^j_\delta)$.

\begin{lemma}\label{le:homotopy_equivalent}
For all small positive $\delta$, the sets $\Phi^{-1}(S_\delta)$ and $V_\delta$ are homotopy equivalent.
\end{lemma}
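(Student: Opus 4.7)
The plan is to prove the stronger statement that for each small $\delta>0$, the sets $\Phi^{-1}(S_\delta)$ and $V_\delta$ are homeomorphic via a face-preserving homeomorphism of $|C|$, from which homotopy equivalence follows immediately.

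By construction, on each simplex $\Lambda$ of $C$ the restrictions $\Phi^{-1}(S_\delta)\cap\Lambda$ and $V_\delta\cap\Lambda$ are combinatorially equivalent (both combinatorially agree with the corresponding model family $W^i_\delta$ or $V^j_\delta$). Lemma~\ref{le:strong} (equivalence of (1) and (4)) then yields, on each simplex individually, a face-preserving homeomorphism sending one to the other. The whole task is to glue these simplexwise homeomorphisms across shared faces of $C$, which I would carry out skeleton by skeleton.

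On each $1$-simplex $\Sigma_i$, I invoke Lemma~\ref{le:strong}~(4) to obtain a face-preserving homeomorphism $g_i:\Sigma_i\to\Sigma_i$ with $g_i(\Phi^{-1}(S_\delta)\cap\Sigma_i)=V_\delta\cap\Sigma_i$; patching these gives a homeomorphism $G^{(1)}_\delta$ of the $1$-skeleton. For each $2$-simplex $\Delta_j$, Lemma~\ref{le:strong}~(4) supplies \emph{some} face-preserving $H_j:\Delta_j\to\Delta_j$ mapping $\Phi^{-1}(S_\delta)\cap\Delta_j$ onto $V_\delta\cap\Delta_j$; the discrepancy $\varphi_j:=G^{(1)}_\delta|_{\partial\Delta_j}\circ(H_j|_{\partial\Delta_j})^{-1}$ is a face-preserving self-homeomorphism of $\partial\Delta_j$ that preserves $V_\delta\cap\partial\Delta_j$ setwise. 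Because face-preservation forces $\varphi_j$ to fix all vertices of $\Delta_j$, it also fixes the endpoints on each edge of the arc $V_\delta\cap\Sigma_i$, and hence fixes the two endpoints on $\partial\Delta_j$ of the interior arc $\partial_{\Delta_j}V_\delta$. I can therefore extend $\varphi_j$ across $\partial_{\Delta_j}V_\delta$ (say, by the identity) to a self-homeomorphism of the Jordan curve $\partial(V_\delta\cap\Delta_j)$, and then by Lemma~1.10 of \cite{RS} extend it separately to self-homeomorphisms of the two complementary closed $2$-balls $\overline{V_\delta\cap\Delta_j}$ and $\overline{\Delta_j\setminus V_\delta}$—each of these is a $2$-ball by basic condition~(E) and Lemma~\ref{le:limit}. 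The resulting self-homeomorphism $\widetilde\varphi_j$ of $\Delta_j$ preserves $V_\delta\cap\Delta_j$, so $G_\delta|_{\Delta_j}:=\widetilde\varphi_j\circ H_j$ extends $G^{(1)}_\delta$ to $\Delta_j$ and still sends $\Phi^{-1}(S_\delta)\cap\Delta_j$ to $V_\delta\cap\Delta_j$. Assembling over all $2$-simplices gives the global face-preserving homeomorphism $G_\delta:|C|\to|C|$ with $G_\delta(\Phi^{-1}(S_\delta))=V_\delta$.

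The main obstacle is precisely the middle step—extending $\varphi_j$ across the interior arc $\partial_{\Delta_j}V_\delta$: one must correctly identify and match the endpoints of that arc on $\partial\Delta_j$, which becomes slightly delicate in combinatorial types (such as type~(3)) where one endpoint of the arc lies at a vertex of $\Delta_j$. Face-preservation of $\varphi_j$, which guarantees that every vertex is fixed, is what makes this match-up automatic and ensures that the Jordan curve $\partial(V_\delta\cap\Delta_j)$ admits the required self-homeomorphism.
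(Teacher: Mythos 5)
Your strategy---building a global face-preserving homeomorphism $G_\delta$ of $|C|$ with $G_\delta(\Phi^{-1}(S_\delta))=V_\delta$---is much stronger than what the lemma asserts, and it is in fact false in general, so the gap is not repairable within this approach. The failure is exactly at the gluing step. On a shared edge $\Sigma_i$, your $g_i$ is normalized to the family $T^i_\delta=\Phi^{-1}(S_\delta)\cap\Sigma_i$, i.e.\ the actual trace of $S_\delta$ on the open edge, whereas $H_j|_{\partial\Delta_j}$ (coming from Lemma~\ref{le:strong}) controls the \emph{restriction} $(S^j_\delta)_l=\overline{S^j_\delta}\cap\Delta^2_l$, i.e.\ the closure of the two-dimensional part. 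Since $f$ is only upper semi-continuous, these two subsets of the edge need not coincide: $f$ on the open edge may strictly exceed the extension by semicontinuity of $f$ from the adjacent $2$-simplex, so $T^i_\delta$ can be strictly larger than $(S^j_\delta)_l$ with a different moving endpoint. In that situation $\varphi_j=G^{(1)}_\delta|_{\partial\Delta_j}\circ(H_j|_{\partial\Delta_j})^{-1}$ does \emph{not} preserve $\overline{V_\delta\cap\Delta_j}\cap\partial\Delta_j$ and does \emph{not} fix the endpoint of the arc $\partial_{\Delta_j}V_\delta$ on the edge, so the extension ``by the identity'' across that arc is not even well defined, and no correction $\widetilde\varphi_j$ preserving $V_\delta\cap\Delta_j$ exists. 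Indeed, any ambient homeomorphism would have to send the endpoint of $T^i_\delta$ and the endpoint of $(S^j_\delta)_l$ to the same point $1-\delta$ of the edge, which is impossible by injectivity when they differ. A second, independent obstruction: $V_\delta$ as defined is a union of subsets of \emph{open} $1$- and $2$-simplices and so contains no vertex of $C$, while $\Phi^{-1}(S_\delta)$ is compact and in general does contain vertices; a face-preserving homeomorphism fixes every vertex, so it cannot carry one set onto the other.

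The paper's proof deliberately avoids any matching of homeomorphisms across faces. It covers $\Phi^{-1}(S_\delta)$ by the closed sets $\overline{\Psi_i(T^i_\delta)}$, $\overline{\Phi_j(S^j_\delta)}$ and $V_\delta$ by the corresponding closures of the model families, observes that all pieces and all their finite intersections are contractible (single points or intervals), and applies the Nerve Theorem. Combinatorial equivalence of each $T^i_\delta$ with $W^i_\delta$ and each $S^j_\delta$ with $V^j_\delta$ is then used only to conclude that the two nerves are \emph{isomorphic}, i.e.\ that the empty/non-empty pattern of intersections agrees---which is robust against precisely the boundary mismatches that break your gluing. If you want to salvage a pointwise argument you would need, at a minimum, to replace the edge data $T^i_\delta$, $W^i_\delta$ by the restrictions from the adjacent $2$-simplices and to handle the $0$-skeleton, at which point you are proving a different (and stronger) statement than Lemma~\ref{le:homotopy_equivalent}; the nerve argument is the intended and correct route.
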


\begin{proof}
Without loss of generality, assume that $S_\delta$ (and hence, $\Phi^{-1}(S_\delta)$ and $V_\delta$) is connected.
Consider the covering of the set $\Phi^{-1}(S_\delta)$ by all sets $A_\delta$ in
$$\{ \overline{\Psi_i(T^i_\delta)},\> \overline{\Phi_i(S^j_\delta)}|\> i=1, \ldots ,r,\> j=1, \ldots ,k \}.$$
and let ${\mathcal N}_{\Phi^{-1}(S_\delta)}$ be the nerve of this covering.
Observe that every finite non-empty intersection $A^{i_1}_\delta \cap \cdots \cap A^{i_t}_\delta$ is either
a single point or an interval in one of 1-simplices $\Sigma_i$, hence contractible.
By the Nerve Theorem (\cite{Bjorner}, Theorem~6), $\Phi^{-1}(S_\delta)$ is homotopy equivalent to
the geometric realization of ${\mathcal N}_{\Phi^{-1}(S_\delta)}$.

Analogously, the covering of the set $V_\delta$ by all sets $B_\delta$ in
$$\{ \overline{\Psi_i(W^i_\delta)},\> \overline{\Phi_i(V^j_\delta)}|\> i=1, \ldots ,r,\> j=1, \ldots ,k \}$$
has the nerve ${\mathcal N}_{V_\delta}$ whose geometric realization is homotopy equivalent to $V_\delta$.

Since each model family $W^i_\delta$ is combinatorially equivalent to $T^i_\delta$, and each model family
$V^j_\delta$ is combinatorially equivalent to $S^j_\delta$, the nerves ${\mathcal N}_{\Phi^{-1}(S_\delta)}$
and ${\mathcal N}_{V_\delta}$ are isomorphic, hence their geometric realizations are homotopy equivalent.
It follows that $\Phi^{-1}(S_\delta)$ and $V_\delta$ are homotopy equivalent.
\end{proof}

\begin{theorem}
There exists a definable monotone family $R_\delta$ in $K$ and a definable ordered triangulation of $K$ such that
$R_\delta$ is homotopy equivalent to $S_\delta$ for all small positive $\delta$, and
for each ordered simplex $\Lambda$ and each small enough $\delta >0$, the intersection $R_\delta \cap \Lambda$
is topologically equivalent to one of the {\em separable} families in the standard simplex.
\end{theorem}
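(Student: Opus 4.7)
The plan is to keep the triangulation supplied by Theorem~\ref{th:main}, replace $S_\delta$ inside each of its simplices by the combinatorially equivalent model family, and then barycentrically subdivide. Lemma~\ref{le:homotopy_equivalent} will then deliver the required homotopy equivalence, while Lemma~\ref{le:model_bary} will deliver separability on each simplex of the subdivision.

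Concretely, I would start from the ordered triangulation $\Phi:|C|\to K$ provided by Theorem~\ref{th:main}, together with the one- and two-dimensional model families $W^i_\delta\subset\Delta^1$ and $V^j_\delta\subset\Delta^2$ combinatorially equivalent (via Lemma~\ref{le:same_function}) to $\Psi_i^{-1}\Phi^{-1}(S_\delta\cap\Lambda^1_i)$ and $\Phi_j^{-1}\Phi^{-1}(S_\delta\cap\Lambda^2_j)$, respectively, and set
$$R_\delta:=\Phi\Bigl(\bigcup_i\Psi_i(W^i_\delta)\cup\bigcup_j\Phi_j(V^j_\delta)\Bigr).$$
Lemma~\ref{le:homotopy_equivalent} applied inside $|C|$ and pushed forward by $\Phi$ gives at once that $R_\delta$ is homotopy equivalent to $S_\delta$ for all small $\delta>0$. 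That this union is a well-defined definable monotone family rests on the following coherence check: on any face shared between two top-dimensional simplices (or between a triangle and an incident edge), the two model families induced on that face, when transported by the canonical order-preserving identification with the standard simplex, must coincide. This is the content of Lemma~\ref{le:model_prop}(iii) (restrictions of model families are again model families, entirely determined by the restricted lex-monotone Boolean function) together with the fact that on any shared face the two restricted Boolean functions agree because they both come from restrictions of the original $S_\delta$; monotonicity in $\delta$ is then inherited from each individual model family.

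Next, let $C'$ be the barycentric subdivision of $C$, endowed with its induced total order on vertices; this gives a definable ordered triangulation $\Phi:|C'|\to K$. For any simplex $\Sigma$ of $C'$, the set $\Phi^{-1}(R_\delta)\cap\Sigma$ is the affine image under $\Phi_j$ or $\Psi_i$ of the restriction of a model family to a simplex of the barycentric subdivision of the standard simplex. By Lemma~\ref{le:model_bary} this restriction is separable, so its combinatorial type is a separable lex-monotone Boolean function. Lemma~\ref{le:strong} (equivalently Corollary~\ref{cor:equivalencies}), which upgrades combinatorial equivalence to topological equivalence in dimension at most two, then provides a face-preserving topological equivalence between $R_\delta\cap\Phi(\Sigma)$ and the corresponding separable model family in the standard simplex.

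The step I expect to be the main obstacle is the coherent global assembly of the local model families: although each individual simplex admits many topologically equivalent replacements, the chosen replacements must glue \emph{pointwise} along common faces, and the resulting $R_\delta$ must be honestly monotone in $\delta$ rather than merely fiber-wise. The only clean way to accomplish this is the canonical construction just described, in which each model family is produced by the recursion of Definition~\ref{def:model} inside the standard simplex and then transported by the affine vertex-order-preserving maps $\Psi_i,\Phi_j$; once the bookkeeping is set up this way, the required compatibility is exactly the content of Lemma~\ref{le:model_prop}(iii), and the rest of the argument reduces to routine application of Lemmas~\ref{le:homotopy_equivalent}, \ref{le:model_bary} and \ref{le:strong}.
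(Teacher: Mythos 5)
Your proposal is correct and follows essentially the same route as the paper: take $R_\delta=\Phi(V_\delta)$ with $V_\delta$ the union of the model families transported into $|C|$, invoke Lemma~\ref{le:homotopy_equivalent} for the homotopy equivalence, and pass to the barycentric subdivision together with Lemma~\ref{le:model_bary} for separability. Your explicit coherence check on shared faces (via the fact that restrictions of model families are again model families) is a point the paper leaves implicit, and is handled correctly, apart from the minor slip that the relevant item of Lemma~\ref{le:model_prop} is (2) rather than (3).
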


\begin{proof}
By Lemma~\ref{le:homotopy_equivalent}, for all small positive $\delta$, the sets $\Phi^{-1}(S_\delta)$ and
$V_\delta$ are homotopy equivalent.
According to Lemma~\ref{le:model_bary}, the restrictions of $W^i_\delta$ to each simplex of the barycentric
subdivision of $\Delta^1$, and of $V^j_\delta$ to each simplex of the barycentric subdivision
of $\Delta^2$ are separable model families.
As $R_\delta$ take $\Phi(V_\delta)$ and as triangulation of $K$ take $\Phi:\> |C'| \to K$, where $C'$
is the barycentric subdivision of the simplicial complex $C$.
\end{proof}

\section{Approximation by compact families}
\label{sec:approximation}
Let, as before, $S:=\bigcup_{\delta >0}S_\delta \subset K \subset \Real^n$.
In \cite{GV07} the following construction was introduced.

For each $\delta>0$, let $\{ S_{\delta, \eps} \}_{\eps >0}$ be a definable family of compact
subsets of $K$
such that the following conditions hold:
\begin{enumerate}
\item
for all $\eps,\> \eps' \in (0,1)$, if $\eps' > \eps$, then $S_{\delta, \eps} \subset S_{\delta, \eps'}$;
\item
$S_\delta= \bigcap_{\eps >0} S_{\delta, \eps}$;
\item
for all $\delta' >0$ sufficiently smaller than $\delta$, and all $\eps' >0$, there exists an open in $K$ set
$U \subset K$ such that $S_\delta \subset U \subset S_{\delta', \eps'}$.
\end{enumerate}

For a sequence $\eps_0 \ll \delta_0 \ll \eps_1 \ll \delta_1 \ll \cdots \ll  \eps_m \ll \delta_m$,
introduce the compact set $T_m(S_{\delta, \eps}):= S_{\delta_0, \eps_0} \cup \cdots \cup S_{\delta_m, \eps_m}$.
Here $m \ge 0$, and $\ll$ stands for ``sufficiently smaller than'' (for the precise meaning of $\ll$ see
Definition~1.7 in \cite{GV07}).

Let $H_i(X)$ be the singular homology group of a topological space $X$ with coefficients in some fixed
Abelian group.
Without loss of generality, assume that $S$ is connected in order to make the homotopy groups
$\pi_k(S)$ and $\pi_k(T_m(S_{\delta, \eps}))$ independent of a base point.

\begin{proposition}[\cite{GV07}, Theorem~1.10]\label{pr:epi}
\begin{enumerate}[(i)]
\item
For every $1 \le k \le m$, there are epimorphisms
$$\psi_k:\> \pi_k(T_m(S_{\delta, \eps})) \to \pi_k(S),$$
$$\varphi_k:\> H_k(T_m(S_{\delta, \eps})) \to H_k(S).$$
\item
If there is a triangulation of $S$ such that the restriction of $S_\delta$ to each simplex is separable, then
$\psi_k$ and $\varphi_k$ are isomorphisms for all $k \le m-1$.
Herewith if $m \ge \dim S$, then $T_m(S_{\delta, \eps})$ is homotopy equivalent to $S$.
\end{enumerate}
\end{proposition}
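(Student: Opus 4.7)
The proof plan is to relate the topology of $T_m(S_{\delta,\eps}) = S_{\delta_0,\eps_0} \cup \cdots \cup S_{\delta_m,\eps_m}$ to that of $S = \bigcup_\delta S_\delta$ by constructing a canonical approximation map and analyzing its fibers through the nested structure provided by conditions (1)--(3). For part (i), the first step is to build a continuous (in fact definable) map $\pi:\ T_m \to S$ that will induce $\psi_k$ and $\varphi_k$. Condition (3) gives, for each $\delta' \ll \delta$ and each $\eps'>0$, an open set $U$ with $S_\delta \subset U \subset S_{\delta',\eps'}$. Iterating along the chain $\eps_0 \ll \delta_0 \ll \cdots \ll \eps_m \ll \delta_m$, I would build $\pi$ as a composition of definable deformations, where the outer layer $S_{\delta_i,\eps_i}$ retracts onto a neighborhood of $S_{\delta_{i+1}}$ through the open set provided by (3), while $S_{\delta_{i+1}}$ is fixed. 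This yields a continuous $\pi:\ T_m \to S_{\delta_m} \subset S$.

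Surjectivity of $\psi_k$ and $\varphi_k$ for $1 \le k \le m$ would then follow from a compactness argument: any continuous $f:\ S^k \to S$ (or any $k$-cycle) has image contained in some $S_\delta$, by compactness of $S^k$ together with the union $S = \bigcup_\delta S_\delta$ and the monotonicity of the family. Choosing the parameters in the $\ll$ chain so that $\delta_m$ lies below this $\delta$, the image lands in $S_{\delta_m} \subset T_m$, and projecting by $\pi$ recovers the original class up to homotopy. The role of $m$ is to guarantee enough ``room'' in the chain to realize $k$-dimensional data faithfully; a $k$-cycle needs roughly $k+1$ layers to be rigidified, which is why the bound $k \le m$ appears.

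For part (ii), under separability, the plan is to upgrade surjectivity to an isomorphism by a simplex-by-simplex Mayer--Vietoris (or Nerve-theorem) argument on the given triangulation. The key local input, which separability provides, is that for each simplex $\Lambda$ the pair $(\Lambda, S_\delta\cap\Lambda)$ is homotopy equivalent to $(\Lambda, S_{\delta',\eps'}\cap\Lambda)$ for an appropriate choice of parameters, with the equivalence compatible with the face structure --- this is the content of separability, which guarantees that $S_\delta\cap\Lambda$ does not pinch against faces where its closure ought not to touch. Using the hereditary form of separability on faces, an inductive argument over skeleta, combined with the Five Lemma applied to the Mayer--Vietoris sequences of the two coverings, promotes the epimorphisms to isomorphisms in degrees $k \le m-1$. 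When $m \ge \dim S$, both spaces have isomorphic homotopy groups in every dimension, and Whitehead's theorem (applied to CW structures coming from the triangulation) delivers the homotopy equivalence.

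The main obstacle is verifying that separability truly suffices to eliminate the loss of topological information that would otherwise occur at the boundaries of simplices --- this is precisely where the non-separable case breaks down and motivates Conjecture~\ref{conj:equivalence}. Concretely, the delicate step is establishing the local pair equivalence $(\Lambda, S_\delta\cap\Lambda) \simeq (\Lambda, S_{\delta',\eps'}\cap\Lambda)$, which requires carefully choosing the $\eps'$-thickening so that its restriction to $\Lambda$ deformation retracts onto $S_\delta\cap\Lambda$ through maps that are simultaneously compatible with all faces of $\Lambda$; without separability, the closures $\overline{S_\delta\cap\Lambda}$ may meet faces $\Lambda'$ even when $\Lambda' \subset \overline{\Lambda\setminus S_\delta}$, and no such face-preserving retraction exists in general.
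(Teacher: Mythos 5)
This proposition is not proved in the paper at all: it is quoted verbatim from \cite{GV07} (Theorem~1.10), so there is no internal argument to compare yours against. Judged on its own terms, your sketch has genuine gaps. The central one is the very first step: you propose to build a continuous map $\pi:\> T_m(S_{\delta,\eps}) \to S$ by composing ``definable deformations'' in which each layer $S_{\delta_i,\eps_i}$ retracts onto a neighbourhood of the next $S_{\delta_{i+1}}$. Condition (3) supplies only a sandwiched open set $S_\delta \subset U \subset S_{\delta',\eps'}$; it does not supply any retraction, and in general $S_{\delta,\eps}$ does not retract onto $S_\delta$ or onto any neighbourhood of it (the two sets can have entirely different topology). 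Indeed, if a single continuous map $T_m \to S$ inducing $\psi_k$ and $\varphi_k$ existed, there would be no reason for the degree restrictions $k \le m$ and $k \le m-1$; those bounds arise precisely because in \cite{GV07} the homomorphisms are constructed on representatives, by an induction over the skeleta of a triangulated $k$-polyhedron that consumes one layer of the telescope per dimension.

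Your surjectivity argument is also flawed: a compact subset of $S=\bigcup_\delta S_\delta$ need \emph{not} lie in any single $S_\delta$. By Lemma~\ref{le:S_by_function}, $S_\delta=\{f\ge\delta\}$ for an upper semi-continuous $f$, and a positive upper semi-continuous function on a compact set can have infimum $0$: take $K=[0,1]$, $f(0)=1$, $f(x)=x$ for $x>0$; then $S=[0,1]$ is compact yet contained in no $S_\delta$. The correct mechanism uses the interleaving $S_{\delta_{i+1}}\subset U\subset S_{\delta_i,\eps_i}$ of the thickenings, not compactness of the image alone. Finally, in part (ii) the ``key local input'' --- a face-compatible pair equivalence $(\Lambda, S_\delta\cap\Lambda)\simeq(\Lambda, S_{\delta',\eps'}\cap\Lambda)$ --- is asserted rather than derived from Definition~\ref{def:separable2}, and the concluding appeal to Whitehead's theorem presupposes a map inducing the isomorphisms, which, per the first objection, you have not constructed.
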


It was conjectured in \cite{GV07}, Remark~1.11, that $\psi_k$ and $\varphi_k$ are isomorphisms for
all $k \le m-1$ even without the separability condition.

We now show that this conjecture is true in case when $\dim K \le 2$.

Let a family $\{ S_{\delta, \eps} \}$ have the corresponding monotone family $\{ S_\delta \}$.
Consider the triangulation from Theorem~\ref{th:triang}, corresponding to $\{ S_\delta \}$.
Construct new families $\{ V_\delta \}$ and $\{ V_{\delta, \eps } \}$ in $K$ as follows.

We start with $V_\delta$.
For each definable simplex $\Lambda$ of the triangulation $\Phi$, if the restriction $\Lambda \cap S_\delta$ is
separable, let $\Lambda \cap V_\delta$ coincide with $\Lambda \cap S_\delta$.
If $\Lambda \cap S_\delta$ is not separable, consider two cases.

In the first case $\Lambda_0 \subset S_{\delta, \eps}$.
Then, for $\Lambda \cap V_\delta$, replace the restriction $\Lambda \cap S_\delta$ by the (separable) family of combinatorial
type of the model family (4) (i.e., by the full simplex $\Lambda$).

In the second case $\Lambda_0 \not\subset S_{\delta, \eps}$.
Let $\Phi:\> \overline \Delta \to \overline \Lambda$, where $\Delta$ is the standard 2-simplex, be a homeomorphism
preserving the order of vertices.
Let $W_\delta$ be the monotone family in $\Delta$ having the combinatorial type of the model family (2),
such that $\partial_{\Delta}W_\delta$ is the straight line interval parallel to $\Delta_2$ with the endpoint
$\Phi^{-1}(\partial_{\Lambda_0}(\overline{\Lambda \cap S_\delta} \cap \Lambda_0))$
(hence, the restrictions to $\Delta_0$ of $\Phi^{-1}(\Lambda \cap S_\delta)$ and $W_\delta$ coincide).
For $\Lambda \cap V_\delta$, replace the restriction $\Lambda \cap S_\delta$ by a (separable) family $\Phi (W_\delta)$.

Now we construct the family $V_{\delta, \eps }$.
Observe that the set $\Phi^{-1}(\overline{\Lambda \cap S_{\delta, \eps}} \cap \Lambda_0)$ consists of two semi-open
intervals $(\Delta_{0,2}, a]$ and $[b, \Delta_{0,1})$ in $\Delta_0$
(in particular, the open endpoint of the first interval is the vertex 1 of $\Delta$, while the open endpoint of the
second interval is the vertex 2).
Let $\alpha$ be the intersection of $\Delta$ with the straight line passing through $a$ parallel to $\Delta_2$, and
$\beta$ be the intersection of $\Delta$ with the straight line passing through $b$ parallel to $\Delta_1$.

Let $\Lambda^1, \ldots ,\Lambda^r$ be all two-dimensional simplices of the triangulation such that
$\Lambda^i \cap S_\delta$ is non-separable and $\Lambda^i \cap V_\delta \neq \Lambda^i$ for each $i=1, \ldots ,r$.
For each $\delta$ let $V_{\delta, \eps} \setminus \bigcup_i \Lambda^i$ coincide with
$S_{\delta, \eps} \setminus \bigcup_i \Lambda^i$.

For each $i=1, \ldots ,r$ let $\Lambda^i \cap V_{\delta, \eps}$ be the union of two sectors, closed in $\Lambda^i$.
One sector lies between $\Lambda_2$ and the curve $\Phi(\alpha)$.
Another sector lies between $\Lambda_1$ and the curve $\Phi(\beta)$.

\begin{remark}\label{re:tele}
\begin{enumerate}[(i)]
\item
The family $V_{\delta, \eps}$ satisfies the conditions (1), (2), (3) at the beginning of this section.
\item
The family $\Lambda \cap V_\delta$ is separable for each definable simplex $\Lambda$ of the triangulation.
\item
$\bigcup_{\delta>0} V_\delta=S$, since the unions $\bigcup_{\delta>0} (\Lambda \cap S_\delta)$ and
$\bigcup_{\delta>0} (\Lambda \cap V_\delta)$ coincide, being simultaneously either empty or equal to $\Lambda$
for each $\Lambda$.
\end{enumerate}
\end{remark}

\begin{lemma}\label{le:tele}
For $m \ge 1$ and the families $S_{\delta, \eps} $, $ V_{\delta, \eps }$, we have
$T_m(S_{\delta, \eps})=T_m(V_{\delta, \eps})$.
\end{lemma}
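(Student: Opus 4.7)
The plan is to verify the equality of the telescopes locally. On the complement $K \setminus \bigcup_{i=1}^r \Lambda^i$ the families $S_{\delta,\eps}$ and $V_{\delta,\eps}$ coincide by the construction in Section~\ref{sec:approximation}, so the two telescope unions $T_m(S_{\delta,\eps})$ and $T_m(V_{\delta,\eps})$ agree there automatically. It therefore suffices to prove $T_m(S_{\delta,\eps}) \cap \Lambda^i = T_m(V_{\delta,\eps}) \cap \Lambda^i$ on each exceptional two-simplex $\Lambda^i$, where $\Lambda^i \cap S_\delta$ has the (unique) non-separable two-dimensional combinatorial type, namely type~(3). For such a $\Lambda^i$, the set $V_{\delta,\eps}$ is, by construction, the union of two closed ``straight'' sectors, one attached to $\Lambda_2^i$ and bounded by $\Phi_i(\alpha)$, the other attached to $\Lambda_1^i$ and bounded by $\Phi_i(\beta)$, whose openings on $\Lambda_0^i$ agree exactly with $\overline{S_{\delta,\eps}} \cap \Lambda_0^i$. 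Moreover, by Remark~\ref{re:(F)} applied to combinatorial type~(3), $\overline{S_\delta}$ already contains neighborhoods of both $\Lambda_1^i$ and $\Lambda_2^i$ in $\overline{\Lambda^i}$.

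The main step is to establish, for each $k$ with $1 \leq k \leq m$, the pair of sandwich inclusions
\[
V_{\delta_k,\eps_k} \cap \Lambda^i \;\subset\; S_{\delta_{k-1},\eps_{k-1}} \cap \Lambda^i, \qquad
S_{\delta_k,\eps_k} \cap \Lambda^i \;\subset\; V_{\delta_{k-1},\eps_{k-1}} \cap \Lambda^i.
\]
The crucial input is condition~(3) on the family $\{S_{\delta,\eps}\}$: since $\delta_{k-1} \ll \delta_k$ in the telescope hierarchy $\eps_0 \ll \delta_0 \ll \eps_1 \ll \cdots$, the set $S_{\delta_{k-1},\eps_{k-1}}$ contains an open neighborhood of $\overline{S_{\delta_k}}$. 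Because $\overline{S_{\delta_k}}$ already contains neighborhoods of $\Lambda_1^i \cup \Lambda_2^i$, the two sectors of $V_{\delta_k,\eps_k}$, being attached precisely to these edges with openings on $\Lambda_0^i$ matching those of $S_{\delta_k,\eps_k}$, are absorbed into this open neighborhood. The reverse inclusion is established by a symmetric argument, exploiting the analogous ``neighborhood of $\overline{S_{\delta_k}}$'' property for the new family $\{V_{\delta,\eps}\}$, which is easy to verify directly from the explicit sector description. With both sandwich inclusions in hand, the two unions $\bigcup_{k=0}^m S_{\delta_k,\eps_k}$ and $\bigcup_{k=0}^m V_{\delta_k,\eps_k}$ restricted to $\Lambda^i$ reorganize to coincide; the hypothesis $m \geq 1$ ensures that enough absorption steps are available to handle the lowest-index terms.

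The main obstacle is making the sandwich inclusions precise: condition~(3) only produces \emph{some} open neighborhood of $\overline{S_{\delta_k}}$ inside $S_{\delta_{k-1},\eps_{k-1}}$, without specifying its thickness, while the sectors of $V_{\delta_k,\eps_k}$ may protrude deeply into the interior of $\Lambda^i$. Verifying that these sectors fit inside the neighborhood requires controlling the positions of the endpoints $a(\delta_k,\eps_k)$ and $b(\delta_k,\eps_k)$ of $\overline{S_{\delta_k,\eps_k}} \cap \Lambda_0^i$ as $\eps_k$ decreases, so that the sectors collapse onto a subset of $\overline{S_{\delta_k}}$ in a controlled way; Lemma~\ref{le:limit} provides the needed control by pinning down the Hausdorff limit of $\partial_{\Lambda^i} S_{\delta_k}$ to a face of $\Lambda^i$. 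Choosing the telescope parameters along the $\ll$-hierarchy so that each $\eps_{k-1}$ is sufficiently small relative to the thickness of the neighborhood produced by condition~(3) then yields the required inclusions and completes the argument.
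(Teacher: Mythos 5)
Your opening reduction (the two telescopes coincide off the exceptional simplices because $S_{\delta,\eps}$ and $V_{\delta,\eps}$ do) is the same as the paper's, but the interlacing argument on each exceptional $\Lambda^i$ does not work, for two separate reasons. First, neither sandwich inclusion is true. The parameters satisfy $\eps_{k-1}\ll\delta_{k-1}\ll\eps_k$, so near the blow-up vertex $v=\Lambda^i_{0,1}$ the level-$(k-1)$ sets are \emph{thinner} than the level-$k$ sets, not thicker. Concretely, the sector of $V_{\delta_k,\eps_k}$ between $\Lambda^i_1$ and $\Phi(\beta)$ accumulates on the whole interval $[b(\delta_k,\eps_k),v)\subset\Lambda^i_0$, whereas $\overline{S_{\delta_{k-1},\eps_{k-1}}}$ meets $\Lambda^i_0$ near $v$ only in $[b(\delta_{k-1},\eps_{k-1}),v)$, which shrinks to nothing as $\eps_{k-1}\to 0$; so that sector is not contained in the compact set $S_{\delta_{k-1},\eps_{k-1}}$. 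Symmetrically, the stationary part of $S_{\delta_k}\cap\Lambda^i$ (on which $f$ is bounded below by a fixed $c>0$) contains points lying above the curve $\Phi(\alpha)$ for the parameters $(\delta_{k-1},\eps_{k-1})$ and outside the thin sector through $b(\delta_{k-1},\eps_{k-1})$; these belong to $S_{\delta_k,\eps_k}$ but not to $V_{\delta_{k-1},\eps_{k-1}}$. The open set $U$ supplied by condition~(3) contains $S_{\delta_k}$ but has no controlled thickness, so it cannot absorb either discrepancy. Second, even granting both inclusions, they run from index $k$ to index $k-1$ and therefore yield only $T_m(V_{\delta,\eps})\subset V_{\delta_0,\eps_0}\cup T_m(S_{\delta,\eps})$ and its mirror image: the index-$0$ terms, which are the largest sets in the telescopes, are never absorbed, and ``$m\ge 1$ provides enough absorption steps'' is not an argument.

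The fact you need, and the reason the hypothesis $m\ge 1$ appears, is that on each exceptional $\Lambda^i$ \emph{both} telescopes already exhaust the whole simplex at level one: $\Lambda^i\cap T_1(S_{\delta,\eps})=\Lambda^i\cap T_1(V_{\delta,\eps})=\Lambda^i$, so both restrictions of $T_m$ equal $\Lambda^i$ and coincide trivially. For $S$: by upper semicontinuity $f(v)\ge c>0$, so $v\in S_c$ for a fixed $c$, and condition~(3) applied to $S_c\subset U\subset S_{\delta_1,\eps_1}$ shows that $S_{\delta_1,\eps_1}$ contains a fixed open neighbourhood of $v$; since $\overline{\Lambda^i\setminus S_{\delta_0}}$ Hausdorff-converges to $v$ as $\delta_0\to 0$, taking $\delta_0$ small enough gives $\Lambda^i\subset S_{\delta_0,\eps_0}\cup S_{\delta_1,\eps_1}$. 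For $V$: the complement of $(V_{\delta_0,\eps_0}\cup V_{\delta_1,\eps_1})\cap\Lambda^i$ is nonempty only if the two gaps $(a,b)$ on $\Lambda^i_0$ overlap, which fails once $\delta_0$ is small enough relative to $\eps_1$. It is this level-one statement, not a term-by-term comparison, that has to be established.
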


\begin{proof}
By the definition, for all $\delta$ and $\eps$, the sets $S_{\delta, \eps}$ and $V_{\delta, \eps}$
coincide everywhere outside the union of all simplices $\Lambda$ such that $\Lambda \cap S_\delta$ is
non-separable.
Then so do also the sets $T_m(S_{\delta, \eps})$ and $T_m(V_{\delta, \eps})$.
On the other hand, for each such $\Lambda$, we have $\Lambda \cap T_1(S_{\delta, \eps})=
\Lambda \cap T_1(V_{\delta, \eps})= \Lambda$, and therefore $\Lambda \cap T_m(S_{\delta, \eps})=
\Lambda \cap T_m(V_{\delta, \eps})= \Lambda$.
Hence, $T_m(S_{\delta, \eps})=T_m(V_{\delta, \eps})$.
\end{proof}

\begin{theorem}
\label{thm:homotopy}
If $\dim K \le 2$ then for every $1 \le k \le m$, there are epimorphisms
$$\psi_k:\> \pi_k(T_m(S_{\delta, \eps})) \to \pi_k(S),$$
$$\varphi_k:\> H_k(T_m(S_{\delta, \eps})) \to H_k(S)$$
which are isomorphisms for all $k \le m-1$.
In particular, if $m \ge \dim S$, then $T_m(S_{\delta, \eps})$ is homotopy equivalent to $S$.
\end{theorem}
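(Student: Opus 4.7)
The plan is to reduce Theorem~\ref{thm:homotopy} to the separable case already handled by Proposition~\ref{pr:epi}, using the auxiliary family $V_{\delta,\eps}$ constructed just before the theorem. The essential ingredients are already assembled: Theorem~\ref{th:triang} (which supplies a triangulation of $K$ along which each restriction $\Lambda\cap S_\delta$ is standard), the modification producing $V_\delta$ and $V_{\delta,\eps}$, Remark~\ref{re:tele} (which guarantees (a) that $V_{\delta,\eps}$ satisfies the three approximation axioms, (b) that $\Lambda\cap V_\delta$ is separable for every simplex $\Lambda$ of the triangulation, and (c) that $\bigcup_\delta V_\delta = S$), and Lemma~\ref{le:tele} (which identifies the telescopes $T_m(S_{\delta,\eps})$ and $T_m(V_{\delta,\eps})$ for $m\ge 1$).

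First I would apply Proposition~\ref{pr:epi}(i) directly to the family $S_{\delta,\eps}$: this already produces the epimorphisms $\psi_k$ and $\varphi_k$ onto $\pi_k(S)$ and $H_k(S)$ for $1\le k\le m$, without any separability hypothesis, so the first half of the conclusion is immediate. Next, the surjections need to be upgraded to isomorphisms for $k\le m-1$. For this I switch to $V_{\delta,\eps}$. By Remark~\ref{re:tele}(i) the family $V_{\delta,\eps}$ fulfils the three defining conditions for an approximating family relative to the monotone family $\{V_\delta\}$, and by Remark~\ref{re:tele}(iii) one has $\bigcup_\delta V_\delta = S$, so $V_{\delta,\eps}$ is a legitimate approximation of $S$. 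By Remark~\ref{re:tele}(ii) applied to the triangulation of $K$ furnished by Theorem~\ref{th:triang}, the restriction of $\{V_\delta\}$ to each simplex is separable; hence Proposition~\ref{pr:epi}(ii) applies, giving that for $V_{\delta,\eps}$ the maps $\psi_k$ and $\varphi_k$ are isomorphisms for $k\le m-1$, and that $T_m(V_{\delta,\eps})$ is homotopy equivalent to $S$ when $m\ge\dim S$.

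Finally, Lemma~\ref{le:tele} yields the equality $T_m(S_{\delta,\eps}) = T_m(V_{\delta,\eps})$ for $m\ge 1$, so the conclusions about $V_{\delta,\eps}$ transfer verbatim to $S_{\delta,\eps}$. To make sure the identifications of homotopy and homology are compatible with the epimorphisms of Proposition~\ref{pr:epi}(i), I would verify that the canonical inclusions $T_m(S_{\delta,\eps})\hookrightarrow T_{m+1}(S_{\delta,\eps})$ used in the construction of $\psi_k,\varphi_k$ in \cite{GV07} coincide, under Lemma~\ref{le:tele}, with those for $V_{\delta,\eps}$; this is immediate from the pointwise equality of the telescopes. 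Combining these steps gives the isomorphism statement for $k\le m-1$ and the homotopy equivalence when $m\ge\dim S$.

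The main obstacle is purely conceptual rather than technical: one must be confident that the substitution $S_{\delta,\eps}\rightsquigarrow V_{\delta,\eps}$ is harmless, i.e., that the modification of $S_\delta$ to $V_\delta$ on non-separable simplices does not alter either $S=\bigcup_\delta S_\delta=\bigcup_\delta V_\delta$ or the telescopes $T_m$. The first point is Remark~\ref{re:tele}(iii), and the second is Lemma~\ref{le:tele}; once these are in hand the result follows by citing Proposition~\ref{pr:epi}. The delicate verification, if any, is that on each non-separable two-simplex $\Lambda$ with $\Lambda_0\not\subset S_{\delta,\eps}$, the replacement by the type~(2) family still produces a separable family whose union over $\delta$ fills $\Lambda$ exactly when $\bigcup_\delta(\Lambda\cap S_\delta)=\Lambda$ did, which is built into the construction of $V_\delta$ but worth checking explicitly before invoking Proposition~\ref{pr:epi}(ii).
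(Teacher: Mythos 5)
Your proposal is correct and follows essentially the same route as the paper: construct $V_{\delta,\eps}$, invoke Remark~\ref{re:tele} and Proposition~\ref{pr:epi}(ii) for the separable family, and transfer the conclusion back via the equality of telescopes in Lemma~\ref{le:tele}. The extra observations (getting the epimorphisms directly from Proposition~\ref{pr:epi}(i) and checking compatibility of the inclusions) are harmless refinements of the same argument.
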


\begin{proof}
Construct the family $\{V_{\delta, \eps}\}$ for $\{S_{\delta,\eps}\}$ as described above.
Since $\bigcup_{\delta>0} V_\delta=S$, and each family $\Lambda \cap V_\delta$ is separable
(see Remark~\ref{re:tele}, (ii)), there exist, by Proposition~\ref{pr:epi}, (ii), epimorphisms
$$\psi_k:\> \pi_k(T_m(V_{\delta, \eps})) \to \pi_k(S),$$
$$\varphi_k:\> H_k(T_m(V_{\delta, \eps})) \to H_k(S)$$
which are isomorphisms for all $k \le m-1$.
By Lemma~\ref{le:tele}, we have $T_m(S_{\delta, \eps})=T_m(V_{\delta, \eps})$, which implies the theorem.
\end{proof}

\bibliographystyle{abbrv}
\bibliography{master}

\end{document}